\documentclass[10pt]{article}
\usepackage{fullpage}
\pdfoutput=1

\usepackage{scohesion}

\usepackage[style=alphabetic, maxbibnames=999]{biblatex}
\addbibresource{scohesion.bib}
\setcounter{biburlnumpenalty}{9000}
\setcounter{biburllcpenalty}{9000}
\setcounter{biburlucpenalty}{9000}

% \makeatletter
% \@addtoreset{section}{part}
% \makeatother
% \usepackage{fncylab}
% \labelformat{section}{\thepart.#1}
% \labelformat{theorem}{\thepart.#1}
% \labelformat{lemma}{\thepart.#1}
% \labelformat{proposition}{\thepart.#1}
% \labelformat{corollary}{\thepart.#1}
% \labelformat{definition}{\thepart.#1}

\usepackage{authblk}

\title{Synthetic Spectra via a \\ Monadic and Comonadic Modality}
\author[1]{Mitchell Riley}
\author[2]{Eric Finster}
\author[1]{Daniel R. Licata}
\affil[1]{Wesleyan University}
\affil[2]{University of Cambridge}
\date{}

%%% Remaining TODOS:
% Dan: edit rest of pointed space at end

\begin{document}
\maketitle

\begin{abstract}
  Homotopy type theory allows for a synthetic formulation of homotopy
  theory, where arguments can be checked by computer and automatically
  apply in many semantic settings. Modern homotopy theory makes
  essential use of \emph{spectra}, a model for \emph{stable homotopy
    types}, where the suspension and loop space operations become an
  equivalence.  One can define a version of spectra analytically in type
  theory, but with this definition can be quite difficult to work with.
  In this paper, we develop an alternative synthetic approach to
  spectra, where spectra are represented by certain types, and
  constructions on them by type structure—--maps of spectra by
  ordinary functions, loop spaces by the identity type, and so on.
  There is an $\infty$-topos of \emph{parametrised spectra}, whose
  objects are an index space with a family of spectra over it, so
  standard homotopy type theory can be interpreted in this setting.  To
  isolate the spaces (as objects with trivial spectra) and the spectra
  (as objects with trivial base) within this more general setting, we
  extend type theory with a novel modality that is simultaneously a
  monad and a comonad.  Intuitively, this modality keeps the base of an
  object the same but replaces the spectrum over each point with a
  trivial one.  Because this modality induces a non-trivial endomap on
  every type, it requires a more intricate judgemental structure than
  previous modal homotopy type theories.  We show that the type theory
  is sound and complete for an abstract categorical semantics, in terms
  of a category-with-families with a weak endomorphism whose functor on
  contexts is a bireflection, i.e.\ has a counit an a unit that are a
  section-retraction pair.  Next, we augment the type theory with a pair
  of axioms, one which implies that the spectra are stable, and the
  other which relates the synthetic spectra to the analytic ones, and,
  working inside the type theory, show that our synthetic spectra have
  many of the properties expected of spectra.
\end{abstract}

\tableofcontents

\section*{Introduction}

Homotopy type theory provides a setting in which one can do
\emph{synthetic} homotopy theory: rather than working with concrete
topological spaces or simplicial sets, one works with types and the
universal constructions provided by the type formers of the
theory. Working this way requires clever new arguments for standard
results, but the payoff is that the same proof applies in any model of
the theory, not just in the homotopy theory of spaces.  It is
conjectured that all of axiomatic homotopy type theory (``axiomatic
HoTT'') as in~\cite{hottbook} can be interpreted in any $\infty$-topos
--- the interpretation of the basic Martin-L\"of type
theory~\cite{awodey-warren:id-types, lumsdaine-warren:local-universes,
  arndt-kapulkin:models}, Voevodsky's univalence
axiom~\cite{gepner-kock:univalence, mike:elegant-reedy, mike:all}
and a class of higher inductive types~\cite{lumsdaineshulman20hits}
have been worked out, though closure of
the universes under higher inductive types is in progress.  The basics of
synthetic homotopy theory are presented in~\cite[Chapter 8]{hottbook},
and a significant number of results have been developed and formalized
since.

One of the main advantages of axiomatic HoTT is that all
constructions performed in it are necessarily homotopy invariant.
However, this is a double-edged sword, as it rules out some of the first
definitions from algebraic topology one might hope to make. For example,
a key invariant of topological spaces is their homology and cohomology
groups, which are often easier to calculate than their homotopy
groups. Externally, the ordinary homology and cohomology of a space $X$
are defined via maps into $X$ from simplices of various dimensions. But
these simplices are all contractible, so working internally they are
indistinguishable from the point.  Instead, homology and cohomology can
be defined in type theory~\cite{cavallo:cohomology, graham:homology} via
the \emph{spectra} that represent them.

Classically, the study of spectra was motivated by the Freudenthal
suspension theorem (see~\cite[Chapter 8]{hottbook} for a proof in type
theory), which implies that, for nice spaces $X$ and $Y$, the sequence
of homotopy classes of maps \[ [X, Y] \to [\Sigma X, \Sigma Y] \to
[\Sigma^2 X, \Sigma^2 Y] \to \dots \] eventually
\emph{stabilises}. \emph{Stable homotopy theory} studies the phenomena
that survive after arbitrarily many suspensions, and it is an important
tool for obtaining results about unstable homotopy theory/spaces as well
--- e.g.\ the homotopy groups of spheres in a certain range coincide with
certain stable homotopy groups. One can form a category out of the above
observation, where the objects are nice pointed spaces and the hom-sets
are given by the colimit over the above diagram, but the resulting
category does not have very nice properties. This motivates passing to
a category of \emph{spectra} that includes the previous as a full
subcategory, but is much better behaved categorically.

Using $\infty$-categorical technology (by which we mean
$(\infty,1)$-categorical), the $\infty$-category of spectra can be
defined by starting with the $\infty$-category of pointed spaces
$\mathcal{S}_*$, and inverting the loop space functor $\Omega$ in a
universal way:
\begin{align*}
\Spec = \varprojlim \left(  \cdots \xrightarrow{\Omega} \mathcal{S}_* \xrightarrow{\Omega} \mathcal{S}_* \xrightarrow{\Omega} \mathcal{S}_* \right)
\end{align*}
In the $\infty$-category of spectra, the suspension $\Sigma$ and loop
space $\Omega$ functors are equivalences, in contrast to ordinary
spaces where $\Sigma$ and $\Omega$ are far from being
equivalences. This makes $\Spec$ a good place to do stable homotopy
theory: \emph{everything} is stable under suspension. Spectra behave
much like objects from algebra, such as modules over a ring, or chain
complexes. One has a zero object and biproducts, and every morphism of
spectra has a well-behaved kernel and cokernel. Like chain complexes,
$\Sigma$ and $\Omega$ shift objects up and down in dimension.

Unwinding the above definition, a spectrum can be presented concretely
as a sequence of pointed spaces $X_0,X_1,X_2,\ldots$, together with a
pointed equivalence between each space and the loop space of the next,
$X_0 \equiv_\star \Omega X_1$, $X_1 \equiv_\star \Omega X_2$, \ldots.
This definition of spectra is readily internalised in type
theory, replacing `pointed spaces' with `pointed types', and has been
formalised~\cite{lean:spectra}. However, most basic constructions on
spectra are still work in progress, as working with such spectra can
be difficult.  Moreover, re-building the theory of spectra from
scratch within type theory is not so much in the spirit of working
synthetically; we should be leveraging the type theory to provide some
of the structure built-in.

One approach to synthetic spectra would be to look for a type theory
that interprets in the $\infty$-category of spectra, but this category
is not an $\infty$-topos, so the type theory would not include all of
axiomatic HoTT (e.g.\ it is not even cartesian closed; like modules
over a ring, it is closed for a different notion of monoidal product).
However, spectra are a full subcategory of \emph{space-parametrised
  families of spectra}, which Joyal and Biedermann have shown form an
$\infty$-topos~\cite[Section 35]{joyal:notes}. The $\infty$-category
$P\Spec$ of parametrised spectra can be pleasantly described as
follows~\cite{ABG}: Given a space $X$, one can think of it as an
$\infty$-groupoid and consider the functor $\infty$-category
$\mathrm{Fun}(X, \Spec)$. The objects of this are our parametrised
spectra over $X$. This assignment from spaces to functor categories
assembles into a functor $\mathcal{S}^\op \to \mathrm{Cat}_\infty$,
with $\mathrm{Cat}_\infty$ the $\infty$-category of
$\infty$-categories. An $\infty$-categorical version of the
Grothendieck construction yields the $\infty$-category $P\Spec$ of
parametrised spectra.  Thus, we can begin by interpreting all of
axiomatic HoTT into this $\infty$-topos $P\Spec$ of parametrised
spectra.  This has important practical advantages over designing a new
type theory for just the spectra, not the families: we can reuse all
of the synthetic homotopy theory that has been developed, and we can
reuse existing proof assistants to work in this setting.

When working in axiomatic HoTT with the model in parametrised spectra
in mind, it will be necessary to have some syntactic way to say when a
type denotes a single spectrum, rather than a family of them --- for
example, the fact that suspension and loop space are an equivalence
holds only for spectra, not families thereof.  Moreover, it will be
necessary say when a type (in $P\Spec$) denotes an ordinary space; for
example, some constructions that we will perform apply only to
families of spectra over a space, not families of spectra over a
spectrum, so in certain places we will want to restrict dependency to
types that denote spaces.  It turns out that we can access both the
full subcategory of spaces and the full subcategory of spectra by way
of a \emph{modality}~\cite{hottbook,rss:modalities}. A basic
operation on $P\Spec$ is the functor that, given a parametrised
spectrum, extracts its underlying space. In the other direction, we
can assign to each space $X$ the parametrised spectrum $(X, 0)$, where
$0 : X \to \Spec$ denotes the constant functor at the zero spectrum,
which has the one-point space at each level (not the empty space,
since the spaces in an $\Omega$-spectrum are all pointed). This is
both left and right adjoint to the forgetful functor from $P\Spec$
to spaces $\mathcal{S}$ that extracts the index space:
\[
\begin{tikzcd}
P\Spec \ar[r] \ar[r, phantom, "\top",yshift=1.5ex, xshift=-0.8ex] \ar[r, phantom, "\top",yshift=-1.5ex, xshift=-0.8ex]&
\mathcal{S} \ar[l, bend left = 40, "0", pos=0.51] \ar[l, bend right = 40, "0" swap]
\end{tikzcd}
\]
and this diagram satisfies the additional coherence of a bireflective
subcategory~\cite{bireflectivity}.  This means that the roundtrip on
$P\Spec$, which we write as $\natural$, has a number of special
properties: is a left-exact idempotent monad and comonad, it is
adjoint to itself, and the counit followed by the unit
$\natural E \to E \to \natural E$ is the identity.  Then, we can
isolate the spaces within the parametrised spectra as the
$\natural$-modal types~\cite{hottbook,rss:modalities}, roughly the
types $A$ such that $A \equiv \natural A$ --- i.e.\ an object is a
space iff it is equivalent to the object consisting of the same base
space with the 0-spectrum over every point, because that means the
object has no interesting spectra to begin with.  Dually, we can
isolate the spectra as the $\natural$-connected types, the types such
that $\natural A$ is contractible --- i.e.\ an object is a single
spectrum iff it is a family of spectra over the point.

Monadic modalities are well-studied in axiomatic
HoTT~\cite{rss:modalities}, but to describe non-trivial comonadic
modalities, one must modify the judgemental structure of the context.
The adjunction between $\mathcal{S}$ and $P\Spec$ is a degenerate
example of \emph{axiomatic cohesion}~\cite{lawvere:cohesion}. An
$\infty$-topos $\mathcal{T}$ is cohesive (over $\mathcal{S}$) if there
is a string of adjoint functors relating $\mathcal{T}$ to $\mathcal{S}$,
satisfying certain conditions. These adjunctions induce a string of
adjoint functors $\shape \dashv \flat \dashv \sharp$ on $\mathcal{T}$ so
that $\shape$ and $\sharp$ are monads and $\flat$ is a comonad. In our
setting, we have $\shape \defeq \flat \defeq \sharp$, hence the name
$\natural$ for our roundtrip operation.  A `cohesive type theory'
capturing the structure of cohesion was introduced by
Shulman~\cite{mike:real-cohesive-hott}, where the three functors also
appear as unary type formers. The part of the theory we are especially
interested in is the $\flat/\sharp$ fragment, which on its own is called
`spatial type theory'. In spatial type theory, the context is divided
into two zones following~\cite{barber:dill,pfenning-davies}: a `modal' zone,
where the types of all variables are morally prefixed with a $\flat$,
followed by a context with ordinary variables. The rules for the $\flat$
and $\sharp$ modalities move variables between the zones to enforce the
correct relationships between the modalities and the judgemental version
of $\flat$ represented by the modal context zone.  In principle one
could use spatial type theory for $P\Spec$ by adding axioms asserting
that the modalities are all equivalent.  While we have no objection in
principle to adding axioms to type theories for synthetic homotopy
theory (we extend axiomatic HoTT, which already adds univalence and
higher inductive types as axioms), the ergonomics of such axioms can be
poor --- in this case, transport across these equivalences would be
pervasive in every construction.

In this paper, we instead describe an extension of the judgements and
rules of type theory, which give the $\natural$ modality the correct
properties without the need for any axioms (Section~\ref{sec:tt}).
Relative to existing modal dependent type theories, the primary
difficulty of the $\natural$ modality is that the bireflection induces a
non-trivial roundtrip on every type\footnote{In full detail, the
  round-trip only applies to types that depend on a space, not a
  spectrum.}: the unit map $A \to \natural A$ followed by the counit map
$\natural A \to A$. The syntax has a pair of novel features to handle
this. Firstly, each variable may be used in two ways: as normal, or
``marked'', which is written $\zx$ and means using it via the roundtrip
described above. Secondly, in contrast to spatial type theory, there is
no separation of the context into two zones, one modal and one
non-modal. The presence of both the unit and counit mean that modal and
ordinary variables can be mixed together without restriction.

Next, to demonstrate that the resulting type theory is quite ergonomic
to use, even without a proof assistant, we develop some synthetic
homotopy theory informally in the style of~\cite{hottbook}.  First, in
Section~\ref{sec:basic-props}, we show that our rules for the $\natural$
modality indeed give it the desired properties---we can prove
internally that it is a monad and comonad, idempotent, self-adjoint,
left-exact, interacts nicely with pointed types, and that the
$\natural$-modal (the ``spaces'') are closed under various type constructors.
Next, in Section~\ref{sec:redu}, we study the $\natural$-connected types
(the ``spectra''), showing that they are pointed and that ordinary
functions between them are automatically pointed, avoiding the need to
carry around proofs of pointedness.  By assuming a base type $\sphere$
(intended to be modelled in $P\Spec$ by the sphere spectrum), we give a
synthetic definition of the adjunction $\Sigma^\infty \dashv
\Omega^\infty$ that relates spaces and spectra, which allows transfer of
information between the unstable and stable worlds.

With the type-theoretic rules for $\natural$, our ``synthetic spectra''
do not have all of the properties of spectra.  This is in some sense an
advantage --- we can also interpret all of the above constructions in a
simpler model in parametrised pointed spaces, which we give some
intuition for in Section~\ref{sec:toy-model}.  But, to bring the
synthetic spectra closer to spectra, we identify two axioms.  The first,
studied in Section~\ref{sec:stab}, makes the internal category of
synthetic spectra \emph{stable}, in particular making $\Sigma \dashv \Omega$
an adjoint equivalence for these types. It turns out to be sufficient to
assert that coproducts and products of synthetic spectra coincide.
Together with the ambient setting of an $\infty$-topos and a left-exact
$\natural$ modality, this causes commutative squares of
spectra to be pushouts if and only if they are pullbacks, which,
together with finite limits/colimits and a zero object, is an equivalent
characterisation of the stability of an $\infty$-category.  The proof
that this axiom implies stability uses a recent advance in synthetic
homotopy theory, the Generalized Blakers-Massey Theorem~\cite{abfj:blakers-massey}.

The second axiom connects our synthetic spectra to `analytic' spectra,
defined concretely as sequences of pointed types with connecting maps.
Using the above stability axiom, we show that the
$\Sigma^\infty \dashv \Omega^\infty$ adjunction between pointed types
and synthetic spectra can be factored through analytic spectra,
defining in particular an adjunction between synthetic and analytic
spectra. For this part, we assume the correctness of an analytic
spectrification operation, which has been defined but not yet proved
correct in type theory. Our second axiom is that this adjunction
between synthetic and analytic spectra is an equivalence.  As an
application, we show that this axiom is is strong enough to fix the
homotopy groups (appropriately defined) of the synthetic sphere
spectrum to be the stable homotopy groups of the ordinary higher
inductive spheres, using some recent work on sequential
colimits~\cite{sbr:seq-colims}.

Finally, in Section~\ref{sec:semantics}, we give a sound and complete
categorical semantics for our type theory, in categories with families
enhanced with extra structure to interpret the modality. In
particular, we require a weak CwF endomorphism such that the underlying
functor on contexts is a bireflection, i.e.\ has a counit-unit
section-retraction pair.

\paragraph{Related work.}

Our rules for the natural modality follow the work on constructive modal
logics such constructive S4, intuitionistic linear logic, and adjoint
logic --- especially presentations with two contexts or different
judgements for modal assumptions --- and their generalisations with
dependent
types~\cite{barber:dill,bentonwadler96adjoint,pfenning-davies,alechina+01categoricals4,cervesatopfenning02llf,nanevski+07cmtt,reed:modal,kpb:lnld,vakar15linear,depaivaritter16fibrational}.
The most directly related calculi are spatial type
theory~\cite{mike:real-cohesive-hott} and the calculi for right adjoint
functors/comonads~\cite{clouston18fitch,drats,gsb:implementing}.  While
these previous works inform our design in Section~\ref{sec:tt}, none
consider a bireflective modality as we do here, and there were still
some interesting design questions specific to our setting.  For example,
because $\natural$ is both a monad and a comonad, the previous work
suggests two possible designs:
\cite{pfenning-davies,mike:real-cohesive-hott} have special context
structure corresponding to a comonad, and the counit is a ``silent''
operation (not marked in the proof term), while~\cite{gsb:implementing}
has special context structure for a monad, and the unit is a silent
operation.  In our setting, we have both a monad and a comonad, but the
``roundtrip'' of the unit followed by the counit is \emph{not} the
identity, so we cannot make both the unit and counit silent.  After some
experimentation, we chose to make the unit silent and the counit
explicit via ``marked'' variables.

Some additional related work
develops frameworks for modal type theories in
general~\cite{lsr:multi,gratzer+20mtt}, but our setting is not quite an
instance of these frameworks. The first~\cite{lsr:multi} lacks
dependent types, but can describe the simply-typed fragment of our type
theory.
The mode theories of the second~\cite{gratzer+20mtt}, do not
allow making the left adjoint types (``locks'') into CwF morphisms,
which corresponds in our setting to defining the natural operation on
contexts as a context of individually marked variables.  Additionally,
our contribution in Section~\ref{sec:tt} is an ``optimised'' syntax
where structural rules are combined with other rules, and as much is
admissible as possible, and the step from these frameworks to an
optimised syntax is currently one that must be undertaken for each type
theory separately in any case.

There is also work by Isaev on `indexed type
theories'~\cite{isaev:itt, isaev:itt-talk}, which correspond to
indexed ($\infty$-)categories. The theory of parametrised spectra can
be described in terms of an indexed $\infty$-category over spaces,
where the $\infty$-category over each fixed space $X$ is
$\mathrm{Fun}(X, \Spec)$. But there are significant benefits in
considering the total category as we do: the existing constructions of
type theory automatically apply to types that correspond to spectra,
whereas in indexed type theory one needs to add an indexed version of
each type constructor together with axioms relating the base and
indexed versions of each. Our ordinary universe $\univ$ is already a
universe of parametrised spectra; in an indexed type theory one has a
universe for the types in the base, but seems difficult to extend this
to indexed types, as only the total category has a universe.

\paragraph{Acknowledgements:} The work in this paper benefited heavily
from discussions with Michael Shulman and Mathieu Anel, in particular
to Mike for suggesting Axiom N. Thank you also to Ed Morehouse and
Alex Kavvos for many helpful conversations.

\newpage
\part{Theory}

\section{The $\natural$ Modality}\label{sec:tt}

In this paper, we begin with homotopy type theory as in~\cite{hottbook}
with $\Pi$, $\Sigma$, identity, and (higher) inductive types, and
univalent universes.  We then add a new \emph{modality}, a unary type
constructor $\natural$.  Unlike the monadic modalities studied
in~\cite{rss:modalities}, which can be described by axioms, our
$\natural$ modality also has some comonadic aspects, which require
changes to the judgements of the type theory.  We first describe a
judgemental version of the modality that applies to contexts and has the
desired unit and counit maps, and then give a type constructor that
internalises this judgemental operation as a type.

\subsection{New Judgemental Rules}

\begin{figure}[p]

\textbf{Marked context extension and variables:}

\[
\begin{array}{ccc}
 \inferrule*[left=ctx-ext-zero]{\Gamma \ctx \and \zc{\Gamma} \yields A : \univ}{\Gamma, \zx :: A  \ctx} &
 \inferrule*[left=var-zero]{~}{\Gamma, \zx :: A, \Gamma' \yields \zx : A}  &
 \inferrule*[left=var-roundtrip]{~}{\Gamma, x : A, \Gamma' \yields \zx : \zA}
 \end{array}
\]

\textbf{Natural on contexts:}

\[
\raisebox{-10px}{$
\inferrule*[fraction={-{\,-\,}-}]{\Gamma \ctx}{\zc{\Gamma} \ctx}$}
\qquad
\begin{array}{rl}
\zc{\cdot} &:\defeq \cdot \\
\zc{\Gamma, x : A} &:\defeq \zc{\Gamma}, \zx :: \zA \\
\zc{\Gamma, \zx :: A} &:\defeq \zc{\Gamma}, \zx :: A
\end{array}
\qquad
\inferrule*[fraction={-{\,-\,}-}]{ }
           {\zc{\zc{\Gamma}} \defeq \zc{\Gamma}}
\]

\textbf{Precomposition with the counit:}

\[
\inferrule*[left=pre-counit,fraction={-{\,-\,}-}]
           {\Gamma \yields a:A}{\zc{\Gamma} \yields \za : \zA}
\qquad
\inferrule*[fraction={-{\,-\,}-}]
           {\zc{\Gamma} \vdash a : A}
           {\zc{\Gamma} \vdash \za \defeq a : A}
\qquad
\inferrule*[fraction={-{\,-\,}-}]{\Gamma \vdash a : A}
           {\zc{\Gamma} \vdash \underline{\underline a} \defeq \underline a : \zA}
\]
\[
\raisebox{-10px}{$
\inferrule*[fraction={-{\,-\,}-}]{\Gamma \yields \Delta \tele}{\zc{\Gamma} \yields \Delta^{0\Gamma} \tele}$}
\qquad
\begin{array}{rl}
(\cdot)^{0\Gamma} &:\defeq \cdot \\
(\Delta, x : A)^{0\Gamma} &:\defeq \Delta^{0\Gamma}, x : A^{0\Gamma} \\
(\Delta, \zx :: A)^{0\Gamma} &:\defeq \Delta^{0\Gamma}, \zx :: A
\end{array}
\qquad
\inferrule*[left=pre-counit-gen,fraction={-{\,-\,}-}]
           {\Gamma,\Delta \yields a:A}{\zc{\Gamma}, \Delta^{0\Gamma} \yields a^{0\Gamma} : A^{0\Gamma}}
\]

\textbf{Precomposition with the unit/roundtrip:}
\[
\inferrule*[left=pre-unit,fraction={-{\,-\,}-}]
           {\Psi,\zc{\Gamma},\Delta \yields a : A}
           {\Psi,\Gamma,\Delta \yields a : A}
\qquad
\inferrule*[left=pre-roundtrip,fraction={-{\,-\,}-}]
           {\Gamma \yields a : A}
           {\Gamma \yields\za : \zA}
\qquad
\inferrule*[left=pre-roundtrip-gen,fraction={-{\,-\,}-}]
           {\Gamma,\Delta \yields a :A}
           {\Gamma, \Delta^{0\Gamma} \yields a^{0\Gamma} : A^{0\Gamma}}
\]

\caption{New Context Structure.  Dashed lines indicate admissible
  rules.}\label{fig:structural}
\end{figure}

%% precounit: G,D to 0G, underline(D)
%% preunit: G1,0G2,G3 to G1,G2,G3
%% preroundtrip like counit
%% subst is derivable from normal subst and unzeroing
%% can get crisp coproduct ind from unzeroing?
%% strengthening???: any map where all variables are marked factors through the unit

At a high level, the reason we require some new judgemental structure
for presenting the $\natural$ modality is that it has both a unit $A \to
\natural A$ and a counit $\natural A \to A$ and the \emph{roundtrip} $A
\to \natural A \to A$ gives a non-trivial map for general types $A$. So
to $\beta$-reduce an introduction followed by an elimination, we need a
judgemental version of this roundtrip map $A \to A$ to reduce to.  This
must satisfy a naturality equation, which says that for any $f : A \to
B$, the composites $A \to B \to \natural B \to B$ and $A \to \natural A
\to A \to B$ are equal: post-composing with the roundtrip is the same as
precomposing with it.  To achieve this, we add post-composition with the
roundtrip as a new way of using variables: for a variable $x : A$, we
write $\zx$ for the roundtrip on $A$ applied to $x$, and say that the
variable is \emph{marked} or \emph{zeroed} or \emph{dull}.  For a
general term $a$, we define an admissible operation $\za$ that
``underlines all of the free variables of $a$'', and denotes
precomposing with the roundtrip on the context.  This gives normal forms
for the above naturality equations.  There is an interaction with
dependency, because applying the roundtrip to a term must also apply it
to the term's type, so if $a : A$ then $\za : \zA$.  For certain rules,
we will need a judgement classifying terms whose free variables are
\emph{only} used marked, which we call \emph{dull terms}. We accomplish
this by allowing variable declarations in the context to be marked as
well, written $\zx :: A$, which semantically is the same as $x :
\natural A$.  A variable declared marked in the context can only be used
marked, so a term in a context of only marked declarations must be
dull.  Any context $\Gamma$ can be turned into a context of only marked
declarations via a defined $\zc{\Gamma}$ operation, which semantically
is applying the $\natural$ modality to the context.  Finally, we have
admissible structural rules corresponding to precomposing a judgement
with the unit $\Gamma \to \zc{\Gamma}$ and counit $\zc{\Gamma} \to
\Gamma$.  We now discuss the rules in Figure~\ref{fig:structural} in
more detail.

%% FIXME: make sure this gets said well enough elsewhere
%% all the structural properties of $\natural$: for any context $\Gamma$, there should be a context $\natural \Gamma$; $\natural$ should be functorial on substitutions; there should be a unit $\Gamma \to \natural \Gamma$ and counit $\natural \Gamma \to \Gamma$ (natural in $\Gamma$); $\natural$ should be idempotent ($\natural \natural \Gamma \cong \natural \Gamma$); the counit followed by the unit ($\natural \Gamma \to \Gamma \to \natural \Gamma$) should be the identity, so the non-trivial roundtrip $\Gamma \to \natural \Gamma \to \Gamma$ is idempotent; and $\natural$ should be left-exact.

%% In our intended models of this theory (parametrized spectra, parametrized pointed spaces), the types will morally consist of two pieces: a `downstairs'/`base' space and an `upstairs' family of spaces/spectra over the base.  In describing the rules, it sometimes helpful to have the intuition that $\natural \Gamma$ keeps the base the same but replaces the upstairs with the trivial pointed space/spectrum; we make this more precise below in Section~\ref{sec:toy-model}.

\paragraph{Marked Context Extension and Variables.}
To make the type theory easier to use, we will make $\zc{\Gamma}$ an
\emph{admissible} (defined by induction on syntax) operation, rather
than a \emph{derivable} (new piece of formal syntax) one. The new piece
of formal syntax is a new context \emph{extension} $\Gamma,\zx :: A$
(\rulen{ctx-ext-zero}), which semantically is the same as
$\Gamma.\natural A$.  We think of this special context extension as
``marking'' a variable by writing $::$ in place of $:$, and by writing
an underscore under the variable name.  Formally, either one of the
underscore or the $::$ would be enough, but we find the syntax
clearer if variables are written the same in the context and at their
use sites, which will be marked; and we carry over the $::$ from spatial
type theory~\cite{mike:real-cohesive-hott}.

The restriction imposed by a marked context extension is that a
variable that is declared marked in the context $\Gamma,\zx :: A$ can
only be used marked as $\zx$ in a term, as indicated in the
\rulen{var-zero} rule. Semantically, this is using the counit $\natural
A \to A$, because $\zx :: A$ in the context is semantically $\natural
A$, but $A$ as a type on the right is semantically just $A$.  In
concrete syntax, it is best to think of $\zx$ as a term constructor
$\textsf{underline}(x)$, not that the underline is part of the variable
name.

A new ingredient for the $\natural$ modality is that there are two ways
to use an \emph{ordinary} unmarked variable declared in the context as
$\Gamma,x : A,\Gamma'$. In addition to the standard rule
$\Gamma,x:A,\Gamma' \vdash x : A$, we can also apply the
\rulen{var-roundtrip} rule, and use the variable $x$ by writing $\zx$.
We say that such a use is \emph{marked} or \emph{zeroed}, because
semantically, it corresponds to the projection from $\Gamma$ post-composed with the roundtrip $A \to \natural A \to A$.
In our intended models, this keeps the base of $x$ the same but
replaces the fibres of $x$ with the default sections of $A$.  We
intentionally use the same raw syntax for these two distinct typing
rules \rulen{var-zero} and \rulen{var-roundtrip}, one of which uses a
variable that is marked in the context (via the counit), and the other
uses a variable that is unmarked in the context (via the roundtrip).
This allows precomposition with the unit $\Gamma \to \zc{\Gamma}$ to be
a ``silent'' operation that leaves the raw syntax unchanged.
We explain the operation $\zA$ marking a type below.

\paragraph{Natural on Contexts}
Proceeding to the admissible rules in the figure, marking a context
$\zc{\Gamma}$ is defined inductively by marking all of the
variables in a context.  Semantically, these equations say (roughly ---
there are some subtleties with dependency that we discuss below) that
$\natural 1 = 1$, $\natural (\Gamma.A) = \natural \Gamma.\natural A$
(which are the equations of a strict CwF morphism~\cite[Definition
  2]{dybjer:cwf}) and that $\natural(\Gamma.\natural A) =
\natural \Gamma.\natural A$ (which is reasonable because of idempotence
$\natural \natural A = \natural A$).  The equation states that
$\zc{\Gamma}$ is idempotent---syntactically, $\zc{\Gamma}$ has only
marked variable declarations, which $\zc{\zc{ \Gamma}}$ leaves
unchanged (and the $\zA$ operation discussed next is also idempotent).

Putting together \rulen{ctx-ext-zero} and \rulen{var-zero} and the
definition of $\zc{\Gamma}$, the types of later marked variables can
depend on earlier unmarked ones (but can only use them marked). For
example, in a context $x : A, \zy::B$, the type $B$ is in context
$\zc{x:A} \defeq \zx::\zA$, so may refer to $\zx$ (but not $x$).  This
means we cannot put all of the marked variables in a separate context
zone preceding the unmarked variables~\cite{barber:dill,pfenning-davies}.

\paragraph{Precomposition with the Counit.}
We will often be interested in types and terms where \emph{every} use of a free
variable is marked/zeroed. We call such types and terms \emph{dull}. A
dull term in context $\Gamma$ is equivalently a term in the context
$\zc{\Gamma}$---because all the variables in $\zc{\Gamma}$ are marked,
any term $\zc{\Gamma} \yields a : A$ necessarily uses these variables
marked.  We can turn any $\Gamma \vdash a : A$ into a dull term
$\zc{\Gamma} \vdash \za : \zA$ by marking all the free variable uses in
$\za$ with an underscore (\rulen{pre-counit}).  (Note that we overload
the notation and write the $\za$ operation on general terms using the
same syntax as for marked/roundtripped variables.) Semantically, $\za$
is precomposing $a$ with the counit $\zc{\Gamma} \to \Gamma$.  Because
the type $A$ also depends on $\Gamma$, substitution by the counit will
also mark its variables, which we write as $\zA$. We think of types
as elements of a universe, so $\Gamma \vdash A : \univ$ implies
$\zc{\Gamma} \vdash \zA : \univ$ is another instance of this rule.
This means that when a term is marked/zeroed i.e.\ only varies over
the underlying space of the context, its corresponding type is also
only permitted to vary over the underlying space of the context.

We omit a formal definition of $\za$ on raw syntax, which is given by
recursion over the term syntax (like substitution), turning $x$ into
$\zx$, leaving marked variable uses unchanged, and proceeding
recursively otherwise (e.g.\ $\underline{f(a)} =
\underline{f}(\underline{a})$). In particular, zeroing commutes with the
type former for $\Idsym$-types: $(\zc{x = y}) \defeq (\zx = \zy)$, which
is part of what makes our modality left-exact.

One subtlety is that, because $\za$ is semantically
substitution/precomposition with the counit $\zc{\Gamma} \to \Gamma$,
it marks the \emph{free} variables of a term, but leaves the bound
variables the same. For example,
$\underline{\lambda x.f x} \defeq \lambda x.\underline f x$.  This
leads to the full form of the operation in \rulen{pre-counit-gen},
which marks the variables in $\Gamma$ in the context and changes all
occurrences of the $\Gamma$-variables in the term into marked ones,
but does not change the occurrences of $\Delta$-variables in the term.
Formally, $\Delta$ is a telescope (context in context), but we omit
the rules for $\Gamma \vdash \Delta \,\, \mathsf{telescope}$, with
formation rules analogous to those for contexts, and the operation of
concatenating a context and a telescope $\Gamma,\Delta$.  For example,
with $f$ in $\Gamma$ and $x$ in $\Delta$, $\underline{f x} = \zf x$.
However, when the variables declared in $\Gamma$ occur in the types in
$\Delta$, those occurrences must be marked, which we notate with
${\Delta}^{0\Gamma}$. The telescope ${\Delta}^{0\Gamma}$ is defined by
sending $x:A$ to $x:A^{0\Gamma}$ (\emph{not} $\zx::A^{0\Gamma}$,
differing from $\zc{\Gamma}$) and $\zx::A$ to $\zx :: A$ (since all
variables are already zeroed) for each variable in $\Delta$.
Officially, we should be annotating the underscores like $\za_\Gamma$
to indicate which variables in $a$ are to be zeroed, but when we
use this operation informally we always start with $\Gamma$
being all free variables and $\Delta$ empty, so we adopt a
convention that $\za$ means to mark all free variables of $a$.

The equations for precomposition with the counit state that if a term
starts out in $\zc{\Gamma}$, then marking has no effect $\za \defeq a$.
Syntactically, this is because a term in context $\zc{\Gamma}$ cannot
have any unmarked free variable uses, which are the only parts of a term
changed by the marking operation.  Note that this equation needs
$\zc{\zc{\Gamma}} \defeq \zc{\Gamma}$ to type check, and semantically
corresponds to the counit on $\zc{\Gamma}$ being the identity.
Consequently, marking is idempotent: $\zc{\za} \defeq \za$.

\paragraph{Precomposition with the Unit.}
There is an analogous operation of precomposition with the unit
$\Gamma \to \zc{\Gamma}$. Following~\cite{gsb:implementing}, we make
this a ``silent'' operation, i.e.\ it does not change the raw syntax
of the term or the type, only the typing derivation, as stated in
\rulen{pre-unit}. We refer to as use of the unit as ``unzeroing'' a
piece of the context, because variables that are marked/zeroed in the
context $\zc{\Gamma}$ in the premise become unmarked/unzeroed in the
conclusion.  The unit does \emph{not} unmark the \emph{uses} of the
variables in a term or type --- a use $\zx$ of a marked variable
$\zx :: A$ from $\zc{\Gamma}$ (typed by \rulen{var-zero}, which is the
counit $\natural A \to A$) becomes a use $\zx$ of $x : A$ from
$\Gamma$ (typed by \rulen{var-roundtrip}, which is the roundtrip
$A \to \natural A \to A$, the counit precomposed with the
unit).  Thus, the unit can be silent because we
use the same syntax for the counit on marked variables as for the
roundtrip on unmarked variables.
\rulen{var-roundtrip} in the typing derivation.

To work up to the rule in the figure, the most basic form, where
$\Psi$ and $\Delta$ are empty, says that any
$\zc{\Gamma} \vdash a : A$ is also $\Gamma \vdash a : A$.  For the
same reasons as for the counit, we will need a tail telescope $\Delta$
that is not ``unzeroed'' by the operation (i.e.\ the marks in $\Delta$
in the premise are still there in the conclusion), for inductively
pushing this operation under bound variables, which are not unzeroed
(and indeed, might not even be marked in the premise). We will also
sometimes find it useful to unzero a variable in the middle of the
context, without unzeroing its prefix, e.g.\ going from
$\Gamma,\zx::A \vdash \judge$ to $\Gamma,x : A \vdash \judge$.
Semantically, this is precomposition with the unit $A \to \natural A$
paired with the identity substitution on $\Gamma$.
It is an implicit requirement for the judgement in the conclusion to
be well-formed that $\Psi,\Gamma,\Delta$ is a well-formed context

% when ``unzeroing'' the variable uses in $\Gamma$, we cannot do so
% arbitrarily. For example, if we wish to apply the rule with
% $\zc{\Gamma} \defeq (\zx :: \zA, \zy :: \zA, \zp :: \zx = \zy)$, we cannot
% end up with $\Gamma \defeq (\zx :: \zA, y : A, p : \zx = y)$, as
% $\zA \ndefeq A$ in general so $\zx = y$ is not a well-formed type.

\paragraph{Precomposition with the Roundtrip.}
Composing \rulen{pre-unit} and \rulen{pre-counit}, we have a rule
\rulen{pre-roundtrip} representing precomposition with the non-trivial
roundtrip $\Gamma \to \natural \Gamma \to \Gamma$.  (We have not seen a
use for a counit rule with a prefix $\Psi$ as in the unit rule, so we do
not include one, and consequently restrict the roundtrip to the setting
where both the unit and counit exist, when $\Psi$ is empty for the
unit.)  The section-retraction property of a bireflection states that
composing \rulen{pre-unit} and \rulen{pre-counit} in the other
direction, i.e.\ going from $\zc{\Gamma} \vdash a : A$ to $\Gamma \vdash
a : A$ to $\zc{\Gamma} \vdash \za : \zA$ should be the identity; because
the unit is silent, this is the same as the counit equation $\za \defeq
a$.

Returning to the rule \rulen{var-roundtrip}, the type $\zA$ in the
conclusion is typed by \rulen{pre-roundtrip}, because, as for the
counit, precomposing/substituting by the roundtrip on $\Gamma$
substitutes into the type $A$ as well.  Because variable uses $x : A$
and $\zx : \zA$ (in general) have different types, the marked-ness of a
variable usage cannot be na\"{\i}vely flipped at will in a term. For
example, if $x : A$ then $\zx = x$ may not be well-formed, as $\zA$ is
not in general the same type as $A$.  The only reason that we do not
need to analogously mark the type $A$ in the conclusion of
$\rulen{var-zero}$ as $\zA$ is that the marked context extension
\rulen{ctx-zero} `pre-zeroes' the type --- the type $A$ is in context
$\zc{\Gamma}$, so must already use only marked variables.

\paragraph{Well-formedness of the Conclusions.}
Whenever we can form a term $\Gamma \yields a : A$, we of course want
that $\Gamma \ctx$ and $\Gamma \yields A : \univ$. (Depending on
precisely how the type theory is set up, these are sometimes
presuppositions of the term judgement; later when checking that
\rulen{pre-counit} and \rulen{pre-unit} are admissible in
Section~\ref{sec:admissible-rules}, we follow~\cite{streicher:book} in
having them be consequences of the term judgement.) There are few
spots in the above rules where it is a bit subtle why these invariants
are maintained.  First, in \rulen{var-zero}, we have by
\rulen{ctx-ext-zero} that $\zc{\Gamma} \yields A : \univ$, but for the
use of $A$ on the right, we need $\Gamma,x:A,\Gamma' \yields A : \univ$.
In addition to the usual weakening with $x:A,\Gamma'$, this uses
\rulen{pre-unit}.  In \rulen{var-roundtrip}, we have
$\Gamma \yields A : \univ$, so by another application of
\rulen{var-roundtrip}, we also have $\Gamma \yields \zA : \univ$, so all
that is needed is the usual weakening.  In the definition of
$\zc{\Gamma}$ for unmarked variables, we begin with
$\Gamma \yields A : \univ$, and need $\zc{\Gamma} \yields \zA : \univ$,
which we have by \rulen{pre-counit} (with $\Delta$ empty).  In the
definition for marked variables, we start with
$\zc{\Gamma} \yields A :\univ$, and need
$\zc{\zc{\Gamma}} \yields A : \univ$ to apply \rulen{ctx-ext-zero},
which holds by idempotence.  In the equation $\za \defeq a$, we need
the same equation on types to see that $\zA \defeq A$, and similarly
for the $\underline \za \defeq \za$ equation.

\paragraph{Substitution for Marked Variables.}
There is a new case of standard substitution for substituting into
\rulen{var-roundtrip}, which is defined by
\[
\zx [a / x] :\defeq \za
\]
That is, when we substitute a term $a : A$ for a marked variable usage
$\zx$, the result is the marking of $a$.  This type checks for
$\Gamma,x:A \vdash \zx : \zA$ and $\Gamma \vdash a : A$ because the
\rulen{pre-roundtrip} rule gives $\Gamma \vdash \za : \zA$.
Semantically, $\zx$ is the roundtrip $A \to \natural A \to A$, and the
substitution post-composes this roundtrip with $a$; but $\za$ is $a$
pre-composed with the roundtrip $\Gamma \to \natural \Gamma \to
\Gamma$, and these are equal by naturality of the unit and counit.

A substitution principle for marked variables that is typical from other
comonadic type theories following~\cite{pfenning-davies} is
\[
\inferrule*[fraction={-{\,-\,}-}]
           {\Gamma, \zx :: A, \Gamma' \yields b : B \and \zc{\Gamma} \yields a: A}
           {\Gamma, \Gamma'[a/\zx] \yields b[a/\zx] : B[a/\zx]}
\]
Here, the term being substituted must already have all of its variables
marked, as indicated by the premise $\zc{\Gamma} \vdash a : A$ of the rule,
and this substitution principle is implemented by a syntactic
substitution, replacing $\zx$ with $a$ everywhere.  Given the admissible
rules in Figure~\ref{fig:structural}, we can in fact define this by
first unzeroing the variable $\zx$ to get $\Gamma, x : A, \Gamma'
\yields b : B$ and then doing an ordinary substitution $b[a/x]$.  Since
all uses of $x$ will be marked $\zx$ in $b$ and $B$, this will replace
$\zx$ with $\za$ everywhere---but since $\zc{\Gamma} \yields a : A$, we have
$\za \defeq a$, so we get the same result as the more specialised
principle would have given.

We prefer this style of presenting substitution, where the substitution
for marked variables is given by unmarking and then ordinary
substitution, because it corresponds more closely to what we will do
when working informally in this type theory.  When performing
substitutions $b[a/x]$ by hand, we can simply look in $b$ for each
instance of $x$ and $\zx$ and replace them with either $a$ or $\za$
accordingly, without having to mentally keep track of the context
through each subterm to see whether $x$ is marked or not (variables that
are not marked become marked in the premises of some rules), as we would
have to do if substitution for a marked variable required pre-marking
the term.  This is another benefit of having \rulen{var-zero} and
\rulen{var-roundtrip} rules be identical raw syntax.  In our experience
trying different systems for this setting, this choice seems critical
for the usability of the system for informal type theory.

\paragraph{Comparison with Spatial Type Theory.}
In spatial type theory~\cite{mike:real-cohesive-hott}, there is also a
special context extension $x :: A$, which is a judgemental version of
extending the context with $\flat A$. Such variables are
called \emph{crisp}. A usage of a crisp variable $x$ in a term
corresponds to a use of the counit $\flat A \to A$, like our
\rulen{var-zero} rule.  Because there is only a counit and not also a
unit, the type $A$ of a crisp variable $x :: A$ is only permitted to
depend on other crisp variables. A loose way to think about this is as
follows. Before we have access to any structural rules, dependency
forces us to apply modalities to an entire context at once. Given a
type-in-context $\Gamma \yields A : \univ$ presented as a fibration $p : A
\to \Gamma$, ordinary context extension corresponds to considering the
object $A$ as a context. If we want to make $A$ discrete, we have to
apply $\flat$ to everything, giving $\flat p : \flat A \to \flat
\Gamma$. So $\flat A$ can only depend on a discrete context, and the
judgemental version $x :: A$ has the same restriction.  Therefore, all
crisp variables must occur before regular ones, and the context
naturally divides into two zones.  In our system, however, the presence
of the unit map $A \to \natural A$ means that we can no longer neatly
divide the context in this way. For example, if we have a context $\zx
:: A, \zy :: B$, then we can precompose with the unit substitution just
on $\zx$, giving $x : A, \zy :: B$. This breaks the invariant that crisp
variables all occur before ordinary ones.

\subsection{The $\natural$ type}

\begin{figure}
\[
\inferrule*[left=$\natural$-form]{\zc{\Gamma} \yields A : \univ }{\Gamma
  \yields \natural{A} : \univ} \\
\qquad
\inferrule*[left=$\natural$-intro]{\zc{\Gamma} \yields a : A}{\Gamma \yields a^\natural : \natural{A}} \and
\quad
\inferrule*[left=$\natural$-elim]{\Gamma \yields b : \natural A}{\Gamma \yields b_\natural : A} \\\\
\]
\[
\inferrule*[left=$\natural$-beta]{\zc{\Gamma} \yields a : A}{\Gamma \yields a^\natural{}_\natural \defeq a : A} \and
\quad
\inferrule*[left=$\natural$-eta]{\Gamma \yields b : \natural A}{\Gamma \yields b \defeq \zb{}_\natural{}^\natural : \natural A}
\]
\caption{Rules for $\natural$}\label{fig:natural-rules}
\end{figure}

Using this judgement structure, it is now simple to describe the
$\natural$ type using the rules in Figure~\ref{fig:natural-rules}.

Recall that we refer to a term/type in context $\zc{\Gamma}$, i.e.\ a
term/type all of whose free variables are marked, as \emph{dull}.  The
formation rule says that for any dull type $A$ there is a type $\natural
A$.  Formally, this formation rule is analogous to $\sharp$ in spatial
type theory or dependent right adjoints~\cite{drats}, in that it asks
for a type under the left adjoint of $\natural A$, which in this case is
also $\natural$, represented by $\zc{\Gamma}$.  One alternate rule that
one could imagine is like $\flat$ in spatial type theory, $\zc{\Gamma}
\vdash A : \univ$ implies $\zc{\Gamma} \vdash \natural A : \univ$.  However,
this rule breaks admissibility of precomposition with the unit, because
it forces variables in the conclusion's context to be marked.

The introduction rule says that for any dull term of a dull type $a : A$
there is a term $a^\natural : \natural A$, again transposing $\natural$
on the right to $\natural$ on the left (roughly, $\natural \Gamma \to A$
implies $\Gamma \to \natural A$). This is the same as the introduction
rule for $\sharp$ and dependent right adjoints.  Note that the type $A$
must be assumed to be dull for the type $\natural A$ in the conclusion
to be well-formed.

The elimination rule says that for any (not necessarily dull) term $b :
\natural A$, there is a term $b_\natural : A$.  Semantically, this is
the counit $\natural A \to A$ precomposed with $b$.  Note that the type
$A$ must be assumed to be dull for $\natural A$ in the premise to be
well-formed --- for a non-dull type $A$, we have a counit $\natural \zA
\to \zA$, but in general we do not have a map $\natural \zA \to A$.

The computation or $\beta$-reduction rule says that
$a^\natural{}_\natural \defeq a$.  Whenever the left-hand side is
well-typed, the right-hand side is too, because of the silent unit rule
\rulen{pre-unit}. Note that $a$ is necessarily dull for the
\rulen{$\natural$-intro} rule to have been applied, and all of its free
variables are still marked zeroed on the right.

The uniqueness or $\eta$-rule says that $b \defeq
\zb{}_\natural{}^\natural$ for any term $b : \natural \zA$. Since $b$ is
not necessarily dull, it must be marked (precomposed with the counit)
before being used in the introduction rule $-^\natural$.  One must be
cautious in applying this rule from right to left, as not every possible
`unzeroing' of a term $\zb$ will be well-typed.

For a non-dull type $\Gamma \vdash A : \univ$, note that $\zA$ (given by
\rulen{pre-counit}) and $\natural \zA$ are very different.  In
parametrised pointed spaces/spectra, $\zA$ is $A$ with its dependency on
the fibres of the context $\Gamma$ replaced by the sections of $\Gamma$.
On the other hand, $\natural \zA$ also replaces the fibres of $A$ with
the trivial pointed space/spectrum.  For example, if $A$ is a closed
type then $\zA \defeq A \not \equiv \natural A$.
From this point of view, our notation $\zc{\Gamma}$ for marking a
context is confusing, because it semantically is $\natural \Gamma$;
however, we use this notation to emphasise that it is implemented by
``underlining all of the variables in $\Gamma$''.

We have not proved canonicity or normalisation for the $\natural$ type,
as our intended applications rely on many axioms, but we conjecture they
are true: the equations for $\zc{\Gamma}$ and $\za$ are proved rather
than asserted, and the $\natural$ type has a $\beta$ rule for weak head
reduction and a type-directed $\eta$ rule.

\subsection{Alternative Rules Without Marked Context Extension}

\begin{figure}
\[
\begin{array}{c}
 \inferrule*[left=var-roundtrip]{~}{\Gamma, x : A, \Gamma' \yields \zx : \zA}
 \qquad
 \inferrule*[left=pre-roundtrip,fraction={-{\,-\,}-}]
           {\Gamma \yields a : A}
           {\Gamma \yields\za : \zA}
           \qquad
 \inferrule*
           {\Gamma \yields a : A}
           {\Gamma \yields \zc{\za} \defeq \za  : \zA}
\\\\
\inferrule*[left=$\natural$-form]{\Gamma \yields A : \univ }{\Gamma
  \yields \natural{A} : \univ} \qquad
\qquad
\inferrule*[left=$\natural$-zero]{\Gamma \yields A : \univ }{\Gamma
  \yields \natural{A} \defeq \natural{\zA} : \univ}
\\ \\
%% G |- A
%% N A [\eta] =? N (A [\epsilon][\eta]) [ \eta ]
%%   N (A [\epsilon][\eta]) [ \eta ]
%%   = N (A) [ N epsilon ] [ N eta ] [ \eta ]
%%   = N A [ eta ]
\inferrule*[left=$\natural$-intro]{\Gamma \yields a : A}{\Gamma \yields
  a^\natural : \natural{A}}
\qquad
\inferrule*[left=$\natural$-elim]{\Gamma \yields b : \natural A}{\Gamma
  \yields b_\natural : \zA}
\qquad
\inferrule*[left=$\natural$-beta]{\Gamma \yields a : A}{\Gamma \yields a^\natural{}_\natural \defeq \za : \zA}
\qquad
\inferrule*[left=$\natural$-eta]{\Gamma \yields b : \natural A}{\Gamma
  \yields b \defeq b{}_\natural{}^\natural : \natural \zA \defeq \natural A}
\\\\
\end{array}
\]
\caption{Rules without marked context extension}\label{fig:rules-no-marked-context-extension}
\end{figure}
%% %% underling still defined recursively underline all vars:
%% \zc{\natural A} :\defeq \natural \zA \quad
%% \zc{a^\natural} :\defeq \za^\natural \quad
%% \zc{a_\natural} :\defeq \za_\natural

%% test: marking in natural A still derivable
%% a : \natural A \defeq (a_\natural)^\natural \defeq
%% (((a_\natural)^\natural)_\natural)^\natural \defeq
%% \zc{a_\natural}^\natural \defeq
%% {\za_\natural}^\natural \defeq
%% \za

%% and unit can be marked
%% a^\natural \defeq (a^\natural)_\natural^\natural \defeq \za^\natural

As an alternative to the above rules, it is also possible to give rules
for $\natural A$ without marked context extension
$\rulen{ctx-ext-zero}$, which we show in
Figure~\ref{fig:rules-no-marked-context-extension} (\rulen{pre-roundtrip}
needs to be generalized with a $\Delta$ like above, but we omit this
from the figure).

The idea with this variation is to have a judgemental account only of
the roundtrip idempotent, and not the object $\zc{\Gamma}$ that the
roundtrip splits through.  We still include the marked variable
\emph{uses} (representing the roundtrip applied to a variable) and the
admissible operation of underlining terms $\za$ (representing
precomposition with the roundtrip on the context, so it does not change
the context).  The formation, introduction, and elimination rules are
essentially $\natural : \univ \to \univ$ and $-^\natural : A \to
\natural A$ and $-_\natural : \natural A \to \zA$.  The $\beta$ rule
says that the composite $A \to \natural A \to \zA$ is the roundtrip, and
the $\eta$ rule says that the composite $\natural A \to \zA \to \natural
A$ is the identity.  We explicitly include an equation $\natural A
\defeq \natural \zA$ because semantically these types should be equal
--- in Figure~\ref{fig:natural-rules}, we only allow the latter to be
written, but that requires marked context extension to enforce.  The
$\eta$ rule requires $\natural A \defeq \natural \zA$ for both sides to
have the same type.

While the rules in Figure~\ref{fig:rules-no-marked-context-extension}
are shorter to describe, we prefer the rules from
Figures~\ref{fig:structural} and~\ref{fig:natural-rules} with marked
context extension for several reasons.  First, they do not require an
extra equation \rulen{$\natural$-zero}.  Second, when working informally
below, we will often assume a dull/marked variable $\zx :: A$, which has
the same meaning as assuming a variable $x : \natural A$, but using a
dull variable by writing $\zx$ is a bit terser than writing
$x_\natural$, and substitution does not need to go through a
$\beta$-reduction.  However, the main reason we prefer the marked
variables in the context is because they are necessary for future
work on adding the `smash product of spectra' as a tensor type,
as in linear logic.  The base of every type can be duplicated, so when
splitting the context to type check $c : C \vdash (a,b) : A \otimes B$,
we can e.g.\ allow a split into $c : C \vdash a : A$ and $\zc{c} :: \zC
\vdash b : B$, where one component receives all of $c$ and the other
receives only the base of $c$.  Presenting this nicely requires marked
variables in the context.

\section{Basic Properties of $\natural$}\label{sec:basic-props}

For the next several sections, we work in axiomatic
HoTT~\cite{hottbook} (with $\Pi$, $\Sigma$, identity, and (higher)
inductive types, and univalent universes) with our new rules for
$\natural$.  First, we develop the basic structure of the $\natural$
type internally, proving that $\natural$ behaves like both the $\flat$
and $\sharp$ modalities of spatial type
theory~\cite{mike:real-cohesive-hott}.

\begin{definition}[Unit and counit for $\natural$]
The introduction and elimination rules immediately give, for any type $A$, unit and counit maps
\begin{align*}
\eta_A :\defeq (\lambda x.\zx{}^\natural) &: A \to \natural \zA \\
\varepsilon_A :\defeq (\lambda n. n_\natural) &: \natural \zA \to \zA
\end{align*}
\end{definition}

The fact that the we only have a counit for \emph{dull} types is what
defeats the `no-go theorem' for comonadic modalities~\cite[Theorem
  4.1]{mike:real-cohesive-hott}. In general, there is no way to go from
a term of $\natural \zA$ or $\zA$ to a term of the non-zeroed type $A$.

\begin{proposition}
  The counit and unit are a section-retraction pair, i.e.\ the roundtrip $\eta_A \circ \varepsilon_A :
  \natural \zA \to \zA \to \natural \zA$ is the identity.  The composite
  $\varepsilon_A \circ \eta_A : A \to \natural \zA \to \zA$ is equal to
  $\lambda x.\zx$.
\end{proposition}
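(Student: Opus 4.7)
The plan is to compute both composites directly, using the $\natural$-$\beta$ and $\natural$-$\eta$ rules from Figure~\ref{fig:natural-rules} together with the idempotence equations $\underline{\underline{A}} \defeq \underline{A}$ and $\underline{\underline{a}} \defeq \underline{a}$ for marking.

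For the second claim, I would take $x : A$ and simply unfold the definitions:
\[
(\varepsilon_A \circ \eta_A)(x) \;=\; \varepsilon_A\!\left(\underline{x}^\natural\right) \;=\; \left(\underline{x}^\natural\right)_\natural.
\]
The term $\underline{x}$ is dull (it is typed using the precomposition with the counit), so $\natural$-$\beta$ applies and collapses $(\underline{x}^\natural)_\natural$ to $\underline{x}$, yielding $\varepsilon_A \circ \eta_A \defeq \lambda x.\underline{x}$ as required.

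For the first claim, I would take $n : \natural \underline{A}$ and unfold similarly:
\[
(\eta_{\underline{A}} \circ \varepsilon_A)(n) \;=\; \eta_{\underline{A}}(n_\natural) \;=\; \underline{n_\natural}^{\,\natural},
\]
where the outer underline comes from $\eta_{\underline{A}}(y) \defeq \underline{y}^\natural$. The decisive step is to recognise that marking, being defined by structural recursion on raw syntax and only acting on free variables, commutes with the elimination constructor, so $\underline{n_\natural} \defeq \underline{n}_\natural$. The $\natural$-$\eta$ rule then rewrites $\underline{n}_\natural{}^\natural$ back to $n$.

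The only subtlety worth double-checking is that the types match up correctly: the instance of $\eta$ being applied has domain $\underline{A}$, so its codomain is $\natural\underline{\underline{A}}$, which by idempotence of marking is $\natural\underline{A}$ as required, and $\natural\underline{A}$ is well-formed via $\natural$-form because $\underline{A}$ is dull. With these definitional matches in place, both claims reduce to a single application of the computation rules, so I do not anticipate any real obstacle beyond keeping track of the underlining conventions.
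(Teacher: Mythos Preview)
Your proposal is correct and matches the paper's proof essentially line for line: both reduce the second claim to a single application of $\natural$-$\beta$ on $\zx^\natural{}_\natural$, and both handle the first claim by commuting the underline through $(-)_\natural$ and then invoking $\natural$-$\eta$. Your closing remark about idempotence of marking to reconcile $\natural\underline{\underline{A}}$ with $\natural\underline{A}$ is a nice point of care that the paper leaves implicit.
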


\begin{proof}
For $n : \natural \zA$ we have
\[
\eta(\varepsilon(n)) \defeq \zc{n_\natural}^\natural \defeq \zc{n}_\natural^\natural \defeq n
\]
by the definition of $\zc{ }$ and the $\eta$-law.  For the composite on $x : \zA$, we get
\[
\varepsilon(\eta(x)) \defeq {\zx{}^\natural}_{\natural} \defeq \zx
\]
by the $\beta$-law.
\end{proof}

\begin{definition}\label{def:natural-is-a-functor}
We can define the functorial action of $\natural$ on a map, in any ambient context: given $f : A \to B$ we define $\natural \zf : \natural \zA \to \natural \zB$ by:
\begin{align*}
\natural f (x) :\defeq [\zf(\zx{}_\natural)]^\natural
\end{align*}
yielding a map $(A \to B) \to (\natural \zA \to \natural \zB)$.  When $f$ is $\lambda y.y$ we get
\[
\natural (\lambda y.y) (x) \defeq [\zc{(\lambda y.y)}(\zx{}_\natural)]^\natural \defeq
[(\lambda y.y)(\zx{}_\natural)]^\natural
\defeq
(\zx{}_\natural)^\natural \defeq x
\]
When $f$ is $f_2 \circ f_1$ we first have
\[
\natural (f_2 \circ f_1) (x) \defeq [\zc{(f_2 \circ f_1)}(\zx{}_\natural)]^\natural
\defeq [(\zc{f_2} \circ \zc{f_1})(\zx{}_\natural)]^\natural
\defeq [\zc{f_2}(\zc{f_1}(\zx{}_\natural))]^\natural
\]
But we also have
\[
(\natural (f_2) \circ \natural (f_1)) (x)
\defeq
\natural (f_2) ([\zc{f_1}(\zx{}_\natural)]^\natural)
\defeq
[\zc{f_2}(\zc{([\zc{f_1}(\zx{}_\natural)]^\natural)}_\natural)]^\natural
\defeq
[\zc{f_2}(([\zc{f_1}(\zx{}_\natural)]^\natural)_\natural)]^\natural
\defeq
[\zc{f_2}(\zc{f_1}(\zx{}_\natural))]^\natural
\]
So $\natural f$ preserves identity and composition definitionally.
\end{definition}

\begin{remark}
Note that, in contrast with $\flat$ of spatial type theory, we do not need the function $f$ to be `crisp', i.e., only use modal variables. Here, we can turn any function $f : A \to B$ into a `crisp' one $\zf : \zA \to \zB$ by zeroing, allowing us to apply \rulen{$\natural$-intro} to $\zf(\zx{}_\natural) : \zB$.
\end{remark}

The $\eta$-rule for $\natural \zA$ implies that any term of natural type
is equal to the marked version of it:
\begin{proposition}\label{prop:zero-in-natural}
  For any dull type $\zc{\Gamma} \vdash A : \univ$ and not necessarily
  dull term $\Gamma \vdash a : \natural A$ there is a definitional
  equality $\Gamma \vdash a \defeq \za : \natural A$
\end{proposition}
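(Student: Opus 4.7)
The strategy is a short three-line calculation: apply $\rulen{$\natural$-eta}$ once to $a$ and once to $\za$, then combine using the idempotence equation $\underline{\underline a} \defeq \underline a$ from Figure~\ref{fig:structural}.

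First I would verify that $\za$ itself is a term of $\natural A$, so that $\rulen{$\natural$-eta}$ applies to it as well as to $a$. By \rulen{pre-roundtrip} we immediately have $\Gamma \vdash \za : \zc{\natural A}$. Since the zeroing operation commutes with type formers (as remarked for the identity type in Section~\ref{sec:tt}, and inductively for $\natural$), $\zc{\natural A} \defeq \natural \zA$; and because $A$ is assumed dull we have $\zA \defeq A$, so in fact $\za : \natural A$ on the nose.

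Next, applying $\rulen{$\natural$-eta}$ to $a$ gives $a \defeq \za{}_\natural{}^\natural$. Applying $\rulen{$\natural$-eta}$ to $\za$ gives $\za \defeq (\underline{\underline a})_\natural{}^\natural$, which reduces to $\za{}_\natural{}^\natural$ by the idempotence equation $\underline{\underline a} \defeq \underline a$. Chaining these two equalities yields the desired $a \defeq \za{}_\natural{}^\natural \defeq \za$ in $\natural A$.

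The only step requiring any care is the type-level bookkeeping that puts $\za$ into $\natural A$ rather than into some variant such as $\natural \zA$; once dullness of $A$ has been used to identify these, everything else is a purely formal rewriting with the $\eta$-law and the idempotence of marking, so I do not expect any serious obstacle.
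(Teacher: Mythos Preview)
Your proposal is correct and follows essentially the same approach as the paper: apply the $\eta$-law to both $a$ and $\za$, then use idempotence $\underline{\underline a} \defeq \underline a$ to identify the results. The paper's argument is identical, only slightly more compressed in justifying $\za : \natural A$ (it observes directly that $\zA \defeq A$ since $A$ is dull, rather than going through $\zc{\natural A} \defeq \natural \zA$ first).
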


\begin{proof}
  Suppose a term $\Gamma \vdash a : \natural A$.  By the $\eta$-rule, we
  have $a \defeq {\za_\natural}^\natural$.  But applying the admissible
  roundtrip rule \rulen{pre-roundtrip}, we have $\Gamma \vdash \za :
  \natural A$ (using the fact that $\zA \defeq A$ because $A$ is dull).
  Applying the $\eta$-rule to that gives $\za \defeq
  {\zc{\za}_\natural}^\natural$. But $\underline{\zn} \defeq \zn$).

  Semantically, this is because, for any $f : \Gamma \to \natural A$,
  the composite with the roundtrip $\Gamma \to \natural \Gamma \to
  \Gamma \to \natural A$ is still equal to $f$ --- first, use naturality
  of the unit/counit to see this is equal to $\Gamma \to \natural A \to
  \natural \natural A \to \natural A$ and then the latter two maps are
  inverse by idempotence of $\natural$.
\end{proof}

\begin{proposition}
The unit and counit are natural, so for any $f : A \to B$ the diagrams
\begin{mathpar}
\begin{tikzcd}
A \ar[r, "f"] \ar[d, "\eta_A" swap] & B \ar[d, "\eta_B"] \\
\natural \zA \ar[r, "\natural f" swap] & \natural \zB
\end{tikzcd}
\and
\begin{tikzcd}
\natural \zA \ar[r, "\natural f"] \ar[d, "\varepsilon_A" swap] & \natural \zB \ar[d, "\varepsilon_B"] \\
\zA \ar[r, "\zf" swap] & \zB
\end{tikzcd}
\end{mathpar}
commute.
\end{proposition}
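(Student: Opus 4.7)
The plan is to verify both naturality squares by definitional equality, reducing each composite using the $\beta$- and $\eta$-rules for $\natural$ together with the definition of $\natural f$ from Definition~\ref{def:natural-is-a-functor}. No propositional reasoning should be required beyond Proposition~\ref{prop:zero-in-natural}.

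For the unit square I compute both legs on a general $x : A$. On one side, $\eta_B(f(x)) \defeq \underline{f(x)}^\natural \defeq \zf(\zx)^\natural$. On the other, $\natural f(\eta_A(x)) \defeq [\zf(\underline{\eta_A(x)}_\natural)]^\natural$. Unfolding $\eta_A(x) \defeq \zx^\natural$ and using that underlining leaves already-marked variable uses alone, the subterm $\underline{\eta_A(x)}$ reduces to $\zx^\natural$, whereupon the $\beta$-rule gives $(\zx^\natural)_\natural \defeq \zx$ (valid since $\zx$ is dull). This yields $[\zf(\zx)]^\natural$, matching the first leg.

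For the counit square I evaluate both composites on $n : \natural \zA$. On one side, $\varepsilon_B(\natural f(n)) \defeq ([\zf(\zn_\natural)]^\natural)_\natural$. Since $\zf(\zn_\natural)$ is dull (all its free variables occur marked), $\beta$ reduces this to $\zf(\zn_\natural)$. On the other side, $\zf(\varepsilon_A(n)) \defeq \zf(n_\natural)$. The two sides now differ only by whether $n$ or $\zn$ appears inside, and this is exactly what Proposition~\ref{prop:zero-in-natural} supplies for $n : \natural \zA$, namely $n \defeq \zn$ definitionally.

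The main obstacle, if it deserves the name, is bookkeeping: tracking which intermediate terms are dull so that $\beta$ applies, and remembering the fine-grained behaviour of $\underline{(\cdot)}$ on subterms that already contain marked variables or natural-type introductions. Once that is straight, both squares commute by strict definitional equality, so the propositional equalities asserting naturality are simply reflexivity.
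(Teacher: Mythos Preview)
Your proof is correct and follows essentially the same approach as the paper: unfold the definitions of $\eta$, $\varepsilon$, and $\natural f$, apply the $\beta$-rule, and in the counit square use that $n \defeq \zn$ for $n : \natural \zA$ (Proposition~\ref{prop:zero-in-natural}). The paper's proof is slightly terser but performs exactly the same reductions.
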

\begin{proof}
On the left: \[\zf((\za^\natural){}_\natural)^\natural \defeq \zf(\za)^\natural \defeq \zc{f(a)}^\natural \]
On the right:
\[ (\zf(\zx{}_\natural)^\natural)_\natural \defeq \zf(\zx{}_\natural) \defeq \zf(x{}_\natural) \]
\end{proof}

We now consider types that are equivalent to their `underlying space'.
\begin{definition}
A type $A$ is \emph{modal} if the unit $\eta_A : A \to \natural \zA$ is an equivalence. We define
\begin{align*}
\Modal &:\defeq \sm{X : \univ} \isEquiv(\lambda x. \zx{}^\natural)
\end{align*}
for the type of modal types.
\end{definition}
This definition is studied in detail in \cite[Definition 7.7.5]{hottbook},\cite[Section 1]{rss:modalities}. In the intended model, the modal
types are the spaces, embedded in $P\Spec$ as a space equipped with the
constant zero family of spectra.  We will sometimes need to restrict
statements to such spaces, so it is important that we can carve out a
subuniverse of spaces using the modality.

\begin{proposition}\label{prop:underline-modal} If $A$ is modal then $\zA$ is also modal.
\end{proposition}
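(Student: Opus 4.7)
The goal is to show that if $\eta_A : A \to \natural \zA$ is an equivalence, then so is $\eta_{\zA} : \zA \to \natural \zc{\zA}$. The plan is to observe that $\eta_{\zA}$ is judgementally equal to $\zc{\eta_A}$ (the underlined version of the unit), and then transport the proof of $\isEquiv(\eta_A)$ along the zeroing operation, which preserves all the ingredients of the equivalence predicate.

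First I would unwind the target judgement. Since zeroing is idempotent, $\zc{\zA} \defeq \zA$, so $\eta_{\zA}$ has type $\zA \to \natural \zA$. I also need to check that $\natural \zA$ is already dull, i.e.\ $\zc{\natural \zA} \defeq \natural \zA$: this holds because the only free variables of $\natural \zA$ come from $\zA$, which are already all marked, and the zeroing operation only changes unmarked free variable uses. Consequently $\zc{\eta_A}$, obtained from $\eta_A \defeq \lambda x.\zx^\natural$ by \rulen{pre-counit-gen}, has type $\zA \to \natural \zA$. Because $x$ is bound, $\zc{\lambda x.\zx^\natural}$ reduces to $\lambda x.\zc{\zx^\natural}$, and since marked variable uses are preserved by zeroing, this is $\lambda x.\zx^\natural$. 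So $\zc{\eta_A} \defeq \eta_{\zA}$ judgementally.

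Next I would argue that zeroing preserves equivalences. Given a proof $\Gamma \vdash e : \isEquiv(\eta_A)$, the admissible rule \rulen{pre-counit-gen} yields $\zc{\Gamma} \vdash \zc{e} : \zc{\isEquiv(\eta_A)}$. The predicate $\isEquiv(f)$ is built from $\Sigma$, $\Pi$ and $\Idsym$, all of whose type formers commute with zeroing (the paper already notes this for $\Idsym$; for $\Sigma$ and $\Pi$ the computation of $\zc{-}$ on raw syntax passes under the binders, exactly as in Definition~\ref{def:natural-is-a-functor}), and all the subterms involving the free variable $f$ get replaced by $\zf$. Thus $\zc{\isEquiv(\eta_A)} \defeq \isEquiv(\zc{\eta_A}) \defeq \isEquiv(\eta_{\zA})$, giving a proof $\zc{\Gamma} \vdash \zc{e} : \isEquiv(\eta_{\zA})$. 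Finally, applying the silent \rulen{pre-unit} rule transports this proof back to $\Gamma$, establishing that $\zA$ is modal.

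The main obstacle is bookkeeping the definitional equalities rather than any deep content: one must check that $\natural \zA$ is genuinely dull so that the types match up, and that the recursive definition of the underlining operation commutes cleanly with the binders inside $\isEquiv$. A reasonable alternative, if these definitional equalities proved inconvenient, would be to factor $\eta_A$ as $\eta_{\zA} \circ r$ with $r \defeq \lambda x.\zx : A \to \zA$ and use the two-out-of-three property for equivalences; but this requires showing $r$ is itself an equivalence, which essentially unwinds to the same calculation done above, so the direct argument via zeroing is cleaner.
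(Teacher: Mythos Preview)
Your argument is correct and is essentially the paper's proof with the bookkeeping spelled out: the paper simply says that from a witness $w : \isModal(A)$ we obtain $\zw : \isModal(\zA)$, leaving implicit exactly the verifications you carry out (that $\zc{\eta_A} \defeq \eta_{\zA}$ and that zeroing commutes with the type formers inside $\isEquiv$). Your use of \rulen{pre-counit-gen} followed by \rulen{pre-unit} is just the composite \rulen{pre-roundtrip}, which is what the paper's bare ``$\zw$'' invokes.
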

\begin{proof}
Given a witness $w :
\isModal(A)$, we have $\zw : \isModal(\zA)$, showing $\zA \equiv
\natural \zA$.
\end{proof}

\begin{proposition}\label{prop:natural-type-is-a-space}
For any $A$, the type $\natural \zA$ is modal.
\end{proposition}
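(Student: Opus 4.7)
The plan is to exhibit $\varepsilon_{\natural \zA}$ as a two-sided inverse to $\eta_{\natural \zA}$, appealing to the section--retraction property from the earlier proposition on one side and to Proposition~\ref{prop:zero-in-natural} on the other.

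First, I would unfold what it means for $\natural \zA$ to be modal: the map $\eta_{\natural \zA} \defeq \lambda n.\, \zn^{\natural} : \natural \zA \to \natural \zc{\natural \zA}$ must be an equivalence. Before constructing the inverse, I would compute the target type. Since underlining commutes with type formers and $\zc{\zA} \defeq \zA$ (by the idempotence $\zc{\zc{\Gamma}} \defeq \zc{\Gamma}$ and the equation $\underline{\underline a} \defeq \underline a$), we get $\zc{\natural \zA} \defeq \natural \zc{\zA} \defeq \natural \zA$, so the codomain of $\eta_{\natural \zA}$ reduces to $\natural \natural \zA$. This identifies the candidate inverse as $\varepsilon_{\natural \zA} \defeq \lambda m.\, m_\natural : \natural \natural \zA \to \natural \zA$.

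Next, I would verify the two composites. The composite $\eta_{\natural \zA} \circ \varepsilon_{\natural \zA}$ on $\natural \natural \zA$ is the identity by the section--retraction clause of the previous proposition applied at $A \defeq \natural \zA$ (no extra work is needed, since $\natural \zA$ is itself a valid dull type for that proposition). For the reverse composite $\varepsilon_{\natural \zA} \circ \eta_{\natural \zA} : \natural \zA \to \zc{\natural \zA} \defeq \natural \zA$, the same proposition rewrites it as $\lambda n.\, \zn$. Because $\zA$ is dull, Proposition~\ref{prop:zero-in-natural} applies and delivers $n \defeq \zn$ for every $n : \natural \zA$, so this composite is also definitionally the identity.

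The only point requiring care is the definitional computation $\zc{\natural \zA} \defeq \natural \zA$ and checking that $\zA$ is dull enough to invoke Proposition~\ref{prop:zero-in-natural}; these are exactly what the admissible rules for $\za$ and the idempotence equations are designed to deliver. Once those reductions are in place, the rest of the argument is a direct instantiation of the previous two propositions, and I expect the whole proof to fit in a handful of lines.
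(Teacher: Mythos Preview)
Your proposal is correct and follows essentially the same approach as the paper: both exhibit the counit $\varepsilon_{\natural \zA}$ as a two-sided inverse of the unit $\eta_{\natural \zA}$, using the $\beta$/$\eta$-laws (which you invoke via the packaged section--retraction proposition) for one composite and Proposition~\ref{prop:zero-in-natural} for the other. The only cosmetic difference is that the paper unfolds the $\beta$- and $\eta$-rules directly rather than citing the earlier proposition, but the computations are identical.
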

\begin{proof}
  The counit $(\lambda z. z_\natural) : \natural \natural \zA \to \natural \zA$ is an inverse to the unit $(\lambda v. \zv{}^\natural) : \natural \zA \to \natural \natural \zA$.
Using Proposition~\ref{prop:zero-in-natural},
for the roundtrip on $\natural \zA$  we have
\[\zv{}^\natural{}_\natural \defeq \zv \defeq v \]
and for the roundtrip on $\natural \natural \zA$ we have
\[\zc{z_\natural}{}^\natural \defeq \zz{}_\natural{}^\natural \defeq z.\]
\end{proof}

Note that the unit and counit are \emph{not} in general an equivalence between $\natural \zA$ and $\zA$ --- the use of the $\eta$-law to prove $\zv \defeq v$ does not apply in $\zA$.  Intuitively, $\natural \zA$ is the base space of $\zA$, while $\zA$ itself only zeroes out the dependence of $A$ on the ambient context.  However, we do have:

\begin{proposition}\label{prop:space-if-zero-equiv}
A type $A$ is modal iff $(\lambda x. \zx) : A \to \zA$ is an equivalence.
\end{proposition}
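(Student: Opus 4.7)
The plan is to factor $f := \lambda x.\zx : A \to \zA$ as the composite $\varepsilon_A \circ \eta_A$ through $\natural \zA$. By the $\natural$-$\beta$-rule applied to the dull term $\zx$, we have $\varepsilon_A(\eta_A(x)) \defeq (\zx^\natural)_\natural \defeq \zx$ definitionally, so this factorisation is strict. The argument then only needs this factorisation together with the section-retraction identity $\eta_{\zA} \circ \varepsilon_A \defeq \mathrm{id}_{\natural \zA}$ proved above, and the closure property from Proposition~\ref{prop:underline-modal}.

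For the forward direction, suppose $A$ is modal, so $\eta_A$ is an equivalence. By Proposition~\ref{prop:underline-modal}, $\zA$ is also modal, so $\eta_{\zA}$ is an equivalence. The section-retraction identity then forces $\varepsilon_A = \eta_{\zA}^{-1}$, so $\varepsilon_A$ is an equivalence, and $f = \varepsilon_A \circ \eta_A$ is a composite of equivalences.

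For the backward direction, suppose $f$ is an equivalence with inverse $g$. I would show that $\varepsilon_A$ is an equivalence, from which $\eta_A = \varepsilon_A^{-1} \circ f$ is immediately an equivalence as well. The map $\varepsilon_A$ admits $\eta_{\zA}$ as a left inverse (by section-retraction) and $\eta_A \circ g$ as a right inverse, since $\varepsilon_A \circ \eta_A \circ g = f \circ g = \mathrm{id}_{\zA}$. These two one-sided inverses must agree, $\eta_{\zA} = \eta_{\zA} \circ \varepsilon_A \circ \eta_A \circ g = \eta_A \circ g$, yielding a quasi-inverse for $\varepsilon_A$ and hence an equivalence in the HoTT sense.

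I do not expect substantial obstacles. The main subtlety is keeping $\eta_A : A \to \natural \zA$ distinct from $\eta_{\zA} : \zA \to \natural \zA$; these have different domains and agree only after precomposition with $f$, i.e., $\eta_A = \eta_{\zA} \circ f$, which itself follows from $\zc{\zx} \defeq \zx$.
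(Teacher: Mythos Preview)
Your proof is correct and shares the same skeleton as the paper's: both directions hinge on the factorisation $f = \varepsilon_A \circ \eta_A$ and the section-retraction identity $\eta_{\zA} \circ \varepsilon_A = \id_{\natural \zA}$.

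The forward direction is essentially identical to the paper's. For the backward direction, the paper takes a slightly different route: it directly exhibits $g \circ \varepsilon_A$ as a quasi-inverse to $\eta_A$ and verifies both round-trips. One round-trip is immediate from $\beta$, but the other requires zeroing the entire equivalence datum $(\lambda x.\zx, g, w)$ to obtain that $\zg$ is also an inverse to $\lambda x.\zx$ on $\zA$, and then a short computation with the $\eta$-law. Your argument is cleaner: you instead show $\varepsilon_A$ is an equivalence by the standard ``left inverse plus right inverse'' trick (with $\eta_{\zA}$ on the left and $\eta_A \circ g$ on the right), and then conclude for $\eta_A$ by 2-out-of-3. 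This avoids the zeroing-the-witness manoeuvre entirely and is arguably more transparent; the paper's version, on the other hand, makes the inverse of $\eta_A$ explicit and illustrates how zeroing interacts with equivalence data, which is useful pedagogy for the calculus being developed.
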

\begin{proof}
  Suppose $\lambda x.\zx : A \to \zA$ is an equivalence, with inverse $g
  : \zA \to A$.  We show that $\eta_A$ is a quasi-equivalence, which can
  be improved to an equivalence.  The inverse is $g \circ \varepsilon_A :
  \natural \zA \to A$.  For $x:A$, we have
  \[
  g[(\zx^\natural)_\natural] \defeq g(\zx) = x
  \]
  using the inverse law for $g \circ (\lambda x.\zx)$.

  For the other composite, let $(\lambda
  x.\zx, g, w) : A \equiv \zA$, so that $w$ is the witness that $(\lambda
  x.\zx)$ and $g$ are inverses. Now observe that $\zc {(\lambda
    x.\zx, g, w)} : \zA \equiv \zA$ and by definition of
  $\zc{\ }$, the maps are $\lambda x.\zx$ and $\zc{g}$, so
  we also have $\zc{g} \circ (\lambda x.\zx) = \id_{\zA}$.
  Then, for $y : \natural \zA$, the composite is
  \[
  \zc{g(y_\natural)}^\natural \defeq [\zg(\zy_{\natural})]^\natural
  \defeq
  ((\lambda x.\zx^\natural) \circ \zg \circ (\lambda x:\zA.\zx)) (y_\natural)
  =
  (\lambda x.\zx^\natural)(y_\natural)
  \defeq
  (\zy_\natural)^\natural
  \defeq
  y
  \]

  %% As a first step we show that that if $\zA$ is modal then the unit and counit are inverse.
%% Given $v : \natural \zA$, the composite $\natural \zA \to \zA \to \natural \zA$ is the identity by the $\eta$-rule: \[ (\zc{v_\natural}){}^\natural \defeq \zv{}_\natural{}^\natural \defeq v. \] So the counit is an inverse to the unit, because the unit is an equivalence by assumption.

%% Now the function $(\lambda x. \zx) : A \to \zA$ is equal to the composite $A \to \natural \zA \to \zA$ by the $\beta$-rule: \[ (\za{}^\natural)_\natural \defeq \za{}^\natural{}_\natural \defeq \za. \]
%% Consider the chain of functions $A \to \natural \zA \to \zA \to \natural \zA$. We know $\natural \zA \to \zA \to \natural \zA$ is an equivalence by the previous proposition. So if $A \to \natural \zA \to \zA$ is an equivalence then the unit $A \to \natural \zA$ is an equivalence by 2-out-of-6.

Conversely, if $A$ is modal then $\zA$ is also modal by
Proposition~\ref{prop:underline-modal}.  By the $\beta$-law, the map
$(\lambda x.\zx) : A \to \zA$ is equal to the composite $\varepsilon_{A}
\circ \eta_A : A \to \natural \zA \to \zA$, which is the composite of
two equivalences.  First, $\eta_A$ is an equivalence because $A$ is
modal.  Second, $\varepsilon_{A}$ is an equivalence, because it is
left-inverse to $\eta_{\zA}$, which is an equivalence because ${\zA}$ is
modal, and a left-inverse of a map that is an equivalence is its
inverse.
%% If g' o f = id then g' = g
%% because g' = g' o f o g = g
\end{proof}

\begin{corollary}\label{cor:space-if-zero-id}
A dull type $\zA$ is modal iff $x = \zx$ for any $x : \zA$.
\end{corollary}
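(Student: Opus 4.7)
The plan is to reduce the corollary to Proposition~\ref{prop:space-if-zero-equiv}, applied to the type $A := \zA$. Since marking is idempotent, $\zc{\zA} \defeq \zA$, so the map $(\lambda x.\zx)$ of that proposition becomes an endomap $e : \zA \to \zA$. By Proposition~\ref{prop:space-if-zero-equiv}, $\zA$ is modal iff $e$ is an equivalence. So it suffices to show that $e$ is an equivalence iff $x = \zx$ for all $x : \zA$.

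The easy direction is to assume pointwise $x = \zx$. Then by function extensionality, $e = \id_{\zA}$, and the identity is an equivalence, so $\zA$ is modal.

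For the other direction, the key observation is that $e$ is definitionally idempotent: using the equation $\underline{\underline x} \defeq \zx$ for dull terms (which applies since $\zA$ is dull, so any $x : \zA$ is dull), we have $e(e(x)) \defeq \underline{\underline x} \defeq \zx \defeq e(x)$. Any idempotent equivalence on a type is propositionally equal to the identity: if $e$ has inverse $e^{-1}$, then $e = e \circ \id = e \circ e \circ e^{-1} = e \circ e^{-1} = \id_{\zA}$. Applying this equality at a point $x$ gives $\zx = x$, as required.

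The main (very minor) obstacle is just being careful about the subtle interplay between definitional equalities (used to see that $e$ is idempotent and that $e$ has the right source and target after instantiating the proposition) and propositional equalities (the statement $x = \zx$). Neither direction requires any computation beyond invoking Proposition~\ref{prop:space-if-zero-equiv} and the idempotence of marking.
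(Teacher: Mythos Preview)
Your proof is correct. For the direction ``$x = \zx$ for all $x$ implies $\zA$ is modal'', you and the paper do the same thing: funext gives $(\lambda x.x) = (\lambda x.\zx)$, transport makes the latter an equivalence, and Proposition~\ref{prop:space-if-zero-equiv} finishes.

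For the converse direction the paper's proof is actually silent, and you supply the missing argument via idempotence: since $e :\defeq (\lambda x.\zx)$ satisfies $e \circ e \defeq e$, any equivalence inverse $e^{-1}$ yields $e = e \circ e \circ e^{-1} = e \circ e^{-1} = \id$, hence $\zx = x$. This is correct and is indeed what is needed when the corollary is later invoked in the modal-implies-$\zp = p$ direction (e.g.\ in the proof of Proposition~\ref{prop:natural-is-left-exact}).

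One small imprecision in your justification: you write that ``$\zA$ is dull, so any $x : \zA$ is dull'', but the variable $x$ itself is not a dull term---it is an unmarked use of $x$. The equation $\underline{\underline{x}} \defeq \zx$ you need is just the general idempotence of marking ($\underline{\underline{a}} \defeq \underline{a}$ for any term $a$), which requires no dullness hypothesis on $a$. The dullness of $\zA$ is only used to ensure that $e$ lands back in $\zA$ (via $\zc{\zA} \defeq \zA$), so that Proposition~\ref{prop:space-if-zero-equiv} is about an endomap and the idempotence argument applies. This does not affect the validity of your proof.
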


\begin{proof}
By function extensionality, $(\lambda x.x) = (\lambda x.\zx)$, and transporting the fact that the identity function is an equivalence along this allows us to use Proposition~\ref{prop:space-if-zero-equiv}.
\end{proof}

In the remainder of this section we show that $\natural$ has all the properties of both $\flat$ and $\sharp$ from spatial type theory.

\subsection{Monadic Properties}

We begin with the monadic properties; those shared with $\sharp$.  The
following characterises maps \emph{into} $\natural \zA$ (from the
ambient context) via an induction principle, so we call it a ``right''
universal property (even though the domain on the left of the $\Pi$ type
is what changes).  Below, we also consider a ``left'' universal
property, which characterises maps out of $\natural \zA$.

\begin{proposition}[{Right $\natural$-induction, cf.\ \cite[Theorem 3.4]{mike:real-cohesive-hott}}]\label{right-natural-induction}
Suppose $P : \natural \zA \to \univ$ is a type family such that each $P(v)$ is modal. Given a dependent function $f : \prod_{x : A} P(\zx{}^\natural)$, there is $g : \prod_{v : \natural \zA} P(v)$ such that $g(\zx{}^\natural) = f(x)$ for all $x : A$.
\end{proposition}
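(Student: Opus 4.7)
The plan is to use the hypothesis that each $P(v)$ is modal to reduce the task to producing a term of type $\natural \underline{P(v)}$, and then to build that term using $\natural$-intro applied to the marked version of $f$. Since each $P(v)$ is modal, the unit $\eta_{P(v)} : P(v) \to \natural \underline{P(v)}$ is an equivalence with inverse $\eta^{-1}_{P(v)}$, so it suffices to define $h : \prod_{v : \natural \zA} \natural \underline{P(v)}$ and set $g(v) := \eta^{-1}_{P(v)}(h(v))$.

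To define $h(v)$, I apply $\natural$-intro, which asks for a term of type $\underline{P(v)}$ in the marked context $\zc{\Gamma}, \zv :: \natural \zA$. In that dull context, $\zv_\natural : \zA$ is available. Precomposing $f : \prod_{x:A} P(\zx^\natural)$ with the counit (rule \rulen{pre-counit}) gives the dull term $\zf : \prod_{x : \zA} \zP(\zx^\natural)$. Applying $\zf$ to $\zv_\natural$ yields $\zf(\zv_\natural) : \zP(\zv_\natural{}^\natural)$; by $\natural$-$\eta$ we have $\zv_\natural{}^\natural \defeq \zv$, so this term inhabits $\zP(\zv) = \underline{P(v)}$ as required. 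Setting $h(v) := \zf(\zv_\natural)^\natural$, the candidate is
\[ g(v) \;:=\; \eta^{-1}_{P(v)}\bigl(\zf(\zv_\natural)^\natural\bigr). \]

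To verify $g(\zx^\natural) = f(x)$ for $x : A$, I substitute $v := \zx^\natural$. Since $\zx$ is already dull, marking $\zx^\natural$ leaves it unchanged; by $\natural$-$\beta$, $(\zx^\natural)_\natural \defeq \zx$, so $h(\zx^\natural) \defeq \zf(\zx)^\natural$. On the other hand, unfolding $\eta_{P(\zx^\natural)}(f(x)) \defeq \underline{f(x)}^\natural \defeq \zf(\zx)^\natural$, using the functoriality of marking on applications. Hence $g(\zx^\natural) = \eta^{-1}(\eta(f(x))) = f(x)$ by the section law of the equivalence---propositionally in general, since the inverse of $\eta_{P(v)}$ need not be definitional.

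The main obstacle is bookkeeping the interaction of the marking operation with the dependent family $P$: recognising that, in the marked context, $\zf(\zv_\natural)$ really does inhabit $\underline{P(v)}$, which requires both that precomposition with the counit turns $P(\zx^\natural)$ into $\zP(\zx^\natural)$ (leaving the bound $x$ alone) and that the $\natural$-$\eta$ law $\zv_\natural{}^\natural \defeq \zv$ kicks in for the newly marked $\zv$. Once this is clear the rest is a routine transpose-along-equivalence calculation.
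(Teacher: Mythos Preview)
Your proof is correct and follows essentially the same approach as the paper: both use the inverse of the unit (the paper calls it $r_v$) to reduce to constructing $\zf(\zv_\natural)^\natural : \natural \zP(\zv)$, and both verify the computation rule by the same $\beta$-reduction $(\zx^\natural)_\natural \defeq \zx$ followed by $\zf(\zx)^\natural \defeq \zc{f(x)}^\natural \defeq \eta(f(x))$.
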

\begin{proof}
%This is proven exactly the same way as for $\sharp$ in spatial type theory, with added zero markings inserted in the appropriate places.

Because each $P(v)$ is modal, we have inverses $r_v : \natural \zP(\zv) \to P(v)$. So it is enough to produce a function $g' : \prod_{v : \natural \zA} \natural \zP(\zv)$. Zeroing $f$ gives a function $\zf : \prod_{x : \zA} \zP(\zx{}^\natural)$, which we can use to define
\begin{align*}
g'(v) :\defeq \zf(\zv{}_\natural)^\natural
\end{align*}
and $g'(v)$ has type $\natural \zP(\zv{}_\natural{}^\natural) \defeq \natural \zP(\zv)$ as required. To get the goal function $g : \prod_{v : \natural \zA} P(v)$ we then post-compose with $r_v$:
\begin{align*}
g(v) :\defeq r_v(\zf(\zv{}_\natural)^\natural)
\end{align*}
This has the correct computation property:
\begin{align*}
g(\zx{}^\natural) \defeq r_v(\zf(\zx{}^\natural{}_\natural)^\natural) \defeq r_v(\zf(\zx{})^\natural) \defeq r_v(\zc{f(x)}{}^\natural) = f(x)
\end{align*}
as $r_v$ is an inverse of $(\lambda x. \zx^\natural)$.
\end{proof}

\begin{theorem}[{Right universal property, cf.\ \cite[Theorem 3.6]{mike:real-cohesive-hott}}]\label{thm:right-univ-prop}
Suppose $B : \natural \zA \to \univ$ is a type family with each $B(v)$ modal. Then precomposition with $\eta_A : A \to \natural \zA$ is an equivalence
\begin{align*}
\prd{v : \natural \zA} B(v)  \equiv \prd{x : A} B(\zx{}^\natural)
\end{align*}
\end{theorem}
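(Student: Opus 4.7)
The plan is to exhibit a two-sided inverse to the precomposition map, using Proposition~\ref{right-natural-induction} (right $\natural$-induction) applied first to $B$ itself and then to a family of identity types. The forward map is $\Phi \defeq \lambda g.\ \lambda x.\ g(\zx{}^\natural)$, i.e.\ precomposition with $\eta_A$. I would define the candidate inverse by directly invoking the induction principle: given $f : \prd{x : A} B(\zx{}^\natural)$, let $\Psi(f) : \prd{v : \natural \zA} B(v)$ be the function produced by Proposition~\ref{right-natural-induction} on the modal family $B$, which satisfies the computation rule $\Psi(f)(\zx{}^\natural) = f(x)$ for every $x : A$.

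One composite is essentially immediate. For any $f$, the computation rule gives $\Phi(\Psi(f))(x) \defeq \Psi(f)(\zx{}^\natural) = f(x)$, so by function extensionality $\Phi \circ \Psi = \id$. For the other composite, take $g : \prd{v : \natural \zA} B(v)$ and set $g' \defeq \Psi(\Phi(g))$; I want $g' = g$. By function extensionality it suffices to produce a pointwise equality $g'(v) = g(v)$ for all $v : \natural \zA$, and to do this I would apply Proposition~\ref{right-natural-induction} a second time, now to the family $P(v) :\defeq (g'(v) = g(v))$. The required input $\prd{x : A} P(\zx{}^\natural)$ is supplied, once again, by the computation rule of $\Psi$: it gives $g'(\zx{}^\natural) = \Phi(g)(x) \defeq g(\zx{}^\natural)$.

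The main obstacle is showing that $P(v)$ is modal, which reduces to the claim that identity types of modal types are modal. This is a standard property of any (monadic) modality, and I would prove it in this setting by a small auxiliary lemma: given $B(v)$ modal and $b_1, b_2 : B(v)$, the induction principle applied to the constant family $\refl_{\eta_A b_1}$ produces an inverse to the unit $(b_1 = b_2) \to \natural \zc{b_1 = b_2}$, using idempotence via Proposition~\ref{prop:natural-type-is-a-space} to clean up the roundtrip. Once this closure result is in hand, the rest of the argument is routine and mirrors the standard derivation of right universal properties for reflective subuniverses as in~\cite{mike:real-cohesive-hott, rss:modalities}, with the only subtlety being careful bookkeeping of marks on the variables.
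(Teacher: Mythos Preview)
Your overall strategy matches the paper's: use Proposition~\ref{right-natural-induction} to build the inverse, and the computation rule handles the $\Phi\circ\Psi$ direction immediately. The divergence is in the other composite.

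For $\Psi\circ\Phi$, you propose a second application of right $\natural$-induction to the identity family $P(v) \defeq (g'(v) = g(v))$, which forces you to first establish that identity types of modal types are modal. Your sketch of this auxiliary lemma is too vague to evaluate --- ``the induction principle applied to the constant family $\refl{\eta_A b_1}$'' does not obviously type-check or specify a family over $\natural\zA$ --- and the standard route to this closure fact goes through the very universal property you are proving (it is how one shows $\natural$ is a monadic modality, from which Lemma~\ref{lem:modality-consequences} is then quoted). So as written there is a real risk of circularity, or at minimum a nontrivial missing argument.

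The paper sidesteps this entirely. Instead of a second induction, it uses the \emph{definitional} $\eta$-rule for $\natural$: any $y : \natural\zA$ satisfies $y \defeq (\zy_\natural)^\natural$, so one computes directly
\[
g'(y) \defeq g'((\zy_\natural)^\natural) = g((\zy_\natural)^\natural) \defeq g(y),
\]
invoking the computation rule of $\Psi$ at the dull point $\zy_\natural$. The paper even remarks parenthetically that this is ``morally doing another right $\natural$-induction'' but that the identity-types-are-modal fact is not yet available, so the explicit $\eta$-expansion is preferred. Your approach could be made to work once that closure lemma is independently secured, but the paper's route is shorter and avoids the dependency.
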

\begin{proof}
  The inverse is given by right $\natural$-induction (Proposition
  \ref{right-natural-induction}), and the roundtrip on $f : \prd{x :
    A} B(\zx{}^\natural)$ is exactly the $g(\zx^\natural) = f(x)$
  equation given above.

  For the other composite, suppose $h : \prd{v : \natural \zA} B(v)$,
  and $y:\natural \zA$, and we need to show that $h(y)$ is equal to the
  $g(y)$, for the $g$ determined by right $\natural$-induction on
  $h$-precomposed-with-$\eta_A$, $\lambda x.h(\zx^\natural)$.
  However, by the $\eta$-law, $y \defeq (\zy_\natural)^\natural$, so
  \[
  g(y) \defeq g((\zy_\natural)^\natural) = h((\zy_\natural)^\natural) = h(y)
  \]
  (This proof is morally doing another right $\natural$-induction to
  reduce $y$ to something of the form $\zx^\natural$, but we have not
  yet proved that $b =_{B(y)} b'$ is modal when $B(y)$ is modal, so we cannot
  use Proposition \ref{right-natural-induction} directly, but instead
  $\eta$-expand explicitly.)

%% Post-composing with the equivalence $B(v) \to \natural \zB(\zv)$ means that it suffices to show that
%% \begin{align*}
%% \prd{v : \natural \zA} \natural \zB(\zv) \to \prd{x : A} \natural \zB(\zx{}^\natural)
%% \end{align*}
%% is an equivalence.
%% For an inverse, given $f : \prd{x : A} \natural \zB(\zx{}^\natural)$ we can construct $g : \prd{v : \natural \zA} \natural \zB(\zv)$  as follows. Note that $\zf$ has type $\prd{x : \zA} \natural \zB(\zx{}^\natural)$, so we can do
%% \begin{align*}
%% g(v) :\defeq \zf(v_\natural)
%% \end{align*}
%% Precomposing with the unit gives:
%% \begin{align*}
%% g(\zx{}^\natural) \defeq \zf(\zx{}^\natural{}_\natural) \defeq \zf(\zx) \defeq f(x),
%% \end{align*}
%% the last equality by the $\eta$-rule for $\natural$ in both directions. For the other direction, beginning with a $h : \prd{v : \natural \zA} \natural \zB(\zv)$, precomposing with the unit and then extending gives:
%% \begin{align*}
%% \zh(\zc{v_\natural}{}^\natural) \defeq \zh(\zv{}_\natural{}^\natural) \defeq \zh(\zv) = h(v)
%% \end{align*}
%\mvrnote{This is slightly shorter than the proof for $\sharp$ because of our more permissive elim, but I think the $\sharp$ proof goes through verbatim with counit inserted appropriately}
\end{proof}

\begin{corollary}
$(\lambda A. \natural \zA) : \univ \to \univ$ with unit $\lambda x.\zx^\natural : A \to \natural \zA$ is a monadic modality in the sense of~\cite{rss:modalities}.
\end{corollary}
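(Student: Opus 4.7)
The plan is to verify each component of the RSS definition of a modality against a result already established. Taking $\bigcirc A \defeq \natural \zA$ and $\eta_A \defeq \lambda x.\zx^\natural$, I would first observe that $\bigcirc A$ is well-formed for every $A$: the type $\zA$ is dull by \rulen{pre-counit}, so $\natural \zA$ is formed by $\rulen{$\natural$-form}$. Moreover, Proposition~\ref{prop:natural-type-is-a-space} shows that every $\natural \zA$ is itself modal, so the image of $\bigcirc$ lands in $\Modal$; conversely, any modal $A$ has $\eta_A$ an equivalence by the very definition of $\Modal$. Hence the $\bigcirc$-modal types in the RSS sense coincide with the modal types we have already been studying.

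Second, and this is the main content of the corollary, the induction principle required of an RSS modality is exactly Proposition~\ref{right-natural-induction}: given a family $P : \natural \zA \to \univ$ whose fibres are modal and a section $f : \prd{x:A} P(\zx^\natural)$, the proposition supplies an extension $g : \prd{v : \natural \zA} P(v)$ together with the propositional computation rule $g(\zx^\natural) = f(x)$. The stronger dependent universal property of Theorem~\ref{thm:right-univ-prop} ensures that this extension is unique up to homotopy, which, combined with the equivalence $\Modal \simeq \{A : \univ \mid \eta_A \text{ is an equivalence}\}$, is enough to match whichever equivalent presentation of a monadic modality one reads off from \cite{rss:modalities} (in particular, it implies the standard closure condition that modal types are closed under $\Sigma$ over modal bases, by the usual argument).

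The only real obstacle is bookkeeping: confirming that the formulation of ``modality'' cited matches the particular phrasing in RSS, since they give several equivalent definitions. Because we already possess the strongest dependent universal property, no additional construction is needed and the corollary follows immediately by bundling Propositions~\ref{prop:natural-type-is-a-space} and~\ref{right-natural-induction} with Theorem~\ref{thm:right-univ-prop}.
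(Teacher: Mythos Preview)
Your proposal is correct and takes essentially the same approach as the paper: both rely on Theorem~\ref{thm:right-univ-prop} together with Proposition~\ref{prop:natural-type-is-a-space}. The paper is slightly more direct, observing that the precomposition equivalence of Theorem~\ref{thm:right-univ-prop}, specialised to families of the form $\natural \zB(\zv)$, is literally the ``uniquely eliminating modality'' characterisation (RSS Definition~1.2), whereas you route through the induction principle (Proposition~\ref{right-natural-induction}) plus uniqueness; but these are the same ingredients packaged differently.
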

\begin{proof}
The precomposition equivalence
\begin{align*}
\prd{v : \natural \zA} \natural \zB(\zv) \to \prd{x : A} \natural \zB(\zx{}^\natural)
\end{align*}
of the previous proposition, where $\natural \zB$ is modal by
Proposition~\ref{prop:natural-type-is-a-space}, is precisely the
definition of a `uniquely eliminating modality'~\cite[Definition
  1.2]{rss:modalities}, one of the several equivalent definitions of a
monadic modality.
\end{proof}

Being a monadic modality has many formal consequences. In particular:
\begin{lemma}[Properties of a modality, {\cite{rss:modalities}}]\label{lem:modality-consequences}
\begin{enumerate}~
\item $A$ is modal iff $(\lambda x. \zx^\natural) : A \to \natural \zA$ admits a retraction.
\item If the input types are modal then all the following are modal:
\begin{mathpar}
1 \and A \times B \and x =_A y \and \fib_f(x) \and B \times_A C \and A \equiv B
\end{mathpar}
\item If $A$ is any type and $P : A \to \univ$ is such that every $P(x)$ is modal, then $\prd{x : A} P(x)$ is modal. If additionally $A$ is modal, then $\sm{x : A} P(x)$ is modal.
\item For any types $A$ and $B$, the canonical map $\natural(\zA \times \zB) \to \natural \zA \times \natural \zB$ is an equivalence.
\item If $A$ is a proposition, then so is $\natural \zA$.
\end{enumerate}
\end{lemma}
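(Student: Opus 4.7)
The plan is to leverage the previous corollary, which established that $(\lambda A.\natural\zA)$ is a monadic modality in the sense of~\cite{rss:modalities}, and then each of the five properties is obtained by citing (and lightly adapting) the corresponding general statement about monadic modalities from that paper. Because the lemma is already framed as ``Properties of a modality,'' I would proceed item-by-item, at each point making explicit which universal property of $\natural$ is being used, and ensuring that the zeroing operation $\underline{(-)}$ threaded through our definitions does not introduce extra obligations.

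For item 1, I would argue that any retraction $r : \natural\zA \to A$ of $\eta_A$ composes with $\eta_A$ on the other side to give $\eta_A \circ r$, and by right $\natural$-induction (Theorem~\ref{thm:right-univ-prop}) applied to the modal type family $\lambda v.\,\natural\zA$, the map $\eta_A \circ r$ is forced to equal the identity, making $\eta_A$ an equivalence. For item 2, closure under $1$ and $A \times B$ and $A \equiv B$ is a direct specialization of item 3 (applied to constant families), so the core cases are identity types, fibres, and pullbacks. For identity types, given modal $A$ and $x,y:A$, I would build the inverse to $\eta_{x=y}$ by right $\natural$-induction on the target family $\lambda v.\,x =_A y$, which is modal because $A$ is; for fibres $\mathrm{fib}_f(x)$ and pullbacks $B\times_A C$, I would unfold them as iterated $\Sigma$ of products and identity types, reducing to item 3 plus the identity-type case.

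For item 3, the closure of dependent products is a standard consequence of the uniquely-eliminating form of the modality: to invert $\eta_{\prod P}$ on $\natural\zc{\prod P}$, I would use right $\natural$-induction to produce a term of $\prod_x P(x)$ from $v : \natural\zc{\prod P}$, since each $P(x)$ is modal. For $\Sigma$ of modal $A$ with modal fibres, the inverse to $\eta_{\Sigma P}$ is produced by first $\natural$-inducting to extract the first component in $A$ (using modality of $A$), and then $\natural$-inducting in the fibre. Item 4 follows from item 2 plus the universal property: $\natural\zA \times \natural\zB$ is modal, the map $\zA \times \zB \to \natural\zA \times \natural\zB$ given by $(\eta,\eta)$ extends uniquely through $\eta_{\zA\times\zB}$, and a standard two-sided computation using right $\natural$-induction shows this extension is inverse to the canonical projection map. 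Item 5 follows by characterizing propositions via contractibility of identity types: if $A$ is a proposition, then for any $u,v : \natural\zA$, right $\natural$-induction reduces the goal $u = v$ to the case of elements of the form $\zx^\natural$, $\zy^\natural$, where propositionality of $A$ furnishes the needed equality, lifted by functoriality of $\natural$.

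The main obstacle I anticipate is bookkeeping around $\zA$ versus $A$: several RSS arguments are phrased for a modality $\bigcirc : \univ \to \univ$ with unit $A \to \bigcirc A$, whereas our unit lands in $\natural\zA$ and various ``modal-fibre'' hypotheses need to be stated as $P(v)$ modal for $v : \natural\zA$ rather than for $v : A$. So the work is to verify in each case that the zeroing operation commutes with the type constructors involved (e.g.\ $\zc{\prod P} \defeq \prod \zc{P}$, $\zc{A\times B} \defeq \zA\times\zB$) well enough that the RSS argument can be run verbatim. Since $\underline{(-)}$ is defined by commuting with all term formers and is idempotent, I expect this to go through cleanly, but it is the place where care is required.
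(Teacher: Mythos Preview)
Your proposal is correct in content, but it is substantially more than what the paper does. The paper offers no proof at all for this lemma: it is stated immediately after the Corollary establishing that $(\lambda A.\natural\zA)$ is a uniquely eliminating modality in the sense of \cite{rss:modalities}, and the lemma is simply a citation of known formal consequences from that reference. So the paper's ``proof'' is the single sentence preceding the lemma: ``Being a monadic modality has many formal consequences. In particular: \ldots''.

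Your item-by-item sketches are essentially the arguments one finds in \cite{rss:modalities}, and you correctly flag the only place where adaptation is needed, namely the bookkeeping around $\zA$ versus $A$ in the unit $A \to \natural\zA$. That concern is real but, as you note, resolved by the fact that the zeroing operation commutes definitionally with all type formers. What your approach buys is a self-contained account; what the paper's approach buys is brevity and a clean separation between the general modality theory and the $\natural$-specific facts established in this section.
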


Also like $\sharp$, the $\natural$ modality preserves $\Sigma$-types and is left-exact.

\begin{proposition}[$\natural$ preserves $\Sigma$]\label{prop:natural-preserves-sigma}
For types $A : \univ$ and $B : A \to \univ$, we have
\begin{align*}
\natural \left( \sm{x : \zA} \zB(x) \right) \equiv \sm{u : \natural \zA} \natural \zB(\zu{}_\natural)
\end{align*}
\end{proposition}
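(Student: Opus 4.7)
My plan is to construct explicit inverse maps in both directions. I will define the forward map by the right universal property of $\natural$ (Theorem~\ref{thm:right-univ-prop}), and verify one of the round-trips by right $\natural$-induction (Proposition~\ref{right-natural-induction}), leveraging the $\beta$- and $\eta$-rules for $\natural$ to make the computations nearly strict.

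First, note that the right-hand side $\sm{u : \natural \zA} \natural \zB(\zu{}_\natural)$ is modal: the base $\natural \zA$ is modal by Proposition~\ref{prop:natural-type-is-a-space}, each fibre $\natural\zB(\zu{}_\natural)$ is likewise modal, and by Lemma~\ref{lem:modality-consequences} a $\Sigma$-type of modal fibres over a modal base is modal. The right universal property then reduces the construction of $f : \natural(\sm{x : \zA} \zB(x)) \to \sm{u : \natural \zA} \natural \zB(\zu{}_\natural)$ to specifying a map $g : \sm{x : \zA} \zB(x) \to \sm{u : \natural \zA} \natural \zB(\zu{}_\natural)$, which I take to be $g(x,y) :\defeq (\zx{}^\natural, \zy{}^\natural)$. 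The second component is well-typed because $(\zx{}^\natural){}_\natural \defeq \zx$ by the $\beta$-law, so $\natural \zB((\zx{}^\natural){}_\natural)$ reduces to $\natural \zB(\zx)$, which is the type of $\zy{}^\natural$ (since $\zy : \zB(\zx)$ via \rulen{pre-roundtrip}).

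For the backward direction, I take $h(u,v) :\defeq (\zu{}_\natural, \zv{}_\natural){}^\natural$. The pair $(\zu{}_\natural, \zv{}_\natural)$ is dull because $u$ and $v$ only appear through underlined uses, so $\natural$-intro applies, and the dependency is respected since $\zv{}_\natural : \zB(\zu{}_\natural)$ matches $\zB(x)$ with $x$ substituted by the first component. The round-trip $f \circ h = \id$ is then essentially immediate: $f(h(u,v)) = g((\zu{}_\natural, \zv{}_\natural)) = (\zu{}_\natural{}^\natural, \zv{}_\natural{}^\natural)$, and each component reduces to $u$ and $v$ respectively by the $\eta$-law $b \defeq \zb{}_\natural{}^\natural$ (equivalently Proposition~\ref{prop:zero-in-natural}). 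For the other round-trip, the identity type $h(f(w)) = w$ in the modal type $\natural(\sm{x : \zA} \zB(x))$ is modal by Lemma~\ref{lem:modality-consequences}, so right $\natural$-induction reduces the goal to showing $h(f(\zp{}^\natural)) = \zp{}^\natural$ for $p : \sm{x:\zA}\zB(x)$; destructuring $p = (x, y)$ and unfolding yields $h(g(p)) = h(\zx{}^\natural, \zy{}^\natural) = ((\zx{}^\natural){}_\natural, (\zy{}^\natural){}_\natural){}^\natural \defeq (\zx, \zy){}^\natural \defeq \zp{}^\natural$ by the $\beta$-law and the fact that underlining commutes with pair formation.

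The main obstacle will be definitional bookkeeping: at each step I must check that each underlined subterm lies in the correct (possibly dull) type under the admissible rules \rulen{pre-counit} and \rulen{pre-roundtrip}, and confirm that $\beta$- and $\eta$-reductions propagate correctly under dependent substitution. The most delicate points are verifying that $\zB(\zu{}_\natural)$ is dull so that $\natural \zB(\zu{}_\natural)$ is well-formed in the first place, and that the $\beta$-reduction $(\zx{}^\natural){}_\natural \defeq \zx$ commutes with substitution into $\zB$, so that the second component of $g$ type-checks on the nose without requiring a propositional transport.
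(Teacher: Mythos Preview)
Your proposal is correct and follows essentially the same approach as the paper: construct the forward map via right $\natural$-induction into the modal $\Sigma$-type using $g(x,y) :\defeq (\zx{}^\natural, \zy{}^\natural)$, construct the backward map as $(\zu{}_\natural, \zv{}_\natural)^\natural$, and verify the two round-trips by the $\beta$/$\eta$-laws for $\natural$ together with a further $\natural$-induction for the identity type in $\natural(\Sigma\ldots)$. Your typing discussion (dullness of $\zB(\zu{}_\natural)$, the $\beta$-reduction $(\zx{}^\natural){}_\natural \defeq \zx$ making the second component type-check strictly) is more explicit than the paper's presentation but matches it exactly.
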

\begin{proof}
We know that the right-hand side is modal, so to define a map from left-to-right it is sufficient to provide $f : \sm{x : A} B(x) \to \sm{u : \natural \zA} \natural \zB(\zu{}_\natural)$ and then apply $\natural$-induction. For this we have $f(a, b) :\defeq (\za{}^\natural, \zb{}^\natural)$.

The other way, we are provided $u : \natural \zA$ and $v : \natural \zB(\zu{}_\natural)$, with which we can produce \[(\zu{}_\natural, \zv{}_\natural)^\natural : \natural \left( \sm{x : \zA} \zB(x) \right).\]

To show the roundtrip on the left is the identity, suppose $v :
\natural \left( \sm{x : \zA} \zB(x) \right)$. By $\natural$-induction
($=_\natural$ is modal by Lemma~\ref{lem:modality-consequences}) we
assume this is of the form $\zc{(a, b)}^\natural$, and then the round
trip is just
\begin{align*}
(\za{}^\natural{}_\natural, \zb{}^\natural{}_\natural)^\natural \defeq (\za, \zb)^\natural \defeq \zc{(a, b)}^\natural
\end{align*}
by the $\beta$-rule for $\natural$.

For the other roundtrip, starting with $(u, v) : \sm{u : \natural \zA} \natural \zB(\zu{}_\natural)$, %by $\natural$-induction twice we assume $u \defeq \zx{}^\natural$ and $v \defeq \zy{}^\natural$ for $x : \zA$ and $y : \zB$. Then
the computation rule for $\natural$-induction gives that the roundtrip is equal to
\begin{align*}
f(\zu{}_\natural, \zv{}_\natural) \defeq (\zu{}_\natural{}^\natural, \zv{}_\natural{}^\natural) \defeq (u, v)
\end{align*}

%% note on alternate proof: there's no need to do this by induction; you can write it with -_\natural and projections instead.

\end{proof}

\begin{proposition}[{$\natural$ is left-exact, cf.\ \cite[Theorem 3.7]{mike:real-cohesive-hott}}]\label{prop:natural-is-left-exact}
For $x, y : A$, there is an equivalence $(\zx{}^\natural = \zy{}^\natural) \equiv \natural(\zx = \zy)$ such that
\begin{mathpar}
\begin{tikzcd}[column sep = small]
& (\zx{}^\natural = \zy{}^\natural) \ar[dd] \\
(x = y) \ar[ur, "\ap_{(\zc{-}){}^\natural}"]  \ar[dr, "(\zc{-}){}^\natural" swap] \\
& \natural(\zx = \zy)
\end{tikzcd}
\end{mathpar}
commutes
\end{proposition}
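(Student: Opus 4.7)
The plan is to construct the equivalence by defining $\phi$ via path induction, so that the triangle and both inverse laws reduce to base cases at reflexivity. Both sides are modal: the target $\natural(\zx = \zy)$ by Proposition~\ref{prop:natural-type-is-a-space}, and the source $(\zx^\natural = \zy^\natural)$ by Lemma~\ref{lem:modality-consequences}(2), as an identity type of the modal type $\natural \zA$.

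Set $\psi(q) := \ap_{\lambda z. \zz{}^\natural}(q_\natural)$. For $\phi$, I first define a generalized version
\[
\phi'_{u, v} : (u = v) \to \natural(\zc{u}{}_\natural = \zc{v}{}_\natural) \qquad (u, v : \natural \zA)
\]
by path induction with $\phi'_{u, u}(\refl_u) := \refl_{\zc{u}{}_\natural}{}^\natural$; the right-hand side is dull because $u$ appears only through the marked $\zc{u}$, so $\natural$-introduction applies. Then I specialize to $\phi(p) := \phi'_{\zx{}^\natural, \zy{}^\natural}(p) : \natural(\zx = \zy)$, using idempotence of marking ($\zc{\zx{}^\natural} \defeq \zx{}^\natural$) and the $\beta$-rule ($\zx{}^\natural{}_\natural \defeq \zx$) to identify the target.

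The triangle and the two inverse laws now reduce to reflexivity. For the triangle, path induct on $(y, p) : \sm{y : A}(x = y)$; at $y = x$, $p = \refl_x$, both sides of $\phi(\ap_{\lambda z.\zz{}^\natural}(p)) = \zp{}^\natural$ compute to $\refl_{\zx}{}^\natural$ by the $J$-computation rule for $\phi'$ and the behaviour of marking on reflexivity. For $\psi \circ \phi = \id$, generalize to a statement at all $u, v : \natural \zA$ and path induct; at $\refl_u$, the composite computes definitionally to $\refl_u$ via $J$-beta, the $\beta$-rule of $\natural$, and the $\eta$-rule identification $(\zc{u}{}_\natural){}^\natural = u$. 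For $\phi \circ \psi = \id$, first apply right $\natural$-induction (Proposition~\ref{right-natural-induction}) on the modal family $q \mapsto (\phi(\psi(q)) = q)$ to reduce to $q$ of the form $\zr{}^\natural$, then generalize the varying endpoint to $z : \zA$ and path induct on $(z, r) : \sm{z : \zA}(\zx = z)$; the base case again reduces to $\refl_{\zx}{}^\natural = \refl_{\zx}{}^\natural$, and specialization at $z := \zy$ recovers the original statement, with Proposition~\ref{prop:zero-in-natural} ensuring that $\zc{q_\natural}$ and $q_\natural$ agree definitionally. The main technical care is marked-variable bookkeeping --- ensuring motives and terms like $\refl_{\zc{u}{}_\natural}{}^\natural$ and $\zr{}^\natural$ type-check under $\natural$-introduction --- but no deeper obstacle arises, since generalizing both endpoints to lie in $\natural \zA$ (or, in the $\zA$-side induction, marking via $\zc{-}$) sidesteps the fact that $\zA$ need not itself be modal.
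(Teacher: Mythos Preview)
Your proof is correct. It differs from the paper's primarily in how the map $\phi : (\zx^\natural = \zy^\natural) \to \natural(\zx = \zy)$ is built and how the roundtrips are verified. The paper writes $\phi$ explicitly as $p \mapsto (\ap_{(-)_\natural}(\zp))^\natural$ and then checks both roundtrips by direct equational computation with the $\beta/\eta$ rules, functoriality of $\ap$, and a small auxiliary fact $\ap_{(\lambda x.\zx)}(\zp) = \zp$ (itself by path induction); the final step of the roundtrip on $p$ also appeals to Corollary~\ref{cor:space-if-zero-id} to identify $\zp$ with $p$ in the modal identity type. You instead define $\phi$ by path induction over $u = v$ in $\natural \zA$, and then discharge the triangle and both inverse laws uniformly by reducing to reflexivity (invoking right $\natural$-induction once to strip the outer $\natural$). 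The two $\phi$'s agree at $\refl$, hence coincide, so the content is the same; what differs is emphasis. Your route is more systematic and avoids the separate lemma about $\ap_{(\lambda x.\zx)}$ and the appeal to modality of the identity type for the $\zp = p$ step, at the cost of the extra generalisation over $z : \zA$ and the right-$\natural$-induction manoeuvre. The paper's route foregrounds the equational calculus of marking and $\natural$, which is instructive in its own right.
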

\begin{proof}
We can define maps both ways by:
\begin{align*}
c \mapsto \ap_{(\zc{-}){}^\natural}(c_\natural) : \natural(\zx = \zy) \to (\zx{}^\natural = \zy{}^\natural) \\
p \mapsto (\ap_{(-)_\natural}(\zp))^\natural : (\zx{}^\natural = \zy{}^\natural) \to \natural(\zx = \zy)
\end{align*}
For showing the roundtrip on $c : \natural(\zx = \zy)$ is the identity,
first, for any type $A$ with $x,y : A$ and $p : x =_A y$, we have
$\ap_{(\lambda x.\zx)}(\zp) = \zp$ as paths in $\zx =_{\zA} \zy$ (which type checks
because $\zc{\zx} = \zx$).  By path induction, it suffices to
show
\[
\ap_{(\lambda x.\zx)}(\zc{\refl{x}})
\defeq
\ap_{(\lambda x.\zx)}(\refl{\zx})
=
\refl{\zc{x}} \defeq \zc{\refl x}
\]
Then we have
\[
[\ap_{(-)_\natural}(\zc{\ap_{(\zc{-}){}^\natural}(c_\natural)})]^\natural
\defeq
[\ap_{(-)_\natural}(\ap_{(\zc{-}){}^\natural}(\zc{c}_\natural))]^\natural
=
[\ap_{({(\zc{-}){}^\natural})_\natural}(\zc{c}_\natural)]^\natural
\defeq
[\ap_{\zc{-}}(\zc{c}_\natural)]^\natural
=
[\zc{\zc{c}_\natural}]^\natural
\defeq
[\zc{c}_\natural]^\natural
\defeq
c
\]

%% This is modal by Proposition~\ref{prop:natural-type-is-a-space}, so we may apply $\natural$-induction on $c$, replacing it with $\zq{}^\natural$ for $q : x = y$. This reduces the goal to
%% \begin{align*}
%% &(\ap_{(-)_\natural}(\ap_{(-){}^\natural}(\zq{}^\natural{}_\natural))^\natural = \zq{}^\natural) \\
%% &\defeq (\ap_{(-)_\natural}(\ap_{(\zc{-})^\natural}(\zq))^\natural = \zq{}^\natural)
%% \end{align*}
%% And now path-induction on $q$ reduces both sides to the same thing.

The other direction is easier: the roundtrip on $p$ is:
\begin{align*}
&\ap_{(\zc{-})^\natural}(\ap_{(-)_\natural}(\zp)^\natural{}_\natural) \\
&\defeq \ap_{(\zc{-})^\natural}(\ap_{(-)_\natural}(\zp)) \\
&= \ap_{(\zc{-})_\natural{}^\natural}(\zp) \\
&\defeq \ap_{\id}(\zp) \\
&= \zp\\
&= p
\end{align*}
The last equality is by Lemma~\ref{cor:space-if-zero-id} and that $p : (\zx{}^\natural = \zy{}^\natural)$ is a term of a dull modal type.

The triangle commutes by path-induction:
\begin{align*}
\ap_{(-)_\natural}(\ap_{(\zc{-}){}^\natural}(\refl{\zx}))^\natural
&= \ap_{(\zc{-})^\natural{}_\natural}(\refl{\zx}){}^\natural \\
&= (\refl{\zx}){}^\natural \\
&\defeq \zc{(\refl{x})}^\natural
\end{align*}
\end{proof}

Left-exactness has some additional formal consequences outlined by~\cite{rss:modalities}.
\begin{proposition}
\begin{itemize}
\item $\natural$ preserves pullbacks, \cite[Theorem 3.1]{rss:modalities}.
\item $\natural$ preserves $n$-types and more generally $n$-truncated maps for all $n$, \cite[Corollary 3.9]{rss:modalities}.
\end{itemize}
\end{proposition}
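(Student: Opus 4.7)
The plan is to reduce both statements to the general theory of left-exact monadic modalities developed in~\cite{rss:modalities}. The preceding results already assemble the two hypotheses that framework needs: $(\lambda A.\natural\zA)$ is a monadic modality (the corollary of Theorem~\ref{thm:right-univ-prop}), and it is left-exact in the identity-type sense by Proposition~\ref{prop:natural-is-left-exact}. Once these facts are in hand, both bullets are purely formal consequences cited directly from \cite{rss:modalities}, so the task for this proposition is really just to exhibit the comparison maps and reduce to those abstract results.

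For the first bullet, I would construct the canonical map $\natural\zc{B\times_A C}\to \natural\zB \times_{\natural\zA} \natural\zC$ using functoriality of $\natural$ on the two projections, noting that the target is modal because pullbacks of modal types are modal by Lemma~\ref{lem:modality-consequences}(2--3). Then I would invoke \cite[Theorem 3.1]{rss:modalities}, which states precisely that a lex modality preserves pullbacks. As a sanity check, one can build a direct proof in the style of Proposition~\ref{prop:natural-preserves-sigma}: reduce $B\times_A C$ to $\sum_{b:B}\sum_{c:C}(f b = g c)$, apply the $\Sigma$-preservation result, and then use Proposition~\ref{prop:natural-is-left-exact} to handle the identity-type factor.

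For the second bullet, I would proceed by induction on $n$. The base case $n=-2$ says $\natural$ preserves contractibility, which follows from preservation of unit and identity types (any $x,y:\natural\zA$ have $(x=y)\simeq \natural(\zc{x_\natural}=\zc{y_\natural})$, which is a $\natural$ of a contractible type, hence contractible by the inductive hypothesis or by a direct argument using the center of contraction). For the inductive step, a type $A$ is $(n+1)$-truncated iff all its identity types are $n$-truncated; combining this with the equivalence $(\zx^\natural = \zy^\natural)\simeq \natural(\zx=\zy)$ from Proposition~\ref{prop:natural-is-left-exact} and the inductive hypothesis yields that $\natural\zA$ is $(n+1)$-truncated. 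The statement about $n$-truncated maps then follows by applying truncation preservation fibrewise to $\fib_f(y)$, whose image under $\natural$ is still a fibre by pullback preservation. This is exactly the argument of \cite[Corollary 3.9]{rss:modalities}.

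The main obstacle, such as it is, is bookkeeping with the underlining: the RSS framework treats the modality as a single unary operator, whereas here the intrinsic operator is $A\mapsto\natural\zA$, and all of the comparison maps have to be typed carefully so that the domain is dull before $\natural$ is applied. Once one is consistent about inserting $\zc{\ }$ on inputs and using $(\ )_\natural$ to pass from $\natural\zA$ back into a dull type, the RSS proofs transcribe line by line.
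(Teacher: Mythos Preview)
Your proposal is correct and matches the paper's approach: the paper gives no proof at all for this proposition, simply stating it as a formal consequence of left-exactness and embedding the citations to \cite{rss:modalities} directly in the statement. Your write-up actually goes further than the paper by sketching the comparison maps and the induction on $n$, which is fine but unnecessary here.
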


\subsection{Comonadic Properties}

Now we turn to the comonadic properties of $\natural$; the properties
it shares with the $\flat$ modality of spatial type theory. First, as
remarked in Section~\ref{sec:tt}, we can derive a substitution
principle of dull terms for dull variables:
\begin{definition}[Dull substitution]\label{def:dull-subst}
  The dull substitution principle is
  \begin{mathpar}
    \inferrule*[Left=dull-subst, fraction={-{\,-\,}-}]
               {\Gamma, \zx :: A \vdash c : C \and
                 \zc{\Gamma} \vdash a : A}
              {\Gamma \vdash c[a/\zx] :\defeq c[a/x] : C[a/\zx]}
  \end{mathpar}
To see that this type checks, precompose with the unit on $A$ to get
$\Gamma, x : A \vdash c : C$ and the unit on $\Gamma$ to get $\Gamma
\vdash a : A$, and then a normal substitution $c[a/x]$ has type $C[a/x]
\defeq C[a/\zx]$.
\end{definition}

\begin{remark}\label{rem:dull-subst}
  Definition~\ref{def:dull-subst} corresponds to the substitution
  principle for crisp variables in spatial type theory, where a crisp
  variable can be substituted by a term containing only crisp variables.
  In our setting, given a term $a$ in a general context, we can mark it
  and then substitute it for a dull variable:
  \begin{mathpar}
    \inferrule*[fraction={-{\,-\,}-}]
               {\Gamma, \zx :: A \vdash c : C \and
                 \Gamma \vdash a : A}
              {\Gamma \vdash c[\za/\zx] \defeq c[\za/x] : C[\za/\zx]}
  \end{mathpar}
  Because $\zc{\Gamma} \vdash \za : A$ is a dull term, we can use
  \rulen{dull-subst}.  This is equal to the ordinary substitution
  $c[a/x]$ because all of the uses of $x$ in $c$ and $C$ must be marked,
  so $a$ will be marked during substitution; we can prove inductively that
  \[
    \inferrule*[fraction={-{\,-\,}-}]
               {\Gamma, \zx :: A \vdash c : C \and
                 {\Gamma} \vdash a : A}
               {\Gamma \vdash c[\za/\zx] \defeq c[a/x] : C[\za/\zx]}
  \]
\end{remark}

\begin{proposition}[Left $\natural$-induction]\label{prop:flat-style-rules}
A $\flat$-style eliminator is derivable for $\natural$:
\begin{mathpar}
\inferrule*[Left=``$\flat$''-elim, fraction={-{\,-\,}-}]{\Gamma, x : \natural \zA \yields C : \univ \and \Gamma \yields v : \natural \zA \and \Gamma, \zu :: \zA \yields c : C[\zu{}^\natural/x]}{\Gamma \yields (\flatstyleind{v}{\zu}{c}) : C[v/x]} \\\\
\inferrule*[Left=``$\flat$''-beta, fraction={-{\,-\,}-}]{\Gamma, x : \natural \zA \yields C : \univ \and \zc{\Gamma} \yields v : \zA \and \Gamma, \zu :: \zA \yields c : C[\zu{}^\natural/x]}{\Gamma \yields (\flatstyleind{v^\natural}{\zu}{c}) \defeq c[v/\zu] : C[v^\natural/x]}
\end{mathpar}
\end{proposition}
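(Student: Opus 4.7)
The plan is to derive the eliminator by a single use of the dull substitution principle, using the $\natural$-$\eta$ rule to arrange that the resulting type agrees with $C[v/x]$. Given $\Gamma \vdash v : \natural \zA$, the elimination rule for $\natural$ gives $\Gamma \vdash v_\natural : \zA$. Since this term lives in the general context $\Gamma$ rather than $\zc{\Gamma}$, I would use the generalised dull substitution from Remark~\ref{rem:dull-subst}: first mark to get $\zc{\Gamma} \vdash \zc{v_\natural} : \zA$, then apply \rulen{dull-subst} to substitute for $\zu$. Concretely, set
\[
\flatstyleind{v}{\zu}{c} \;:\defeq\; c[\zc{v_\natural}/\zu] \;\defeq\; c[v_\natural/u],
\]
where the second equality is exactly the observation from Remark~\ref{rem:dull-subst} that, since $\zu$ appears in $c$ only marked, substituting the marked term $\zc{v_\natural}$ for $\zu$ agrees with the ordinary substitution of $v_\natural$ for $u$ after unzeroing.

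For the type-checking step, the body $c$ has type $C[\zu^\natural/x]$ in context $\Gamma, \zu :: \zA$, so the substitution yields a term of type $C[\zu^\natural/x][\zc{v_\natural}/\zu] = C[\zc{v_\natural}^\natural/x]$. The $\natural$-$\eta$ rule gives $v \defeq \zc{v_\natural}^\natural$, so this type is definitionally $C[v/x]$, matching the conclusion of the rule. This is the only place where $\eta$ is used, and it is essential: without it we would only know the eliminator up to the equivalence $\natural \zA \simeq \natural \zA$ induced by the roundtrip, not definitionally.

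For the $\beta$-rule, suppose $v = v'^\natural$ with $\zc{\Gamma} \vdash v' : \zA$. The $\natural$-$\beta$ rule gives $v_\natural \defeq (v'^\natural)_\natural \defeq v'$, and since $v'$ is already dull we have $\zc{v_\natural} \defeq v'$, so the definition unfolds to $c[v'/u] \defeq c[v'/\zu]$, exactly the expected equation. There is no real obstacle: the proposition essentially repackages the $\eta$- and $\beta$-rules for $\natural$ with the dull substitution principle. The only delicate bookkeeping is ensuring the marked-versus-unmarked status of the substituted term lines up on both sides, which is handled uniformly by the two equations $c[\za/\zu] \defeq c[a/u]$ (Remark~\ref{rem:dull-subst}) and $v \defeq \zc{v_\natural}^\natural$ (\rulen{$\natural$-eta}).
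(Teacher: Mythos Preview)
Your proposal is correct and follows the same approach as the paper. The paper defines the eliminator as $c[\zc{v}{}_\natural/\zu]$ while you write $c[\zc{v_\natural}/\zu]$, but since the marking operation commutes with $(-)_\natural$ on raw syntax these are identical terms; the type-checking via the $\natural$-$\eta$ rule and the $\beta$-reduction via the $\natural$-$\beta$ rule plus $\zc{v} \defeq v$ for dull $v$ are exactly as in the paper's proof.
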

\begin{proof}
The eliminator can be derived by dull substitution:
\begin{align*}
(\flatstyleind{v}{\zu}{c}) :\defeq c[\zc{v}{}_\natural/\zu]
\end{align*}
which has type $C[\zc{v}{}_\natural{}^\natural/x] \defeq C[v/x]$ as required. The $\beta$-rule follows immediately from the $\beta$-rule for $\natural$, using the fact that
$\zv \defeq v$ for $\zc{\Gamma} \vdash v : A$, as all variable uses in $v$ are already marked.
%\begin{align*}
%(\flatstyleind{M^\natural}{u}{N}) \defeq N[M^0{}^\natural{}_\natural/\zu] \defeq N[M^0/\zu]
%\end{align*}
\end{proof}

This induction principle can be rephrased as a characterisation of maps
out of $\natural A$ (which we call a ``left'' universal property to
distinguish it from the characterisation of maps into $\natural A$
given in Theorem~\ref{thm:right-univ-prop}, though confusingly it is the
natural on the codomain that differs between the two sides).
\begin{theorem}[{Left universal property, cf.\ \cite[Theorem 6.16]{mike:real-cohesive-hott}}]\label{thm:left-univ-prop}
For any $A, B : \univ$, post-composition (and functoriality of $\natural$) with $(-)_\natural : \natural \zB \to \zB$ induces an equivalence
\begin{align*}
\natural (\natural \zA \to \natural \zB) \equiv \natural (\natural \zA \to \zB)
\end{align*}
or more dependently, for $A : \univ$ and $B : \natural \zA \to \univ$, fibrewise post-composition (and functoriality of $\natural$) with $(-)_\natural : \natural \zB(\zv) \to \zB(\zv)$ yields an equivalence
\begin{align*}
\natural \left(\prd{v:\natural \zA} \natural \zB(\zv)\right) \equiv \natural \left (\prd{v:\natural \zA} \zB(\zv)\right)
\end{align*}
\end{theorem}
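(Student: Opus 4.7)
Both types are modal by Proposition~\ref{prop:natural-type-is-a-space}, so it suffices to exhibit inverse maps. The forward map is the one in the statement, namely $\natural$ applied (via Definition~\ref{def:natural-is-a-functor}) to the fibrewise counit; unfolding the functorial action it is
\[
\phi(F) \defeq (\lambda v. (F_\natural(v))_\natural)^\natural,
\]
while the inverse I propose is, dually, the fibrewise unit followed by repackaging,
\[
\psi(H) \defeq (\lambda v. (H_\natural(v))^\natural)^\natural.
\]
In each case the inner $\natural$-intro is well-typed because the ambient free variable $F$ (resp.\ $H$), being a term of a $\natural$-type, is definitionally its own marking by Proposition~\ref{prop:zero-in-natural}, and because the fibrewise target type $\zB(\zv)$ is already dull.

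For $\psi\circ\phi$, reducing with $\natural$-$\beta$ and $\Pi$-$\beta$ gives $\psi(\phi(F)) \defeq (\lambda v. ((F_\natural(v))_\natural)^\natural)^\natural$; since $F_\natural(v) : \natural \zB(\zv)$, the $\natural$-$\eta$ rule identifies $((F_\natural(v))_\natural)^\natural$ fibrewise with $F_\natural(v)$, after which $\Pi$-$\eta$ followed by an outer $\natural$-$\eta$ collapses everything to $F$. Dually, $\phi(\psi(H))$ reduces to $(\lambda v. ((H_\natural(v))^\natural)_\natural)^\natural$, and now it is $\natural$-$\beta$ rather than $\natural$-$\eta$ that performs the fibrewise reduction back to $H_\natural(v)$, followed by the same $\Pi$-$\eta$ and $\natural$-$\eta$ collapses. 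The non-dependent version of the theorem is the $B$-constant special case of the dependent one.

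The main obstacle I anticipate is simply the bookkeeping for dullness inside the binder $\lambda v$: checking that at each invocation of $\natural$-intro or $\natural$-$\beta/\eta$ the relevant subterm really is dull. This leans on Proposition~\ref{prop:zero-in-natural} (so that free uses of $F$ or $H$ behave as though already marked) together with the observation that $\zB$ depends on its argument only through its mark, so that $\zB(v) \defeq \zB(\zv)$ under the binder for $v : \natural \zA$. None of the individual reduction steps are deep; the care is entirely in confirming the formal syntactic premises of the rules in Figure~\ref{fig:natural-rules} at each stage.
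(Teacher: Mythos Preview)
Your proof is correct and is essentially the paper's argument: define the inverse by post-composition with the unit, then reduce both roundtrips using the $\beta$/$\eta$ rules together with Proposition~\ref{prop:zero-in-natural}. The only difference is organizational: the paper observes up front that the counit is a section of the unit, so post-composition with the counit is automatically a section, and therefore only the roundtrip on $\natural(\prod_{v}\zB(\zv))$ needs to be checked explicitly; you instead compute both roundtrips by hand, which costs a few extra lines but is otherwise the same calculation.
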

\begin{proof}
The counit map $\natural \zB(\zv) \to \zB(\zv)$ is always a section of the unit map, so the post-composition map in the statement of the Theorem is also a section.

We just have to check that the roundtrip on $\natural \left (\prd{v:\natural \zA} \zB(\zv)\right)$ is the identity. Suppose we have an $f : \natural \left (\prd{v:\natural \zA} \zB(\zv)\right)$. Unfolding the definition of post-composition and functoriality of $\natural$ with both the unit and the counit is:
\begin{align*}
(\lambda x. (\zf_\natural(\zx))^\natural{}_\natural)^\natural
&\defeq (\lambda x. (\zf_\natural(\zx)))^\natural \\
&\defeq (\lambda x. (\zf_\natural(x)))^\natural \\
&\defeq (\zf_\natural)^\natural \\
&\defeq f
\end{align*}
where $\zx \defeq x : \natural \zA$ by
Proposition~\ref{prop:zero-in-natural}. Note that $(\lambda
x. (\zf_\natural(x)))^\natural$ is well-typed, as $x$ is not free below
the \rulen{$\natural$-intro}, so does not need to be marked.
\end{proof}
Because the rules for $\flat$ are derivable, we could instead have repeated the proof for $\flat$ verbatim, using $\mathsf{let}$-bindings, but the above is more direct. %That proof would unwind to the above under the translation of Proposition~\ref{prop:flat-style-rules}.

\begin{remark}
One may wonder why the applications of $\natural$ are required around the two sides. Thinking syntactically, they are necessary to block access to `spectral' information from the context. Without $\natural$ on the right, one could use a $b : \zB$ in the context to form constant functions $\const_b : \natural \zA \to \zB$ which have no corresponding maps $\natural \zA \to \natural \zB$ on the left.

Another way to see that they are necessary is to consider the model in families of pointed types. There, it is clear that the map is not an equivalence without the $\natural$ present: $\upst (\natural \zA \to \natural \zB)$ is equivalent to the point in every fibre, but $\upst (\natural \zA \to \zB)$ may be non-trivial.
\end{remark}

\begin{corollary}[Dull Self-adjointness]\label{cor:self-adjoint}
For any $A, B : \univ$ there is an equivalence
\begin{align*}
\natural (\zA \to \natural \zB) \equiv \natural (\natural \zA \to \zB)
\end{align*}
\end{corollary}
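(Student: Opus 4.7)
The plan is to factor the target equivalence through $\natural(\natural \zA \to \natural \zB)$, using each of the two established universal properties to handle one side of the ``adjunction''. Theorem~\ref{thm:left-univ-prop} (the left universal property) already gives
\[
\natural(\natural \zA \to \natural \zB) \equiv \natural(\natural \zA \to \zB)
\]
by postcomposition with the counit on the codomain, so it will suffice to construct the domain-side equivalence $\natural(\zA \to \natural \zB) \equiv \natural(\natural \zA \to \natural \zB)$.

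For the domain-side piece I would apply Theorem~\ref{thm:right-univ-prop} (the right universal property), but with its ``base type'' parameter instantiated to $\zA$ rather than $A$. Since $\zc{\zA} \defeq \zA$, plugging in $\zA$ keeps $\natural \zA$ on the left-hand side of the theorem and places $\zA$ on the right. Taking the type family to be constant at $\natural \zB$, which is modal by Proposition~\ref{prop:natural-type-is-a-space}, then yields an equivalence
\[
(\natural \zA \to \natural \zB) \equiv (\zA \to \natural \zB)
\]
whose forward direction is precomposition with $\eta_{\zA} \defeq \lambda x.\zx^\natural : \zA \to \natural \zA$.

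To finish, I would apply $\natural$ to this equivalence. Both sides are already dull, because the zeroing operation distributes over function types and fixes $\zA$, $\natural \zA$, and $\natural \zB$, so each $\natural(\cdots)$ is well-formed; and $\natural$ is a functor preserving identities and composition definitionally (Definition~\ref{def:natural-is-a-functor}), hence sends equivalences to equivalences. Composing the resulting $\natural(\natural \zA \to \natural \zB) \equiv \natural(\zA \to \natural \zB)$ with the one from Theorem~\ref{thm:left-univ-prop} delivers the dull self-adjointness. The only point requiring a moment of care is the instantiation of Theorem~\ref{thm:right-univ-prop} at the already-dull type $\zA$---but since that theorem is schematic in a type and $\zA$ is just another type, no additional argument is needed.
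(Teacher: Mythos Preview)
Your proof is correct and follows the same approach as the paper: factor through $\natural(\natural \zA \to \natural \zB)$, using the right universal property (Theorem~\ref{thm:right-univ-prop}) for the domain side and the left universal property (Theorem~\ref{thm:left-univ-prop}) for the codomain side. Your added care about instantiating Theorem~\ref{thm:right-univ-prop} at the already-dull type $\zA$ and about why $\natural$ preserves equivalences are reasonable elaborations of what the paper leaves implicit.
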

\begin{proof}
Combining the left and right universal properties we get
\begin{align*}
\natural (\zA \to \natural \zB) \equiv \natural (\natural \zA \to \natural \zB) \equiv \natural (\natural \zA \to \zB)
\end{align*}
\end{proof}

\begin{proposition}[{cf.\ \cite[Theorem 3.11]{mike:real-cohesive-hott}}]
In the presence of univalence, the type $\Modal \defeq \sm{A : \univ} \isModal(A)$ of modal types is modal.
\end{proposition}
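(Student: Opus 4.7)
The plan is to invoke Proposition~\ref{prop:space-if-zero-equiv}, which characterises modal types $A$ as those for which the underlining map $(\lambda x.\zx) : A \to \zA$ is an equivalence. Because $\Modal$ is a closed expression, the inductive definition of marking gives $\zc{\Modal} \defeq \Modal$, so the task reduces to showing that $(\lambda X.\zc{X}) : \Modal \to \Modal$ is an equivalence. I will prove this map is pointwise equal to the identity and then close out using function extensionality.

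Given $X : \Modal$, the $\Sigma$-structure of $\Modal = \sm{A:\univ}\isModal(A)$ lets me form $\pi_1 X : \univ$ and $\pi_2 X : \isModal(\pi_1 X)$, and by the inductive definition of marking I have $\pi_1(\zc{X}) \defeq \zc{\pi_1 X}$. Since $\isModal(-)$ is defined via $\isEquiv$ it is a proposition, so by the usual $\Sigma$-path characterisation it suffices to exhibit a path $\pi_1 X = \zc{\pi_1 X}$ in $\univ$. Proposition~\ref{prop:space-if-zero-equiv} applied to the witness $\pi_2 X$ gives that $(\lambda a.\zc{a}) : \pi_1 X \to \zc{\pi_1 X}$ is an equivalence, and univalence converts this equivalence into the required path.

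Assembling these pieces, function extensionality upgrades the pointwise identification $X = \zc{X}$ to $(\lambda X.\zc{X}) = \id_\Modal$, which is trivially an equivalence, and then Proposition~\ref{prop:space-if-zero-equiv} delivers modality of $\Modal$. I do not expect a substantive obstacle; the only delicate points are purely syntactic---that $\Modal$ is closed enough for $\zc{\Modal} \defeq \Modal$ to hold definitionally, and that marking commutes with the first projection---both of which are immediate from the inductive definition of $\zc{-}$ given in Section~\ref{sec:tt}.
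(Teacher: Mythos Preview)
Your proof is correct and essentially the same as the paper's: both reduce to showing $(A,w) = (\zA,\zw)$ for each $(A,w):\Modal$, using univalence on the equivalence $A \equiv \zA$ supplied by modality of $A$ in the first component and the fact that $\isModal$ is a proposition in the second. The only cosmetic difference is that the paper invokes Corollary~\ref{cor:space-if-zero-id} (which already packages the function extensionality step) rather than Proposition~\ref{prop:space-if-zero-equiv} plus funext as you do.
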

\begin{proof}
We can use a simpler proof than the one for $\sharp$. By Corollary~\ref{cor:space-if-zero-id}, we just have to show that for any $A : \univ$ and $w : \isModal(A)$, there is an equality $(A, w) = (\zA, \zw)$. And there is: by assumption $A$ is modal so $A \equiv \zA$, again by Corollary~\ref{cor:space-if-zero-id}. In the second component, $\isModal(A)$ is a proposition, so we are done.

%Given $X  : \natural \Modal$, we have $X_\natural : \Modal$, which can be destructed into a type $A$ and a proof $w : \isSpace(A)$. We choose our output space to be $\natural \zA$, which is a space by Proposition~\ref{prop:natural-type-is-a-space}. In all we have assigned $X \mapsto \natural(\proj_1(X_\natural))$.
%
%To show this is a retraction, given $Y : \Modal$ we calculate
%\begin{align*}
%\natural(\proj_1(\zY{}^\natural_\natural)) \defeq \natural(\proj_1(\zY)) \equiv \proj_1(Y)
%\end{align*}
%using the proof that $\proj_1(Y)$ is a space from in $\proj_2(Y)$. We do not need to check the roundtrip on the second component, because $\isSpace(-)$ is a proposition.
\end{proof}

\begin{remark}\label{rem:dull-induction}
  When working with inductive types in a theory with more structured contexts such as ours, one has make sure that the induction principles are strong enough. In spatial type theory, one needs to assert or prove `crisp induction principles', for when the motive of an elimination rule depends on a \emph{crisp} variable of type being eliminated.  The crisp induction principle for a type constructor is a judgemental way of saying that the modality preserves the type constructor.  For example, crisp coproduct case analysis is a judgemental way of saying that $\flat(A+B) \equiv \flat A + \flat B$, because the crisp induction principle gives crisp variables of type $A + B$ the same universal property as an ordinary variable of type $\flat A + \flat B$.  In our setting, a dull induction principle for coproducts looks like:
\begin{mathpar}
\inferrule*[left=dull-$+$-elim, fraction={-{\,-\,}-}]{\Gamma, \zz :: \zA
  + \zB \yields C : \univ \\\\ \Gamma, \zx :: \zA \yields p : C[\inl(\zx)/\zz] \and \Gamma, \zy :: \zB \yields q : C[\inr(\zy)/\zz] \\\\ \zc{\Gamma} \yields s : \zA + \zB}{\Gamma \yields \dullcase(\zz.C, \zx.p, \zy.q, s) : C[s/\zz]}
\end{mathpar}
In spatial type theory, crisp induction principles are proven using the adjointness of $\flat$ and $\sharp$, and because $\natural$ is self-adjoint, we could repeat the proof almost verbatim. But we can show they are valid more directly, using the \rulen{pre-counit} and \rulen{pre-unit} rules.

With the above inputs, we can apply \rulen{pre-unit} to obtain
\begin{align*}
\Gamma, z : \zA + \zB &\yields C : \univ \\
\Gamma, x : \zA &\yields p : C[\inl(\zx)/z] \\
\Gamma, y : \zB &\yields q : C[\inr(\zy)/z] \\
\Gamma &\yields s : \zA + \zB
\end{align*}
Now note that $C[\inl(\zx)/z] \defeq C[\inl(x)/z]$ and $C[\inr(\zx)/z]
\defeq C[\inr(x)/z]$, because $z$ is only used marked in $C$, as in
Remark~\ref{rem:dull-subst}. These inputs are now of the right shape to
apply the ordinary \rulen{$+$-elim} rule.

Using an analogous construction, we can derive dull induction principles
for $\Idsym$-types, pushouts, etc.
\end{remark}

\begin{proposition}[$\natural$ preserves pushouts]\label{prop:natural-preserves-pushouts}
Suppose $\zf : \zC \to \zA$ and $\zg : \zC \to \zB$ are dull functions between dull types. Then \[\natural(\zA +_{\zC} \zB)  \equiv \natural \zA +_{\natural \zC} \natural \zB,\] the pushout of $\natural \zf : \natural \zC \to \natural \zA$ and $\natural \zg : \natural \zC \to \natural \zB$.
\end{proposition}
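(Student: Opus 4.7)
The plan is to first establish that the right-hand side $P \defeq \natural \zA +_{\natural \zC} \natural \zB$ is itself modal, and then construct the equivalence. Once $P$ is known to be modal, Theorem~\ref{thm:right-univ-prop} lets us reduce maps $\natural(\zA +_\zC \zB) \to P$ to maps $\zA +_\zC \zB \to P$, so that the usual universal property of the pushout $\zA +_\zC \zB$ can be matched against the universal property of $P$.

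To see that $P$ is modal, observe that it is built entirely out of dull pieces, so it is itself dull: $\zc{P} \defeq P$. By Corollary~\ref{cor:space-if-zero-id} it suffices to show $\zc{p} = p$ for every $p : P$, which I prove by pushout induction. The case $p = \inl(u)$ with $u : \natural \zA$ unfolds to $\inl(\zc{u}) = \inl(u)$, which holds \emph{definitionally} by Proposition~\ref{prop:zero-in-natural}; the $\inr$ case is symmetric. For the glue case at $w : \natural \zC$, the required coherence between $\ap_{\lambda p.\zc{p}}(\text{glue}(w))$ and $\text{glue}(w)$ reduces, once the endpoint cases are definitional, to the fact that $\zc{\text{glue}(w)} \defeq \text{glue}(\zc{w})$ (extending the schema that $\zc{(-)}$ recurses through term syntax to the path constructors of HITs) combined with $\zc{w} \defeq w$ from Proposition~\ref{prop:zero-in-natural}.

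Given that $P$ is modal, construct the forward map $\phi : P \to \natural(\zA +_\zC \zB)$ by pushout recursion, sending $\inl(u) \mapsto \natural \inl(u)$ and $\inr(v) \mapsto \natural \inr(v)$ using the functorial action from Definition~\ref{def:natural-is-a-functor}, with coherence on the glue obtained by applying $\ap_\natural$ to the pushout identity $\inl \circ \zf = \inr \circ \zg$ together with functoriality of $\natural$. For the inverse $\psi : \natural(\zA +_\zC \zB) \to P$, invoke Theorem~\ref{thm:right-univ-prop}: since $P$ is modal, giving such a map is the same as giving a map $\zA +_\zC \zB \to P$, which is defined via pushout recursion by $\inl(x) \mapsto \inl(\eta_{\zA}(x))$ and $\inr(y) \mapsto \inr(\eta_{\zB}(y))$, compatible on $\zC$ by naturality of $\eta$ together with the hypothesis that $\zf, \zg$ are dull. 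Verifying that $\phi$ and $\psi$ are mutually inverse is then a routine pushout induction using the $\beta$-rules for pushouts, for $\natural$, and for the right universal property.

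The hard part will be the glue case in the modality step: one must check that $\zc{(-)}$ commutes with the pushout's $\text{glue}$ constructor in the expected way and that the resulting path coherence genuinely reduces to reflexivity once the endpoint equalities become definitional. This is a minor extension of the paper's convention that $\zc{(-)}$ recurses through term syntax (like $\inl$ and $\inr$) to HIT path constructors, combined with Proposition~\ref{prop:zero-in-natural}, but it is the place where the path algebra must be handled most carefully.
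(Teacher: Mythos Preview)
Your approach is correct and takes a genuinely different route from the paper. The paper does not first establish that $P \defeq \natural \zA +_{\natural \zC} \natural \zB$ is modal; instead it writes both maps down directly. From left to right it applies $(-)_\natural$ to get into $\zA +_{\zC} \zB$, then does ordinary case analysis sending $a \mapsto \inl(\za^\natural)$, $b \mapsto \inr(\zb^\natural)$, with $\glue(\zc{c}^\natural)$ for the path case. From right to left it case-analyses $P$, uses $(-)_\natural$ on the $\natural$-components, and then applies $(-)^\natural$ to the resulting dull term of $\zA +_{\zC} \zB$. The roundtrips are checked with left $\natural$-induction (Proposition~\ref{prop:flat-style-rules}) and dull pushout induction (Remark~\ref{rem:dull-induction}); the paper remarks that morally this is just ``$\natural$ is a left adjoint''. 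Your version instead isolates the fact that $P$ is modal and then leverages the right universal property (Theorem~\ref{thm:right-univ-prop}) to produce $\psi$. This is a cleaner decomposition and makes the modality of $P$ available as a standalone lemma, at the cost of an extra pushout induction.

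One point to tighten in your modality step: the coherence you need in the glue case is $\ap_{(\lambda p.\zc{p})}(\glue(w)) = \glue(w)$, and your two facts $\zc{\glue(w)} \defeq \glue(\zc{w})$ and $\zc{w} \defeq w$ only give $\zc{\glue(w)} \defeq \glue(w)$. You still need the bridge $\ap_{(\lambda p.\zc{p})}(q) = \zc{q}$, which is \emph{propositional}, not definitional: it is proved by path induction on $q$ (at $\refl{x}$ both sides are $\refl{\zc{x}}$), exactly as in the proof of Proposition~\ref{prop:natural-is-left-exact}. So the glue coherence does not literally ``reduce to reflexivity''; it reduces to this small lemma plus your two observations. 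With that in place your argument is complete.
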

\begin{proof}
Morally, this follows because we just proved that $\natural$ is a left adjoint, but we can also write out the maps explicitly as follows.

From left-to-right, we extract a term of $p : \zA +_{\zC} \zB$, and then do case analysis. On $a : \zA$, we have $\inl(\za^\natural) : \zA +_{\natural \zC} \natural \zB$. Similarly, on $b : \zB$ we have $\inr(\zb^\natural) : \zA +_{\natural \zC} \natural \zB$. To complete the cocone we have to to provide for any $c : \zC$, a path $\inl(\zf(\zc{c})^\natural) = \inr(\zg(\zc{c})^\natural)$ in $\natural \zA +_{\natural \zC} \natural \zB$. The glue constructor for the $\natural \zA +_{\natural \zC} \natural \zB$ pushout gives us a path
\begin{align*}
\glue(\zc{c}^\natural) : \inl(\natural \zf(\zc{c}^\natural)) = \inr (\natural \zg(\zc{c}^\natural))
\end{align*}
And this type is equal to $\inl(\zf(\zc{c})^\natural) = \inr (\zg(\zc{c})^\natural)$ by the definition of the functorial action of $\natural$, and the $\beta$-rule.

The right-to-left direction is similar. We begin with case analysis on $\zz$. On $n : \natural \zA$ and $m : \natural \zB$ we have $\inl(\zn_\natural) : \zA +_{\zC} \zB$ and $\inr(\zm_\natural) : \zA +_{\zC} \zB$ respectively. For any $o : \natural \zC$, we need a path $\inl(\natural \zf(\zo)_\natural) = \inr(\natural \zg(\zo)_\natural)$. Expanding the definition of functoriality and applying the $\beta$-rule, this is a path
\begin{align*}
\inl(\zf(\zo_\natural)) = \inr(\zg(\zo_\natural))
\end{align*}
and $\glue(\zo_\natural)$ is such a path in $\zA +_{\zC} \zB$. All together this produces a dull term of $\zA +_{\zC} \zB$, so applying \rulen{$\natural$-intro} we are done.

Checking the roundtrips are the identity is straightforward, using left $\natural$-induction and dull pushout induction, which can be derived as in Remark~\ref{rem:dull-induction}.

%That the roundtrip on $\natural (\zA +_{\zC} \zB)$ is the identity is immediate by $\natural$-induction and case-analysis.
%
%For the roundtrip on the right, we cannot use $\natural$-induction directly because, a priori, $\natural \zA +_{\natural \zC} \natural \zB$ is not a space. Suppose we have $w : \natural \zA +_{\natural \zC} \natural \zB$. By case-analysis, assume $w \defeq \inl(n)$ with $n : \natural \zA$. The roundtrip is then $\inl(\zn_\natural^\natural)$, which is $\inl(n)$ by the $\eta$-rule for $\natural$. The other components are similar.
\end{proof}
%\begin{proof}
%\mvrnote{Actually I think it's better to just define the maps directly}
%\begin{align*}
%\natural(\natural(A +_{C} B) \to X)
%&\equiv \natural(A +_{C} B \to \natural X) \\
%&\equiv \natural\left(\sm{i : A \to \natural X} \sm{j : B \to \natural X} i \circ f \sim j \circ g \right) \\
%&\equiv \sm{i : \natural(A \to \natural X)} \sm{j : \natural(B \to \natural X)} \natural(i^0{}_\natural \circ f \sim j^0{}_\natural \circ g) \\
%&\equiv \text{\mvrnote{todo}} \\
%&\equiv \sm{i' : \natural(\natural A \to X)} \sm{j' : \natural(\natural B \to X)} \natural(i'^0{}_\natural \circ \natural f \sim j'^0{}_\natural \circ \natural g) \\
%&\equiv \natural\left(\sm{i' : \natural A \to X} \sm{j' : \natural B \to X} i' \circ \natural f \sim j' \circ \natural g \right) \\
%&\equiv \natural(\natural A +_{\natural C} \natural B \to X)
%\end{align*}
%Then apply dull Yoneda. \mvrnote{And check naturality.}
%\end{proof}

The sequential colimit (see e.g.\ \cite{sbr:seq-colims}) of a sequence
\begin{align*}
A(0) \xrightarrow{a(0)} A(1) \xrightarrow{a(1)} A(2) \xrightarrow{a(2)} \dots
\end{align*}
is given by the higher inductive type $\colim A_n$ with point and path constructors
\begin{align*}
\iota &: \prd{n : \NN} A(n) \to \colim_n A_n \\
\kappa &: \prd{n : \NN} \prd{x : A(n)} \iota(n+1, a(n,x)) = \iota(n,x)
\end{align*}

\begin{proposition}[$\natural$ preserves sequential colimits]\label{prop:natural-preserves-seq-colimits}
Suppose we have a diagram
\begin{align*}
\zA(0) \xrightarrow{\za(0)} \zA(1) \xrightarrow{\za(1)} \zA(2) \xrightarrow{\za(2)} \dots
\end{align*}
of dull types and dull functions between them. Then \[ \natural (\colim_n \zA(n)) \equiv \colim_n \natural \zA(n) \]
where the sequential colimit on the right is over the diagram
\begin{align*}
\natural \zA(0) \xrightarrow{\natural \za(0)} \natural A(1) \xrightarrow{\natural \za(1)} \natural \zA(2) \xrightarrow{\natural \za(2)} \dots
\end{align*}
\end{proposition}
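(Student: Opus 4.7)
The plan is to mirror the proof of Proposition~\ref{prop:natural-preserves-pushouts} almost verbatim, replacing pushout induction with sequential colimit induction. Morally this should also follow from the fact that $\natural$ is a left adjoint (Corollary~\ref{cor:self-adjoint}) and so preserves all colimits, but it is more concrete to write out the two maps explicitly.

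First I would construct the left-to-right map. Given $v : \natural (\colim_n \zA(n))$, apply \rulen{$\natural$-elim} to obtain a dull term $v_\natural : \colim_n \zA(n)$ (valid because each $\zA(n)$ and each $\za(n)$ is dull, so the colimit is dull). Now do sequential colimit induction on $v_\natural$: on a point $\iota(n, a)$ with $a : \zA(n)$ return $\iota(n, \za{}^\natural) : \colim_n \natural \zA(n)$, and on the path constructor $\kappa(n,a) : \iota(n+1, \za(n, a)) = \iota(n, a)$ use the path
\[ \kappa(n, \za{}^\natural) : \iota(n+1, \natural \za(n)(\za{}^\natural)) = \iota(n, \za{}^\natural) \]
from the target colimit; by the definition of functoriality of $\natural$ and the $\beta$-rule the left endpoint equals $\iota(n+1, \zc{\za(n, a)}{}^\natural)$, matching what the point case produces.

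Next I would construct the right-to-left map by sequential colimit induction on the input: on $\iota(n, u)$ with $u : \natural \zA(n)$ send it to $\iota(n, \zu_\natural)^\natural : \natural(\colim_n \zA(n))$ (using that $\iota(n, \zu_\natural)$ is dull, since all its free variables are marked), and on $\kappa(n, u)$ use $\ap_{(-)^\natural}$ of the path $\kappa(n, \zu_\natural)$ in $\colim_n \zA(n)$, again reshuffling endpoints via functoriality and the $\beta$-rule. Finally I would verify both roundtrips are the identity by sequential colimit induction, using the $\beta$-rule for $\natural$ to handle the point case and reducing the path cases via path-over manipulations as in the pushout proof. For the roundtrip starting on the left, I would also need a dull sequential colimit induction principle; this is derivable exactly as in Remark~\ref{rem:dull-induction}, by applying \rulen{pre-unit} to unzero the assumptions, invoking ordinary colimit induction, and observing that occurrences of the motive and branches are unchanged because the variables were only used marked.

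The main obstacle I anticipate is bookkeeping: ensuring type correctness for the path constructor cases on both sides, where the endpoints must be shown definitionally equal using the interaction of $\natural$-functoriality, $\za{}^\natural{}_\natural \defeq \za$, and the fact that $\za = a$ for dull $a$. None of these steps should require a genuinely new idea beyond what was used for pushouts; the only real content beyond the pushout proof is that the sequential colimit's defining higher inductive type has countably many point and path constructors rather than finitely many, but since the dull induction principle is stated schematically this causes no difficulty.
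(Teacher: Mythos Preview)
Your proposal is correct and matches the paper's approach exactly: the paper's proof consists of the single sentence ``The proof is analogous to that for pushouts,'' and you have simply spelled out that analogy in detail, including the dull sequential colimit induction principle derived as in Remark~\ref{rem:dull-induction}.
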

\begin{proof}
The proof is analogous to that for pushouts.
%Again, showing the roundtrip on $\colim_n \natural \zA(n)$ is the identity critically uses the $\eta$-rule for $\natural$.
\end{proof}

Finally, we give compound rules for $\Pi x : \natural A.B$, which we
refer to as ``dull $\Pi$-types''
(c.f. \cite{mike:real-cohesive-hott,depaivaritter16fibrational,agda-flat}, or the connective $A \to B$ in
linear logic as a fusion of $! A \multimap B$).
\begin{proposition}\label{prop:dull-pis}
  Dull $\Pi$-types are definable with
  \[
   \begin{array}{c}
  \inferrule*{\zc{\Gamma} \vdash A : \univ \and
    \Gamma,\zx::A \vdash B : \univ}
             {\Gamma \vdash \prd{\zx::A} B : \univ}
  \qquad
  \inferrule*{\Gamma,\zx::A \vdash b : B}
             {\Gamma \vdash \lambda \zx::A.b : \prd{\zx::A} B : \univ}
  \qquad
  \inferrule*{\Gamma \vdash f : \prd{\zx::A} B \and
              \zc{\Gamma} \vdash a : A
             }
             {\Gamma \vdash f(a) : B[a/\zx]}
  \\ \\
  (\lambda \zx::A.b)(a) \defeq b[a/\zx]
  \qquad\qquad
  f : \prd{\zx::A} B \defeq \lambda \zx::A.f(\zx)
  \end{array}
  \]
\end{proposition}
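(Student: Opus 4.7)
The plan is to define dull $\Pi$-types as an ordinary $\Pi$-type quantifying over $\natural A$, exploiting the fact that a marked declaration $\zx :: A$ in a context is judgementally interchangeable with an unmarked declaration $y : \natural A$ mediated by the counit.

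Concretely, I would set
\[
  \prd{\zx::A} B \;:\defeq\; \prd{y : \natural A} B[\zy_\natural / \zx],
\]
where $B[\zy_\natural / \zx]$ uses the marked-variable substitution of Remark~\ref{rem:dull-subst}: starting from $\Gamma,\zx::A \vdash B : \univ$, weaken to $\Gamma, y : \natural A, \zx::A \vdash B : \univ$ and then substitute the dull term $\zc{\Gamma, y:\natural A} \vdash \zy_\natural : A$, which type-checks because $\zc{\Gamma, y:\natural A} \defeq \zc{\Gamma}, \zy::\natural A$ (using idempotence of $\zc{-}$ and that $\natural A$ is already dull whenever $A$ is), after which \rulen{$\natural$-elim} produces the required term. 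The introduction and elimination rules would then be defined by
\[
  \lambda \zx::A.\, b \;:\defeq\; \lambda y:\natural A.\, b[\zy_\natural / \zx], \qquad f(a) \;:\defeq\; f(a^\natural),
\]
so that $f(a)$ has type $B[\zy_\natural/\zx][a^\natural/y] \defeq B[a^\natural{}_\natural/\zx] \defeq B[a/\zx]$ by $\natural$-$\beta$.

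For the $\beta$-rule, unfolding gives $(\lambda y.\, b[\zy_\natural/\zx])(a^\natural)$, which by ordinary $\Pi$-$\beta$ and composition of substitutions becomes $b[\zc{a^\natural}{}_\natural/\zx] \defeq b[a^\natural{}_\natural/\zx] \defeq b[a/\zx]$, using $\zc{a^\natural} \defeq a^\natural$ (since $a$ is dull, so $a^\natural$ has no unmarked free variables) and one application of $\natural$-$\beta$. For the $\eta$-rule, unfolding $\lambda \zx::A.\, f(\zx)$ and the elimination definition gives $\lambda y:\natural A.\, f(\zx^\natural)[\zy_\natural/\zx]$; the substitution rule $\zx[c/x] \defeq \zc{c}$ simplifies this to $\lambda y.\, f(\zc{\zy_\natural}^\natural) \defeq \lambda y.\, f(\zy_\natural{}^\natural)$, and then a single application of $\natural$-$\eta$ (reading the equation $y \defeq \zy_\natural{}^\natural$ right-to-left) yields $\lambda y.\, f(y) \defeq f$ by ordinary $\Pi$-$\eta$.

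The main subtlety is threading the marking operation through the substitutions: one must notice that marking a term whose free variables are already all marked is a no-op, so that $\zc{a^\natural} \defeq a^\natural$ in the $\beta$-calculation and $\zc{\zy_\natural} \defeq \zy_\natural$ in the $\eta$-calculation. With that observation in hand, the result follows immediately from the admissible rules of Figure~\ref{fig:structural}, the $\natural$-type rules of Figure~\ref{fig:natural-rules}, and the standard $\beta\eta$-rules for ordinary $\Pi$-types.
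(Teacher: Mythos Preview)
Your proposal is correct and follows exactly the same approach as the paper: define $\prd{\zx::A} B :\defeq \prd{y:\natural A} B[\zy_\natural/\zx]$, $\lambda \zx::A.b :\defeq \lambda y.b[\zy_\natural/\zx]$, and $f(a) :\defeq f(a^\natural)$, then derive the $\beta/\eta$ equations from those of $\Pi$ and $\natural$. The paper's proof is terser---it simply records these definitions and asserts that the equations follow---whereas you spell out the substitution manipulations explicitly, but the argument is the same.
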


\begin{proof}

Recall from Definition~\ref{def:dull-subst} that dull substitution
$B[a/\zx]$ is syntactically just $B[a/x]$. We define
\[
\begin{array}{l}
\prd{\zx::A} B :\defeq \prd{y:\natural A} B[\zy_\natural/\zx]\\
\lambda \zx::A. b :\defeq \lambda y.b[\zy_\natural/\zx] \\
f(a) :\defeq f (a^\natural)
\end{array}
\]
The $\beta/\eta$ equations follow from the same for $\Pi$ and
$\natural$.

\end{proof}

\subsection{Pointed Types}

The modality also interacts nicely with pointed types and functions. Recall the following standard definitions:
\begin{definition}
A \emph{pointed type} is a pair $(A, a)$ of a type $A$ and a term $a : A$. A \emph{pointed map} from $(A, a)$ to $(B, b)$ is a function $f : A \to B$ and a path $p : f(a) = b$. Write $\univ_\star$ for the type of pointed types and
\begin{align*}
A \to_\star B :\defeq \sm{f : A \to B} f(a) = b
\end{align*}
for the type of pointed maps.
\end{definition}
Note that a type or map being pointed is structure, not a property. However, it is common to abuse notation and write $A \to_\star B$ rather than $(A, a) \to_\star (B, b)$ when the points of $A$ and $B$ can be inferred from context.

\begin{lemma}\label{lem:pointed-modal}
If $B$ is modal then then $A \to_\star B$ is modal.
\end{lemma}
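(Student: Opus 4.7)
The plan is to unfold the definition $A \to_\star B \defeq \sum_{f : A \to B} f(a) = b$ and verify that both the base type of this $\Sigma$ and each of its fibres is modal, then conclude using the closure properties of modal types already recorded in Lemma~\ref{lem:modality-consequences}.

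First I would argue that the base $A \to B$ is modal. By item 3 of Lemma~\ref{lem:modality-consequences}, the type $\prod_{x : A} P(x)$ is modal whenever each $P(x)$ is modal --- there is no hypothesis on $A$ for $\Pi$. Applying this to the constant family $P(x) \defeq B$, which is modal by assumption, we conclude that $A \to B$ is modal.

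Next I would check that for each $f : A \to B$, the fibre type $f(a) =_B b$ is modal. Since $B$ is modal, item 2 of Lemma~\ref{lem:modality-consequences} gives that identity types $x =_B y$ of elements of $B$ are modal, and in particular $f(a) =_B b$ is modal.

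Finally, with a modal base and fibrewise modal families, item 3 of Lemma~\ref{lem:modality-consequences} says that $\sum_{x : A'} P'(x)$ is modal whenever $A'$ is modal and each $P'(x)$ is modal. Instantiating with $A' \defeq A \to B$ and $P'(f) \defeq (f(a) = b)$ yields that $A \to_\star B$ is modal. No genuine obstacle is expected here; the only thing to be careful about is that the $\Sigma$-closure requires the base to be modal, which is why the argument goes through the intermediate step of showing $A \to B$ is modal first rather than trying to treat $A \to_\star B$ as a $\Sigma$ over $A$ directly.
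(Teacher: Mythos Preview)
Your proposal is correct and follows exactly the same approach as the paper, which simply invokes Lemma~\ref{lem:modality-consequences} for the closure of modal types under $\Sigma$, identity types, and $\Pi$ with modal codomain. You have merely spelled out the three steps that the paper compresses into a single sentence.
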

\begin{proof}
By Lemma~\ref{lem:modality-consequences}, modal types are closed under
$\Sigma$, $=$ and $\Pi$ where the codomain is modal.
\end{proof}

%Because $\natural$ commutes with both $\Sigma$ and $\Idsym$ types, there is an equivalence
%\begin{align*}
%\natural (A \to_\star B) \equiv \sm{f : \natural(A \to B)} (\zf(\za))^\natural = \zb^\natural
%\end{align*}

\begin{proposition}
For any pointed type $(A, a)$, there are canonical points $\za : \zA$ and $\za^\natural : \natural \zA$ for which the unit $\eta_A : A \to \natural \zA$ and counit $\varepsilon_A : \natural \zA \to \zA$ are pointed maps.
\end{proposition}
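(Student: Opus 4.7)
The plan is that both parts follow by definitional computation, once we identify the canonical points. First, given $a : A$, apply the admissible rule \rulen{pre-roundtrip} to obtain $\za : \zA$, and then apply \rulen{$\natural$-intro} to obtain $\za^\natural : \natural \zA$ (this is well-formed because $\za$ lives in the dull context $\zc{\Gamma}$ by \rulen{pre-counit}, and $\zA$ is a dull type).

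Next, to show $\eta_A$ is pointed, I need a path $\eta_A(a) = \za^\natural$ in $\natural \zA$. But $\eta_A(a) \defeq (\lambda x. \zx^\natural)(a) \defeq \za^\natural$ definitionally, so $\refl_{\za^\natural}$ suffices. Similarly, to show $\varepsilon_A$ is pointed, I need a path $\varepsilon_A(\za^\natural) = \za$ in $\zA$; but $\varepsilon_A(\za^\natural) \defeq (\za^\natural)_\natural \defeq \za$ by the $\beta$-rule for $\natural$, so again $\refl_{\za}$ suffices.

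There is no real obstacle here; the only subtle point is making sure the types of the canonical points are correctly formed under the judgemental conventions of Section~\ref{sec:tt}, which is handled by combining \rulen{pre-roundtrip} with \rulen{$\natural$-intro}. The whole proof is essentially a one-liner invoking $\beta$ for $\natural$ and the definitions of $\eta_A$ and $\varepsilon_A$.
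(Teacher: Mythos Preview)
Your proposal is correct and essentially identical to the paper's proof: both show $\eta_A(a) \defeq \za^\natural$ by unfolding the definition of $\eta_A$, and $\varepsilon_A(\za^\natural) \defeq \za$ by the $\beta$-rule for $\natural$. The paper is slightly terser (it omits the explicit justification for forming $\za$ and $\za^\natural$), but the content is the same.
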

\begin{proof}
Both are immediate from the definitions:
\begin{align*}
(\lambda x. \zx^\natural)(a) &\defeq \za^\natural \\
(\lambda n. n_\natural)(\za^\natural) &\defeq \za^\natural{}_\natural \defeq \za
\end{align*}
\end{proof}

The following is a standard characterisation of equivalence between $\Sigma$-types:
\begin{lemma}\label{lem:sigma-equiv}
Suppose we have type families $P : A \to \univ$ and $Q : B \to \univ$. If we have an equivalence $f : A \equiv B$ and a family of equivalences $g_x : P(x) \equiv Q(f(x))$ then the map
\begin{align*}
(\lambda (x,p). (f(x), g_x(p))) : \sm{x : A} P(x) \to \sm{y : B} Q(y)
\end{align*}
is an equivalence.
\end{lemma}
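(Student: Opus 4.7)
The plan is to factor the proof as the composition of two standard equivalences, which between them isolate the two hypotheses cleanly. Define an intermediate type $\sm{x:A} Q(f(x))$ and show both
\begin{align*}
\sm{x:A} P(x) &\equiv \sm{x:A} Q(f(x)) \\
\sm{x:A} Q(f(x)) &\equiv \sm{y:B} Q(y).
\end{align*}
The first equivalence uses only the fiberwise equivalences $g_x$: the forward map sends $(x,p)$ to $(x, g_x(p))$, and the inverse sends $(x,q)$ to $(x, g_x^{-1}(q))$, with roundtrips obtained pointwise from the roundtrips of each $g_x$. The second equivalence uses only the equivalence $f$: the forward map sends $(x,q)$ to $(f(x),q)$, and the inverse sends $(y,q)$ to $(f^{-1}(y), \mathsf{transport}^{Q}(\eta_f(y), q))$ where $\eta_f : f(f^{-1}(y)) = y$ is the right-inverse law for $f$. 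Composing the two forward maps yields $(x,p) \mapsto (f(x), g_x(p))$ on the nose, matching the map in the statement.

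The first equivalence can also be obtained without an explicit inverse by noting that the fiber of the total map over $(x,q')$ computes (up to standard $\Sigma$-path manipulations) to the fiber of $g_x$ at $q'$, which is contractible by assumption; then a map with contractible fibers is an equivalence. The main routine obstacle is bookkeeping the transport along $\eta_f$ in the second equivalence when checking the roundtrips---one composite $(x,q) \mapsto (f^{-1}(f(x)), \mathsf{transport}^Q(\eta_f(f(x)), q)) \mapsto \ldots$ requires the triangle identity relating $\eta_f(f(x))$ to $\ap_f$ of the left-inverse law, which is a standard fact about quasi-equivalences. No use of $\natural$ or the new judgemental structure is required; the lemma is a straightforward consequence of the Martin-L\"of fragment and appears as Theorem 4.7.7 in the HoTT book.
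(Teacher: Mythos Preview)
Your proof is correct but follows a different route from the paper. The paper's argument is a one-liner using univalence: it regards $\Sigma$ as a function $(\Sigma A:\univ.\,A\to\univ)\to\univ$, observes that the given data $(f,(g_x)_x)$ is exactly the data of a path $(A,P)=(B,Q)$ in the domain (by univalence and the characterisations of paths in $\Sigma$- and $\Pi$-types), and then applies $\ap_\Sigma$ to get the desired equivalence. Your argument instead factors the map through the intermediate total space $\sm{x:A}Q(f(x))$, handling the fibrewise part with the $g_x$ and the base change with $f$, exactly as in HoTT Book Theorem~4.7.7. Your approach is more elementary---it avoids univalence entirely and yields an explicit inverse with computable roundtrips---while the paper's approach is terser and exploits that univalence is already ambient in the setting. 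Both establish that the specific map in the statement (not merely some equivalence) is an equivalence.
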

\begin{proof}
We have $\Sigma : (\Sigma A : \univ. A \to \univ) \to \univ$, and the given data is equivalent to $(A,P) =_{(\Sigma A : \univ. A \to \univ)} (B,Q)$ by univalence and the definitions of paths in $\Sigma$ and $\Pi$ types.
\end{proof}

%% \drlnote{Do we need to know that the next couple are pointed equivalences? Is it enough check that one of the functions preserves the point?}

\begin{proposition}\label{prop:remove-nat-left-pointed}
Let $(A, a)$ and $(B, b)$ be pointed types with $B$ modal. Precomposition with $A \to_\star \natural \zA$ induces an equivalence
\begin{align*}
(\natural \zA \to_\star B) \equiv (A \to_\star B)
\end{align*}
\end{proposition}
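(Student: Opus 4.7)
The plan is to unfold both sides as $\Sigma$-types and reduce to the right universal property (Theorem~\ref{thm:right-univ-prop}) on the first (function) component, observing that on the second (pointedness) component the two path types coincide definitionally. Specifically, we have
\[
(\natural \zA \to_\star B) \defeq \sm{f : \natural \zA \to B} f(\za^\natural) = b
\qquad \text{and} \qquad
(A \to_\star B) \defeq \sm{g : A \to B} g(a) = b.
\]
Precomposition with $\eta_A = \lambda x.\zx^\natural$ sends $(f,p)$ to $(f \circ \eta_A, q)$ where, by unfolding, $q$ inhabits $(f \circ \eta_A)(a) = b$, and by definition of $\eta_A$ we have $(f \circ \eta_A)(a) \defeq f(\za^\natural)$. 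So the path type in the second component is judgementally unchanged.

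First, I would apply Theorem~\ref{thm:right-univ-prop} to the constant family $B(v) \defeq B$, which is modal by hypothesis, to obtain an equivalence $\phi : (\natural \zA \to B) \equiv (A \to B)$ given by precomposition with $\eta_A$. Then I would apply Lemma~\ref{lem:sigma-equiv} with this $\phi$ as the equivalence on the base, and with the family of equivalences on fibres being the identity, using the definitional equality $(\phi(f))(a) \defeq f(\za^\natural)$ to see that the fibre over $f$ on the left, $f(\za^\natural) = b$, is literally the same type as the fibre over $\phi(f)$ on the right, $(\phi(f))(a) = b$.

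The main thing to verify is that the map produced by Lemma~\ref{lem:sigma-equiv} agrees with ``precomposition with the pointed map $A \to_\star \natural \zA$''. Recall that the canonical pointing of $\eta_A$ is the definitional equality $\eta_A(a) \defeq \za^\natural$, i.e.\ $\refl{\za^\natural}$. Composition of pointed maps $(f,p) \circ (\eta_A, \refl{}) $ yields the pair $(f \circ \eta_A, p \cdot \ap_f(\refl{\za^\natural}))$, whose second component reduces to $p$, matching the fibrewise identity equivalence used in the application of Lemma~\ref{lem:sigma-equiv}. I do not expect a serious obstacle here; the only thing to be careful about is that the canonical pointing of $\eta_A$ is $\refl{}$ so that precomposition of pointed maps reduces transparently to precomposition of underlying functions paired with the original pointedness witness.
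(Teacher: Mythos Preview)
Your proposal is correct and matches the paper's proof essentially line for line: both apply Theorem~\ref{thm:right-univ-prop} to obtain the equivalence on underlying maps, then (implicitly in the paper, explicitly via Lemma~\ref{lem:sigma-equiv} in your version) observe that the pointedness fibres are judgementally identical because precomposition with $\eta_A$ satisfies $(f\circ\eta_A)(a)\defeq f(\za^\natural)$. Your extra paragraph verifying that the resulting total map agrees with pointed precomposition is a detail the paper leaves implicit, but it is routine and handled correctly.
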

\begin{proof}
We have seen (Theorem~\ref{thm:right-univ-prop}) that there is an equivalence
\begin{align*}
w : (\natural \zA \to B)  \equiv (A \to B).
\end{align*}
So we need to show that for every $f : \natural \zA \to B$, there is an equivalence
\begin{align*}
(f(\za^\natural) = b) \equiv (w(f)(a) = b)
\end{align*}
but $w$ is precomposition with the unit, so the type on the right is also $(f(\za^\natural) = b)$.
\end{proof}

\begin{proposition}\label{prop:remove-nat-right-pointed}
For dull pointed types $(\zA, \za)$ and $(\zB, \zb)$, post-composition with $(-)_\natural : \natural \zB \to \zB$ induces an equivalence
\begin{align*}
\natural (\natural \zA \to_\star \natural \zB) \equiv \natural (\natural \zA \to_\star \zB)
\end{align*}
\end{proposition}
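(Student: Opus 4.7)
The plan is to parallel the proof of Theorem~\ref{thm:left-univ-prop}, reducing to that (unpointed) result by unfolding pointed maps as $\Sigma$-types and applying $\natural$-preserves-$\Sigma$ (Proposition~\ref{prop:natural-preserves-sigma}) on both sides. Concretely, since pointed maps are a $\Sigma$-type, $\natural(\natural\zA \to_\star \natural\zB)$ becomes equivalent to $\sum_{u : \natural(\natural\zA \to \natural\zB)} \natural(u_\natural(\za^\natural) = \zb^\natural)$, and similarly the right-hand side becomes $\sum_{v : \natural(\natural\zA \to \zB)} \natural(v_\natural(\za^\natural) = \zb)$. I will then combine a base equivalence with a fibrewise equivalence via Lemma~\ref{lem:sigma-equiv}.

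For the base equivalence, Theorem~\ref{thm:left-univ-prop} already gives $\alpha : \natural(\natural\zA \to \natural\zB) \equiv \natural(\natural\zA \to \zB)$, and up to the definitional equalities used in that proof we have $\alpha(u)_\natural(\za^\natural) \defeq u_\natural(\za^\natural)_\natural$, i.e.\ post-composition with $(-)_\natural$.

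For the fibrewise equivalence, abbreviate $y \defeq u_\natural(\za^\natural) : \natural\zB$; I need $\natural(y = \zb^\natural) \equiv \natural(y_\natural = \zb)$. By Proposition~\ref{prop:natural-type-is-a-space}, $\natural\zB$ is modal, so by Lemma~\ref{lem:modality-consequences} its identity type $y = \zb^\natural$ is modal, which collapses $\natural(y = \zb^\natural)$ to $(y = \zb^\natural)$. By left-exactness (Proposition~\ref{prop:natural-is-left-exact}), $\natural(y_\natural = \zb) \equiv (y_\natural^\natural = \zb^\natural)$, and the $\eta$-rule for $\natural$ together with $\zy \defeq y$ (Proposition~\ref{prop:zero-in-natural}) rewrites $y_\natural^\natural$ to $y$. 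Chaining yields the required equivalence, and tracing through the definitions one sees that the composite sends a pointedness witness $p : u_\natural(\za^\natural) = \zb^\natural$ to $\ap_{(-)_\natural}(p) : u_\natural(\za^\natural)_\natural = \zb$, which is exactly what post-composition with $(-)_\natural$ does on the pointedness component.

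The main obstacle will be verifying that the fibre equivalence constructed by chaining these standard facts really coincides with the map induced by post-composition with $(-)_\natural$, rather than being some other equivalence, so that the final map agrees with the one named in the statement. This is a short path-algebra calculation in the style of the verifications in Proposition~\ref{prop:natural-is-left-exact} and Theorem~\ref{thm:left-univ-prop}, but care is needed in tracking the underlining on the free variables $\za, \zb$ and on the path $p$ (both are already dull because $(\zA,\za)$ and $(\zB,\zb)$ are dull pointed types, which is what makes the chain of definitional equalities close up).
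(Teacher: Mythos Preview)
Your proposal is correct and follows essentially the same route as the paper: unfold $\to_\star$ as a $\Sigma$-type, apply Proposition~\ref{prop:natural-preserves-sigma} on both sides, use Lemma~\ref{lem:sigma-equiv} with Theorem~\ref{thm:left-univ-prop} on the base and left-exactness plus the $\eta$-rule on the fibre, and finally check that the resulting equivalence is the post-composition map. The only cosmetic difference is that the paper manipulates the fibre entirely under $\natural$ (using the $\eta$-rule first, then left-exactness and idempotence), whereas you strip $\natural$ from the modal side and meet in the middle at $(y = \zb^\natural)$; both arguments are equally valid.
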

\begin{proof}
Because $\natural$ commutes with $\Sigma$ by
Proposition~\ref{prop:natural-preserves-sigma}, we can again apply
Lemma~\ref{lem:sigma-equiv} so it is enough to show that the map
\begin{align*}
\sm{f : \natural (\natural \zA \to \natural \zB)} \natural (\zf_\natural(\za^\natural) = \zb^\natural) \equiv \sm {g : \natural(\natural \zA \to \zB)} \natural (\zg_\natural (\za^\natural) = \zb)
\end{align*}
induced by post-composition is an equivalence.

First, we know by Theorem~\ref{thm:left-univ-prop} that
\begin{align*}
w : \natural (\natural \zA \to \natural \zB) \equiv \natural (\natural \zA \to \zB)
\end{align*}
Explicitly, this is
\begin{align*}
w(f) \defeq (\lambda x. \zf_\natural(x)_\natural)^\natural
\end{align*}
So we need to produce for every $f : \natural (\natural \zA \to \natural \zB)$ an equivalence
\begin{align*}
\natural (\zf_\natural(\za^\natural) = \zb^\natural) \equiv \natural ((\zw(\zf))_\natural (\za^\natural) = \zb)
\end{align*}
On the right we calculate
\begin{align*}
(\zw(\zf))_\natural (\za^\natural) \defeq [(\lambda x. \zf_\natural(x)_\natural)^\natural]_\natural (\za^\natural) \defeq
(\lambda x. \zf_\natural(x)_\natural)(\za^\natural) \defeq
\zf_\natural(\za^\natural)_\natural
\end{align*}
And then we have
%\begin{align*}
%\natural (\zf_\natural(\za^\natural) = \zb^\natural)
%&\equiv (\zf_\natural(\za^\natural) = \zb^\natural) && \text{(because $\Idsym$-types of spaces are spaces)} \\
%&\defeq (\zf_\natural(\za^\natural)_\natural{}^\natural = \zb^\natural) && \text{(by the $\eta$-rule)}\\
%&\defeq ((\zw(\zf))_\natural{}(\za^\natural)^\natural  = \zb^\natural) && \text{(by the above calculation)}\\
%&\equiv \natural ((\zw(\zf))_\natural (\za^\natural) = \zb) && \text{(by left exactness)}
%\end{align*}
%or
\begin{align*}
\natural (\zf_\natural(\za^\natural) = \zb^\natural)
&\defeq \natural ((\zf_\natural(\za^\natural))_\natural{}^\natural = \zb^\natural) && \text{(by the $\eta$-rule)} \\
&\defeq \natural ((\zw(\zf))_\natural{}(\za^\natural)^\natural  = \zb^\natural) && \text{(by the above calculation)}\\
&\equiv \natural ((\zw(\zf))_\natural (\za^\natural) = \zb) && \text{(by left exactness and idempotence)}
\end{align*}
One then has to check that this equivalence is the one that is induced by composition with the counit, but this follows from the definition of the left-exactness equivalence.
\end{proof}

\begin{proposition}[Dull Self-adjointness for Pointed Types]\label{prop:pointed-self-adjoint}
For any dull pointed types $A$ and $B$, there is an equivalence
\begin{align*}
\natural (\zA \to_\star \natural \zB) \equiv \natural (\natural \zA \to_\star \zB)
\end{align*}
\end{proposition}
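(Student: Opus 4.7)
The plan is to mirror the proof of Corollary~\ref{cor:self-adjoint}, replacing the unpointed universal properties with their pointed analogues (Propositions~\ref{prop:remove-nat-left-pointed} and~\ref{prop:remove-nat-right-pointed}), and routing through the intermediate type $\natural(\natural \zA \to_\star \natural \zB)$.

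First, I would apply Proposition~\ref{prop:remove-nat-left-pointed} with $A := \zA$ and $B := \natural \zB$, canonically pointed at $\za$ and $\zb^\natural$ respectively. Since $\zA$ is already dull we have $\zc{\zA} \defeq \zA$, and $\natural \zB$ is modal by Proposition~\ref{prop:natural-type-is-a-space}, so the hypotheses are satisfied and the proposition yields
\[
(\natural \zA \to_\star \natural \zB) \equiv (\zA \to_\star \natural \zB).
\]
Since $\natural$ is a functor on the universe (Definition~\ref{def:natural-is-a-functor}), it preserves this equivalence, giving
\[
\natural(\natural \zA \to_\star \natural \zB) \equiv \natural(\zA \to_\star \natural \zB).
\]

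Next, Proposition~\ref{prop:remove-nat-right-pointed} applied directly to the dull pointed types $(\zA, \za)$ and $(\zB, \zb)$ produces
\[
\natural(\natural \zA \to_\star \natural \zB) \equiv \natural(\natural \zA \to_\star \zB).
\]
Composing these two equivalences through the shared middle term yields the desired
\[
\natural(\zA \to_\star \natural \zB) \equiv \natural(\natural \zA \to_\star \zB).
\]

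I do not anticipate a serious obstacle: the two pointed universal properties are already established, and the task reduces to tracking the canonical basepoints $\za$, $\za^\natural$, $\zb$, $\zb^\natural$ consistently through the equivalences, which is automatic since each cited proposition is already stated at the level of pointed mapping spaces. The only minor bookkeeping item is confirming that functoriality of $\natural$ sends the equivalence from Proposition~\ref{prop:remove-nat-left-pointed} to an equivalence of the $\natural$-wrapped types, but this is immediate from the fact that $\natural$ preserves identities and composition definitionally. Overall this proof is the pointed analogue of Corollary~\ref{cor:self-adjoint} assembled in exactly the same pattern.
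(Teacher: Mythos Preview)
Your proposal is correct and follows essentially the same approach as the paper: both route through the intermediate type $\natural(\natural \zA \to_\star \natural \zB)$, using Proposition~\ref{prop:remove-nat-left-pointed} (plus preservation of equivalences by $\natural$) for one leg and Proposition~\ref{prop:remove-nat-right-pointed} directly for the other. The only cosmetic difference is that the paper invokes univalence to justify that $\lambda X.\natural \zX$ preserves equivalences, whereas you cite the functoriality from Definition~\ref{def:natural-is-a-functor}; both are valid.
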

\begin{proof}

  By Proposition~\ref{prop:remove-nat-left-pointed} (where $\natural
  \zB$ is modal by Proposition~\ref{prop:natural-type-is-a-space}), $(\zA
  \to_\star \natural \zB) \equiv (\natural \zA \to_\star \natural \zB)$.
  Using univalence, $(\lambda X:\univ.\natural \zX)$ preserves equivalences, so we have
  \[
  \natural (\zA \to_\star \natural \zB) \equiv \natural (\natural \zA \to_\star \natural \zB)
  \]
  Then by Proposition~\ref{prop:remove-nat-right-pointed} we have
  $\natural (\natural \zA \to_\star \natural \zB) \equiv \natural (\natural \zA \to_\star \zB)$.
\end{proof}

\section{Reduced Types}\label{sec:redu}

Informally, the modal types $A \equiv \natural \zA$ studied above are
those with \emph{no} synthetic spectral information---we think of
$\natural A$ as forgetting the spectra and replacing them with the
trivial one, so if $A$ is equivalent to $\natural \zA$, then $A$ had no spectral
information to begin with.  Dually, we can consider types with
\emph{only} synthetic spectral information, which can be defined by
demanding that its underlying space is contractible.  In our intended
model, such a type corresponds to an individual spectrum indexed by the
point --- finding the spectra among the parametrised families of spectra
as those families where the index space is trivial.  However, since the
type theory thus far admits more models than the main intended one, we
will refer to such a type as \emph{reduced} rather than as ``a
spectrum'' (though when reading it can be helpful to substitute ``a
spectrum'' for ``reduced'' to gain intuition).  This is an instance of a
general definition: for any monadic modality $\bigcirc$, a
\emph{$\bigcirc$-connected} type $A$ is one where $\bigcirc A$ is
contractible~\cite{rss:modalities}, so a reduced type is a $\natural$-connected
one.

\begin{definition}
A type $E$ is \emph{reduced} if $\natural \zE$ is contractible.  We have
\begin{align*}
\Redu &:\defeq \sm{E : \univ} \isContr(\natural \zE)
\end{align*}
for the type of reduced types.
\end{definition}

\begin{remark}
  In the pointed spaces model of Section~\ref{sec:toy-model}, the
  reduced types are ``synthetic pointed types''.  A reduced type $A$
  has $\dnst A$ contractible, so $\upst A$ is just a single
  type, and its section $\ptst A$ is just an element of $\upst A$.
  Moreover, any function $f : A \to B$ for $A$ and $B$ reduced is a
  ``synthetic pointed map''---inside the type theory, we do not need to
  carry around the data saying that $f$ preserves the point, but in the
  pointed spaces model it will be interpreted as a function that
  preserves the sections of $A$ and $B$, i.e.\ the points.
\end{remark}

\begin{remark}
With reduced types/spectra in mind, we can clarify why the syntactic
property of a type $\zE$ being dull is not the same as it being
modal. Thinking of our intended model, the only spectrum (reduced type)
that is also a space (modal) is the point.  However, we can have a
non-trivial spectrum (reduced type) that is dull, which describes the
relationship of $\zE$ to the \emph{context}---a dull spectrum is one
that only varies over the underlying space of the context.
\end{remark}

In what follows, we will often say ``For a dull \zX \ldots'', which
should be parsed formally as a dull $\Pi$ type $\prd{\zX::\ldots}
\ldots$ as in Definition~\ref{prop:dull-pis}.  In particular ``For a dull
reduced type $\zE$ means $\prd{\zE :: \Redu}{\ldots}$, though we will
omit the projection of the type part and use $\zE$ directly as a type.
This means that proof that $\zE$ is reduced is dull, as well as the type
itself.

\begin{definition}\label{def:reduced-is-pointed}
Any dull reduced type $\zE$ has a canonical point $\star_{\zE} : \zE$
given by the composite $1 \to \natural \zE \to \zE$, where the first map
is part of the data of $\natural \zE$ being contractible, and the second
is $\varepsilon_E$.
\end{definition}

\begin{proposition}\label{prop:reduced-map-is-pointed}
Any dull map $\zf : \zE \to \zF$ between dull reduced types is a pointed map in a canonical way.
\end{proposition}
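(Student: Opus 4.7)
The plan is to exhibit the required path $\zf(\star_{\zE}) = \star_{\zF}$ by combining naturality of the counit with the contractibility of $\natural \zF$. Unfolding Definition~\ref{def:reduced-is-pointed}, let $c_{\zE} : \natural \zE$ and $c_{\zF} : \natural \zF$ denote the centres of contraction, so that $\star_{\zE} \defeq \varepsilon_{\zE}(c_{\zE})$ and $\star_{\zF} \defeq \varepsilon_{\zF}(c_{\zF})$ (using that $\zc{\zE} \defeq \zE$ since $\zE$ is already dull, and likewise for $\zF$).

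Since $\zf$ is dull, we obtain $\natural \zf : \natural \zE \to \natural \zF$ from Definition~\ref{def:natural-is-a-functor}, and by the naturality of the counit the square
\[
\begin{tikzcd}
\natural \zE \ar[r, "\natural \zf"] \ar[d, "\varepsilon_{\zE}" swap] & \natural \zF \ar[d, "\varepsilon_{\zF}"] \\
\zE \ar[r, "\zf" swap] & \zF
\end{tikzcd}
\]
commutes. Since $\natural \zF$ is contractible, we have a path $p : \natural \zf(c_{\zE}) = c_{\zF}$. Chaining these gives
\[
\zf(\star_{\zE}) \defeq \zf(\varepsilon_{\zE}(c_{\zE})) \;=\; \varepsilon_{\zF}(\natural \zf(c_{\zE})) \;=\; \varepsilon_{\zF}(c_{\zF}) \defeq \star_{\zF},
\]
where the first equation uses the naturality square and the second uses $\ap_{\varepsilon_{\zF}}(p)$.

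There are no serious obstacles: the computation is essentially the standard fact that any map of $\bigcirc$-connected types between terminal $\bigcirc$-reflections is automatically pointed. The only thing to check is that every ingredient is well-typed in the dull setting; because $\zE$, $\zF$, and $\zf$ are all dull, $\natural \zf$, the counits $\varepsilon_{\zE}, \varepsilon_{\zF}$, and the centres $c_{\zE}, c_{\zF}$ all live in the expected contexts, and the proof goes through uniformly. Packaging $(\zf, \zf(\star_{\zE}) = \star_{\zF})$ as a term of $\zE \to_\star \zF$ completes the canonical pointing.
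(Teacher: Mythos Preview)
Your proof is correct and follows essentially the same approach as the paper: the paper presents the argument diagrammatically (a triangle over a square, with the triangle commuting by contractibility of $\natural \zF$ and the square by naturality of the counit), while you spell out the same two ingredients explicitly with named centres of contraction and the path $p$. The content is identical.
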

\begin{proof}
In the diagram
\begin{mathpar}
\begin{tikzcd}[column sep=1em]
& 1 \ar[dl] \ar[dr] \\
\natural \zE \ar[rr, "\natural \zf"] \ar[d] && \natural \zF \ar[d] \\
\zE \ar[rr, "\zf" swap] && \zF
\end{tikzcd}
\end{mathpar}
the top triangle commutes by contractibility of $\natural \zF$, and the bottom square commutes by naturality of the counit, so we have a path equipping $\zf$ with the structure of a pointed map.
\end{proof}

We can't show that this pointing of $\zf : \zE \to \zF$ is unique, but we can show that the underlying space of `$\zf$ is pointed' is contractible, i.e.\  `$\zf$ is pointed' is itself reduced. Hence:
\begin{proposition}\label{prop:spectra-map-is-pointed}
If $\zE$ and $\zF$ are dull reduced types then $\natural(\zE \to \zF) \equiv \natural(\zE \to_\star \zF)$.
\end{proposition}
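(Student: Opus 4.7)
The plan is to expand the type of pointed maps as a $\Sigma$-type, apply the fact that $\natural$ commutes with $\Sigma$ (Proposition~\ref{prop:natural-preserves-sigma}), and then show that the ``pointedness'' fibre becomes contractible after applying $\natural$, so that the $\Sigma$ collapses onto its base.

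First I would unfold $\zE \to_\star \zF \defeq \sum_{f : \zE \to \zF} (f(\star_\zE) = \star_\zF)$. Since $\zE$ and $\zF$ are dull and their witnesses of being reduced are dull (because we assumed $\zE, \zF :: \Redu$), the canonical points $\star_\zE : \zE$ and $\star_\zF : \zF$ of Definition~\ref{def:reduced-is-pointed} are dull, the type $\zE \to \zF$ is dull, and the fibre family $f \mapsto (f(\star_\zE) = \star_\zF)$ uses only marked ambient variables. Proposition~\ref{prop:natural-preserves-sigma} therefore applies and gives
\[
\natural(\zE \to_\star \zF) \;\equiv\; \sum_{u : \natural(\zE \to \zF)} \natural\bigl((\zu_\natural)(\star_\zE) = \star_\zF\bigr).
\]

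Next I would show the second factor is contractible for every $u$. Both $(\zu_\natural)(\star_\zE)$ and $\star_\zF$ live in $\zF$, so by left-exactness (Proposition~\ref{prop:natural-is-left-exact}), the type $\natural\bigl((\zu_\natural)(\star_\zE) = \star_\zF\bigr)$ is equivalent to the identity type $(\zu_\natural)(\star_\zE)^\natural =_{\natural \zF} \star_\zF^\natural$. Since $\zF$ is reduced, $\natural \zF$ is contractible, and any identity type in a contractible type is contractible. Hence by Lemma~\ref{lem:sigma-equiv} (or just that a $\Sigma$ with contractible fibres is equivalent to its base) we conclude $\natural(\zE \to_\star \zF) \equiv \natural(\zE \to \zF)$. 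The composite equivalence agrees with the map induced by the forgetful projection $\zE \to_\star \zF \to (\zE \to \zF)$, because the equivalence from Proposition~\ref{prop:natural-preserves-sigma} sends the first component of the $\Sigma$ on the right to $\natural$ applied to the first projection.

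The main obstacle is really just the bookkeeping around dullness: one has to be careful that the fibre family $f \mapsto (f(\star_\zE) = \star_\zF)$ is genuinely a dull family over the dull type $\zE \to \zF$, so that the $\Sigma$-preservation lemma applies. Once that is set up, the rest is a short application of left-exactness and contractibility of $\natural \zF$.
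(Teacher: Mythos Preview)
Your proposal is correct and follows essentially the same route as the paper: unfold $\to_\star$ as a $\Sigma$-type, apply Proposition~\ref{prop:natural-preserves-sigma}, then Proposition~\ref{prop:natural-is-left-exact}, and finally use contractibility of $\natural \zF$. Your additional remarks about dullness bookkeeping and the identification of the resulting equivalence with the forgetful projection are welcome elaborations but do not change the underlying argument.
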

\begin{proof}
We verify
\begin{align*}
\natural(\zE \to_\star \zF)
&\defeq \natural\left (\sm{f : \zE \to \zF} f(\star_{\zE}) = \star_{\zF} \right) \\
&\equiv \sm{f : \natural(\zE \to \zF)} \natural(\zf_\natural(\star_{\zE}) = \star_{\zF} ) && \text{(Proposition~\ref{prop:natural-preserves-sigma})} \\
&\equiv \sm{f : \natural(\zE \to \zF)} \zf_\natural(\star_{\zE})^\natural =_{\natural \zF} \star_{\zF}{}^\natural && \text{(Proposition~\ref{prop:natural-is-left-exact})} \\
&\equiv \natural(\zE \to \zF) && \text{($\natural \zF$ is contractible)}
\end{align*}
\end{proof}

\begin{definition}
If $A$ is any type and $\zx : \natural \zA$ is a dull point of its base, the \emph{reduced type over $x$} is the type
\[ A_{\zx} :\defeq \sm{y : A} (\zx = \zy{}^\natural) \]
\end{definition}
When $\zA$ is dull, ${\zA}_{\zx}$ is canonically pointed by $(\zx{}_\natural, \refl{\zx}) : \sm{y : A} (\zx = \zy{}^\natural)$.

\begin{proposition}
$A_{\zx}$ is reduced.
\end{proposition}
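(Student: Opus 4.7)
The plan is to reduce $\natural \zc{A_{\zx}}$ to a singleton type at $\zx$ in $\natural \zA$, which is automatically contractible. Conceptually, $A_{\zx}$ is the fiber of the unit $\eta_A : A \to \natural \zA$ over $\zx$, and for any monadic modality, $\natural$ applied to such a fiber is contractible because $\natural$ applied to the unit is an equivalence by idempotence.

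To execute this, I would first compute $\zc{A_{\zx}} \defeq \sm{y:\zA}(\zx = \zy^\natural)$, using that the marking operation commutes with $\Sigma$ and with identity types, and fixes the already-marked term $\zx$. Applying Proposition~\ref{prop:natural-preserves-sigma} to the (already dull) family $y \mapsto (\zx = \zy^\natural)$ then yields
\begin{align*}
\natural \zc{A_{\zx}} \equiv \sm{u:\natural \zA} \natural(\zx = \zu_\natural{}^\natural).
\end{align*}
The $\natural$-$\eta$ rule gives $\zu_\natural{}^\natural \defeq u$, reducing the inner type to $\natural(\zx = u)$. Since $\natural \zA$ is modal (Proposition~\ref{prop:natural-type-is-a-space}) and identity types of modal types are modal (Lemma~\ref{lem:modality-consequences}), the type $(\zx = u)$ is modal, so $\natural(\zx = u) \equiv (\zx = u)$. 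The result is $\sm{u:\natural \zA}(\zx = u)$, the singleton at $\zx$, which is contractible.

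The main obstacle is the bookkeeping around the marking operation and dependency: in particular, confirming that the family $y \mapsto (\zx = \zy^\natural)$ is genuinely dull so that Proposition~\ref{prop:natural-preserves-sigma} applies in the form claimed, and that substituting $y := \zu_\natural$ into the zeroed body really produces $(\zx = \zu_\natural{}^\natural)$ up to definitional equality prior to the $\eta$-simplification. Once these syntactic checks are settled, the chain of equivalences -- $\natural$ preserves $\Sigma$, the $\natural$-$\eta$ rule, identity types in modal types being modal, and singleton contractibility -- completes the proof without further subtlety.
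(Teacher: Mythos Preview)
Your proposal is correct and follows essentially the same route as the paper: apply Proposition~\ref{prop:natural-preserves-sigma} to rewrite $\natural$ of the $\Sigma$-type, use the $\natural$-$\eta$ law $\zu_\natural{}^\natural \defeq u$, drop the inner $\natural$ because identity types in the modal type $\natural \zA$ are modal (Lemma~\ref{lem:modality-consequences}), and conclude by singleton contractibility. The only cosmetic difference is that the paper swaps the order of the last two simplifications (first removing $\natural$ from the modal identity type, then applying the $\eta$-law), and does not spell out the preliminary computation of $\zc{A_{\zx}}$.
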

\begin{proof}
We calculate:
\begin{align*}
\natural \left( \sm{y : A} (\zx = \zy{}^\natural) \right)
%&\equiv \sm{u : \natural \zA} \natural (\zx = (u_\natural)^0{}^\natural) \\
&\equiv \sm{u : \natural \zA} \natural (\zx = \zu{}_\natural{}^\natural) && \text{(Proposition~\ref{prop:natural-preserves-sigma})}\\
&\equiv \sm{u : \natural \zA} (\zx = \zu{}_\natural{}^\natural) && \text{($=_{\natural A}$ is modal by Proposition~\ref{lem:modality-consequences})}\\
&\defeq \sm{u : \natural \zA} (\zx = u)
\end{align*}
which is contractible.
\end{proof}
This lets us internalise the idea that every type is a `space-valued family of spectra':
\begin{corollary}\label{cor:sum-of-fibres}
For any type $A$,
\[ A \equiv \sm{x : \natural \zA} A_{\zx}\]
\end{corollary}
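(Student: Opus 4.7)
The plan is to unfold the definition of $A_{\zx}$ and reshuffle the $\Sigma$-types so the inner piece becomes a singleton, which is contractible. Explicitly,
\[
\sm{x : \natural \zA} A_{\zx}
\;\defeq\;
\sm{x : \natural \zA} \sm{y : A} (\zx = \zy^\natural)
\;\equiv\;
\sm{y : A} \sm{x : \natural \zA} (\zx = \zy^\natural).
\]
Here the second step is the standard equivalence swapping two $\Sigma$-types, which requires no modal reasoning. The intended maps back and forth are $y \mapsto (\zy^\natural,(y,\refl{\zy^\natural}))$ and the composite of the two projections $(x,(y,p)) \mapsto y$.

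The key remaining step is to check that for each $y : A$ the fibre $\sm{x : \natural \zA} (\zx = \zy^\natural)$ is contractible. Because $x : \natural \zA$, we have $\zx \defeq x$ definitionally by Proposition~\ref{prop:zero-in-natural}, so this type is definitionally $\sm{x : \natural \zA} (x = \zy^\natural)$, i.e.\ the based-path space at $\zy^\natural$ in $\natural \zA$, which is contractible by the usual singleton argument in HoTT. Hence the right-hand side is equivalent to $\sm{y:A} 1 \equiv A$, as required.

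The only mild obstacle is to make sure the marked variables behave correctly under this rearrangement. The $\zx$ inside $A_{\zx}$ is the notation from the definition preceding the corollary, where $\zx$ was a dull point; but when we take the outer $\Sigma$ to range over $x : \natural \zA$ we are substituting a non-dull variable of a modal type. The reconciliation is exactly Proposition~\ref{prop:zero-in-natural}: for $x : \natural \zA$, the equality $\zx \defeq x$ holds definitionally, so the substitution $A_{\zx}[x/\zx]$ typechecks and yields the type used above. Once this is noted, no further subtleties arise: the swap equivalence and contractibility of singletons are both provable in plain HoTT with no additional modal ingredients.
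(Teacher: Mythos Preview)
Your proof is correct and follows essentially the same approach as the paper: expand $A_{\zx}$, use Proposition~\ref{prop:zero-in-natural} to identify $\zx$ with $x$ for $x:\natural\zA$, swap the two $\Sigma$-types, and contract the resulting singleton. The only cosmetic difference is that the paper replaces $\zx$ by $x$ \emph{before} interchanging the $\Sigma$-types, whereas you interchange first and then simplify; both orders are fine, and your discussion of the ``mild obstacle'' correctly identifies Proposition~\ref{prop:zero-in-natural} as the ingredient that makes the substitution typecheck.
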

\begin{proof}
Expanding $A_{\zx}$ on the right, and using the fact that $x = \zx$:
\begin{align*}
\sm{x : \natural \zA} \sm{y : A} (\zx = \zy{}^\natural) \equiv \sm{x : \natural \zA} \sm{y : A} (x = \zy{}^\natural)
\end{align*}
Interchange the $\Sigma$-types and then you have a contractible pair.
\end{proof}

Reduced types are closed under many operations:
\begin{proposition}\label{prop:reduced-types-closed}
Reduced types are closed under $\Sigma$-types, identity types, pullbacks, pushouts, suspensions and loop spaces.
\end{proposition}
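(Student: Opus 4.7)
The plan is to reduce each case to the corresponding closure property of contractible types by invoking the relevant $\natural$-preservation result proved above. The common pattern is: the $\natural$-modalisation of the constructor is equivalent to the same construction applied to the $\natural$-modalisations of its inputs, and contractible types are closed under all of the operations in question.

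For $\Sigma$-types, assume $\zE$ is reduced and $\zP$ is a family of reduced types on $\zE$. Proposition~\ref{prop:natural-preserves-sigma} gives
\[
\natural\left(\sm{x : \zE} \zP(x)\right) \equiv \sm{u : \natural \zE} \natural \zP(\zu{}_\natural),
\]
and since $\natural \zE$ is contractible this is equivalent to the fibre over its centre, which is contractible by reducedness of $\zP(\star_{\zE})$. For identity types of a reduced $\zE$ with $x, y : \zE$, use that $\zc{(x = y)} \defeq (\zx = \zy)$ together with Proposition~\ref{prop:natural-is-left-exact} to get $\natural \zc{(x = y)} \equiv (\zx{}^\natural =_{\natural \zE} \zy{}^\natural)$, which is contractible because $\natural \zE$ is. For pullbacks of dull reduced types along dull maps, preservation of pullbacks gives $\natural(\zA \times_{\zC} \zB) \equiv \natural \zA \times_{\natural \zC} \natural \zB$, a pullback of contractibles, hence contractible.

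For pushouts of dull reduced types along dull maps, Proposition~\ref{prop:natural-preserves-pushouts} gives $\natural(\zA +_{\zC} \zB) \equiv \natural \zA +_{\natural \zC} \natural \zB$, a pushout in which all three vertices are contractible and which is therefore itself contractible (the universal property trivially produces a map into any type, and a direct induction on the HIT shows the identity equals a constant map). The suspension $\Sigma \zE \defeq 1 +_{\zE} 1$ is a special case of the pushout, and the loop space $\Omega(\zE, \star_{\zE}) \defeq (\star_{\zE} =_{\zE} \star_{\zE})$ is a special case of the identity type, so these two cases are immediate.

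The main bookkeeping concern is the dullness hypotheses demanded by the pullback and pushout preservation theorems: both the types and the connecting maps must be dull. This is not a genuine restriction because, following the convention introduced before Definition~\ref{def:reduced-is-pointed}, reduced types are always assumed in a dull position, making the hypotheses automatic. Apart from this, each case is a mechanical instance of the recipe above.
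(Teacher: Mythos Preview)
Your proof is correct and follows essentially the same strategy as the paper. The only difference is packaging: the paper dispatches closure under $\Sigma$, identity types, and pullbacks in one stroke by citing the general fact that $\bigcirc$-connected types are closed under these for any left-exact modality \cite[Theorem 3.1]{rss:modalities}, whereas you unpack this by appealing directly to the concrete $\natural$-preservation results established earlier (Propositions~\ref{prop:natural-preserves-sigma} and~\ref{prop:natural-is-left-exact} and its corollary for pullbacks). For pushouts, suspensions, and loop spaces your argument matches the paper's exactly.
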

\begin{proof}
Closure under $\Sigma$, identity types and pullbacks holds for any lex
modality \cite[Theorem 3.1]{rss:modalities}. Closure under pushouts
follows from
Proposition~\ref{prop:natural-preserves-pushouts}. Suspension types and
loop spaces (defined via identity types) are special cases of pullbacks and pushouts.
\end{proof}

\begin{remark}
  Since a reduced type $E$ is a type, we can apply the ordinary higher
  inductive suspension type constructor $\Sigma \zE$ to it, and
  similarly, for an element $x:E$, form a loop space via the identity
  type $\Omega(E,x) :\defeq x=_E x$.  Thinking of the reduced types as
  spectra, these operations on types do turn out to correspond
  semantically to the correct operations of suspension and loop space
  on spectra.  This is because $\Spec$ is a full subcategory of
  $P\Spec$, and the limits and colimits defining suspension and loop
  space already land in $\Spec$, so they coincide with the limits and
  colimits calculated in the subcategory.
\end{remark}

\begin{remark}
  A common abuse of notation is, for a pointed type $(X,x)$, to write
  $\Omega X$ for the pointed type $(x =_X x, \refl{x})$, leaving $x$ to be
  inferred from context.  Any dull reduced type $\zE$ is pointed by
  $\star_{\zE}$ from Definition~\ref{def:reduced-is-pointed}, and when we
  write $\Omega \zE$ for a dull reduced type, $\star_{\zE}$ is the point
  that should be inferred.
\end{remark}

\begin{proposition}
  For dull reduced types $\zE$ and $\zF$, suspension and loop space are dull adjoints, i.e.
  \[
  \natural(\Sigma \zE \to \zF) \equiv \natural(\zE \to \Omega \zF)
  \]
\end{proposition}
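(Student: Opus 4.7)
The plan is to reduce this equivalence to the standard pointed suspension-loop adjunction of axiomatic HoTT via Proposition~\ref{prop:spectra-map-is-pointed}. First I would observe that $\Sigma \zE$ and $\Omega \zF$ are both dull reduced types: closure of reduced types under pushouts and identity types is part of Proposition~\ref{prop:reduced-types-closed}, and dullness is preserved because $\zE$ and $\zF$ are dull (suspension is built as a pushout from dull data, and $\Omega \zF \defeq \star_\zF =_{\zF} \star_\zF$ uses only dull data).

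Next I would apply Proposition~\ref{prop:spectra-map-is-pointed} on each side, obtaining
\[
\natural(\Sigma \zE \to \zF) \equiv \natural(\Sigma \zE \to_\star \zF)
\qquad\text{and}\qquad
\natural(\zE \to \Omega \zF) \equiv \natural(\zE \to_\star \Omega \zF),
\]
where all pointings are the canonical ones from Definition~\ref{def:reduced-is-pointed}. It then suffices to exhibit an equivalence $(\Sigma \zE \to_\star \zF) \equiv (\zE \to_\star \Omega \zF)$, since applying $\natural$ via univalence closes the chain. This is the standard suspension-loop adjunction, provable in plain HoTT from the universal property of $\Sigma \zE$ as the pushout $1 +_{\zE} 1$: a pointed map $\Sigma \zE \to_\star \zF$ unfolds into a choice of images for the two poles together with an $\zE$-indexed family of paths between them, which after contracting the image of the basepoint pole reduces to an $\zE$-indexed loop at $\star_\zF$, i.e.\ a pointed map $\zE \to_\star \Omega \zF$.

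The main subtle point is basepoint bookkeeping: the canonical reduced pointings from Definition~\ref{def:reduced-is-pointed} are not definitionally the north pole of $\Sigma \zE$ and $\refl_{\star_\zF}$ that the textbook suspension-loop argument naturally uses. Since $\natural$ of a reduced type is contractible, any two basepoints on a reduced type agree after applying $\natural$, so pointed-map-type equivalences with differently chosen basepoints become identified in the relevant $\natural$-wrapped context. I would make this precise by stating the classical suspension-loop adjunction parametrically in the choice of basepoint on $\Sigma \zE$ and $\Omega \zF$ and then specialising to the canonical reduced points, which is routine but the one place where care is required.
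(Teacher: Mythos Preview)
Your proposal is correct and follows essentially the same route as the paper: reduce to pointed maps via Proposition~\ref{prop:spectra-map-is-pointed}, invoke the ordinary $\Sigma \dashv \Omega$ adjunction for pointed types (the paper cites \cite[Lemma 6.5.4]{hottbook}), and apply $\natural$ functorially. The paper's proof is two sentences and does not address the basepoint bookkeeping you flag; your observation that the proof of Proposition~\ref{prop:spectra-map-is-pointed} only needs $\natural \zF$ contractible (so works for any dull choice of points, not just the canonical reduced ones) is the cleanest way to resolve that and is worth making explicit.
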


\begin{proof}
  By Proposition~\ref{prop:spectra-map-is-pointed}, it is equivalent to show
  \[
  \natural(\Sigma \zE \to_\star \zF) \equiv \natural(\zE \to_\star \Omega \zF)
  \]
  so this follows by functoriality of $\natural$ from the fact that
  suspension and loop space are adjoint for (general) types and pointed
  maps~\cite[Lemma 6.5.4]{hottbook}.
\end{proof}

\subsection{Synthetic Stabilisation}

We now have defined the modal types, i.e.\ the spaces (in the intended
model in $P\Spec$), and the reduced types, i.e.\ the spectra.  One
important feature of spectra that we would like to capture synthetically
is an adjunction relating these:
\begin{mathpar}
\begin{tikzcd}[row sep = 4em, column sep = 4em]
\Redu \text{ (spectra)}\ar[d, bend left=30, "\Omega^\infty"]\\
\Modal_\star \text{ (spaces)} \ar[u, bend left=30, "\Sigma^\infty"] \ar[u, phantom, "\dashv"]
\end{tikzcd}
\end{mathpar}
The $\Sigma^\infty$ operation `freely stabilises' a pointed modal
type. Once we have imposed some axioms, we will find that the homotopy
groups of the reduced type $\Sigma^\infty X$ correspond to the stable
homotopy groups of the modal type $X$.

Recall from the introduction that the $\infty$-category of spectra may
be defined as the limit
\begin{align*}
\Spec = \varprojlim \left(  \cdots \xrightarrow{\Omega} \mathcal{S}_* \xrightarrow{\Omega} \mathcal{S}_* \xrightarrow{\Omega} \mathcal{S}_* \right)
\end{align*}
and that spectra can presented concretely as a sequence of pointed
spaces $X_0,X_1,\ldots$ with $X_0 \equiv \Omega X_1$,
$X_1 \equiv \Omega X_2$, \ldots.  Then $\Omega^\infty$ sends a
spectrum to the space $X_0$.  It is named $\Omega^\infty$ as we have
`applied $\Omega$ infinitely many times' to reach the end of the
limit.  The left adjoint $\Sigma^\infty$ sends a space $X$ to the
spectrification of its suspension pre-spectrum (see
Section~\ref{sec:norm} below), i.e.\ we can make a sequence of pointed
spaces $X, \Sigma X, \Sigma \Sigma X, \ldots$ with maps
$X \to_\star \Omega \Sigma X$,
$\Sigma X \to_\star \Omega \Sigma \Sigma X$ \ldots given by the unit
of the $\Sigma \dashv \Omega$ adjunction. These maps are not
equivalences, which is corrected by replacing each space by a certain
colimit, a process called spectrification.

Here, we instead define an abstract/synthetic variant of this
adjunction, which can be interpreted in this way in $P\Spec$.  We will
do so by assuming the existence of a distinguished reduced type $\sphere
: \Redu$. Note that $\sphere$ is a closed type, so is dull. In our
intended model this will be interpreted as the sphere spectrum
(i.e.\ $\Sigma^\infty S^0$) living over the point.  In this section, we
will use the notation $\sphere$ and $\Sigma^\infty/\Omega^\infty$
because of the intended interpretation in $P\Spec$, but these will be
abstract operations that exist in any model (up to the choice of
$\sphere$).

%% In the remainder of this section we do not need any special properties
%% of $\sphere$, but Axiom N described later will use $\sphere$ as part of
%% its statement.
Using $\sphere$, we define $\Omega^\infty$ as follows:
\begin{definition}
  For $\zE : \Redu$, define the modal type
  \begin{align*}
    \Omega^\infty \zE :\defeq \natural(\sphere \to \zE)
  \end{align*}
  which is pointed by the constant zero map
  $(\lambda \_. \star_{\zE})^\natural$.
\end{definition}

\begin{remark}
  To gain intuition for why this is the right definition, we can
  consider the pointed spaces model of Section~\ref{sec:toy-model}. The analogue of $\sphere$ there
  is $\mathbb{B}$, the two-point space $S^0$ living over the
  point. The base of the function type $\mathbb{B} \to \zE$ is
  equivalent to the type of all basepoint preserving maps from the
  upstairs of $\mathbb{B}$ to the upstairs of $\zE$. The basepoint is
  fixed, and there is one free point that can be mapped to any point
  of the upstairs $\zE$. So the base space of $\mathbb{B} \to \zE$
  indeed corresponds to the upstairs of $\zE$.

  For spectra, similar reasoning applies, using the
  $\Sigma^\infty \dashv \Omega^\infty$ adjunction more explicitly, and
  that the sphere spectrum is the stabilisation of the two-point space:
  \begin{align*}
    \mathrm{Map}_{\Spec}(\sphere, E) \equiv \mathrm{Map}_{\Spec}(\Sigma^\infty S^0,  E) \equiv \mathrm{Map}_{\mathcal{S}_\star}(S^0, \Omega^\infty E) \equiv \mathrm{Map}_{\mathcal{S}}(1, \Omega^\infty E) \equiv \Omega^\infty E
  \end{align*}
  So we take `$\mathrm{Map}_{\Spec}(\sphere, E)$' as our definition of $\Omega^\infty E$.
\end{remark}

One application of $\Omega^\infty$ is defining the homotopy groups of a spectrum, in the sense that is used in stable homotopy theory:
\begin{definition}\label{def:stable-homotopy-groups}
The \emph{$\sphere$-shifted homotopy groups} of a reduced type $\zE$ are defined by
\begin{align*}
\pi^s_n \zE :\defeq \pi_n(\Omega^\infty \zE)
\end{align*}
\end{definition}
Simply calculating $\pi_n(\zE)$ will not do the correct thing: we will
see later that, after imposing a stability axiom
(Section~\ref{sec:stab}), reduced types are
$\infty$-connected~\ref{cor:spec-inf-connected}, so $\pi_n(\zE) \equiv
1$ always.

The left adjoint $\Sigma^\infty \dashv \Omega^\infty$ then has a surprisingly simple formula:
\begin{definition}
For $X$ a pointed modal type, define $\Sigma^\infty X :\defeq X \topsmash \sphere$.
\end{definition}
This is left adjoint roughly because maps of reduced types are pointed
maps, and $- \topsmash A \dashv A \to_\star -$ for any $A$.

First, we check that $\Sigma^\infty$ lands in reduced types:
\begin{proposition}\label{prop:sigmainf-reduced}
For any modal type $X$, $\Sigma^\infty X$ is reduced.
\end{proposition}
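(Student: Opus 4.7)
The plan is to express $\Sigma^\infty X = X \topsmash \sphere$ as a pushout of dull types, apply Proposition~\ref{prop:natural-preserves-pushouts} to compute $\natural \zc{(X \topsmash \sphere)}$, and observe that the resulting pushout collapses to $1$.

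Recall that for pointed types $A$ and $B$, the smash product is the pushout
\[
A \topsmash B \defeq 1 \;+_{A \vee B}\; (A \times B),
\]
where $A \vee B \to A \times B$ is the wedge inclusion and $A \vee B \to 1$ is the collapse. Since $X$ is modal we have $X \equiv \natural \zc{X}$ (Proposition~\ref{prop:space-if-zero-equiv}), so up to equivalence we can arrange for this pushout to be built from the dull types $\zc{X}$ and $\sphere$, and Proposition~\ref{prop:natural-preserves-pushouts} then yields
\[
\natural \zc{(X \topsmash \sphere)} \;\equiv\; \natural 1 \;+_{\natural \zc{(X \vee \sphere)}}\; \natural \zc{(X \times \sphere)}.
\]

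Each apex simplifies. First, $\natural 1 \equiv 1$ trivially. Second, using Lemma~\ref{lem:modality-consequences}(4) together with $\natural \sphere \equiv 1$ (by reducedness of $\sphere$) and $\natural \zc{X} \equiv X$ (by modality of $X$), the right apex reduces to $\natural \zc{X} \times \natural \sphere \equiv X \times 1 \equiv X$. Third, presenting the wedge itself as a pushout $\zc{X} \leftarrow 1 \to \sphere$ and re-applying Proposition~\ref{prop:natural-preserves-pushouts} gives $\natural \zc{(X \vee \sphere)} \equiv X +_1 1 \equiv X$. Under these identifications the map induced by the wedge inclusion becomes the identity $X \to X$ (the $X$-summand maps to the first projection of $(x,\star) \in X \times \sphere$, while the $\sphere$-summand is killed because $\natural \sphere$ is contractible), and the collapse becomes the unique map $X \to 1$.

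We are therefore left with the pushout of $\id_X$ and $X \to 1$, which is $1$, hence contractible. Thus $\natural \zc{(X \topsmash \sphere)}$ is contractible and $\Sigma^\infty X$ is reduced. The one step that requires real care is the verification that the induced map on the wedge apex is genuinely the identity on $X$ rather than some other endomap; without this, the pushout could a priori yield something like $\Sigma X$. This is a routine unpacking of the equivalences provided by Lemma~\ref{lem:modality-consequences}(4) and Proposition~\ref{prop:natural-preserves-pushouts} applied to the wedge, but is the only place where the computation could plausibly go wrong.
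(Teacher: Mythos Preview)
Your proof is correct and follows essentially the same route as the paper: write the smash as a pushout, push $\natural$ through using Proposition~\ref{prop:natural-preserves-pushouts} and the product comparison, compute $\natural(\zX \vee \sphere) \equiv \natural \zX$ and $\natural(\zX \times \sphere) \equiv \natural \zX$, and observe that the resulting span has an identity leg. One small remark: your invocation of the modality of $X$ to replace $\natural \zX$ by $X$ is harmless but unnecessary---the paper simply keeps $\natural \zX$ throughout, so the argument in fact shows $\Sigma^\infty X$ is reduced for \emph{any} pointed $X$.
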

\begin{proof}
The pushout diagrams defining $X \vee \sphere$ and $X \topsmash \sphere$
are
\begin{mathpar}
\begin{tikzcd}
1 \ar[d] \ar[r] \arrow[dr, phantom, "\ulcorner", very near end] & X \ar[d] \\
\sphere \ar[r] & X \vee \sphere
\end{tikzcd}

\begin{tikzcd}
X \vee \sphere \ar[d] \ar[r] \arrow[dr, phantom, "\ulcorner", very near end] & X \times \sphere \ar[d] \\
1 \ar[r] & \Sigma^\infty X
\end{tikzcd}
\end{mathpar}
Because $\natural$ preserves pushouts by
Proposition~\ref{prop:natural-preserves-pushouts} and products
by Proposition~\ref{prop:natural-preserves-sigma}, and $\sphere$ is
reduced, so $\natural \sphere \equiv 1$, we can calculate

\begin{mathpar}
  \begin{tikzcd}
1 \ar[d] \ar[r] \arrow[dr, phantom, "\ulcorner", very near end] & \natural \zX \ar[d] \\
1 \ar[r] & \natural (\zX \vee \sphere) \equiv \natural \zX
\end{tikzcd}

\begin{tikzcd}
\natural \zX \ar[d] \ar[r] \arrow[dr, phantom, "\ulcorner", very near end] & \natural \zX \ar[d] \\
1 \ar[r] & \natural(\Sigma^\infty \zX) \equiv 1
\end{tikzcd}
\end{mathpar}
because the top of the second diagram is the identity, and the pushout
of a map along the identity is the same map.

% $\natural$ preserves pushouts, and using the fact that $\sphere$ itself is a reduced, the above diagrams become
% \begin{mathpar}
% \begin{tikzcd}
% \natural X \ar[d] \ar[r] \arrow[dr, phantom, "\ulcorner", very near end] & \natural X \ar[d] \\
% 1 \ar[r] & \natural(\Sigma_+^\infty X)
% \end{tikzcd}
% \and
% \begin{tikzcd}
% 1 \ar[r] \ar[d]& \natural X \ar[r]  \arrow[dr, phantom, "\ulcorner", very near end] & \natural(\Sigma_+^\infty X) \ar[d] \\
% 1 \ar[rr] & & \natural(\Sigma^\infty X)
% \end{tikzcd}
% \end{mathpar}
% Evidently $\natural(\Sigma_+^\infty X)$ and $\natural(\Sigma^\infty X)$ are contractible.
\end{proof}

\begin{proposition}\label{prop:sigmainf-omegainf-adjoint}
$\Sigma^\infty$ and $\Omega^\infty$ are (dull) adjoints: there is an equivalence \[\natural (\Sigma^\infty \zX \to \zE) \equiv (\zX \to_\star \Omega^\infty \zE)\] where $\zX$ is a pointed modal type and $\zE$ is reduced.
\end{proposition}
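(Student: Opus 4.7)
Both sides of the target equivalence are modal: the left is $\natural$ of a type, and the right is a pointed hom into the modal type $\Omega^\infty \zE \defeq \natural(\sphere \to \zE)$, hence modal by Lemma~\ref{lem:pointed-modal}. The plan is to construct a chain of equivalences passing through the standard smash-hom adjunction for pointed types, using that maps between reduced types are automatically pointed (Proposition~\ref{prop:spectra-map-is-pointed}) and that $\zX$ is modal. The steps are: (i) convert ordinary hom to pointed hom on the outside; (ii) apply the smash-hom adjunction; (iii) slide $\natural$ past $(\zX \to_\star -)$ using that $\zX$ is modal; (iv) convert back from pointed hom to ordinary hom inside.

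For step (i), $\Sigma^\infty \zX$ is reduced by Proposition~\ref{prop:sigmainf-reduced} and $\zE$ is reduced by hypothesis, so Proposition~\ref{prop:spectra-map-is-pointed} gives
\[
  \natural(\Sigma^\infty \zX \to \zE) \equiv \natural(\Sigma^\infty \zX \to_\star \zE) \defeq \natural(\zX \topsmash \sphere \to_\star \zE).
\]
For step (ii), the standard smash-hom adjunction $(A \topsmash B \to_\star C) \equiv (A \to_\star (B \to_\star C))$ in HoTT yields
\[
  \natural(\zX \topsmash \sphere \to_\star \zE) \equiv \natural(\zX \to_\star (\sphere \to_\star \zE)).
\]

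Step (iii) is the heart of the argument. Writing $F \defeq \sphere \to_\star \zE$, both $\zX$ and $F$ are dull pointed types, so Proposition~\ref{prop:remove-nat-right-pointed} gives
\[
  \natural(\natural \zX \to_\star \natural F) \equiv \natural(\natural \zX \to_\star F).
\]
Since $\zX$ is modal, $\zX \equiv \natural \zX$, and this becomes $\natural(\zX \to_\star \natural F) \equiv \natural(\zX \to_\star F)$. But $\zX \to_\star \natural F$ is itself modal — it is a pointed hom whose codomain $\natural F$ is modal by Proposition~\ref{prop:natural-type-is-a-space}, so Lemma~\ref{lem:pointed-modal} applies — and it is dull (all free variables are marked), so it coincides with its own $\natural$. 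Hence
\[
  \zX \to_\star \natural F \equiv \natural(\zX \to_\star \natural F) \equiv \natural(\zX \to_\star F).
\]
Finally, for step (iv), applying Proposition~\ref{prop:spectra-map-is-pointed} once more to the dull reduced types $\sphere$ and $\zE$ gives $\natural(\sphere \to_\star \zE) \equiv \natural(\sphere \to \zE) \defeq \Omega^\infty \zE$, and post-composition produces the desired right-hand side $\zX \to_\star \Omega^\infty \zE$.

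The main obstacle is step (iii), sliding $\natural$ past the pointed hom — it relies on the delicate interplay between modality and pointed function types captured by Propositions~\ref{prop:remove-nat-right-pointed} and~\ref{lem:pointed-modal}, and on being careful that $\zX$ being modal lets us silently replace $\natural \zX$ by $\zX$. The remaining steps are routine applications of previously established lemmas about how $\natural$ interacts with reduced types and pointed maps.
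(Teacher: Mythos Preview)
Your proof is correct and follows essentially the same chain of equivalences as the paper, just traversed in the opposite direction (the paper starts from $\zX \to_\star \Omega^\infty \zE$ and works towards $\natural(\Sigma^\infty \zX \to \zE)$). The only cosmetic difference is that where you invoke Proposition~\ref{prop:remove-nat-right-pointed} together with $\zX \equiv \natural \zX$, the paper instead cites the packaged self-adjointness statement Proposition~\ref{prop:pointed-self-adjoint}, which is itself derived from Propositions~\ref{prop:remove-nat-left-pointed} and~\ref{prop:remove-nat-right-pointed}; so the underlying manoeuvre in your step (iii) is the same.
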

\begin{proof}
\begin{align*}
        \zX \to_\star \Omega^\infty \zE
&\defeq (\zX  \to_\star \natural(\sphere \to_\star \zE)) \\
&\equiv \natural(\zX \to_\star \natural(\sphere \to_\star \zE)) &&(- \to_\star \natural - \text{ is modal })\\
&\defeq \natural(\natural \zX  \to_\star (\sphere \to_\star \zE)) && (\text{Proposition~\ref{prop:pointed-self-adjoint}})\\
&\equiv \natural(\zX  \to_\star (\sphere \to_\star \zE)) && \text{($\zX$ modal)} \\
&\equiv \natural(\zX  \topsmash \sphere \to_\star \zE) \\
&\defeq \natural(\Sigma^\infty \zX  \to_\star \zE) \\
&\equiv \natural(\Sigma^\infty \zX  \to \zE) && (\text{Proposition~\ref{prop:spectra-map-is-pointed}, Proposition~\ref{prop:sigmainf-reduced}})
\end{align*}
Here we use currying for pointed maps $A \to_\star (B \to_\star C)
\equiv (A \topsmash B) \to_\star C$, which has been proved in type theory~\cite[Theorem 4.3.28]{floris:thesis}.
\end{proof}

\begin{remark}
  In the pointed spaces model, for a pointed modal type $X$,
  $X \topsmash \mathbb{B}$ works out to be $\dnst \zX \topsmash 1$ in the
  base (which is indeed contractible). Over this point, we are
  calculating the cofibre of the map
  \[ X + 1 \equiv X \wedge S^0 \to X \times S^0 \equiv X + X \] which
  is the identity on the first component $X$ and the basepoint
  inclusion on the second. So the first copy of $X$ is crushed to a
  point, and identified with the basepoint of the second copy of $X$.

  In all, the operation takes a pointed modal type $X$ to a reduced
  type $\Sigma^\infty X$, moving $X$ from the base to the fibre,
  internalising the analytic pointing as synthetic pointing.

  In $P\Spec$ we have to work a little harder to justify this
  definition. $\Sigma^\infty$ is a functor from pointed spaces to
  spectra, but we can precompose with the functor
  $(-)_+ : \mathcal{S} \to \mathcal{S}_*$ to get a functor from
  unpointed spaces. This is typically written
  $\Sigma^\infty_+ : \mathcal{S} \to \Spec$, and is left adjoint to
  the functor given by computing the pointed space $\Omega^\infty$ and
  forgetting the basepoint.

  As a left adjoint, $\Sigma^\infty_+$ preserves colimits, and because
  every space is the colimit of its points, we calculate
  \[ \Sigma^\infty_+(X) \equiv \Sigma^\infty_+(\colim_X 1) \equiv
    \colim_X \Sigma^\infty_+(1) \equiv \colim_X \Sigma^\infty(S^0)
    \equiv \colim_X \sphere \]

  I.e., the colimit of the constant diagram on $X$ at $\sphere$. For
  us, such a constant diagram is given by the parametrised spectrum
  $X \times \sphere$, placing a copy of $\sphere$ over every point in
  $X$. The desired colimit can be computed as the cofibre of
  $X \to X \times \sphere$ which is the basepoint in the second
  component, crushing the base $X$ to a point. We can take this as our
  type-theoretic definition of $\Sigma^\infty_+$:
  \[ \Sigma^\infty_+(X) :\defeq \cofib(X \to X \times \sphere) \]

  To get back to $\Sigma^\infty(X)$ for $X$ a pointed type, we have to
  crush the `extra copy of $\sphere$' that was added over the new
  basepoint: this is now taking the cofibre of the composite
  $\sphere \to X \times \sphere \to \Sigma^\infty_+ X$ which is the
  basepoint in the \emph{first} component. Combining the two cofibre
  diagrams, in all we have calculated the smash product
  $X \topsmash \sphere$.
\end{remark}

\subsection{Commutativity of the Adjunctions}

So far, we have seen that we have the three adjunctions in the following
diagram (the two vertical sides are the same):

% \drlnote{flip picture so that the left adjoints are on the left?}

\begin{mathpar}
\begin{tikzcd}[row sep = 4em, column sep = 4em]
\Modal_\star \ar[r, bend left=15, "\Sigma"] \ar[d, bend left=20, "\Sigma^\infty"] \ar[r, phantom, "\bot"] \ar[d, phantom, "\vdash"]  & \Modal_\star \ar[l,bend left=15,"\Omega"] \ar[d,bend left=20, "\Sigma^\infty"] \ar[d, phantom, "\vdash"] \\
\Redu \ar[r, bend left=15, "\Sigma"] \ar[u, bend left=20, "\Omega^\infty"] \ar[r, phantom, "\bot"] & \Redu \ar[l,bend left=15, "\Omega"]\ar[u,bend left=20, "\Omega^\infty"]
\end{tikzcd}
\end{mathpar}

We can also show that the diagram commutes.

\begin{remark}
  In pointed spaces it is clear this should be true, as the
  $\Sigma^\infty \dashv \Omega^\infty$ adjunction simply moves pointed
  spaces into the fibre over a point and back, and the suspension/loop
  space of a reduced type is calculating the suspension/loop space of
  the unique fibre.

  For spectra, consider a spectrum again presented as a sequence of
  pointed spaces $(E_0, E_1, \dots)$. The loop space of such an
  $\Omega$-spectrum can be calculated by shifting the spaces over by
  one, giving $\Omega$-spectrum $(\Omega E_0, E_0, E_1, \dots)$, so
  extracting the $0$th space commutes with calculating the loop space.

  The suspension of an $\Omega$-spectrum can be calculated by shifting
  the spaces the other way, giving $(E_1, E_2, \dots)$.  Leaving aside
  spectrification briefly, $\Sigma^\infty X$ is given by the
  prespectrum $(X, \Sigma X, \Sigma^2 X, \dots)$, so
  $\Sigma^\infty \Sigma X \defeq (\Sigma X, \Sigma^2 X, \dots)$ is
  exactly $\Sigma^\infty X$ shifted by one.
\end{remark}

\begin{proposition}\label{prop:omegainf-omega-commutes}
$\Omega^\infty$ commutes with the ordinary loop space operation $\Omega$:
\begin{align*}
\Omega^\infty \Omega \zE \equiv_\star \Omega \Omega^\infty \zE
\end{align*}
naturally in $\zE$.
\end{proposition}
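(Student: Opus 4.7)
The plan is to string together a chain of equivalences that moves $\Omega$ past the $\natural$ and inside the function type, using the left-exactness of $\natural$ and function extensionality. By definition we have $\Omega^\infty \Omega \zE \defeq \natural(\sphere \to \Omega \zE)$, and $\Omega \Omega^\infty \zE \defeq \Omega(\natural(\sphere \to \zE))$ at the basepoint $(\lambda\_.\star_{\zE})^\natural$. I will work from the right-hand side toward the left.

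First I apply Proposition~\ref{prop:natural-is-left-exact} with $A \defeq \sphere \to \zE$ and $x = y \defeq (\lambda\_.\star_{\zE})$ to obtain
\[
\Omega(\natural(\sphere \to \zE)) \equiv \natural\bigl(\underline{(\lambda\_.\star_{\zE})} = \underline{(\lambda\_.\star_{\zE})}\bigr).
\]
Because $\zE$ is dull, the only free variable of $(\lambda\_.\star_{\zE})$ is already marked, so $\underline{(\lambda\_.\star_{\zE})} \defeq (\lambda\_.\star_{\zE})$, and the right-hand side is definitionally $\natural\bigl((\lambda\_.\star_{\zE}) = (\lambda\_.\star_{\zE})\bigr)$. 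Next I apply function extensionality, which identifies $(\lambda\_.\star_{\zE}) = (\lambda\_.\star_{\zE})$ with $\prd{s:\sphere}(\star_{\zE} = \star_{\zE}) \defeq (\sphere \to \Omega \zE)$. Functoriality of $\natural$ applied to this equivalence then yields $\natural(\sphere \to \Omega \zE) \defeq \Omega^\infty \Omega \zE$.

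It remains to check that the composite equivalence is pointed and natural in $\zE$. Pointedness tracks $\refl{(\lambda\_.\star_{\zE})^\natural}$ through each step: left-exactness sends it to $\refl{(\lambda\_.\star_{\zE})}^\natural$, function extensionality sends this to $(\lambda\_.\refl{\star_{\zE}})^\natural$, which is (up to the canonical equality of basepoints in the reduced type $\Omega \zE$) the basepoint of $\Omega^\infty \Omega \zE$. Naturality in the dull reduced variable $\zE$ follows since each equivalence in the chain (left-exactness, function extensionality, functoriality of $\natural$) is natural in its data.

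The main obstacle is the bookkeeping around basepoints. The canonical basepoint $\star_{\Omega \zE}$ of $\Omega \zE$ coming from Definition~\ref{def:reduced-is-pointed} is not definitionally $\refl{\star_{\zE}}$; they agree only propositionally, via the contractibility of $\natural \Omega \zE$. So when one finally identifies the computed basepoint $(\lambda\_.\refl{\star_{\zE}})^\natural$ with the basepoint $(\lambda\_.\star_{\Omega \zE})^\natural$ of $\Omega^\infty \Omega \zE$, one has to transport along this coherent comparison. The algebraic manipulations themselves are entirely routine applications of results already established in Sections~\ref{sec:basic-props} and~\ref{sec:redu}.
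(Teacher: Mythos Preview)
Your proof is correct, but it takes a different route from the paper. The paper first isolates a general pointed-mapping-space lemma
\[
(A \to_\star \Omega B) \;\equiv_\star\; \Omega(A \to_\star B),
\]
proved via the representation $\Omega B \equiv_\star (S^1 \to_\star B)$ together with exchange for pointed function types, and then obtains the result as
\[
\Omega^\infty \Omega \zE \;\equiv_\star\; \natural(\sphere \to_\star \Omega \zE) \;\equiv_\star\; \natural\Omega(\sphere \to_\star \zE) \;\equiv_\star\; \Omega\natural(\sphere \to_\star \zE) \;\equiv_\star\; \Omega\Omega^\infty \zE,
\]
with the last step again by left-exactness (Proposition~\ref{prop:natural-is-left-exact}). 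You instead stay with the \emph{unpointed} function type $\sphere \to \zE$ from the definition of $\Omega^\infty$, and replace the pointed-map lemma by plain function extensionality on $(\lambda\_.\star_{\zE}) = (\lambda\_.\star_{\zE})$.

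The trade-off is exactly the one you identify: the paper's pointed-map lemma packages the pointedness once and for all, so the chain of equivalences is pointed by construction; your approach is more elementary (no $S^1$, no smash/exchange machinery) but leaves you with the basepoint comparison $(\lambda\_.\refl{\star_{\zE}})^\natural = (\lambda\_.\star_{\Omega\zE})^\natural$ to discharge at the end. That comparison is indeed available --- since $\refl{\star_{\zE}}$ is dull, $\refl{\star_{\zE}} \defeq (\refl{\star_{\zE}})^\natural{}_\natural$, and contractibility of $\natural\Omega\zE$ then furnishes a path $\refl{\star_{\zE}} = \star_{\Omega\zE}$ --- so the argument closes, but it is worth stating this explicitly rather than leaving it as ``transport along this coherent comparison.''
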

Note that the left-hand side is the loop space on reduced (and therefore
pointed) types, while the right-hand side is the loop space on pointed
modal types, but both are implemented by the usual loop space on types.

This is easy, after the following basic fact about loop spaces.
\begin{lemma}
For any pointed types $A$ and $B$,
\[(A \to_\star \Omega B) \equiv_\star \Omega(A \to_\star B) \]
naturally in $B$.
\end{lemma}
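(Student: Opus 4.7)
The plan is to unfold both sides as $\Sigma$-types and then reduce to the standard characterisation of paths in a $\Sigma$-type combined with function extensionality. Write the point of $A$ as $a$ and the point of $B$ as $b$, so that the canonical basepoint of $A \to_\star B$ is $(\lambda \_.b,\ \refl_b)$, and the basepoint of $\Omega B$ is $\refl_b$.

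First I would unfold
\[
\Omega(A \to_\star B) \defeq \big((\lambda \_.b,\refl_b) =_{A \to_\star B} (\lambda \_.b,\refl_b)\big)
\]
and apply the standard equivalence for paths in a $\Sigma$-type, giving a type of pairs $(p, q)$ where $p : (\lambda \_.b) =_{A \to B} (\lambda \_.b)$ and $q$ is the compatibility equation $\mathsf{transport}^{\lambda f.\,f(a) = b}(p,\refl_b) = \refl_b$. Function extensionality turns $p$ into a pointwise family $h : \prd{x:A} b =_B b$, i.e.\ a function $h : A \to \Omega B$, under which the transport above computes to (the inverse of) $h(a)$. Thus $q$ becomes $h(a) = \refl_b$, which is exactly the pointing datum for $h$ as a map in $A \to_\star \Omega B$. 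Assembling these equivalences gives the underlying equivalence of types.

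To promote this to a pointed equivalence, I would check that the constant loop $\refl_{(\lambda\_.b,\refl_b)}$ on the left corresponds to the constant pointed map $(\lambda\_.\refl_b,\refl_{\refl_b})$ on the right: funext sends $\refl$ on the function component to the pointwise $\refl$, and the compatibility square collapses to $\refl$. For naturality in $B$, suppose $g : B \to_\star B'$. Post-composition on the right side of the claimed equivalence acts by $(h,q) \mapsto (\Omega g \circ h,\ \ldots)$, while on the left side it acts by $(p,q) \mapsto (\ap_{g \circ -}(p), \ldots)$, and both agree via the usual computation of $\ap$ under funext. Writing this out amounts to repeatedly invoking that $\ap_{g \circ -}$ computes pointwise and that $g$ preserves $\refl$ up to its pointing datum.

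The main obstacle is really just the transport/compatibility bookkeeping in the second step: once one sets up the right equivalence of total spaces between $\Omega(A \to_\star B)$ and $\sm{h:A \to \Omega B} h(a) = \refl_b$, pointedness and naturality follow by inspection. This is a variant of the standard calculation that identifies loops in a mapping space with maps into the loop space, adapted to the pointed setting, and I would cite or re-derive it without further fuss.
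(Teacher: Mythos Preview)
Your proof is correct but takes a different route from the paper's. The paper argues via the representability $\Omega B \equiv_\star (S^1 \to_\star B)$ (the universal property of the higher inductive circle) followed by the exchange law $(A \to_\star (S^1 \to_\star B)) \equiv_\star (S^1 \to_\star (A \to_\star B))$ for pointed mapping spaces, and then applies the $S^1$ representability once more; naturality in $B$ then comes for free because each of these three steps is natural. Your approach instead unfolds $A \to_\star B$ as a $\Sigma$-type, characterises its loop space via paths-in-$\Sigma$ and function extensionality, and then identifies the resulting data with $A \to_\star \Omega B$ directly. This is more elementary in that it avoids citing the pointed exchange equivalence (which itself needs a proof), at the cost of having to check pointedness and naturality by hand; conversely, the paper's three-line argument is slicker once exchange is available and makes naturality automatic.
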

\begin{proof}
\begin{align*}
(A \to_\star \Omega B) &\equiv_\star (A \to_\star (S^1 \to_\star B)) \\
&\equiv_\star (S^1 \to_\star (A \to_\star B)) \\
&\equiv_\star \Omega(A \to_\star B)
\end{align*}
The first equivalence is essentially the universal property of the
higher inductive circle $S^1$, while the second follows from exchange
for function types.
\end{proof}

\begin{proof}[Proof of Proposition]
\begin{align*}
\Omega^\infty \Omega \zE \defeq \natural(\sphere \to_\star \Omega \zE) \equiv_\star \natural (\Omega(\sphere \to_\star \zE)) \equiv_\star \Omega\natural (\sphere \to_\star \zE) \defeq  \Omega \Omega^\infty \zE
\end{align*}
For any pointed type, $\natural \Omega(A,a) \equiv_\star \Omega(\natural A,\za^\natural)$ follows from
Proposition~\ref{prop:natural-is-left-exact}.
\end{proof}

In the other direction, we have:
\begin{proposition}\label{prop:sigma-infty-susp-commute}
$\Sigma^\infty$ commutes with $\Sigma$:
\begin{align*}
\Sigma \Sigma^\infty X \equiv \Sigma^\infty \Sigma X
\end{align*}
naturally in $X$.
\end{proposition}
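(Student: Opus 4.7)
The plan is to unfold the definition $\Sigma^\infty X :\defeq X \topsmash \sphere$ so that the goal becomes $\Sigma(X \topsmash \sphere) \equiv (\Sigma X) \topsmash \sphere$. This is an instance of the general fact that suspension commutes with the smash product in either variable on pointed types, which I would derive from the standard identification $\Sigma A \equiv A \topsmash S^1$ together with the associativity and symmetry of the smash product. Chained together, this gives
\begin{align*}
\Sigma^\infty \Sigma X \defeq (\Sigma X) \topsmash \sphere \equiv (X \topsmash S^1) \topsmash \sphere \equiv (X \topsmash \sphere) \topsmash S^1 \equiv \Sigma(X \topsmash \sphere) \defeq \Sigma \Sigma^\infty X.
\end{align*}
Naturality in $X$ follows from naturality of each constituent equivalence. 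These smash-product coherence results are standard in pointed HoTT, available in~\cite{floris:thesis}, to which the paper has already appealed for currying $A \to_\star (B \to_\star C) \equiv (A \topsmash B) \to_\star C$ in the proof of Proposition~\ref{prop:sigmainf-omegainf-adjoint}.

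An alternative, more abstract route would be to observe that $\Sigma \Sigma^\infty$ and $\Sigma^\infty \Sigma$ are both left adjoint (up to $\natural$-adjustment) to $\Omega^\infty \Omega \equiv \Omega \Omega^\infty$, using Proposition~\ref{prop:sigmainf-omegainf-adjoint}, the $\Sigma \dashv \Omega$ adjunctions on $\Modal_\star$ and $\Redu$, and Proposition~\ref{prop:omegainf-omega-commutes}, and then invoke uniqueness of left adjoints. However, turning this into an actual type-level equivalence in the presence of the $\natural$ wrappers on the hom-types would require a Yoneda-style argument that is more delicate than the direct smash calculation, so I would take the direct route.

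The main possible obstacle is ensuring that the associativity and symmetry isomorphisms for the smash product compose to a genuinely natural equivalence, rather than merely an equivalence for each $X$. If coherence becomes awkward, the cleanest fix is to stage the argument through the definition $\Sigma A :\defeq A \topsmash S^1$ and cite the coherent symmetric monoidal structure of $(\univ_\star, \topsmash)$ from~\cite{floris:thesis} wholesale, from which the whole chain above is a single instance of rebracketing a three-fold smash product.
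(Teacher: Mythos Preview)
Your proposal is correct and essentially identical to the paper's proof: both unfold $\Sigma^\infty X \defeq X \topsmash \sphere$, use $\Sigma A \equiv S^1 \topsmash A$, and reduce the claim to rebracketing a three-fold smash product. The only cosmetic difference is that the paper writes $\Sigma A \equiv S^1 \topsmash A$ with $S^1$ on the left, so a single associativity step $S^1 \topsmash (X \topsmash \sphere) \equiv (S^1 \topsmash X) \topsmash \sphere$ suffices and symmetry is never invoked.
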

\begin{proof}
  First, some properties of the suspension and smash higher inductive
  types in ordinary homotopy type theory are that
  $\Sigma X \equiv S^1 \topsmash X$~\cite[Proposition 4.2.1]{brunerie:thesis} and smash is
  associative~\cite[Definition 4.3.33]{floris:thesis}.  Thus, we can
  calculate
  \[\Sigma \Sigma^\infty X \defeq \Sigma(X \topsmash \sphere) \equiv S^1 \topsmash (X \topsmash \sphere) \equiv (S^1 \topsmash X) \topsmash \sphere \equiv \Sigma^\infty \Sigma X \]
\end{proof}

\section{Stability}\label{sec:stab}

Classically, the category of spectra has a number of special
properties, including a zero object, biproducts (products and
coproducts that are isomorphic), any pushout square is a pullback
square and vice versa, and suspension and loop space are inverse (not
only adjoint).  Thus far, our definition of synthetic spectra as
reduced types (dully) has a zero object: $1$ is initial as well as
terminal because $\natural(1 \to \zE) \equiv \natural(\zE) \equiv 1$.
To establish the other properties, it turns out to suffice to add an
apparently weaker axiom asserting that products and coproducts in
$\Redu$ coincide---we will show that this implies stability in the
sense of pullback and pushout squares coinciding, which in turn
implies that suspension and loop space are an equivalence.

%\begin{named}{Axiom S}
%If $E : \Redu$ then the canonical maps $E \to \Omega \Sigma E$ and $\Sigma \Omega E \to E$ are equivalences.
%\end{named}

Reduced types are pointed, so the coproduct in the category of reduced
types is actually the wedge $\vee$.  For any pointed types $A$ and $B$
there is a canonical wedge inclusion
$\iota_{A,B} : A \wedge B \to A \times B$.
%The smash product $A \topsmash B$ is the cofibre of this map.

\begin{named}{Axiom S (for Stable)}
%For any dull spectra $\zE$ and $\zF$, the smash product $\zE \topsmash \zF$ is contractible.
For any dull reduced types $\zE$ and $\zF$, the wedge inclusion $\iota_{\zE,\zF} : \zE \wedge \zF \to \zE \times \zF$ is an equivalence.
\end{named}

%This is asserting the existence of a term
%\begin{align*}
%\mathsf{ax}_S : \prd{E : \natural \Redu} \prd{F : \natural \Redu} \isContr(\zE_\natural \topsmash \zF_\natural)
%\end{align*}
Unfolding the informal use of dull $\Pi$-types,
(Proposition~\ref{prop:dull-pis}), this is asserting the existence of
a term
\begin{align*}
\mathsf{ax}_S : \prd{E : \natural \Redu} \prd{F : \natural \Redu} \isEquiv(\iota_{E_\natural, F_\natural})
\end{align*}

\begin{remark}
Some care has to be taken when devising an internal version of the external fact that the subcategory $\Spec \hookrightarrow P\Spec$ is stable. The obvious thing to try is an an axiom that applies to any reduced type. The issue is that when asserting the existence of a closed term, like the axiom above, the term can be weakened to any ambient context.

Semantically, this means we have to consider whether the axiom holds in all slice categories $P\Spec/\Gamma$. In our situation, it does \emph{not}. A reduced type in $P\Spec/\Gamma$ consists of a family of spectra $E$ over the base of $\Gamma$, together with spectrum maps between the fibres of $E$ and the corresponding fibres of $\Gamma$. This category is not stable in general, as it lacks a zero object (among other things). If the context $\Gamma$ itself consists of a single spectrum $F$, then the existence of a zero object would imply that any map $E \to F$ splits.

The axiom \emph{does} hold in $P\Spec/X$ when $X$ is a space, as reduced types in $P\Spec/X$ are exactly the families of spectra over $X$, comprising a stable category. Syntactically this requirement corresponds to having a dull context, hence the restriction to types in $\natural \Redu$ in Axiom S.

We can weaken our $\mathsf{ax}_S$ to an arbitrary context $\Gamma$ and still be safe, as the spectra $E_\natural$ and $F_\natural$ will only depend on the underlying space of $\Gamma$.
\end{remark}

\begin{remark}
This axiom rules out the pointed spaces model, as the wedge and product of reduced types in that model correspond to the ordinary wedge and product of the pointed types in the unique fibre. But it \emph{doesn't} rule out the trivial model where $\natural$ is the identity functor. There, the only reduced type is the point, and the wedge inclusion $1 \wedge 1 \to 1 \times 1$ is certainly an equivalence.
\end{remark}

\begin{proposition}\label{prop:reduced-smash-join-contractible}
For two dull reduced types $\zE$ and $\zF$, the smash product $\zE
\topsmash \zF$ and the join $\zE \join \zF$ are both contractible.
\end{proposition}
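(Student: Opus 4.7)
The plan is to reduce both claims directly to Axiom S by manipulating the defining pushouts.

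For the smash, $\zE \topsmash \zF$ is by construction the pushout
$$
\begin{tikzcd}
\zE \wedge \zF \ar[r, "\iota_{\zE,\zF}"] \ar[d] & \zE \times \zF \ar[d] \\
1 \ar[r] & \zE \topsmash \zF
\end{tikzcd}
$$
Axiom S makes the top map an equivalence, and since pushing out along an equivalence yields an equivalence on the opposite leg, the bottom map $1 \to \zE \topsmash \zF$ is an equivalence, so $\zE \topsmash \zF \equiv 1$.

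For the join, I would first apply Axiom S to rewrite the defining pushout of $\zE \join \zF$ with $\zE \wedge \zF$ replacing $\zE \times \zF$; the induced maps $\pi_{\zE} \circ \iota_{\zE,\zF}$ and $\pi_{\zF} \circ \iota_{\zE,\zF}$ are the wedge retractions $[\id_{\zE}, \ast]$ and $[\ast, \id_{\zF}]$. Then I would paste this rewritten square underneath the square identifying $\zE$ as $\mathrm{cofib}(\iota_{\zF})$:
$$
\begin{tikzcd}
\zF \ar[d, "\iota_{\zF}"] \ar[r] & 1 \ar[d] \\
\zE \wedge \zF \ar[d, "{[\ast, \id]}" swap] \ar[r, "{[\id, \ast]}"] & \zE \ar[d] \\
\zF \ar[r] & \zE \join \zF
\end{tikzcd}
$$
The top square is a pushout---a short secondary pasting using $\zE \wedge \zF \equiv \zE \sqcup_1 \zF$ shows that $[\id, \ast]$ realises $\zE$ as the cofiber of $\iota_{\zF}$. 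The bottom square is the rewritten join pushout. By pushout pasting the outer rectangle is a pushout; its left vertical composite is $[\ast, \id_{\zF}] \circ \iota_{\zF} = \id_{\zF}$, so the outer span is $\zF \xleftarrow{\id_{\zF}} \zF \to 1$, whose pushout is $1$. Hence $\zE \join \zF \equiv 1$.

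The main obstacle is just bookkeeping: one must verify that the projections compose with $\iota_{\zE,\zF}$ to give the wedge retractions, and that the vertical composite through $\zE \wedge \zF$ on the left of the outer rectangle is $\id_{\zF}$. Both are immediate from the definition of $\iota_{\zE,\zF}$, so once these identifications are in hand the pasting argument goes through mechanically.
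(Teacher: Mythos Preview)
Your smash argument is identical to the paper's. For the join, however, the paper takes a much shorter route: it cites the identity $\zE \join \zF \equiv \Sigma(\zE \topsmash \zF)$ (from Cavallo's thesis, Theorem~4.19) and observes that the suspension of a contractible type is contractible. Your argument instead rewrites the join pushout via Axiom~S and then does a two-step pushout pasting to collapse it to a pushout along an identity. This is correct: the cofibre of $\iota_{\zF} : \zF \to \zE \wedge \zF$ is indeed $\zE$ with quotient map $[\id_{\zE},\ast]$ (by pasting with the defining pushout of the wedge), and the left composite $[\ast,\id_{\zF}] \circ \iota_{\zF}$ is $\id_{\zF}$ on the nose. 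What your approach buys is self-containment---you avoid importing the join/smash/suspension identity from outside the paper---at the cost of a longer argument with more bookkeeping. The paper's approach is a one-liner once that identity is granted.
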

\begin{proof}
Recall that the smash product $\zE \topsmash \zF$ is the cofibre of the wedge inclusion, i.e.\ the pushout
\[
\begin{tikzcd}
\zE \vee \zF \ar[d] \ar[r] \arrow[dr, phantom, "\ulcorner", very near end] & \zE \times \zF \ar[d] \\
1 \ar[r] & \zE \topsmash \zF
\end{tikzcd}
\]
Axiom S asserts that the top map is an equivalence, so the
bottom-right corner is equivalent to the bottom-left.  For the join,
we have $\zE \join \zF \equiv \Sigma(\zE \topsmash \zF)$
by~\cite[Theorem 4.19]{cavallo:cohomology}, and the suspension of a
contractible type is contractible.
\end{proof}
Here, $\zE \topsmash \zF$ is the smash product \emph{of types}, which
does not correspond to the `smash product of spectra', an important
operation in stable homotopy theory.

Next, we show that Axiom S indeed makes pullbacks and pushouts in
$\Redu$ coincide, mainly as a consequence of the Little Blakers-Massey
Theorem~\cite{abfj:blakers-massey}.  As a first step:

\begin{lemma}\label{lem:suspension-loop-pullback}
For any pointed type $A$, there is a pullback square
\begin{mathpar}
\begin{tikzcd}
\Sigma \Omega A \ar[d] \ar[r] & A \wedge A \ar[d, "\iota"] \\
A \ar[r, "\Delta" swap] & A \times A
\end{tikzcd}
\end{mathpar}
where the map on the left is the counit of the $\Sigma \dashv \Omega$ adjunction.
\end{lemma}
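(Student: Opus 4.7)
The plan is to apply descent—pullback in an $\infty$-topos preserves pushouts—to the pushout presentation $A \vee A = A \cup_1 A$ of the wedge. Restricted along this pushout, the wedge inclusion $\iota$ sends the left copy by $a \mapsto (a, \star)$, the right copy by $a \mapsto (\star, a)$, and the common basepoint to $(\star, \star)$. So pulling back $\Delta : A \to A \times A$ along $\iota$ becomes the pushout of the three pullbacks along these component maps.

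I would then compute the three pullbacks individually. Pulling back $\Delta$ along $a \mapsto (a, \star)$ amounts to the type $\sm{a' : A}\sm{a : A} (a, \star) = (a', a')$, which forces $a = a' = \star$ and so is contractible. The pullback over the right copy is symmetrically contractible. The pullback over the wedge point $1 \to A \times A$ is the fibre of $\Delta$ at $(\star, \star)$, namely $\sm{a : A} (a = \star) \times (a = \star)$, which contracts to the type $\star = \star$ i.e.\ $\Omega A$. By descent, the total pullback is the pushout $1 \leftarrow \Omega A \to 1$, which is $\Sigma \Omega A$. This gives the pullback square as claimed, with left map the canonical projection to $A$ from the pullback.

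Finally, I would verify that this induced map $\Sigma \Omega A \to A$ agrees with the counit of $\Sigma \dashv \Omega$. The projection to $A$ collapses each of the two contractible summands to $\star$ (giving the north and south poles of the suspension), while a loop $\ell : \Omega A$ sitting over the wedge point corresponds under descent to a meridian whose projection to $A$ is the path $\star = \star$ determined by $\ell$ itself. This is exactly the universal property defining the counit, which sends both poles of $\Sigma \Omega A$ to $\star$ and each meridian $\ell$ to $\ell$.

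The main obstacle will be this final identification. The pullback-plus-descent construction produces \emph{some} canonical map $\Sigma \Omega A \to A$, and matching it with the counit requires unwinding the descent equivalence and the pushout identifications carefully. A clean route is to characterise the counit up to pointed homotopy as the unique map whose $\Sigma \dashv \Omega$-transpose is $\id_{\Omega A}$, and then check this property directly from the pullback data by computing the induced map $\Omega A \to \Omega \Sigma \Omega A \to \Omega A$; alternatively, both maps can be constructed by suspension recursion from $\id_{\Omega A}$, reducing the comparison to a matching of meridians that is already visible in the descent calculation.
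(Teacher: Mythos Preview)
Your proposal is correct and takes essentially the same approach as the paper: both apply descent (the paper's Theorem~\ref{thm:descent-pushouts}) to the pushout presentation $A \wedge A = A \sqcup_1 A$ of the wedge, pulling back the diagonal $\Delta$ along $\iota$. The paper writes down the commutative cube directly and checks the vertical faces are pullbacks, whereas you compute the three pullback legs one at a time, but this is the same calculation; notably, the paper does not spell out the identification of the induced map with the $\Sigma \dashv \Omega$ counit that you (rightly) flag as requiring care.
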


This will follow quickly from the following consequence of descent for pushouts:
\begin{theorem}[{\cite[Theorem 2.2.12]{egbert:thesis}}]\label{thm:descent-pushouts}
Consider a commuting cube of types
\begin{mathpar}
\begin{tikzcd}
& S' \arrow[dl] \arrow[dr] \arrow[d] \\
A' \arrow[d] & S \arrow[dl] \arrow[dr] & B' \arrow[dl,crossing over] \arrow[d] \\
A \arrow[dr] & X' \arrow[d] \arrow[from=ul,crossing over] & B \arrow[dl] \\
& X,
\end{tikzcd}
\end{mathpar}
and suppose the vertical squares are pullback squares. Then the commuting square
\begin{mathpar}
\begin{tikzcd}
A' \sqcup_{S'} B' \arrow[r] \arrow[d] & X' \arrow[d] \\
A\sqcup_{S} B \arrow[r] & X
\end{tikzcd}
\end{mathpar}
is a pullback square.
\end{theorem}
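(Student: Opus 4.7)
The plan is to reduce the theorem to descent for pushouts, a standard consequence of univalence in HoTT (and of the $\infty$-topos structure semantically), which asserts that pullback distributes over pushout. Applied to the pushout presentation $X \equiv A \sqcup_S B$ and the map $X' \to X$, descent yields a canonical equivalence
\[ X' \;\equiv\; (A \times_X X') \sqcup_{S \times_X X'} (B \times_X X') \]
over $X$, where the structure maps of the right-hand pushout are obtained by pulling back the original cospans $A \to X \leftarrow S$ and $S \to X \leftarrow B$ along $X' \to X$.

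Next, I would read off the three outer vertical pullback squares in the cube as the statements that $A' \equiv A \times_X X'$, $B' \equiv B \times_X X'$, and $S' \equiv S \times_X X'$, each as an object fibred over the corresponding corner of the bottom face. Commutativity of the cube supplies the required coherence: the connecting maps $S' \to A'$ and $S' \to B'$ in the cube agree with the canonical maps between the pullbacks induced by $S \to A$ and $S \to B$. Substituting these three equivalences into the descent expression produces an equivalence $A' \sqcup_{S'} B' \simeq X'$ over $X$, which is precisely the top arrow of the square in question.

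Once the top arrow of the square is known to be an equivalence over the bottom, the square is automatically a pullback: any commuting square with an equivalence along one pair of parallel edges is a pullback, because any competing cone factors uniquely through the inverse of that equivalence. This completes the argument.

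The main obstacle is the descent step itself. In HoTT this is proved via the flattening lemma, which builds the classifying type family on a pushout from a pair of fibrewise type families on $A$ and $B$ together with a gluing equivalence over $S$; the delicate part is checking that the gluing equivalences one extracts from the two pullback faces over $S$ agree with the actual connecting maps of the cube. I would not reprove descent from scratch but instead cite its treatment in Rijke's thesis---which is exactly what the stated theorem packages---and devote the write-up to verifying that the cube's data assemble correctly into the input of that result.
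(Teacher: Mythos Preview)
The paper does not prove this statement; it is quoted from Rijke's thesis and used as a black box, so there is no in-paper argument to compare against.

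That said, your sketch has a real gap: you assume the bottom face of the cube is a pushout when you write ``the pushout presentation $X \equiv A \sqcup_S B$'', but this is not a hypothesis. The lower-left corner of the conclusion square is $A \sqcup_S B$, not $X$, precisely because $X$ need not be that pushout, and in the paper's applications it is not (e.g.\ in the proof of Lemma~\ref{lem:suspension-loop-pullback} the bottom span is $A \leftarrow 1 \to A$ under $X = A \times A$, whose pushout is $A \vee A$). Your argument as written only covers the special case where the bottom map $A \sqcup_S B \to X$ is an equivalence. The fix is straightforward: set $P := A \sqcup_S B$, pull back $X' \to X$ along $P \to X$ to get $P' \to P$, and apply flattening to \emph{that} map. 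By pullback pasting (since $A \to P \to X$ equals the given $A \to X$, and likewise for $S$ and $B$), the pullbacks of $P'$ along $A, S, B \to P$ are $A', S', B'$, and descent then gives $A' \sqcup_{S'} B' \simeq P' = (A \sqcup_S B) \times_X X'$, which is exactly the desired pullback square. Separately, your claim that ``any commuting square with an equivalence along one pair of parallel edges is a pullback'' is false in general (take both top corners $1$, bottom-left $2$, bottom-right $1$); one needs both parallel edges to be equivalences for that shortcut.
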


\begin{proof}[Proof of Lemma~\ref{lem:suspension-loop-pullback}]
Consider the commutative cube
\begin{mathpar}
\begin{tikzcd}
& \Omega A \arrow[dl] \arrow[dr] \arrow[d] \\
1 \arrow[d] & 1\arrow[dl] \arrow[dr] & 1 \arrow[dl,crossing over] \arrow[d] \\
A \arrow[dr] & A \arrow[d] \arrow[from=ul,crossing over] & A \arrow[dl] \\
& A \times A,
\end{tikzcd}
\end{mathpar}
where the vertical map $A \to A \times A$ is the diagonal, and the maps $A \to A \times A$ on the left and right are the identity on one component and constant at the point on the other. All of the vertical squares are pullbacks.

Now note that the pushout of the top span is $\Sigma \Omega A$, and the
pushout of the bottom span is $A \wedge A$, so by the above theorem we
have the desired pullback square (with the top/left bottom/right maps
swapped, but pullbacks are symmetric in the sense that $A \times_C B
\equiv B \times_C A$).
\end{proof}

\begin{corollarystab}\label{cor:stab-susp-loop-equiv}
For any dull reduced type, the canonical map $\Sigma \Omega \zE \to \zE$ is an equivalence.
\end{corollarystab}
\begin{proof}
By Lemma~\ref{lem:suspension-loop-pullback}, this map is the pullback
of the wedge inclusion $\iota$ along $\Delta$.  By Axiom S, $\iota$ is
an equivalence, and the pullback of an equivalence along any map is an
equivalence.
\end{proof}

\begin{definition}
Recall~\cite[Definition 7.5.1]{hottbook} that a type $A$ is $n$-connected if its
$n$-truncation is contractible, and a map is $n$-connected if its fibre is
an $n$-connected type for all base points.  A type or function is
$\infty$-connected if it is $n$-connected for every $n$.
\end{definition}

We will use the following Lemmas about $n$-connected types:

\begin{lemma}\label{lem:fibres-unique}
Suppose that a type $A$ is 0-connected and $P : A \to \mathsf{Prop}$ is a family
of propositions ($-1$-types).  Then if $P(a)$ holds for some $a:A$, then
$P(a')$ holds for all $a'$.
\end{lemma}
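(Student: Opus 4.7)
The plan is to reduce the statement to the standard fact that $0$-connectedness of a type provides mere equalities between any two of its points, and then to use the universal property of propositional truncation to transport across such mere equalities.

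First, recall that $A$ being $0$-connected means that $\|A\|_0$ is contractible. From this it follows that for any $a, a' : A$, the propositional truncation $\|a = a'\|_{-1}$ is inhabited: indeed, by contractibility of $\|A\|_0$ we have $|a|_0 = |a'|_0$ as elements of $\|A\|_0$, and by the standard characterisation of identity types in a $0$-truncation (i.e.\ paths in $\|A\|_0$ between points of the form $|-|_0$ correspond to mere paths in $A$) this is equivalent to $\|a = a'\|_{-1}$.

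Given the hypothesis $p_a : P(a)$ and an arbitrary $a' : A$, we produce an element of $P(a')$ as follows. Transport along an identification gives a map $(a = a') \to P(a')$ sending $q$ to $\mathrm{transport}^P(q, p_a)$. Since $P(a')$ is a proposition by assumption, this map factors through the $(-1)$-truncation by the recursion principle for propositional truncation, yielding a map $\|a = a'\|_{-1} \to P(a')$. Applying this factored map to the mere equality obtained above produces the desired element of $P(a')$.

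No step here is expected to present an obstacle: the argument is a routine combination of the universal property of truncation and the characterisation of $0$-connected types. The only thing to double-check is the direction of the characterisation of paths in $\|A\|_0$, which is precisely what makes $0$-connectedness strictly stronger than mere inhabitation and gives us exactly the propositional equality we need.
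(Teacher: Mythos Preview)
Your proof is correct and follows essentially the same approach as the paper: both derive a mere equality $\|a = a'\|_{-1}$ from the contractibility of $\|A\|_0$ via the characterisation of identity types in truncations, and then use that $P(a')$ is a proposition to eliminate the truncation and transport the given witness.
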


\begin{proof}
% \drlnote{this must be a standard n-truncated/n-connected thing, but I
%   couldn't find exactly this on a quick skim}
  Assume an $a$ such that $P(a)$ and another point $a'$.  Since $A$ is
  0-connected, its 0-truncation is contractible, and therefore its
  0-truncation is a proposition~\cite[Theorem 7.1.10]{hottbook}, so we
  get a path $|a'| =_{\trunc{0}{A}} |a|$. Commuting the truncation
  with the loop space gives $\trunc{-1}{a' = a}$~\cite[Theorem
  7.3.12]{hottbook}.  That is, $a'$ is merely equal to $a$.  But
  $P(a')$ is a proposition by assumption, so to prove it, we can
  assume $a' = a$, and then transport the assumed proof of $P(a)$.
\end{proof}

\begin{lemma}\label{lem:connected-closed}
  \begin{itemize}
  \item If $A$ and $B$ are $0$-connected then so is $A \times B$
  \item If $A,B$ are $0$-connected and $C$ is $1$-connected, then for
    any maps $f : A \to C$ and $g : B \to C$, the pullback $A \times_C
    B$ is $0$-connected.
    %% Give the tighter assumption?  we really only need that paths in
    %% ||C||_1 is contractible, i.e.\ ||C||_1 is a -1 type, not that its
    %% contractible.  But when we use it we have that anyway.
  \end{itemize}
\end{lemma}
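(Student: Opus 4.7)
The plan is to treat the two claims separately: the first is a straightforward product computation, while the second requires a careful analysis of the path data in a pullback.

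For the first claim, the cleanest approach is to invoke the fact that the $n$-truncation, as a monadic modality, preserves finite products, giving $\trunc{0}{A \times B} \equiv \trunc{0}{A} \times \trunc{0}{B}$; both factors are contractible by hypothesis, and contractibility is closed under products. Alternatively, unfolding $0$-connectedness as ``merely inhabited with merely unique points'' allows a direct combinatorial argument using closure of propositional truncations under conjunction.

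For the second claim, I would unfold $A \times_C B$ as $\sm{a : A} \sm{b : B} (f(a) = g(b))$ and establish mere inhabitation and merely-unique paths separately. For mere inhabitation, extract mere witnesses $a_0$ and $b_0$ using $0$-connectedness of $A$ and $B$; then $f(a_0)$ and $g(b_0)$ are merely equal using $0$-connectedness of $C$ (which is implied by $1$-connectedness), producing a mere element $(a_0, b_0, p)$ of the pullback. For merely-unique paths, given $(a, b, p)$ and $(a', b', p')$, a path between them in the pullback unfolds into a triple $(q : a = a',\, r : b = b',\, s)$ where $s$ is a $2$-cell in $C$ witnessing a coherence of the form $p \cdot \ap_g(r) = \ap_f(q) \cdot p'$. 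The paths $q$ and $r$ are obtained merely from $0$-connectedness of $A$ and $B$, and I combine the three mere existences using that mere existence is a proposition and so admits propositional-truncation elimination into propositions.

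The main obstacle, and the reason the hypothesis on $C$ must be strengthened to $1$-connectedness, is producing the coherence $2$-cell $s$. Once $q$ and $r$ are fixed, its two sides are parallel paths in $C$ from $f(a)$ to $g(b')$; to obtain $s$ merely I would use that $1$-connectedness of $C$ makes $\trunc{0}{c = c'}$ contractible for any $c, c' : C$ (since this path space coincides with the loop space of $\trunc{1}{C} \equiv 1$ at suitable basepoints), so in particular any two parallel paths in $C$ are merely equal. This supplies the missing piece and completes the argument. Only $0$-connectedness of $C$ would leave us stuck at precisely this step, since it guarantees a mere $1$-path but not a mere $2$-path between parallel $1$-paths.
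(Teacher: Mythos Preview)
Your proof is correct. The first part matches the paper exactly. For the second part you take a genuinely different route: you unfold ``$0$-connected'' as ``merely inhabited with any two points merely equal'' and verify both conditions by hand, analysing the path type in the pullback and using $1$-connectedness of $C$ to supply the coherence $2$-cell. The paper instead computes $\trunc{0}{A \times_C B}$ directly via a chain of equivalences: it uses that $0$-truncation absorbs an inner $0$-truncation on the fibres of a $\Sigma$-type, then identifies $\trunc{0}{f(x)=g(y)}$ with a path type in $\trunc{1}{C}$, which is contractible, collapsing the pullback to $\trunc{0}{A\times B}$ and then to $1$ by the first part. The paper's argument is shorter and never needs to unfold paths in a $\Sigma$-type or manipulate transport explicitly; your argument is more elementary in that it avoids the commutation lemmas for truncation, but pays for this by tracking the dependent-path coherence.
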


\begin{proof}
  For the first part, truncation preserves products~\cite[Theorem
  7.3.8]{hottbook}, so to show $\trunc{0}{A \times B}$ is
  contractible, we can equivalently show that
  $\trunc{0}{A} \times \trunc{0}{B}$ is contractible.  But
  $\trunc{0}{A}$ and $\trunc{0}{B}$ are contractible by assumption,
  and $1 \times 1 \equiv 1$.

  For the second, the pullback is given by the type
  $\sm{(x,y):A \times B} f(x) =_{C} g(y)$.
  By \cite[Theorem 7.3.9, Theorem 7.3.12]{hottbook}, we have
\begin{align*}
& \trunc{0}{\sm{(x,y):A \times B} f(x) =_{C} g(y)} \\
& \equiv \trunc{0}{ \sm{(x,y):A \times B} \trunc{0}{f(x) =_{C} g(y)}   }\\
& \equiv \trunc{0}{ \sm{(x,y):A \times B} |f(x)| =_{\trunc{1}{C}} |g(y)| }\\
& \equiv \trunc{0}{A \times B} \\
& \equiv 1
\end{align*}
The second-to-last step is because $\trunc{1}{C}$ is contractible by assumption, so any identity type in it is as well,
and the last step is by the previous part, since $A$ and $B$ are 0-connected.
\end{proof}

\begin{corollarystab}\label{cor:spec-inf-connected}
Dull reduced types and dull maps between them are $\infty$-connected.
\end{corollarystab}
\begin{proof}
For types, we prove by induction on $n$ that every dull reduced type
$\zE$ is $n$-connected.  Every type is $(-2)$-connected, since the
$(-2)$-truncation is contractible by definition.  For the inductive step,
suppose $\zE$ is reduced, and we want to show that it is
$(n+1)$-connected.  Then $\Omega \zE$ is also reduced by
Proposition~\ref{prop:reduced-types-closed} and dull, so by the
inductive hypothesis (which applies to all dull reduced types, so in
particular $\Omega \zE$) it is $n$-connected.  Suspension increases
connectivity by 1~\cite[Theorem 8.2.1]{hottbook}, so $\Sigma \Omega \zE$ is
$(n+1)$-connected.  But by Corollary~\ref{cor:stab-susp-loop-equiv},
$\zE \equiv \Sigma \Omega \zE$, so $\zE$ is $(n+1)$-connected as well.

Now for maps, fix an $n$.  The fibre of a dull map $\zf : \zE \to \zF$
over the basepoint is the type $\fib_{\zf}(\star_{\zF}) := \sm{x : \zE}
\zf(x) = \star_{\zF}$, which is a dull reduced type and thus is an
$\infty$-connected type by above, and therefore $n$-connected.  We now
show that this implies that all fibres are $n$-connected using
Lemma~\ref{lem:fibres-unique}.  First, $\zF$ is a dull reduced type, and
thus by the previous part it is in particular 0-connected.  For any type
$A$, the type ``$A$ is $n$-connected'' is a proposition, because it
unfolds to ``the $n$-truncation of $A$ is contractible'', and being
contractible (like all h-levels) is a proposition~\cite[Theorem
  7.1.7]{hottbook}.  Thus, $\fib{\zf}(\star_{\zF})$ being $n$-connected
implies the same for any basepoint $x : \zF$, so $\zf$ is an
$n$-connected map.
\end{proof}

%Recall the \emph{pushout product} of two maps.
%\begin{definition}
%For two maps $f : A \to B$ and $g : A' \to B'$, their \emph{pushout product} $f \pushpr g$ is the function defined by the universal property of pushouts in the diagram
%\begin{mathpar}
%\begin{tikzcd}
%A \times A' \ar[r] \ar[d] \arrow[dr, phantom, "\ulcorner", very near end] & A \times B' \ar[d] \ar[ddr, bend left] \\
%B \times A' \ar[r] \ar[drr, bend right] & B \times A'  \sqcup_{A \times A'} A \times B' \ar[dr, dashed, "f \pushpr g"] \\
%&& B \times B'
%\end{tikzcd}
%\end{mathpar}
%\end{definition}
%One can show that the pushout product of two maps is the `external fibrewise join': \mvrnote{ref}
%\begin{align*}
%\fib_{f\pushpr g} (b, b') \equiv \fib_f(b) \join \fib_g(b')
%\end{align*}

We will now make use of the Little Blakers-Massey Theorem and its dual, which is the specialisation of the Generalised Blakers-Massey Theorem~\cite{abfj:blakers-massey} to the identity modality. The Generalised Theorem has been formalised in The HoTT Library~\cite{hott:blakers-massey-formalised}. First, some notation:
\begin{definition}
For $f : A \to B$, let $\Delta f$ denote the canonical map $A \to A \times_B A$.
\end{definition}
\begin{definition}
For $f : A \to B$ and $g : C \to D$, the pushout product $f \pushpr g$ is defined to be the canonical gap map
\begin{mathpar}
\begin{tikzcd}
A \times C \ar[r] \ar[d] \ar[dr, phantom, "\ulcorner", very near end] & A \times D\ar[d] \ar[ddr, bend left] \\
B \times C \ar[r] \ar[drr, bend right] & P \ar[dr, dotted, "f \pushpr g"] \\
&& B \times D
\end{tikzcd}
\end{mathpar}
where $P :\defeq (A \times D) \sqcup_{A \times C} (B \times C)$.
\end{definition}

\begin{lemma}\label{lemma:diag-fibre}
The fibres of $\Delta f$ are given by
\[ \fib_{\Delta f}(a, a', p) \equiv ((a, p) =_{\fib_f(f(a'))} (a', \refl{f(a)})),\] and in particular, \[\fib_{\Delta f}(a, a, \refl{a}) \equiv \Omega_{(a, \refl{a})} \fib_f(f(a)). \]
\end{lemma}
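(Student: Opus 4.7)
The plan is to prove both claims by directly unfolding the definitions and applying the standard characterisation of paths in $\Sigma$-types twice, together with one based path induction to contract away redundant data.

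First, I would note that $A \times_B A \equiv \sum_{a_1, a_2 : A} f(a_1) = f(a_2)$ and $\Delta f$ sends $x$ to $(x, x, \refl{f(x)})$, so
\[
  \fib_{\Delta f}(a, a', p) \equiv \sum_{x : A}\bigl((x, x, \refl{f(x)}) =_{A \times_B A} (a, a', p)\bigr).
\]
Second, I would decompose this equality using the characterisation of paths in the (iterated) $\Sigma$-type $A \times_B A$: it unfolds to a triple $(\alpha : x = a,\ \beta : x = a',\ q)$ where $q$ expresses that transporting $\refl{f(x)}$ along $(\alpha, \beta)$ in the family $(a_1, a_2) \mapsto f(a_1) = f(a_2)$ yields $p$. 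A direct transport calculation gives $\ap_f(\alpha)^{-1} \cdot \ap_f(\beta)$, so $q$ becomes the equation $\ap_f(\beta) = \ap_f(\alpha) \cdot p$. Contracting the based path space $\sum_{x : A}\sum_{\alpha : x = a}(\cdots)$ by eliminating $x$ and $\alpha$ reduces the entire fibre to $\sum_{\beta : a = a'}\ \ap_f(\beta) = p$.

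For the right-hand side (reading it charitably with $\refl{f(a')}$ in place of $\refl{f(a)}$, since $\refl{f(a)}$ would otherwise be ill-typed as an element of $f(a') = f(a')$), I would apply the characterisation of paths in $\fib_f(f(a')) \equiv \sum_{y : A} f(y) = f(a')$: a path $(a, p) = (a', \refl{f(a')})$ decomposes as $\beta : a = a'$ together with a proof that the transport of $p$ along $\beta$ in the family $y \mapsto f(y) = f(a')$ equals $\refl{f(a')}$. Computing this transport gives $\ap_f(\beta)^{-1} \cdot p$, so the condition is equivalent to $p = \ap_f(\beta)$. Up to symmetry of $=$, this matches the type derived for the fibre, yielding the desired equivalence.

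The special case then follows by setting $a' :\defeq a$ and $p :\defeq \refl{f(a)}$: both endpoints become $(a, \refl{f(a)})$, and the resulting path type is by definition $\Omega_{(a,\refl{f(a)})} \fib_f(f(a))$. The main obstacle I anticipate is the transport computation in the dependent family $(a_1, a_2) \mapsto f(a_1) = f(a_2)$; getting the inverses and composition order right takes some care, but it is otherwise routine path algebra.
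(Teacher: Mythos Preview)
Your proposal is correct and follows essentially the same route as the paper: unfold the fibre, characterise the path in the iterated $\Sigma$-type $A\times_B A$ via the transport formula, contract the singleton $\sum_{x}\sum_{\alpha:x=a}$, and then re-assemble the remaining data as a path in $\fib_f(f(a'))$. You also correctly spotted the typo in the statement --- the paper's own calculation ends at $(a,p)=(a',\refl{f(a')})$, confirming that $\refl{f(a')}$ is intended.
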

\begin{proof}
Direct calculation:
\begin{align*}
\fib_{\Delta f}(a, a', p)
&:\defeq \sm{x : A} (x, x, \refl{f(x)}) = (a, a', p) \\
&\equiv \sm{x : A} \sm{l : x = a} \sm{r : x = a'} !\ap_f(l) \cdot \ap_f(r) = p \\
&\equiv \sm{r : a = a'} \ap_f(r) = p \\
&\equiv \sm{r : a = a'} !\ap_f(r) \cdot p = \refl{f(a')}\\
&\equiv \sm{r : a = a'} r_*(p) = \refl{f(a')}\\
&\equiv (a, p) = (a', \refl{f(a')})
\end{align*}
\end{proof}

\begin{proposition}\label{prop:pushpr-fibrewise-join}
The pushout product is the `external fibrewise join', in the sense that
for $b : B$ and $d : D$,
\[ \fib_{f \pushpr g}(b, d) \equiv \fib_f(b) \join \fib_g(d) \]
\end{proposition}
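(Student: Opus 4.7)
The plan is to reduce the statement to an application of descent for pushouts (Theorem~\ref{thm:descent-pushouts}), which will present the fibre $\fib_{f \pushpr g}(b,d)$ as the pushout of the fibres of the three constituent maps $A \times C \to B \times D$, $A \times D \to B \times D$, and $B \times C \to B \times D$.

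First, I would construct a cube whose bottom face is the pushout square defining $P$ together with the target $B \times D$: the span with apex $A \times C$ mapping to $A \times D$ and $B \times C$, with all three types mapping into $X = B \times D$ via $f \times g$, $f \times \id_D$, and $\id_B \times g$ respectively. For the top face, I take the fibres of each of these three vertical maps over the chosen basepoint, using $X' = 1$ with the map $1 \to B \times D$ picking out $(b,d)$; the induced maps between fibres form a span over $X'$. By construction the three vertical squares are pullbacks, so Theorem~\ref{thm:descent-pushouts} yields a pullback square
\begin{mathpar}
\begin{tikzcd}
\fib_{f \times \id_D}(b,d) \sqcup_{\fib_{f \times g}(b,d)} \fib_{\id_B \times g}(b,d) \arrow[r] \arrow[d] & 1 \arrow[d] \\
P \arrow[r, "f \pushpr g" swap] & B \times D
\end{tikzcd}
\end{mathpar}
identifying the pushout of the top span with $\fib_{f \pushpr g}(b,d)$.

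It then remains to identify the three vertices and two maps of the top span. Using that the fibre of a product of maps is the product of the fibres, and that the fibre of an identity map is contractible, the three fibres simplify respectively to $\fib_f(b)$, $\fib_g(d)$, and $\fib_f(b) \times \fib_g(d)$, with the induced span maps becoming the two projections $\fib_f(b) \leftarrow \fib_f(b) \times \fib_g(d) \rightarrow \fib_g(d)$. The pushout of this span is by definition the join $\fib_f(b) \join \fib_g(d)$, completing the proof.

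The main work is the bookkeeping of the cube — carefully checking that each vertical square really is a pullback, identifying the three fibres via the product-of-fibres and contractibility-of-identity-fibres calculations, and verifying that the resulting span maps agree with the projections under these identifications — rather than any deep homotopical computation beyond the descent theorem, which has already been invoked as a black box.
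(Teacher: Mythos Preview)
Your proposal is correct and follows essentially the same route as the paper: build a cube over the defining pushout span with $X' = 1$ picking out $(b,d)$, apply descent (Theorem~\ref{thm:descent-pushouts}), and identify the top span with the projection span defining the join. The only slip is expositional --- in your ``respectively'' list the second and third fibres are swapped --- but you immediately write the correct span afterward, so the argument is unaffected.
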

\begin{proof}
This is another application of descent. Consider the cube
\begin{mathpar}
\begin{tikzcd}
& \fib_f(b) \times \fib_g(d) \arrow[dl] \arrow[dr] \arrow[d] \\
\fib_f(b) \arrow[d] & A \times C \arrow[dl] \arrow[dr] & \fib_g(d) \arrow[dl,crossing over] \arrow[d] \\
A \times D \arrow[dr] & 1 \arrow[d] \arrow[from=ul,crossing over] & B \times C \arrow[dl] \\
& B \times D
\end{tikzcd}
\end{mathpar}
The back vertical map extracts the $A$ and $C$ from the fibres, the side
vertical maps extract the $A$ or $C$ and pair with $b$ or $d$ in the
other component, and the front vertical map is $(b,d)$.  By singleton
contractibility and paths in products being component-wise, all the
vertical sides are pullback squares. The corresponding square
\begin{mathpar}
\begin{tikzcd}
\fib_f(b) \join \fib_g(d) \arrow[r] \arrow[d] & 1 \arrow[d] \\
P \arrow[r, "f \pushpr g" swap] & B \times D
\end{tikzcd}
\end{mathpar}
is a pullback by Theorem~\ref{thm:descent-pushouts}, so
$\fib_f(b) \join \fib_g(d)$ is equivalent to the fibre of
$f \pushpr g$ at $(b, d)$.
\end{proof}

We now quote the Theorem that actually does the work:
\begin{theorem}[Little Blakers-Massey Theorem, {\cite[Corollary 4.1.4, Theorem 3.5.1]{abfj:blakers-massey}}]\label{thm:lbmt}
Consider the following square.
\begin{mathpar}
\begin{tikzcd}
Z \ar[r, "g"] \ar[d, "f" swap] & Y \ar[d, "h"] \\
X \ar[r, "k" swap] & W
\end{tikzcd}
\end{mathpar}
\begin{itemize}
\item If the square is a pushout and $\Delta f \pushpr \Delta g$ is an equivalence, then the square is also a pullback.
\item If the square is a pullback and $h \pushpr k$ is an equivalence, then the square is also a pushout.
\end{itemize}
\end{theorem}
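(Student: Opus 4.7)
The plan is to obtain this result as a direct specialisation of the Generalised Blakers--Massey Theorem of Anel--Biedermann--Finster--Joyal, which is parameterised by an arbitrary (left-exact reflective) modality and describes a precise interaction between pushout squares and pullback squares measured against that modality's notion of connectedness. Because the statement above is the ``little'' version (i.e.\ connectedness replaced by equivalence), my whole strategy is to plug the identity modality into the generalised theorem and unfold what it says.

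First, I would specialise the modality in the Generalised Blakers--Massey Theorem to the identity modality, under which the $\bigcirc$-connected maps are exactly the equivalences and every type is $\bigcirc$-modal. Given a pushout square with apex $Z$ as in the statement, the generalised theorem produces the canonical gap map $Z \to X \times_W Y$ and shows that it is $\bigcirc$-connected whenever the pushout product of the diagonals of the top and left maps is $\bigcirc$-connected. With the identity modality, the pushout-product hypothesis becomes precisely ``$\Delta f \pushpr \Delta g$ is an equivalence'' and the conclusion becomes ``the gap map is an equivalence'', which is the same as the square being a pullback. This yields the first bullet.

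For the second bullet, I would apply the dual Generalised Blakers--Massey Theorem (Corollary 4.1.4 of ABFJ), which starts with a pullback square, forms the pushout $X \sqcup_Z Y$, and asserts that the canonical comparison map $X \sqcup_Z Y \to W$ is $\bigcirc$-connected whenever $h \pushpr k$ is $\bigcirc$-connected. Specialising once again to the identity modality, the hypothesis becomes that $h \pushpr k$ is an equivalence, and the conclusion that $X \sqcup_Z Y \to W$ is an equivalence is the same as the original square being a pushout.

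The main obstacle is not the specialisation itself, which is formal once the generalised theorem is granted, but rather lining up conventions: one must verify that the specific pushout-product combinations appearing in ABFJ's formulations agree, on the nose, with $\Delta f \pushpr \Delta g$ and $h \pushpr k$ as we have defined them (cf.\ Lemma~\ref{lemma:diag-fibre} and Proposition~\ref{prop:pushpr-fibrewise-join}), and that the symmetry conventions of the square match. Since the Generalised Blakers--Massey Theorem has been formalised in the HoTT Library, I would rely on that formalisation to absorb this bookkeeping rather than redo it from scratch, and focus the effort on the downstream consequences such as Corollary~\ref{cor:stab-susp-loop-equiv}.
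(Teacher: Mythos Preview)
Your proposal is correct and matches the paper's approach exactly: the paper does not prove this theorem but simply quotes it, noting just before the statement that it is ``the specialisation of the Generalised Blakers--Massey Theorem to the identity modality'' and that the generalised theorem has been formalised in the HoTT Library. Your unwinding of why the identity-modality specialisation yields precisely these two bullets is the intended justification.
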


%\begin{theorem}
%Consider a commutative square
%\begin{mathpar}
%\begin{tikzcd}
%Z \ar[r, "g"] \ar[d, "f" swap] & Y \ar[d, "h"] \\
%X \ar[r, "k" swap] & W
%\end{tikzcd}
%\end{mathpar}
%of connected, pointed types and connected maps between them. Then
%\begin{itemize}
%\item If the square is a pushout and $\isContr(\Omega \fib_f \join \Omega \fib_g)$, then the square is a pullback.
%\item If the square is a pullback and $\isContr(\fib_h \join \fib_k)$, then the square is also a pushout.
%\end{itemize}
%\end{theorem}
%\begin{proof}

%\end{proof}

\begin{theoremstab}\label{thm:reduced-pullback-iff-pushout}
A dull commutative square in reduced types is a pullback square iff it is a pushout square.
\end{theoremstab}
\begin{proof}
  Suppose a dull commutative square in reduced types
  \begin{mathpar}
\begin{tikzcd}
\zE\ar[r, "\zg"] \ar[d, "\zf" swap] & \zG \ar[d, "\zh"] \\
\zF \ar[r, "\zk" swap] & \zH
\end{tikzcd}
\end{mathpar}
By Definition~\ref{def:reduced-is-pointed} and
Proposition~\ref{prop:reduced-map-is-pointed}, we have base points
$\star_{\zE}$, $\star_{\zF}$, $\star_{\zG}$, $\star_{\zH}$, and
the maps are all pointed. Thus, the fibres all have dull points---
e.g.\ $\fib_{\zf}(\star_{\zF}) := \sm{z:\zE} \zf(z) = \star_{\zF}$ has a
  point given by $\star_{\zE}$ and the path showing $\zf$ is pointed.

To use Theorem~\ref{thm:lbmt}, we just need to show that $\Delta \zf
\pushpr \Delta \zg$ and $\zh \pushpr \zk$ are equivalences, which we
will do using the ``contractible fibres'' definition of equivalence.

The fibre of $\Delta \zf \pushpr \Delta \zg$ over the basepoint $p_0
:\defeq ( (\star_{\zE},\star_{\zE},\refl{\zf(\star_{\zE})}),
(\star_{\zE},\star_{\zE},\refl{\zg(\star_{\zE})}))$ is $\Omega
(\fib_{\zf}(\star_{\zF}))\join \Omega(\fib_{\zg}(\star_{\zG}))$ by
Lemma~\ref{lemma:diag-fibre} (where $f(\star_{\zE}) = \star_{\zF}$
and $g(\star_{\zE}) = \star_{\zG}$ because the maps are pointed) and
Proposition~\ref{prop:pushpr-fibrewise-join}.  Again by
Proposition~\ref{prop:pushpr-fibrewise-join}, the fibre of $h \pushpr k$
over $q_0 :\defeq (\star_{\zH},\star_{\zH})$ is
$\fib_{\zh}(\star_{\zH}) \join \fib_{\zk}(\star_{\zH})$.  To show
that these are both contractible, by
Proposition~\ref{prop:reduced-smash-join-contractible}, it suffices to
show that the pieces of the join are dull, reduced types.  By
Proposition~\ref{prop:reduced-types-closed}, reduced types are closed
under fibres and loop spaces, and for the basepoints these types are
dull because the basepoints are.  This shows that $\fib_{\Delta \zf
  \pushpr \Delta \zg}(p_0)$ and $\fib_{\zh \pushpr \zk}(q_0)$ are
contractible.

Since the fibres over $p_0$ and $q_0$ are contractible, to show that
general fibres $\fib_{\Delta \zf \pushpr \Delta \zg}(p)$ and $\fib_{\zh
  \pushpr \zk}(q)$ over any $p$ and $q$ are contractible, by
Lemma~\ref{lem:fibres-unique} it suffices to show that $p : (\zE
\times_{\zF} \zE) \times (\zE\times_{\zG} \zE)$ and $q :
\zH \times \zH$ are elements of 0-connected types, since being
contractible is a proposition.  For $q$, by
Corollary~\ref{cor:spec-inf-connected}, $\zH$ is $\infty$-connected
and in particular $0$-connected, so $\zH \times \zH$ is
0-connected by Lemma~\ref{lem:connected-closed}.  For $p$, again using
closure under products, we need to show that $(\zE\times_{\zF}
\zE)$ and $(\zE\times_{\zG} \zE)$ are 0-connected.  By
Corollary~\ref{cor:spec-inf-connected}, $\zE$ is 0-connected and
$\zF,\zG$ are 1-connected, so the pullbacks are 0-connected as well by
Lemma~\ref{lem:connected-closed}.  This use of 0-connectedness is
important because the fibre of a map between dull reduced types is only
dull when the point over which we are taking the fibre is, so the
argument in the previous paragraph can be applied directly to $p$ and
$q$.
\end{proof}

We showed that the counit $\Sigma \Omega \zE \to \zE$ is an equivalence in Corollary~\ref{cor:stab-susp-loop-equiv}, and we can now show that the unit is as well:
\begin{corollarystab}\label{cor:susp-loop-unit-equiv}
For any $\zE : \Redu$, the unit map $\zE \to \Omega \Sigma \zE$ is an equivalence.
\end{corollarystab}

\begin{proof}

The pushout square defining the suspension is
\[
\begin{tikzcd}
\zE \ar[d] \ar[r] \arrow[dr, phantom, "\lrcorner", very near start] \arrow[dr, phantom, "\ulcorner", very near end] & 1 \ar[d, , "\mathsf{s}"] \\
1 \ar[r, "\mathsf{n}", swap] & \Sigma \zE
\end{tikzcd}
\]
By Theorem~\ref{thm:reduced-pullback-iff-pushout}, this is also a pullback square.  However, we also have a pullback square
\[
\begin{tikzcd}
\mathsf{n} =_{\Sigma \zE} \mathsf{s} \ar[d] \ar[r] \arrow[dr, phantom, "\lrcorner", very near start] \arrow[dr, phantom, "\ulcorner", very near end] & 1 \ar[d, "\mathsf{s}"] \\
1 \ar[r, "\mathsf{n}", swap] & \Sigma \zE
\end{tikzcd}
\]
so uniqueness of pullbacks gives an equivalence $\zE \equiv (\mathsf{n}=\mathsf{s})$.  If we
consider the suspension to be pointed by $N$, then $\Omega(\Sigma \zE)$
is the type $\mathsf{n}=\mathsf{n}$.  But since $\zE$ is pointed, we have a path $\!
\mathsf{mer}(\star_{\zE}) : \mathsf{s} = \mathsf{n}$, and composition with this path gives an
equivalence $(\mathsf{n} = \mathsf{s}) \equiv (\mathsf{n} = \mathsf{n})$, so $\zE \cong \Omega(\Sigma \zE)$.

The unit of the adjunction $\Sigma \vdash \Omega$ sends $e : \zE$ to the
path $\mathsf{mer}(e) \cdot !\mathsf{mer}(\star_{\zE})$, so the composite equivalence is
indeed the unit.
%% this is a little sketchy -- should say something about how the first mer comes up because its the proof that the pushout square commutes.
\end{proof}

%\mvrnote{These don't show up later, do I bother mentioning them?}
%We can make versions of the suspension and loop space operations that act fibrewise on a type, via Corollary~\ref{cor:sum-of-fibres}.
%\begin{definition}
%\mvrnote{Notation?}
%For any type $A$, the fibrewise loop space of and fibrewise suspension of $A$ is
%\begin{align*}
%\overline{\Sigma} A &:\defeq \sm{x : \natural \zA} \Sigma A_{\zx} \\
%\overline{\Omega} A &:\defeq \sm{x : \natural \zA} \Omega A_{\zx}
%\end{align*}
%\end{definition}
%
%We can rearrange the data of the fibrewise loop space a little:
%\begin{align*}
%\sm{x : \natural \zA} \Omega A_{\zx}
%&\defeq \sm{x : \natural \zA} (\zx_\natural, \refl{\zx}) =_{\sm{y : A} (\zx = \zy{}^\natural)} (\zx_\natural, \refl{\zx}) \\
%&\equiv \sm{x : \natural \zA} \sm{p : \zx_\natural = \zx_\natural} \ap_{(-)^{\natural}}(p) = \refl{\zx}
%\end{align*}
%so, a term of $\overline{\Omega} A$ is a point of the base space, and a loop in the top lying over the identity path in the base.
%
%\begin{propositionstab}
%Fibrewise suspension and loop space are dull inverse equivalences.
%\end{propositionstab}

\section{Relating Synthetic and Analytic Spectra}\label{sec:norm}

In the previous section, we showed that Axiom S gives the reduced
types/synthetic spectra many of the properties that we expect spectra to
have.  In this section, we investigate an additional axiom, which
relates the synthetic spectra represented by reduced types to the
concrete/analytic spectra that can be defined in pure homotopy type
theory.  This also results to be proved using the reduced types and
transferred to analytic spectra, and vice versa.

One can define (``$\Omega$-'')spectra internally in type
theory~\cite{floris:thesis, cavallo:cohomology} as sequences of types
and connecting maps.

\begin{definition}
A \emph{sequential prespectrum} $J$ is a sequence of pointed modal types $J : \mathbb{N} \to \Modal_\star$ together with pointed maps $\alpha_n : J_n \to_\star \Omega J_{n+1}$. A \emph{sequential spectrum} is a prespectrum such that the $\alpha_n$ are pointed equivalences. The types of such objects are denoted $\SeqPreSpec$ and $\SeqSpec$ respectively.
\end{definition}
\begin{definition}
A morphism of sequential (pre)spectra $f : \Mor(I,J)$ is a sequence of pointed maps $f_n : I_n \to_\star J_n$ that commute with the $\alpha_n$.
\end{definition}

\begin{remark}
We need to restrict the types in the sequence to be modal so that semantically they correspond to sequences of spaces. Otherwise we would be describing a spectrum object in $P\Spec_\star$, something more complicated than an ordinary spectrum.
\end{remark}

Our goal is to relate our reduced types with these sequential spectra. We do this by describing a series of (dull) adjoints:
\begin{mathpar}
\begin{tikzcd}[column sep=large]
\Modal_\star \ar[r, bend left, "\mathrm{susp}"] \ar[r, phantom, "\perp"] & \SeqPreSpec \ar[r, phantom, "\perp"] \ar[r, bend left, "\mathrm{spec}"] \ar[l, bend left, "0th", pos=0.48] & \SeqSpec \ar[r, phantom, "\perp"] \ar[r, bend left, "L"] \ar[l, bend left, "\iota"] & \Redu \ar[l, bend left, "{R}"]
\end{tikzcd}
\end{mathpar}

The left two adjunctions take place almost entirely in pure homotopy
type theory: $\mathrm{susp}$ takes a space $X$ to the suspension
prespectrum $(X, \Sigma X, \Sigma \Sigma X, \ldots)$, and
$\mathrm{spec}$ is spectrification, inverting the connecting maps to
equivalences. Their composite $\mathrm{spec} \circ \mathrm{susp}$ is
thus an analytic analogue of the $\Sigma^\infty$ stabilisation functor
we defined in Section~\ref{sec:redu}, taking a modal type to its
suspension analytic spectrum.  The right adjoint $\iota$ is
forgetful/an inclusion, while $0th$ selects the $0th$ term of a
pre-spectrum, and the composite $0th \circ \iota$ is an analytic
analogue of $\Omega^\infty$. All that takes us out of ordinary HoTT is
the requirement that the types involved are modal types. The new
construction in this section is the rightmost adjunction relating
analytic and synthetic spectra, and an axiom stating that it is an
adjoint equivalence, making the two notions of spectra coincide.

\begin{definition}
For any pointed modal type $X$, we have the \emph{suspension sequential prespectrum} $\mathrm{susp} X$ where $(\mathrm{susp} X)_n :\defeq \Sigma^n X$, and the structure maps are the unit maps $\alpha_n : \Sigma^n X \to \Omega\Sigma^{n+1} X$. %\mvrnote{name? we can't re-use $\Sigma^\infty$ of course}
\end{definition}

Note that all the $\Sigma^n X$ are modal, by Proposition~\ref{prop:natural-preserves-pushouts}.

\begin{proposition}
$\mathrm{susp}$ is left adjoint to taking the zeroth type of a sequential prespectrum.
\end{proposition}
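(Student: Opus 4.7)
The plan is to exhibit an equivalence
\[ \Mor(\mathrm{susp}(X), J) \equiv (X \to_\star J_0), \]
natural in $X$ and $J$, whose forward direction extracts the zeroth component $f \mapsto f_0$. Unfolding the definitions, a point of $\Mor(\mathrm{susp}(X), J)$ packages a dependent family of pointed maps $f_n : \Sigma^n X \to_\star J_n$ together with coherence witnesses $h_n : \alpha^J_n \circ f_n = \Omega f_{n+1} \circ \alpha^{\mathrm{susp}}_n$, where by construction $\alpha^{\mathrm{susp}}_n$ is the unit of the $\Sigma \dashv \Omega$ adjunction at $\Sigma^n X$. I will show that the fiber of $f \mapsto f_0$ over any chosen $g : X \to_\star J_0$ is contractible.

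Proceeding by induction on $n$, suppose $f_n : \Sigma^n X \to_\star J_n$ has been uniquely determined (with the base case $f_0 :\defeq g$). By the $\Sigma \dashv \Omega$ adjunction on pointed types (\cite[Lemma 6.5.4]{hottbook}), the type $\Sigma^{n+1} X \to_\star J_{n+1}$ is equivalent to the type $\Sigma^n X \to_\star \Omega J_{n+1}$, with $f_{n+1}$ corresponding to its adjoint transpose, which the triangle identity identifies with $\Omega f_{n+1} \circ \alpha^{\mathrm{susp}}_n$. Writing $\tilde f_{n+1}$ for this transpose, the coherence equation $h_n$ reads simply $\tilde f_{n+1} = \alpha^J_n \circ f_n$. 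Therefore the pair $(f_{n+1}, h_n)$ is equivalent to
\[ \sm{\tilde f_{n+1} : \Sigma^n X \to_\star \Omega J_{n+1}} (\tilde f_{n+1} = \alpha^J_n \circ f_n), \]
a based path space and hence contractible by singleton contraction. Thus each successive piece of data is uniquely determined by its predecessor, and the full fiber over $g$ is a nested sum of contractible types, hence contractible. Naturality follows by inspecting the forward map.

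The main obstacle is the bookkeeping required to repackage $\Mor(\mathrm{susp}(X), J)$ as a nested $\Sigma$-type indexed by $\mathbb{N}$ in a shape suitable for applying the $\Sigma \dashv \Omega$ equivalence fibrewise --- one has to arrange the data so that $f_{n+1}$ and $h_n$ sit together as a $\Sigma$-pair over $f_n$. The key conceptual point that makes this work is that $\alpha^{\mathrm{susp}}_n$ was defined to be exactly the adjunction unit, so the adjoint-transposition equivalence sends the coherence condition on $h_n$ to a singleton equation; with any other choice of structure map the contractibility step would fail.
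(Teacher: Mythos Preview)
Your proof is correct and follows essentially the same approach as the paper: both use the $\Sigma \dashv \Omega$ adjunction to show that, given $f_n$, the next component $f_{n+1}$ together with its coherence $h_n$ is uniquely determined (the paper phrases this as ``$\Sigma X \to_\star J_1$ is forced to be the composite $\Sigma X \to \Sigma J_0 \to J_1$'', while you phrase it as contractibility of a based path space after transposition). Your version is more explicit about the contractibility argument and the bookkeeping required to repackage the morphism type, whereas the paper simply says ``this argument iterates''; but the underlying idea is identical.
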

\begin{proof}
Suppose we have a map $X \to_\star J_0$. We need a map $\Sigma X \to_\star J_1$ such that
\begin{mathpar}
\begin{tikzcd}
X \ar[r] \ar[d] & \Omega \Sigma X \ar[d] \\
J_0 \ar[r] & \Omega J_1
\end{tikzcd}
\end{mathpar}
commutes, equivalently, one such that
\begin{mathpar}
\begin{tikzcd}
\Sigma X \ar[r, equals] \ar[d] & \Sigma X \ar[d] \\
\Sigma J_0 \ar[r] & J_1
\end{tikzcd}
\end{mathpar}
commutes. Then $\Sigma X \to_\star J_1$ is forced to be the composite of $\Sigma X \to \Sigma J_0 \to J_1$. This argument iterates to produce a map $\Mor(\mathrm{susp}X, J)$, showing the data of such a morphism is equivalent to that of a map $X \to_\star J_0$.
\end{proof}

\begin{definition}
For a sequential prespectrum $J$, the \emph{spectrification} of $J$ is given by
\begin{align*}
(\mathrm{spec} J)_n :\defeq \colim_k \Omega^k J_{n+k}
\end{align*}
\end{definition}
Each $(\mathrm{spec} J)_n$ is modal by Proposition~\ref{prop:natural-preserves-seq-colimits} and Lemma~\ref{lem:modality-consequences}. That these types actually assemble into a sequential spectrum has not yet been proven in type theory, so we leave it as an unjustified assertion --- note that this assertion is a statement in pure homotopy type theory, and is not dependent on the modal extension we make in this paper, a proof of the assertion would certainly apply when the types concerned happen to be modal. %\mvrnote{I think people are working on it?}

\begin{named}{Assertion 1}
This formula defines a sequential spectrum, and the operation is left adjoint to the inclusion of sequential spectra into sequential prespectra.
\end{named}

Now we turn to the $L \dashv R$ adjunction relating these sequential
spectra to our synthetic spectra'. First, we can extract a sequential
prespectrum from any reduced type $\zE$.
\begin{definition}
For $\zE : \Redu$, define $R\zE : \SeqPreSpec$ by
\begin{align*}
(R\zE)_n :\defeq \Omega^\infty \Sigma^n \zE
\end{align*}
with connecting maps
\begin{align*}
(R\zE)_n \defeq \Omega^\infty \Sigma^n \zE \to_\star \Omega^\infty \Omega \Sigma \Sigma^n \zE \equiv_\star \Omega \Omega^\infty \Sigma \Sigma^{n} \zE \equiv_\star \Omega \Omega^\infty \Sigma^{n+1} \zE \defeq \Omega (R\zE)_{n+1}
\end{align*}
The first map is given by functoriality of $\Omega^\infty$ on the unit $X \to_\star \Omega \Sigma X$ for $X = \Sigma^n \zE$.
The second is derived from Proposition~\ref{prop:omegainf-omega-commutes}
and the third is essentially by definition, depending on how iterated suspension is defined.
\end{definition}
%Note that $\Omega^\infty \Sigma^n \zE$ is always a pointed space.

\begin{propositionstab}
$R\zE$ is a sequential spectrum.
\end{propositionstab}
\begin{proof}
By Proposition~\ref{prop:reduced-types-closed}, reduced types are closed
under suspensions, so an induction shows that $\Sigma^n \zE$ is reduced.
Therefore the unit map $\Sigma^n \zE \to_\star \Omega \Sigma \Sigma^n \zE$
is an equivalence by Corollary~\ref{cor:susp-loop-unit-equiv}, so the connecting map defined above is a composite of equivalences.
\end{proof}

Conversely, suppose we have a sequential prespectrum $J : \SeqPreSpec$. We can produce a reduced type $LJ : \Redu$.
\begin{definition}\label{def:seqspec-to-spec}
For $J : \SeqPreSpec$ let $LJ : \Redu$ be
\begin{align*}
LJ :\defeq \colim (\Sigma^\infty J_0 \to \Omega \Sigma^\infty J_1 \to \Omega^2 \Sigma^\infty J_2 \to \dots)
\end{align*}
where the maps $\Sigma^\infty J_n \to \Omega \Sigma^\infty J_{n+1}$ are given by
\begin{align*}
\Sigma^\infty J_n \to \Omega \Sigma \Sigma^\infty J_n \equiv \Omega \Sigma^\infty \Sigma J_n \to \Omega \Sigma^\infty \Sigma \Omega J_{n+1} \to \Omega \Sigma^\infty J_{n+1}
\end{align*}
where the maps are the unit of $\Sigma \vdash \Omega$, Proposition~\ref{prop:sigma-infty-susp-commute}, functoriality on the connecting map $J_n \to \Omega J_{n+1}$ of the prespectrum, and then the counit of $\Sigma \vdash \Omega$.
\end{definition}
To see that this type is reduced, by
Proposition~\ref{prop:natural-preserves-seq-colimits}, $\natural L J$ is equivalent
to the colimit of the $\natural \Omega^n \Sigma^\infty J_i$, and
reduced types are closed under loop spaces by Proposition~\ref{prop:reduced-types-closed}, and $\Sigma^\infty$ of any type is reduced by Proposition~\ref{prop:sigmainf-reduced}, so each of the terms of that colimit is contractible, so the colimit is as well.

\begin{remark}\label{rem:suspinf-s0-is-sphere}
Let $S : \SeqPreSpec$ denote the sphere (analytic) prespectrum, i.e.\ the suspension prespectrum of $S^0$. Then $LS \equiv \sphere$, because at each level of the colimit we have
\[\Omega^n \Sigma^\infty S^n \defeq \Omega^n (S^n \topsmash \sphere) \equiv \Omega^n (\Sigma^n \sphere) \equiv \sphere \]
The first equivalence is $\Sigma^n A \equiv S^1 \topsmash (S^1 \topsmash
\ldots (S^1 \topsmash A)) \equiv (S^1 \topsmash S^1 \ldots \topsmash S^1) \topsmash A
\equiv S^n \topsmash A$,
using associativity of smash and $\Sigma A \equiv S^1 \topsmash A$~\cite[Proposition 4.2.1]{brunerie:thesis},
\cite[Definition 4.3.33]{floris:thesis}.
The second is given by iterating $\Omega \Sigma \zE \equiv \zE$ for a dull reduced type $\zE$ (Corollary~\ref{cor:susp-loop-unit-equiv}), noting that the suspension of a dull reduced type is reduced by Proposition~\ref{prop:reduced-types-closed}, and $\sphere$ is assumed to be reduced.
\end{remark}

\begin{propositionstab}
The operations $L$ and $R$ are dull adjoints: \[ \natural(L\zJ \to_\star \zE) \equiv \Mor(\zJ, R\zE) \]
\end{propositionstab}
\begin{proof}
Suppose we have a dull map $\zk : L\zJ \to_\star \zE$. We can forget the fact that $\zk$ is pointed using Proposition~\ref{prop:spectra-map-is-pointed}. The data of $\zk$ (using the universal property for maps out of a colimit) is equivalent to a sequence of dull maps $\zk_n : \Omega^n \Sigma^\infty \zJ_n \to \zE$ so that the squares
\begin{mathpar}
\begin{tikzcd}
\Omega^n \Sigma^\infty \zJ_n \ar[d, "\zk_n" swap] \ar[r] & \Omega^{n+1} \Sigma^\infty \zJ_{n+1} \ar[d, "\zk_{n+1}"] \\
\zE \ar[r, equal] & \zE
\end{tikzcd}
\end{mathpar}
commute.

We can transpose the $\zk$ across the adjunctions to get maps $\hat{k}_n : J_n \to_\star \Omega^\infty \Sigma^n \zE$. This type on the right is exactly $(R\zE)_n$, so we just have to show that this collection of maps forms a morphism of sequential prespectra. This mostly involves unwinding the definition of the map $\Omega^n \Sigma^\infty \zJ_n \to \Omega^{n+1} \Sigma^\infty \zJ_{n+1}$

Precompose the upper left corner with the equivalence $\Omega^{n+1}\Sigma^\infty\Sigma \zJ_n
\equiv \Omega^n\Omega\Sigma^\infty\Sigma \zJ_n \equiv
 \Omega^n\Omega\Sigma\Sigma^\infty \zJ_n \equiv
\Omega^n\Sigma^\infty \zJ_n$:
\begin{mathpar}
\begin{tikzcd}[column sep=5em]
\Omega^{n+1} \Sigma^\infty \Sigma \zJ_n \ar[d,"\dots" swap] \ar[r,"\Omega^{n+1}\Sigma^\infty \hat{\alpha}_n"] & \Omega^{n+1} \Sigma^\infty \zJ_{n+1} \ar[d,"\zk_{n+1}"] \\
\zE \ar[r, equal] & \zE
\end{tikzcd}
\end{mathpar}
Since $\Sigma \dashv \Omega$, and for dull reduced types Axiom S implies that $\Sigma$ and $\Omega$ are inverses,
we also have an adjunction $\Omega \dashv \Sigma$ on dull reduced types.
Transposing across this and $ \Sigma^\infty \dashv \Omega^\infty$ vertically, such squares are equivalent to squares
\begin{mathpar}
\begin{tikzcd}
\Sigma  \zJ_n \ar[d, "\dots" swap] \ar[r, "\hat{\alpha}_n"] & \zJ_{n+1} \ar[d, "\hat{\zk}_{n+1}"] \\
(R\zE)_{n+1} \ar[r, equal] & (R\zE)_{n+1}
\end{tikzcd}
\end{mathpar}
Now transposing `diagonally' along $\Sigma \dashv \Omega$, we get
\begin{mathpar}
\begin{tikzcd}
\zJ_n \ar[d, "\dots" swap] \ar[r, "\alpha_n"] & \Omega \zJ_{n+1} \ar[d, "\Omega \hat{\zk}_{n+1}"] \\
\Omega (R\zE)_{n+1} \ar[r, equal] & \Omega (R\zE)_{n+1}
\end{tikzcd}
\end{mathpar}
Finally, precompose the lower left corner with the equivalence $\beta_n : (R\zE)_n \equiv \Omega (R\zE)_{n+1}$ to get
\begin{mathpar}
\begin{tikzcd}
\zJ_n \ar[d, "\dots" swap] \ar[r, "\alpha_n"] & \Omega \zJ_{n+1} \ar[d, "\Omega \hat{\zk}_{n+1}"] \\
(R\zE)_n \ar[r, "\beta_n" swap] & \Omega (R\zE)_{n+1}
\end{tikzcd}
\end{mathpar}

All we have left is to check is that the vertical map on the left is equal to $\hat{k}_n$. Tracing through what we have done, it is equal to
\begin{align*}
  \zJ_n &\to \Omega \Sigma \zJ_n
  \to \Omega \Omega^\infty  \Sigma^{n+1} \Omega^{n+1} \Sigma^\infty \Sigma \zJ_n
  \to \Omega \Omega^\infty  \Sigma^{n+1} \Omega^{n+1} \Sigma \Sigma^\infty \zJ_n \\
  &\to \Omega \Omega^\infty \Sigma^{n+1} \Omega^n \Sigma^\infty \zJ_n
  \to \Omega \Omega^\infty \Sigma^{n+1} \zE
  \to \Omega^\infty \Omega \Sigma^{n+1} \zE
  \to \Omega^\infty \Sigma^n \zE
\end{align*}
In the above sequence, the map
$\Omega \Omega^\infty \Sigma^{n+1} \Omega^n \Sigma^\infty \zJ_n \to
\Omega \Omega^\infty \Sigma^{n+1} \zE$ is given by
$\Omega \Omega^\infty \Sigma^{n+1} \zc{k}_n$, so by naturality, we can
move this use of $\zc{k}_n$ to the end. The above chain is then equal to the composite
\begin{align*}
  \zJ_n &\to \Omega \Sigma \zJ_n
  \to \Omega \Omega^\infty  \Sigma^{n+1} \Omega^{n+1} \Sigma^\infty \Sigma \zJ_n
  \to \Omega \Omega^\infty  \Sigma^{n+1} \Omega^{n+1} \Sigma \Sigma^\infty \zJ_n \\
  &\to \Omega \Omega^\infty \Sigma^{n+1} \Omega^n \Sigma^\infty \zJ_n
  \to \Omega^\infty \Omega \Sigma^{n+1} \Omega^n \Sigma^\infty \zJ_n
  \to \Omega^\infty \Sigma^n \Omega^n \Sigma^\infty \zJ_n
  \to \Omega^\infty \Sigma^n \zE
\end{align*}
For this to be the transpose of $\zc{k}_n$, what we need is for the
composite
$\zJ_n \to \Omega^\infty \Sigma^n \Omega^n \Sigma^\infty \zJ_n$,
leaving off the last map, is equal to the unit.  And it is, by liberal
use of the triangle inequalities, as every map in the string is either
unit/counit or an equality of composites in a system of commuting
adjunctions.

\end{proof}

Having established this adjunction, we introduce the following axiom, which identifies the synthetic and analytic spectra:

\begin{named}{Axiom N}
The adjunction between $\Redu$ and $\SeqSpec$ is a dull adjoint equivalence, i.e.\ the map $LR\zE \to_\star \zE$ is an equivalence and $\Mor(\zJ, RL\zJ)$ is a level-wise equivalence.
\end{named}

As an application, we show that this axiom fixes the stable homotopy groups of $\sphere$, in the sense of Definition~\ref{def:stable-homotopy-groups}, to be the actual stable homotopy groups of the ordinary spheres.

\begin{remark}\label{rem:composite-adjunction}
The composite right adjoint $\Redu \to \SeqSpec \to \SeqPreSpec \to \Modal_\star$ in the diagram at the beginning of this section is $(RE)_0 \defeq \Omega^\infty E$. So the composite adjunction must be equivalent to the adjunction
\begin{mathpar}
\begin{tikzcd}[column sep=large]
\Modal_\star \ar[r, bend left, "\Sigma^\infty", pos=0.52] & \Redu \ar[l, bend left, "\Omega^\infty"]
\end{tikzcd}
\end{mathpar}
that we already have from Proposition~\ref{prop:sigmainf-omegainf-adjoint}.
\end{remark}

\begin{definition}
Let $Q\zX :\defeq \Omega^\infty \Sigma^\infty \zX$.
\end{definition}

\begin{lemmanorm}\label{lem:qx}
\begin{align*}
Q\zX \equiv \colim_k \Omega^k \Sigma^k \zX
\end{align*}
\end{lemmanorm}
\begin{proof}
  By Remark~\ref{rem:composite-adjunction}, we have
  $Q \zX \defeq \Omega^\infty \Sigma^\infty \zX \equiv \mathrm{0th}(\iota(R(L(\mathrm{spec}(\mathrm{susp}(\zX))))))$.
  Axiom N allows us to chop the $L \dashv R$ adjunction off this roundtrip on $\Modal_\star$,
  and $\iota$ is just forgetful, so we get
  $\mathrm{0th}(\mathrm{spec}(\mathrm{susp}(\zX)))$.
  Then $\colim_k \Omega^k \Sigma^k \zX$ is exactly the 0th type of the spectrification of the suspension prespectrum of $\zX$.
\end{proof}

\begin{propositionnorm}
\[ \pi^s_n(\sphere)\equiv \colim_k \pi_{n+k}(S^k)\]
\end{propositionnorm}
\begin{proof}
We will make use of some properties of sequential colimits proven in~\cite{sbr:seq-colims}. Specifically, sequential colimits commute with taking loop spaces~\cite[Corollary 7.4]{sbr:seq-colims} and truncations~\cite[Corollary 7.6]{sbr:seq-colims}, and thus calculating homotopy groups.
\begin{align*}
\pi^s_n(\sphere)
&\defeq \pi_n(\Omega^\infty \sphere) && \text{(by definition)} \\
&\equiv \pi_n(\Omega^\infty \Sigma^\infty S^0) && \text{(by Remark~\ref{rem:suspinf-s0-is-sphere})} \\
&\equiv \pi_n(\colim_k \Omega^k \Sigma^k S^0) && \text{(by the Lemma~\ref{lem:qx})}  \\
&\equiv \colim_k \pi_n(\Omega^k \Sigma^k S^0) && \text{(sequential colimit commutes with $\pi_n$)} \\
&\equiv \colim_k \pi_{n+k}(\Sigma^k S^0) && \text{(definition of $\pi$)} \\
&\equiv \colim_k \pi_{n+k}(S^k) && \text{(definition of $S^n$)}\\
\end{align*}
\end{proof}

%Note that $QX \equiv \Omega Q(\Sigma X)$ by stability:
%\begin{align*}
%\Omega Q(\Sigma X) \equiv \Omega \Omega^\infty \Sigma^\infty \Sigma X \equiv \Omega^\infty \Omega \Sigma \Sigma^\infty X \equiv \Omega^\infty \Sigma^\infty X
%\end{align*}

%Let's look at $L[\sphere]$. The colimit is over
%\begin{align*}
%\Sigma^\infty \Omega^\infty \sphere \to \Omega \Sigma^\infty \Omega^\infty \Sigma \sphere \to \Omega^2 \Sigma^\infty \Omega^\infty \Sigma^2 \sphere \to \dots
%\end{align*}
%where the arrows are
%\begin{align*}
%\Omega^n \Sigma^\infty \Omega^\infty \Sigma^n \sphere
%&\equiv \Omega^n \Omega \Sigma \Sigma^\infty \Omega^\infty \Sigma^n \sphere \\
%&\equiv \Omega^n \Omega \Sigma^\infty \Sigma \Omega^\infty \Sigma^n \sphere \\
%&\equiv \Omega^n \Omega \Sigma^\infty \Sigma \Omega^\infty \Omega \Sigma \Sigma^n \sphere \\
%&\equiv \Omega^n \Omega \Sigma^\infty \Sigma \Omega \Omega^\infty \Sigma \Sigma^n \sphere \\
%&\to \Omega^n \Omega \Sigma^\infty \Omega^\infty \Sigma \Sigma^n \sphere \\
%\end{align*}
%
%\begin{proposition}
%??? We want \[ QX \equiv \colim_n \Omega^n \Sigma^n X \]
%\end{proposition}
%
%Let's see where we get to. We know that $LS \equiv \sphere$.
%\begin{align*}
%\pi^s_0(\sphere)
%&\defeq \pi_0(\natural(\sphere \to_\star \sphere)) \\
%&\defeq \pi_0(\natural(LS \to_\star \sphere)) \\
%&\defeq \pi_0(\Mor(S, [\sphere])) \\
%\end{align*}

\newpage
\part{Metatheory}

\section{Fully-Annotated Syntax and Proofs of Admissible Rules}\label{sec:admissible-rules}

The rules for the natural type in Figures~\ref{fig:structural} and~\ref{fig:natural-rules}
can be applied to various precise formulations of type theory.  For
example, if the ambient Martin-L\"of type theory is thought of
algebraically as an essentially algebraic theory / quotient
inductive-inductive type~\cite{qiits}, then we could make $\zc{\Gamma}$
a new context former, $\zc{a}$ a new term former, the unit a new
explicit substitution, and the rules defining these operations new
judgemental equalities.  However, this kind of algebraic formulation
does not immediately capture two important aspects of our syntax.  The
first is that the $\zc{\Gamma}$ and $\za$ are definable in terms of
marked context extension and marked variables --- this would need to be
recovered as part of a canonicity proof.  The second is that the unit is
``silent,'' i.e.\ it does not change the raw proof term --- in the
algebraic style, it would be made explicit analogously to weakening.

The simplest way to make these observations formal is to adopt a more
traditional syntax, where the subjects of a judgement $\Gamma \vdash a :
A$ are a raw syntax context $\Gamma$, a raw syntax term $a$, and a raw
syntax type $A$.  These weak invariants ``break the loop'' so that the
basic inference rules can be defined prior to the admissible rules ---
otherwise, one requires the admissible rules to know that the
presuppositions of the judgements are satisfied. For example, the
premise of $(\Gamma, \zx :: A) \ctx$ is $\zc{\Gamma} \vdash A \type$,
but $\zc{\Gamma} \ctx$ only follows from an admissible rule.

\subsection{Official Rules}

When making rules in this style precise, there are some somewhat
arbitrary choices about the presuppositions of a judgement.  For
example, a derivation of $\Gamma \vdash a : A$ might
\begin{enumerate}
\item presuppose that $\Gamma$ is well-formed, i.e.\ the subject is really a raw context
$\Gamma$ such that $\Gamma \ctx$.

\item check $\Gamma \ctx$ as part of the derivation.

\item neither of the above, i.e.\ formally one can make derivations of
  $\Gamma \vdash a : A$ for an ill-formed context $\Gamma$, but we
  generally will only be interested in derivations when when $\Gamma
  \ctx$.
\end{enumerate}
The first has the same problem as the algebraic syntax --- it requires the
admissible rules to be mutual with the basic ones --- and the second is a
bit far from an implementation, which inductively maintains the
invariant that the context is well-formed without repeatedly re-checking
it, so we follow the third option for contexts in all judgements.
However, for types, the rules will ensure that the type is well-formed,
and for equality rules, the rules will ensure that the terms are
well-typed:
\begin{itemize}
\item If $\Gamma \ctx$ and $\Gamma \vdash a : A$ then $\Gamma \vdash A \type$.
\item If $\Gamma \ctx$ and $\Gamma \vdash a \defeq a' : A$ then $\Gamma
  \vdash a : A$ and $\Gamma \vdash a' : A$.
\end{itemize}
This is because, following~\cite{streicher:book,hofmann:interp}'s
approach to categorical semantics and initiality, we officially adopt
a fully annotated term syntax, where every inference rule has a direct
typing premise for each term/type metavariable appearing in the rule.
(Of course, this is also a bit far from an implementation.)  Together
with the admissible rules, these premises will be enough to ensure
that the types in a typing judgement and terms in an equality
judgement are well-formed. De Boer, Brunerie, Lumsdaine and M\"ortberg
(see~\cite{deboer20initiality}) have given a fully mechanised
initiality proof for roughly this style of presentation, though we
treat variable binding informally rather than using de Bruijn indices.

According to these conventions, the official basic rules for the type
theory with $\natural$ and $\Pi$ are in Figure~\ref{fig:rules-official}.
\begin{figure}
\begin{mathpar}
 \inferrule*{~}{\cdot \ctx} \qquad
 \inferrule*{\Gamma \ctx \and \Gamma \vdash A \type}
            {\Gamma,x:A \ctx} \qquad
 \inferrule*{\Gamma \ctx \and \zc{\Gamma} \yields A \type}
            {\Gamma, \zx :: A  \ctx}
 \\
 \inferrule*{~}{\Gamma \yields \cdot \tele} \qquad
 \inferrule*{\Gamma \yields \Delta \tele \and \Gamma,\Delta \yields A \type}
            {\Gamma \yields \Delta,x:A \tele} \qquad
 \inferrule*{\Gamma \yields \Delta \tele \and \zc{\Gamma,\Delta} \yields A \type}
            {\Gamma \yields \Delta, \zx :: A  \tele}
 \\
 \inferrule*[left=var]{\Gamma \vdash A \type} {\Gamma, x : A, \Gamma' \yields x : A}
 %% DRL note: de Boer seems not to annotate x_A, so I guess this is the
 %% one synthesizing term in the language.
 %% Similarly, we don't annotate marked variables.
 %% In the semantic interpretation, we have a context giving the candidate type information,
 %% so the annotation would just introduce redundancy.
 %%
 %% It's a little inconsistent that the variable rules have typing premises,
 %% since for a well-formed context this follows, but
 %% it's used for staging the lemmas that don't assume the context is well-formed.
 %%
 \quad
 \inferrule*[left=var-zero]{\zc{\Gamma} \vdash A \type}
                           {\Gamma, \zx :: A, \Gamma' \yields \zx : A}
 \quad
 \inferrule*[left=var-roundtrip]{\Gamma \vdash A \type}
                                {\Gamma, x : A, \Gamma' \yields \zx : \zA}\\
\inferrule*[left=$\natural$-form]{\zc{\Gamma} \yields A \type }{\Gamma
  \yields \natural{A} \type}
\qquad
\inferrule*[left=$\natural$-intro]{\zc{\Gamma} \yields A \type \quad
                              \zc{\Gamma} \yields a : A}
                             {\Gamma \yields a_A^\natural : \natural{A}}
\qquad
\inferrule*[left=$\natural$-elim]{\zc{\Gamma} \yields A \type \quad
                              \Gamma \yields b : \natural A}
                             {\Gamma \yields b^A_\natural : A} \\
\inferrule*[left=$\natural$-beta]
           {\zc{\Gamma} \yields A \type \quad \zc{\Gamma} \yields a : A}
           {\Gamma \yields (a_A^\natural{})^A_\natural \defeq a : A}
\qquad
\inferrule*[left=$\natural$-eta]
           {\zc{\Gamma} \yields A \type \quad \Gamma \yields b : \natural A}
           {\Gamma \yields b \defeq (\zb{}^{\zA}_\natural){}_A^\natural : \natural A}
\\
 \inferrule*{\Gamma \vdash A \type \quad
             \Gamma,x:A \yields B \type}
             {\Gamma \yields \Pi x:A. B \type} \\
 \inferrule*{\Gamma \vdash A \type \quad
             \Gamma,x:A \yields B \type \quad
             \Gamma,x:A \yields b : B
            }
            {\Gamma \yields (\lambda x:A.b:B) : \Pi {x:A}. B} \quad
 \inferrule*{\Gamma \vdash A \type \quad
             \Gamma,x:A \yields B \type \quad
             \Gamma \yields f : \Pi x:A.B \quad
             \Gamma \yields a : A
             }
             {\Gamma \yields f (a)_{A,x.B} : B[a/x]}
  \\
  \inferrule*{\Gamma \vdash A \type \quad
              \Gamma,x:A \yields B \type \quad
               \Gamma,x:A \yields b : B \quad
               \Gamma \yields a : A
             }
             {\Gamma \yields (\lambda x:A.b:B) (a)_{A,x.B} \defeq b[a/x] : B[a/x]} \quad
  \inferrule*{\Gamma \vdash A \type \quad
              \Gamma,x:A \yields B \type \quad
              \Gamma \yields f : \Pi x:A .B
             }
             {\Gamma \yields f \defeq \lambda x:A.f(x)_{A,x.B} : B : \Pi
               x:A.B}
%% \inferrule*{~}{\Gamma \yields \univ_i \type_{i+1}} \\
\end{mathpar}
We omit the structural rules: type conversion for typing; and reflexivity,
symmetry, transitivity, compatibility for each constructor, and type
conversion for equality.
\caption{Official Rules}\label{fig:rules-official}
\end{figure}

\subsection{Operations on Raw Syntax}

The zeroing operations $\zc{\Gamma}$ and $a^{0\Gamma}$ and
$\Delta^{0\Gamma}$ are defined on raw syntax (prior to typing).  This
accords with the way we use these operations when working informally: we
do not want to have to think about the exact structure of a derivation
as we zero a term.  We use the following judgements for raw terms:

\begin{itemize}
\item $\gamma \yields a \rawterm$ denotes a raw term in the scope
  $\gamma$.  We think of raw syntax as being intrinsically scoped, so a
  raw term $a$ is judged relative to a scope $\gamma$ consisting of
  variable names only, with no associated types or marks.  We do not
  distinguish between ordinary and marked variables in scopes $\gamma$
  because we want precomposition with the unit to be ``silent'', and
  precomposition with the unit changes the marked context extension into
  into unmarked context extension.  At the level of raw syntax, $\zx$ is
  simply a term constructor $\mathtt{underline}(x)$ that takes a
  variable as input.

\item $\Gamma \rawctx$ denotes a raw context consisting of a list of
  variables with a raw term as a `type' of each, and with possible
  marks on the variables. What is missing from $\Gamma \ctx$ is that
  the types are not necessarily well-formed. Every $\Gamma \rawctx$
  has an underlying scope of variable names, written $dom(\Gamma)$.

\item $\gamma \yields \Delta \rawtele$ similarly denotes a raw telescope
  in the scope $\gamma$ with possible marks on the variables.
% \item $\Gamma \ctx$ denotes a well-typed context
% \item $\Gamma \yields \Delta \tele$ denotes a well-typed telescope, and does not presuppose $\Gamma \ctx$.
% \item $\Gamma \yields a : A$ denotes a well-typed term, and does not presuppose $\Gamma \ctx$.
\end{itemize}

The zeroing operations on raw syntax have shape
\begin{mathpar}
  \inferrule*[fraction={-{\,-\,}-}]{\Gamma \rawctx}{\zc{\Gamma} \rawctx} \and
  \inferrule*[fraction={-{\,-\,}-}]{\gamma \yields \Delta \rawtele}{\gamma \yields \zc{\Delta} \rawtele} \and
  \inferrule*[fraction={-{\,-\,}-}]{\gamma, \gamma' \yields \Delta \rawtele}{\gamma, \gamma' \yields \Delta^{0\gamma} \rawtele} \and
  \inferrule*[fraction={-{\,-\,}-}]{\gamma, \delta \yields a \rawterm}{\gamma, \delta \yields a^{0\gamma} \rawterm}
\end{mathpar}
When we apply these operations to well-typed telescopes and terms, we
will just write, for example, $\Delta^{0\Gamma}$ and $a^{0\Gamma}$, letting
$\Gamma$ represent its underlying list of variables.

The definitions of the operations on raw syntax are in
Figure~\ref{fig:operations}.

\begin{figure}

\begin{mathpar}
\begin{array}{rl}
\Gamma,\cdot &:\defeq \Gamma \\
\Gamma,(\Delta,x:A) &:\defeq (\Gamma,\Delta),x:A \\
\Gamma,(\Delta,\zx::A) &:\defeq (\Gamma,\Delta),\zx::A
\end{array}
\\
\begin{array}{rl}
\zc{\cdot} &:\defeq \cdot \\
\zc{\Gamma, x : A} &:\defeq \zc{\Gamma}, \zx :: \zA \\
\zc{\Gamma, \zx :: A} &:\defeq \zc{\Gamma}, \zx :: A
\end{array}
\\
\begin{array}{rl}
x^{0\gamma} &:\defeq \zx \text{  if $x \in \gamma$} \\
x^{0\gamma} &:\defeq x \text{ if $x \notin \gamma$} \\
\zx^{0\gamma} &:\defeq \zx  \\
(\natural A)^{0\gamma} &:\defeq \natural(A^{0\gamma}) \\ %% \text{\drlnote{could stop instead?}} \\
(a_A^\natural)^{0\gamma} &:\defeq {a^{0\gamma}_{A^{0\gamma}}}^\natural \\
(a_\natural^A)^{0\gamma} &:\defeq {a^{0\gamma}}^{A^{0\gamma}}_\natural \\
(\lambda x:A.b:B)^{0\gamma} &:\defeq \lambda x:A^{0\gamma}.b^{0\gamma}:B^{0\gamma}\\
f(a)_{A,x.B}^{0\gamma} &:\defeq f^{0\gamma}(a^{0\gamma})_{A^{0\gamma},x.B^{0\gamma}}\\
%% \univ^{0\gamma} &:\defeq \univ
\end{array}
\\
\begin{array}{rl}
\zc{\cdot} &:\defeq \cdot \\
\zc{\Delta, x : A} &:\defeq \zc{\Delta}, \zx :: \zA \\
\zc{\Delta, \zx :: A} &:\defeq \zc{\Delta}, \zx :: A
\end{array}
\\
\begin{array}{rl}
(\cdot)^{0\gamma} &:\defeq \cdot \\
(\Delta, x : A)^{0\gamma} &:\defeq \Delta^{0\gamma}, x : A^{0\gamma} \\
(\Delta, \zx :: A)^{0\gamma} &:\defeq \Delta^{0\gamma}, \zx :: A
\end{array}
\\
\begin{array}{rl}
  \zx [ a / x ] &:\defeq \za\\
  \zx [ a / y ] &:\defeq \zx  \text{ if $y \neq x$} \\
  x[a/x] &:\defeq a\\
  y[a/x] &:\defeq y \text{ if $y \neq x$}\\
  (\natural A)[a/x] &:\defeq \natural(A[a/x])\\
  (b_A^\natural)[a/x] &:\defeq (b[a/x])_{A[a/x]}^\natural\\
  (b_\natural^A)[a/x] &:\defeq (b[a/x])_\natural^{A[a/x]}\\
  (\lambda (x:A).b:B)[a/x] &:\defeq \lambda (x:A[a/x]).b[a/x]:B[a/x]\\
  f(b)_{A,x.B}[a/x] &:\defeq (f[a/x]) (b[a/x])_{A[a/x],y.B[a/x]}\\
%%  \univ[a/x] &:\defeq \univ
\end{array}
\end{mathpar}
\caption{Operations on Raw Syntax}\label{fig:operations}
\end{figure}

Idempotence properties of contexts and terms now hold on the level of
raw syntax.
\begin{lemma}\label{lem:syn-idem}
  \begin{itemize}
  \item If $\Psi \rawctx$
       and $dom(\Psi) \vdash \Gamma \rawtele$
        and $dom(\Psi,\Gamma) \yields \Delta \rawtele$,
    \begin{align*}
      \zc{\Psi,\Gamma, \Delta} \defeq_\alpha \zc{\Psi,\zc{\Gamma}, \Delta^{0\Gamma}}
    \end{align*}

  \item If $\Gamma \rawctx$
        and $dom(\Gamma) \yields \Delta \rawtele$,
    \begin{align*}
      \zc{\Gamma, \Delta} \defeq_\alpha \zc{\zc{\Gamma}, \Delta^{0\Gamma}}
    \end{align*}

  \item If
  $\gamma, \gamma' \yields \Delta \rawtele$ then
  \begin{align*}
    {(\Delta^{0\gamma})}^{0(\gamma, \gamma')} \defeq_\alpha \Delta^{0(\gamma, \gamma')} \defeq_\alpha {(\Delta^{0(\gamma, \gamma')})}^{0\gamma}
  \end{align*}
  \item If
  $\gamma, \delta, \delta' \yields a \rawterm$. Then
  \begin{align*}
   {(a^{0\gamma})}^{0(\gamma, \delta)} \defeq_\alpha a^{0(\gamma, \delta)} \defeq_\alpha {(a^{0(\gamma, \delta)})}^{0\gamma}
  \end{align*}
  \end{itemize}
\end{lemma}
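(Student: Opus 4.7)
I would prove the four claims by structural induction on raw syntax, in the order (4), (3), (2), (1). Parts~(3) and~(4) are the ``leaf'' results about the operations $(-)^{0\gamma}$ on raw telescopes and raw terms, and parts~(1) and~(2) then follow by a direct induction on $\zc{-}$ that uses them.

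For part~(4), proceed by induction on the raw term $a$ in scope $\gamma, \delta, \delta'$. The only interesting cases are the two variable forms. For an ordinary variable $x$, I case-split on which of the three scope pieces contains $x$. If $x \in \gamma$, then $x^{0\gamma} \defeq \zx$, and since a marked variable is a fixed point of every $(-)^{0\rho}$, all three expressions collapse to $\zx$. If $x \in \delta$, then $x^{0\gamma} \defeq x$ (because $x \notin \gamma$), $x^{0(\gamma,\delta)} \defeq \zx$, and $\zx^{0\gamma} \defeq \zx$, so all three expressions are again $\zx$. If $x \in \delta'$, then $x$ lies outside both $\gamma$ and $(\gamma,\delta)$, and every operation leaves it as $x$. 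A marked variable $\zx$ is left unchanged by every $(-)^{0\rho}$, so that case is immediate. The remaining compound cases are all compatible with $(-)^{0\gamma}$, so the inductive hypothesis applied componentwise gives the result. At binders $\lambda x{:}A.\,b:B$, I $\alpha$-rename the bound variable to be fresh for $\gamma, \delta, \delta'$ before pushing the operations under the binder and invoking the inductive hypothesis on the extended scope, yielding equality up to $\defeq_\alpha$.

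Part~(3) is then an immediate induction on the raw telescope $\Delta$: $(-)^{0\gamma}$ touches only the type annotations carried by each variable declaration, leaving the variable names and marks alone, so in each extension step the telescope claim reduces to part~(4) applied to the type annotation.

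For parts~(1) and~(2), I induct on the raw context $\Gamma$ (with $\Psi$ and $\Delta$ held as parameters in part~1, and just $\Delta$ in part~2). The base case is immediate, since $\zc{\cdot} \defeq \cdot$ and $(\cdot)^{0\Gamma} \defeq \cdot$. In an inductive step, unfolding one layer of $\zc{-}$ on both sides shows that the underlying list of variable names is the same on both sides, and the marked-or-unmarked status at each position matches by construction---$\zc{-}$ always marks an unmarked variable and leaves a marked one alone, and the variables from $\zc{\Gamma}$ on the right are already marked. The only remaining data to compare are the type annotations, where both sides produce underlinings of the same raw type against two overlapping scope prefixes; these agree by part~(4). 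The main obstacle is purely bookkeeping: the $\zc{-}$ operation implicitly applies an $(-)^{0\,\text{prefix}}$ to each type annotation as it walks along a context, and these implicit prefixes must be tracked carefully across both sides of each equality so that part~(4) can be applied with matching arguments. Once one commits to making these prefixes explicit, everything reduces to the single observation---captured formally by part~(4)---that composing several $(-)^{0\rho}$ operations coincides with a single $(-)^{0\rho'}$ at the union of the scopes involved.
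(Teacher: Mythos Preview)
The paper does not give a proof of this lemma; it states only the one-line intuition ``zeroing a larger piece of the context subsumes zeroing a smaller piece'' and treats the result as routine. Your plan is the natural one and is essentially what the paper is implicitly appealing to: part~(4) is the core case analysis on variables, part~(3) follows by pushing through telescope entries, and parts~(1)--(2) reduce to part~(4) applied to the type annotations once one unfolds $\zc{-}$.

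One small slip: for parts~(1) and~(2) you say you induct on $\Gamma$, but your own base case ``$(\cdot)^{0\Gamma} \defeq \cdot$'' and your description of the inductive step as ``unfolding one layer of $\zc{-}$'' both point to an induction on $\Delta$ (equivalently, on the combined context from the right, which is how $\zc{-}$ is defined). Inducting on $\Gamma$ with $\Delta$ fixed does not peel cleanly, since $\zc{-}$ unfolds from the rightmost entry. With the induction on $\Delta$, the base case is the basic idempotence $\zc{\zc{\Gamma}} \defeq_\alpha \zc{\Gamma}$, and in the inductive step for an unmarked entry $x{:}A$ the type annotation on both sides becomes a double zeroing that collapses via part~(4), exactly as you describe; the marked entry $\zx{::}A$ is even easier since $(-)^{0\Gamma}$ and $\zc{-}$ both leave $A$ untouched there. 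Fixing the induction variable, your argument goes through.
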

In words, zeroing a larger piece of the context subsumes zeroing a smaller piece.
% \begin{proof}
%   Easy inductions.
% \end{proof}

Zeroing and weakening/substitution also commute at the level of raw syntax:
\begin{lemma}
  \begin{itemize}
  \item If $\gamma \yields a \rawterm$ then
    \begin{align*}
      \gamma, \delta \yields a^{0\gamma} \defeq_\alpha a^{0(\gamma, \delta)}
    \end{align*}
  \item If $\gamma, \gamma' \yields a \rawterm$ and $\gamma, \gamma', x, \delta \yields b \rawterm$ then
    \begin{align*}
      (b[a/x])^{0\gamma} &\defeq_\alpha b^{0\gamma}[a^{0\gamma}/x]
    \end{align*}
  \item If $\gamma \yields a \rawterm$ and $\gamma, x, \delta, \delta' \yields b \rawterm$ then
    \begin{align*}
      {(b[a/x])}^{0(\gamma, \delta)} \defeq_\alpha b^{0(\gamma, x, \delta)}[a^{0\gamma}/x] \defeq_\alpha b^{0(\gamma, x, \delta)}[a/x]
    \end{align*}
  \end{itemize}
\end{lemma}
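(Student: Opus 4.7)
The plan is to prove each of the three parts by structural induction on the raw syntax of the subject term, using the definitions in Figure~\ref{fig:operations}. Zeroing and substitution commute with every compound term former (application, $\lambda$, $\natural$-intro/elim, and the type formers), so once the variable cases are settled, the compound cases propagate directly from the inductive hypotheses. For binders such as $\lambda y.b$, I will $\alpha$-rename $y$ to be fresh relative to all scopes and substituends in play, so that the inductive hypothesis applies to $b$ with the extended scope; this is standard variable-renaming bookkeeping and I do not dwell on it.

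For the first part, the content is just that $a^{0\gamma}$ only rewrites variables lying in $\gamma$, and the hypothesis $\gamma \yields a \rawterm$ ensures that every free variable of $a$ is already in $\gamma$, so extending the scope to $\gamma,\delta$ has no effect. The variable case gives $y^{0\gamma} = \zy = y^{0(\gamma,\delta)}$ when $y \in \gamma$, and the marked variable case is immediate since zeroing leaves marked variables alone.

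For the second part, the key case is $b = x$: both sides reduce to $a^{0\gamma}$, using that $x^{0\gamma} = x$ because $x$ is declared after $\gamma,\gamma'$ and therefore $x \notin \gamma$. The case $b = \zx$ reduces both sides to the underline of $a^{0\gamma}$, which equals $a^{0\gamma}$ because zeroing is idempotent on a term whose free variables already lie in $\gamma$, by Lemma~\ref{lem:syn-idem}. The remaining variable cases $b = y, \zy$ for $y \neq x$ are unchanged by the substitution. For the third part, the second equality exploits the fact that every occurrence of $x$ in $b^{0(\gamma,x,\delta)}$ has been converted to $\zx$ by zeroing, so substituting $a$ or $a^{0\gamma}$ at those marked positions yields the same result after the implicit underline, again by idempotence. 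The first equality parallels the induction of part two, with the variable case $b = x$ additionally requiring part one to reconcile $a^{0(\gamma,\delta)}$ on the left with $a^{0\gamma}$ on the right.

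The main obstacle I anticipate is the careful interplay between marked variables, the implicit underline operator on arbitrary terms appearing in the substitution rule $\zx[a/x] \defeq \za$, and the idempotence of zeroing; once these are organized, the remaining structural induction is mechanical. A related subtlety is that $\za$ in the substitution rule is a defined notation standing for $a^{0\gamma}$ where $\gamma$ is the ambient free-variable scope of $a$, so I would first confirm that this agrees with the formal $a^{0\gamma}$ operation at the scopes actually in play before running the inductions.
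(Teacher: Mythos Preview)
Your approach is correct and is exactly what the paper intends: the lemma is stated without proof in the paper, as a routine structural induction on raw syntax using the clauses of Figure~\ref{fig:operations} together with the idempotence facts of Lemma~\ref{lem:syn-idem}. There is nothing to compare against beyond that.

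One small imprecision worth cleaning up: in the $b = \zx$ case of part two you write that both sides reduce to $\underline{a^{0\gamma}}$, ``which equals $a^{0\gamma}$ because zeroing is idempotent on a term whose free variables already lie in $\gamma$.'' But the free variables of $a$ lie in $\gamma,\gamma'$, not just $\gamma$, so in general $\underline{a^{0\gamma}} = (a^{0\gamma})^{0(\gamma,\gamma')} = a^{0(\gamma,\gamma')}$, which need not coincide with $a^{0\gamma}$. This does not damage the argument, since you have already shown both sides equal $\underline{a^{0\gamma}}$ and hence each other; the further simplification is simply wrong but irrelevant. Just drop that last clause, or replace it with the correct common value $a^{0(\gamma,\gamma')}$.
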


Additionally, we have
\begin{lemma}
  If $\zc{\Gamma}, \Delta \yields a : A$ then $a^{0{\zc{\Gamma}}} \defeq_\alpha a$
\end{lemma}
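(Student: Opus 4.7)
The plan is to prove this by induction on the derivation of $\zc{\Gamma}, \Delta \yields a : A$, mutually with the analogous statement for types: if $\zc{\Gamma}, \Delta \yields A \type$ then $A^{0\zc{\Gamma}} \defeq_\alpha A$. The key invariant to exploit, built directly into the definition of $\zc{-}$ on raw contexts, is that every declaration in $\zc{\Gamma}$ is marked, of the form $\zy :: T$.

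First I dispatch the variable cases. Inspecting the definition of $(-)^{0\gamma}$ on raw terms in Figure~\ref{fig:operations}, the operation only alters an occurrence of an unmarked raw variable $x$ with $x \in \gamma$: a marked occurrence $\zx$ is unconditionally left alone, and an unmarked variable outside $\gamma$ is also left alone. So the only way $a^{0\zc{\Gamma}}$ could differ from $a$ is if $a$ contains an unmarked occurrence of some $x \in dom(\zc{\Gamma})$. But an unmarked variable occurrence must be typed by \rulen{var} or \rulen{var-roundtrip}, each of which demands that $x$ be declared unmarked in the ambient context. Since every declaration in $\zc{\Gamma}$ is marked, any such $x$ must come from $\Delta$, whence $x \notin dom(\zc{\Gamma})$ and the operation is the identity. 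Uses via \rulen{var-zero} produce the marked raw term $\zx$, which is always fixed.

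For the compound cases ($\natural$-form/intro/elim, $\Pi$-form/intro/elim, and the analogous rules for any other type formers), the operation $(-)^{0\zc{\Gamma}}$ commutes with every term and type former by Figure~\ref{fig:operations}, and I apply the induction hypothesis to every subderivation, including to the type annotations appearing in fully-annotated constructors such as $f(a)_{A,x.B}$. For binders like $\lambda x:A.b:B$ or $\Pi x:A.B$, I choose the bound variable fresh so that $x \notin dom(\zc{\Gamma})$, and note that in the premise its declaration appears in the $\Delta$-part of the extended context $\zc{\Gamma}, (\Delta, x:A)$, so the induction hypothesis on the body and its annotations applies immediately. The equality rules are handled in the same way by induction on the equality derivation.

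The argument is not delicate; the only mild obstacle is coordinating the mutual induction on terms and types so that every annotation arising from the fully-annotated syntax is covered in one sweep. The statement is, in essence, a syntactic reflection of the typing-level fact that a variable declared marked can only be used marked.
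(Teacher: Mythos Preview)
Your proof is correct and follows essentially the same approach as the paper: induction on the typing derivation, with the key observation that every variable declared in $\zc{\Gamma}$ is marked, so no unmarked occurrence of such a variable can be well-typed, and this invariant is preserved by every rule's premises. One small slip: an \emph{unmarked} variable occurrence can only be typed by \rulen{var}, not by \rulen{var-roundtrip} (the latter produces the marked term $\zx$); your conclusion is unaffected since \rulen{var} indeed requires the declaration to be unmarked.
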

For this, we do need that $a : A$ is a well-typed term, so that
variables from $\zc{\Gamma}$ are used in the `correct' way: well-typed
$a$'s can only use variables from $\zc{\Gamma}$ with a marking, but raw
terms might incorrectly use them unmarked.
\begin{proof}
  Intuitively, the only time $a^{0\gamma}$ is not equal to $a$ is for a
  unmarked variable $x \in \gamma$, but these cannot appear in a
  well-typed term in a context with $\Gamma$ already marked.  The
  induction goes through because in each inference rule, if $\Gamma$ is
  marked in the conclusion, then any variable in $\Gamma$ stays marked
  in all premises where it occurs.
\end{proof}

\subsection{Proofs}

We now check that the intended admissible rules are indeed admissible.
To simplify the task, recall that \rulen{pre-counit} is a special case
of \rulen{pre-counit-gen}, defining $\zc{a} :\defeq a^{0\Gamma}$ for a
term $\Gamma \yields a : A$.  Additionally \rulen{pre-roundtrip-gen} and
\rulen{pre-roundtrip} follow by composing \rulen{pre-unit} and
\rulen{pre-counit-gen}, so we do not need to prove them separately.
Thus, what remains is to verify typing for $\zc{\Gamma}$ and
$a^{0\Gamma}$ and $\Delta^{0\Gamma}$ and that substitution is still
admissible.

First, we verify that \rulen{ctx-zero} and \rulen{pre-counit-gen} are
admissible.

\begin{proposition}
  \rulen{ctx-zero} and \rulen{pre-counit-gen} are admissible.
  \begin{mathpar}
    \inferrule*[left=pre-counit-gen,fraction={-{\,-\,}-}]
    {\Gamma,\Delta \yields a:A}
    {\zc{\Gamma}, \Delta^{0\Gamma} \yields a^{0\Gamma} : A^{0\Gamma}}

    \inferrule*[fraction={-{\,-\,}-}]
    {\Gamma,\Delta \yields a \defeq a' :A}
    {\zc{\Gamma}, \Delta^{0\Gamma} \yields a^{0\Gamma} \defeq a'^{0\Gamma} : A^{0\Gamma}}

    \inferrule*[fraction={-{\,-\,}-}]
    {\Gamma,\Delta \yields A \type}
    {\zc{\Gamma}, \Delta^{0\Gamma} \yields A^{0\Gamma} \type}

    \inferrule*[fraction={-{\,-\,}-}]
    {\Gamma,\Delta \yields A \defeq A' \type}
    {\zc{\Gamma}, \Delta^{0\Gamma} \yields A^{0\Gamma} \defeq A'^{0\Gamma} \type}
  \end{mathpar}
\end{proposition}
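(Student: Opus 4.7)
The plan is to prove all four admissibility statements simultaneously by mutual induction on the derivation, additionally strengthening the induction to establish admissibility of \rulen{ctx-zero} ($\Gamma \ctx$ implies $\zc{\Gamma} \ctx$) and the analogous telescope rule ($\Gamma \vdash \Delta \tele$ implies $\zc{\Gamma} \vdash \Delta^{0\Gamma} \tele$). These last two are needed to know that the contexts appearing in the conclusions of \rulen{pre-counit-gen} are well-formed; they follow routinely from the type-formation case of the main induction together with the definitions of the raw operations in Figure~\ref{fig:operations}.

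The most delicate cases are the three variable rules. For \rulen{var} applied to a variable $x$ declared in $\Gamma$ with type $A$, the zeroing operation on raw syntax rewrites $x$ to $\zx$ and the type $A$ to $A^{0\Gamma} \defeq \zA$; meanwhile the declaration in $\zc{\Gamma}$ is $\zx :: \zA$, so \rulen{var-zero} applies directly (after invoking the IH to check $\zc{\Gamma} \vdash \zA \type$). For \rulen{var} applied to a variable in $\Delta$, the variable itself is untouched by $(-)^{0\Gamma}$, its type is zeroed, and the declaration in $\Delta^{0\Gamma}$ matches. For \rulen{var-zero}, marked declarations are preserved by $\zc{(-)}$, so the rule applies directly. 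For \rulen{var-roundtrip}, the original conclusion has type $\zA$, i.e.\ $A^{0\Gamma'}$ for the ambient $\Gamma'$; after zeroing over $\Gamma \subseteq \Gamma'$, we use the raw idempotence (Lemma~\ref{lem:syn-idem}), $(\zA)^{0\Gamma} \defeq_\alpha \zA$, and the variable is now marked in $\zc{\Gamma}$, so \rulen{var-zero} concludes.

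For the $\natural$-type rules, the premises already live under $\zc{\Gamma}$, so the hypothesis $\zc{\zc{\Gamma}} \defeq \zc{\Gamma}$ together with the fact that zeroing is the identity on terms whose free variables are all already marked (the lemma immediately before this proposition) means that the inductive hypothesis applied to the premises yields essentially the same premises again, and the same instance of the rule concludes. The $\Pi$-type formation, introduction, elimination, and $\beta\eta$ rules are routine: we extend $\Delta$ with a fresh $x : A$ when recursing under a binder, invoke the IH, and then re-apply the same rule. The computation rule requires that zeroing commute with substitution, $(b[a/x])^{0\Gamma} \defeq_\alpha b^{0(\Gamma, x)}[a^{0\Gamma}/x]$ (for $x \notin \Gamma$), which is supplied by the commutation lemma stated just before this proposition. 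The structural rules for equality (reflexivity, symmetry, transitivity, compatibility, type conversion) follow directly from the IH.

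The main obstacle is bookkeeping: keeping track of when a type or term is in context $\Gamma, \Delta$ versus $\zc{\Gamma}, \Delta^{0\Gamma}$, and correctly identifying, via the raw-syntax idempotence and commutation lemmas, when an iterated zeroing is definitionally equal to a single one. Once these lemmas are in hand, each case is a mechanical check; the care needed is in the variable cases above and in ensuring that the $\natural$-rule cases do not require zeroing a context piece that is already zeroed in a confused way. No genuinely new ideas are needed beyond the raw-syntax identities already established.
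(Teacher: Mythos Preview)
Your proposal is correct and matches the paper's approach: mutual induction on derivations, with the variable rules as the crux, idempotence (Lemma~\ref{lem:syn-idem}) for the $\natural$ rules, and telescope extension under binders. Two minor points: first, under the paper's conventions context well-formedness is neither a presupposition nor checked by the derivation, so \rulen{ctx-zero} need not be folded into the mutual induction---the paper proves it separately afterwards using \rulen{pre-counit}; second, your sketch of \rulen{var-roundtrip} only covers the sub-case $x \in \Gamma$ (concluding by \rulen{var-zero}), but when $x \in \Delta$ the declaration $x : A$ stays unmarked in $\Delta^{0\Gamma}$ and you must reapply \rulen{var-roundtrip} itself, using the IH on the type premise and idempotence to align $\zc{A^{0\Gamma}}$ with $\zA^{0\Gamma}$.
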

\begin{proof}

  All four rules are proved by mutual induction on the typing/type
  formation/equality derivations.  The most important cases are the
  variable rules, but we also show the $\natural$ formation,
  introduction, and elimination rules, and $\Pi$-formation to
  demonstrate bound variables.

  \begin{itemize}
  \item \rulen{var}: There are two sub-cases, depending on whether the variable is in $\Gamma$ or $\Delta$.
    \begin{itemize}
    \item In the former case, we have
      \[
      \inferrule*{\Gamma \vdash A \type}
                 {\Gamma,x:A,\Gamma',\Delta \vdash x : A}
      \]
      Then $\zx :: A^{0\Gamma} \in \zc{\Gamma, x : A, \Gamma'}$. Since
      the context $\Gamma,x:A$ is well-scoped, $A$ only uses variables
      from $\Gamma$, and so $A^{0\Gamma} \defeq A^{0\Gamma, x : A,
        \Gamma'}$ because zeroing commutes with weakening. So
      $\zc{\Gamma, x : A, \Gamma'}, \Delta^{0\Gamma} \yields \zx :
      A^{0\Gamma, x : A, \Gamma'}$ is well-typed by \rulen{var-zero},
      because $\zc{\Gamma} \vdash A^{0\Gamma} \type$ by the inductive
      hypothesis for the premise (with the empty telescope), so
      $\zc{\zc{\Gamma}} \vdash A^{0\Gamma} \type$ by idempotence.
    \item In the latter case, we have
      \[
      \inferrule*{\Gamma,\Delta \vdash A \type}
                 {\Gamma,\Delta,x:A,\Delta' \vdash x : A}
                 \]
      By the IH on the premise, $\zc{\Gamma},\Delta \vdash A^{0\Gamma} \type$, and $(\Delta,x:A,\Delta')^{0\Gamma} \defeq
      \Delta^{0\Gamma},x:A^{0\Gamma},\Delta'^{0\Gamma}$.  Thus, we can
      conclude $\zc{\Gamma},
      \Delta^{0\Gamma},x:A^{0\Gamma},\Delta'^{0\Gamma} \yields x :
      A^{0\Gamma}$ by \rulen{var}.
    \end{itemize}

  \item \rulen{var-zero}:  We again distinguish cases depending on
    whether the variable was in $\Gamma$ or $\Delta$.
    \begin{itemize}
    \item In the former case, we have
      \[
      \inferrule*{\zc{\Gamma} \yields A \type}
                 {\Gamma,\zx::A,\Gamma',\Delta \vdash \zx : A}
      \]
      Expanding the definition of zeroing on contexts, $\zx :: A \in \zc{\Gamma}$.  Since $A$
      is well-typed in a context with $\Gamma$ zeroed, $A \defeq
      A^{0\Gamma} \defeq A^{0(\Gamma,x,\Gamma')}$.  $\zc{\zc{\Gamma}}
      \yields A \type$ holds by idempotence, so so $\zc{\Gamma},
      \Delta^{0\Gamma} \yields \zx : A^{0\Gamma}$ by \rulen{var-zero}.
    \item In the latter case we have
      \[
      \inferrule*{\zc{\Gamma,\Delta} \yields A \type}
                 {\Gamma,\Delta,\zx::A,\Delta' \vdash \zx : A}
      \]
      Then by definition $\zx :: A \in \Delta^{0\Gamma}$, and again $A
      \defeq A^{0\Gamma}$ because $A$ is well-typed in a context
      $\zc{\Gamma,\Delta}$ with $\Gamma$ already zeroed. So
      $\zc{\Gamma}, \Delta^{0\Gamma} \yields \zx : A^{0\Gamma}$ is
      holds by \rulen{var-zero} because the premise
      $\zc{\zc{\Gamma},\Delta^{0\Gamma}} \yields A \type$ holds
      by idempotence.
    \end{itemize}

  \item \rulen{var-roundtrip}: Note that $\zA$ in the conclusion of the
    rule means $A^{0(\Gamma,x:A,\Gamma')}$, i.e. $A$ zeroed with respect
    to all the variables in the entire context.
    We again distinguish cases depending on
    whether the variable was in $\Gamma$ or $\Delta$.
    \begin{itemize}
    \item In the former case, we are given
      \[
      \inferrule*{\Gamma \yields A \type}
                 {\Gamma, x:A, \Gamma',\Delta \yields \zx : \zA}
      \]
      Then
      $\zx :: A^{0\Gamma} \in \zc{\Gamma, x : A, \Gamma'}$, and by idempotence
      \[ {(A^{0\Gamma})} \defeq (A^{0\Gamma})^{0(\Gamma, x : A,
          \Gamma', \Delta)} \defeq {(A^{0(\Gamma, x : A, \Gamma',
          \Delta)})}^{0\Gamma} \defeq \zA^{0(\Gamma, x : A, \Gamma')} \]
      by idempotence/commuting with weakening, so $\zc{\Gamma, x : A, \Gamma'}, \Delta^{(0\Gamma,
        x : A, \Gamma')} \yields \zx : {\zA}^{0(\Gamma, x : A,
        \Gamma')}$ is well-typed by \rulen{var-zero}, because the type
      $\zc{\Gamma} \yields A^{0\Gamma} \type$ is well-formed by the IH on the premise.

    \item If $x : A \in \Delta$, then we were given
      \[
      \inferrule*{\Gamma,\Delta \yields A \type}
                 {\Gamma,\Delta,x:A,\Delta' \yields \zx : \zA}
      \]
      Then
      \[\zc{A^{0\Gamma}} \defeq
           (A^{0\Gamma})^{0(\Gamma, \Delta,x : A,\Delta')} \defeq {(A^{0(\Gamma, \Delta,x : A, \Delta')})}^{0\Gamma}
      \defeq
          {\zA}^{0\Gamma}
          \]
          by idempotence
      so $\zc{\Gamma}, \Delta^{0\Gamma},x:A^{0\Gamma},\Delta'^{0\Gamma}
      \yields \zx : {\zA}^{0\Gamma}$ by \rulen{var-roundtrip}, using the
      IH on the premise to get $\zc{\Gamma},\Delta^{0\Gamma} \yields
      A^{0\Gamma} \type$.
    \end{itemize}

  \item \rulen{$\natural$-form}: Suppose we have
    $\Gamma, \Delta \yields \natural A \type$ because
    $\zc{\Gamma, \Delta} \yields A \type$. By idempotence, the context
    can be rewritten
    $\zc{\zc{\Gamma}, \Delta^{0\Gamma}} \yields A \type$, and also
    $A \defeq A^{0\Gamma}$ because $A$ is a well-typed term in a context
    with $\zc{\Gamma}$ already marked. Reapplying the rule then gives
    $\zc{\Gamma}, \Delta^{0\Gamma} \yields \natural (A^{0\Gamma})$ as
    required.

  \item \rulen{$\natural$-intro}: Suppose we have $\Gamma, \Delta
    \yields a^\natural_A : \natural A$ because $\zc{\Gamma, \Delta}
    \yields A \type$ and $\zc{\Gamma, \Delta} \yields a : A$.
    By idempotence, the context $\zc{\Gamma, \Delta}$ can be rewritten
    as $\zc{\zc{\Gamma}, \Delta^{0\Gamma}}$, so we also have
    $\zc{\zc{\Gamma}, \Delta^{0\Gamma}} \yields A \type$ and
    $\zc{\zc{\Gamma}, \Delta^{0\Gamma}} \yields a : A$.  Because $a$ and
    $A$ are well-formed terms in a context with $\zc{\Gamma}$ already
    marked, we have $a \defeq_\alpha a^{0\Gamma}$ and $A \defeq_\alpha
    A^{0\Gamma}$, so $\zc{\zc{\Gamma}, \Delta^{0\Gamma}} \yields
    A^{0\Gamma} \type$ and $\zc{\zc{\Gamma}, \Delta^{0\Gamma}} \yields
    a^{0\Gamma} : A^{0\Gamma}$ as well.  Reapplying the rule then gives
    $\zc{\Gamma}, \Delta^{0\Gamma} \yields
    (a^{0\Gamma})^\natural_{A^{0\Gamma}} : \natural (A^{0\Gamma})$ as
    required.

  \item \rulen{$\natural$-elim}: Suppose we have $\Gamma,\Delta \yields
    a_\natural^A : A$ because $\zc{\Gamma, \Delta} \yields A \type$ and
    $\Gamma, \Delta \yields a : \natural A$.  The inductive hypothesis
    for $a$ gives $\zc{\Gamma}, \Delta^{0\Gamma} \yields a^{0\Gamma} :
    \natural A^{0\Gamma}$.  For the type, by idempotence $\zc{\Gamma,
      \Delta} \defeq \zc{\zc{\Gamma}, \Delta^{0\Gamma}}$, and $A \defeq
    A^{0\Gamma}$ because $A$ is already well-typed in a context with
    $\Gamma$ marked. So we have $\zc{\zc{\Gamma}, \Delta^{0\Gamma}}
    \yields A^{0\Gamma} \type$ and $\zc{\Gamma}, \Delta^{0\Gamma}
    \yields a^{0\Gamma} : \natural A^{0\Gamma}$ and can reapply the rule
    to get $\zc{\Gamma}, \Delta^{0\Gamma} \yields
    {((a^{0\Gamma})^{A^{0\Gamma}}_\natural)} : A^{0\Gamma}$.

  \item \rulen{$\Pi$-intro}: If the inputs are
    \begin{align*}
      \Gamma, \Delta &\yields A \type \\
      \Gamma, \Delta, x : A &\yields B \type
    \end{align*}
    then inductively
    \begin{align*}
      \zc{\Gamma}, \Delta^{0\Gamma} &\yields A^{0\Gamma} \type \\
      \zc{\Gamma}, \Delta^{0\Gamma}, x : A^{0\Gamma} &\yields B^{0\Gamma} \type
    \end{align*}
    where in the later case we extend the telescope to $\Delta, x : A$. Reapplying the rule gives
    \begin{align*}
      \zc{\Gamma}, \Delta^{0\Gamma} \yields \prd{x : A^{0\Gamma}} B^{0\Gamma} \type
    \end{align*}
    as required.
  \end{itemize}
\end{proof}

\begin{proposition} \rulen{ctx-zero} is admissible.
\[    \inferrule*[fraction={-{\,-\,}-}]{\Gamma \ctx}{\zc{\Gamma} \ctx}
\]
\end{proposition}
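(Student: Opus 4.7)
The plan is a straightforward induction on the derivation of $\Gamma \ctx$, using the previously proved admissibility of \rulen{pre-counit-gen} (with empty telescope) together with the raw-syntax idempotence lemma (Lemma~\ref{lem:syn-idem}).

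In the base case $\Gamma = \cdot$, we have $\zc{\cdot} \defeq \cdot$, which is well-formed by the empty context rule. There are then two inductive cases corresponding to the two context extension rules. Suppose $\Gamma, x : A \ctx$ was derived from $\Gamma \ctx$ and $\Gamma \vdash A \type$. Unfolding the definition, $\zc{\Gamma, x : A} \defeq \zc{\Gamma}, \zx :: A^{0\Gamma}$, so by \rulen{ctx-ext-zero} we must produce $\zc{\Gamma} \ctx$ and $\zc{\zc{\Gamma}} \vdash A^{0\Gamma} \type$. The former holds by the induction hypothesis. For the latter, the admissibility of \rulen{pre-counit-gen} applied to $\Gamma \vdash A \type$ (with empty $\Delta$) gives $\zc{\Gamma} \vdash A^{0\Gamma} \type$, and then the idempotence equation $\zc{\zc{\Gamma}} \defeq \zc{\Gamma}$ from Lemma~\ref{lem:syn-idem} rewrites the context to the required form.

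The other extension case is easier: if $\Gamma, \zx :: A \ctx$ was derived from $\Gamma \ctx$ and $\zc{\Gamma} \vdash A \type$, then by definition $\zc{\Gamma, \zx :: A} \defeq \zc{\Gamma}, \zx :: A$. Applying \rulen{ctx-ext-zero} requires $\zc{\Gamma} \ctx$ (the IH) and $\zc{\zc{\Gamma}} \vdash A \type$, which is exactly the given premise $\zc{\Gamma} \vdash A \type$ after rewriting by idempotence.

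There is no real obstacle here; the proof is essentially bookkeeping, provided \rulen{pre-counit-gen} is already in hand. The one subtlety to keep in mind is that the admissibility proofs for \rulen{pre-counit-gen} and \rulen{ctx-zero} are in principle mutual (the former's proof invokes idempotence statements that mention $\zc{\Gamma}$ as a context), but since Lemma~\ref{lem:syn-idem} is stated and proved purely on \emph{raw} syntax, we can separate the induction: first establish \rulen{pre-counit-gen} using raw-syntax idempotence (as already done), and then use it to prove \rulen{ctx-zero} here without circularity.
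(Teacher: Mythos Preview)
Your proof is correct and matches the paper's own proof essentially step for step: induction on $\Gamma \ctx$, using \rulen{pre-counit} (the empty-telescope instance of \rulen{pre-counit-gen}) for the unmarked-extension case and raw-syntax idempotence to rewrite $\zc{\Gamma}$ as $\zc{\zc{\Gamma}}$ in both extension cases. Your closing remark about avoiding circularity via the raw-syntax formulation of idempotence is accurate and in fact makes explicit something the paper leaves implicit.
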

\begin{proof}
    \begin{itemize}
  \item In the case for $\cdot$, then $\zc{\Gamma} \defeq \cdot$ is well-formed.
  \item In the case for $\Gamma, x : A \ctx$, then $\Gamma \ctx$ and $\Gamma \yields A \type$, so
    applying the IH gives $\zc{\Gamma} \ctx$ and \rulen{pre-counit} for
    the type gives us $\zc{\Gamma} \yields \zA \type$.  By
    idempotence $\zc{\Gamma} = \zc{\zc \Gamma}$, so $\zc{\zc{\Gamma}}
    \yields \zA \type$ as well, and $\zc{\Gamma} \yields \zc{\Delta}, \zx :: \zA \ctx$ is
    well-formed.
  \item In the case for $\Gamma, \zx :: A \ctx$, we have $\Gamma \ctx$
    and
    $\zc{\Gamma} \yields A \type$. By the IH $\zc{\Gamma} \ctx$.  By idempotence
    $\zc{\Gamma} \defeq \zc{\zc{\Gamma}}$, so applying
    \rulen{ctx-ext-zero} to $\zc{\zc{\Gamma}} \yields A \type$ gives
    that $\zc{\Gamma}, \zx :: A \ctx$ is well-formed.
  \end{itemize}
\end{proof}

\begin{proposition} Zeroing of telescopes is well-formed
  \[    \inferrule*[fraction={-{\,-\,}-}]{\Gamma \yields \Delta \tele}
                                         {\zc{\Gamma} \yields \zc{\Delta} \tele}
  \]
\end{proposition}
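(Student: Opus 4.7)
The plan is to proceed by induction on the derivation $\Gamma \yields \Delta \tele$, closely mirroring the proof for contexts in the preceding proposition. The base case $\Delta = \cdot$ is immediate, since $\zc{\cdot} \defeq \cdot$ is a valid telescope in any context.

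For the unmarked extension case $\Delta = \Delta', x : A$, the derivation supplies $\Gamma \yields \Delta' \tele$ and $\Gamma, \Delta' \yields A \type$. The inductive hypothesis gives $\zc{\Gamma} \yields \zc{\Delta'} \tele$. Applying the already-established \rulen{pre-counit-gen} to the type premise (with zeroing context $\Gamma, \Delta'$ and empty tail telescope) produces $\zc{\Gamma, \Delta'} \yields \zA \type$. By definition of zeroing, this context agrees with $\zc{\Gamma}, \zc{\Delta'}$, and by idempotence (Lemma~\ref{lem:syn-idem}) it further agrees with $\zc{\zc{\Gamma}, \zc{\Delta'}}$. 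That is exactly the shape demanded by the marked telescope extension rule, so combining with the IH yields $\zc{\Gamma} \yields \zc{\Delta'}, \zx :: \zA \tele$, which unfolds to $\zc{\Gamma} \yields \zc{\Delta} \tele$ as required.

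The marked extension case $\Delta = \Delta', \zx :: A$ is even simpler: we are given $\zc{\Gamma, \Delta'} \yields A \type$, and $\zc{\Delta} \defeq \zc{\Delta'}, \zx :: A$ with $A$ itself unchanged because it is already dull. Idempotence immediately rephrases the premise as $\zc{\zc{\Gamma}, \zc{\Delta'}} \yields A \type$, so combining with the IH and the marked telescope extension rule completes this case. I do not anticipate a serious obstacle: the only delicate bookkeeping is the definitional identity $\zc{\Gamma, \Delta'} \defeq \zc{\Gamma}, \zc{\Delta'}$, together with idempotence; the context- and telescope-level zeroings are defined by the same structural recursion on the variables, and both facts are already in hand from Figure~\ref{fig:operations} and Lemma~\ref{lem:syn-idem}.
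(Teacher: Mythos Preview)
Your proposal is correct and follows essentially the same approach as the paper: induction on the telescope derivation, with the unmarked extension case handled by the IH together with \rulen{pre-counit} (your \rulen{pre-counit-gen} with empty tail) on the type and idempotence to reach the premise of marked telescope extension, and the marked extension case handled by the IH and idempotence alone. The only differences are cosmetic (your $\Delta'$ versus the paper's $\Delta$ for the smaller telescope, and your explicit mention of the identity $\zc{\Gamma,\Delta'} \defeq \zc{\Gamma},\zc{\Delta'}$, which the paper uses silently).
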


\begin{proof}
    \begin{itemize}
  \item In the case for $\cdot$, then $\zc{\Gamma} \yields \cdot \tele$ is well-formed.

  \item In the case for $\Delta, x : A \ctx$, we have $\Gamma \yields
    \Delta \tele$ and $\Gamma,\Delta \yields A \type$, so applying the
    IH gives $\zc{\Gamma} \yields \zc{\Delta} \tele$ and
    \rulen{pre-counit} for the type gives us $\zc{\Gamma,\Delta} \yields
    \zA \type$.  By idempotence, $\zc{\zc{\Gamma},\zc{\Delta}} \yields
    \zA \type$ as well, so so $\zc{\Gamma}\yields \zc{\Delta}, \zx ::
    \zA \ctx$ is well-formed.

  \item In the case for $\Gamma \yields \Delta, \zx :: A \tele$, we have
    $\Gamma \yields \Delta \tele$ and $\zc{\Gamma,\Delta} \yields A
    \type$.  By the IH $\zc{\Gamma} \yields \Delta \tele$.  By
    idempotence $\zc{\Gamma},\zc{\Delta} \defeq
    \zc{\zc{\Gamma},\zc{\Delta}}$, so $\zc{\zc{\Gamma},\zc{\Delta}}
    \yields A \type$ as well, and $\zc{\Gamma} \yields \zc{\Delta}, \zx
    :: A \ctx$ is well-formed.
  \end{itemize}
\end{proof}

\begin{proposition} Precomposition with the counit on a telescope is well-formed
  \[
    \inferrule*[fraction={-{\,-\,}-}]{\Gamma \yields \Delta \tele}{\zc{\Gamma} \yields \Delta^{0\Gamma} \tele}
  \]
\end{proposition}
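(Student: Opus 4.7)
The plan is to proceed by induction on the derivation of $\Gamma \yields \Delta \tele$, exactly mirroring the three telescope-formation rules, and reusing the already-established admissibility of \rulen{pre-counit-gen} and the idempotence properties from Lemma~\ref{lem:syn-idem}. The structure will closely parallel the preceding proof of admissibility for $\zc{\Delta}$, except that here we zero the telescope \emph{relative to $\Gamma$} rather than globally, so the variables declared in $\Delta$ are not themselves marked --- only the occurrences of $\Gamma$-variables inside the types of $\Delta$ change.

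For the empty telescope, $(\cdot)^{0\Gamma} \defeq \cdot$, and $\zc{\Gamma} \yields \cdot \tele$ holds immediately. For the unmarked extension $\Gamma \yields \Delta', x : A \tele$ with premises $\Gamma \yields \Delta' \tele$ and $\Gamma, \Delta' \yields A \type$, I apply the inductive hypothesis to get $\zc{\Gamma} \yields \Delta'^{0\Gamma} \tele$, and \rulen{pre-counit-gen} to the type premise to obtain $\zc{\Gamma}, \Delta'^{0\Gamma} \yields A^{0\Gamma} \type$. By definition $(\Delta', x : A)^{0\Gamma} \defeq \Delta'^{0\Gamma}, x : A^{0\Gamma}$, so reapplying the telescope-extension rule gives the desired conclusion.

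For the marked extension $\Gamma \yields \Delta', \zx :: A \tele$ with premises $\Gamma \yields \Delta' \tele$ and $\zc{\Gamma, \Delta'} \yields A \type$, the inductive hypothesis again yields $\zc{\Gamma} \yields \Delta'^{0\Gamma} \tele$. The step to watch is that the marked-extension telescope rule requires $\zc{\zc{\Gamma}, \Delta'^{0\Gamma}} \yields A \type$. But by Lemma~\ref{lem:syn-idem}, $\zc{\Gamma, \Delta'} \defeq_\alpha \zc{\zc{\Gamma}, \Delta'^{0\Gamma}}$, so this is exactly the hypothesis we already have. Since $(\Delta', \zx :: A)^{0\Gamma} \defeq \Delta'^{0\Gamma}, \zx :: A$ (the type $A$ is unchanged because all of its free $\Gamma$-variables are already marked), reapplying the rule concludes the case.

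The only potentially subtle point is the marked case, where we must align the premise $\zc{\Gamma, \Delta'} \yields A \type$ from the original derivation with the premise $\zc{\zc{\Gamma}, \Delta'^{0\Gamma}} \yields A \type$ needed to reapply the formation rule. This is handled entirely by the raw-syntax idempotence lemma, so no new reasoning is required --- the proof is essentially bookkeeping, matching the pattern used in the admissibility of \rulen{ctx-zero} on contexts but with a telescope prefix carried along.
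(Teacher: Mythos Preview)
Your proposal is correct and follows essentially the same approach as the paper: induction on the telescope derivation with three cases (empty, unmarked extension, marked extension), using the inductive hypothesis together with \rulen{pre-counit-gen} in the unmarked case and the idempotence equation $\zc{\Gamma,\Delta'} \defeq_\alpha \zc{\zc{\Gamma},\Delta'^{0\Gamma}}$ in the marked case.
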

\begin{proof}
  \begin{itemize}
  \item In the case for $\cdot$ then $\zc{\Gamma} \yields \cdot \tele$ is well-formed.
  \item In the case for $\Delta, x : A \tele$, we have $\Gamma \yields
    \Delta \tele$ and $\Gamma, \Delta \yields A \type$. Inductively,
    $\zc{\Gamma} \yields \Delta^{0\Gamma} \tele$, and applying
    \rulen{pre-counit-gen} to $A$, we have $\zc{\Gamma},
    \Delta^{0\Gamma} \yields A^{0\Gamma} \type$. So we have the
    well-formed telescope $\zc{\Gamma} \yields \Delta^{0\Gamma}, x :
    A^{0\Gamma} \tele$.
  \item In the case for $\Delta, \zx :: A \tele$, we have $\Gamma
    \yields \Delta \tele$ and
    $\zc{\Gamma, \Delta} \yields A \type$.
    Inductively, $\zc{\Gamma} \yields \Delta^{0\Gamma} \tele$.
    By idempotence, $\zc{\Gamma, \Delta}$
    is equal to $\zc{\zc{\Gamma}, \Delta^{0\Gamma}}$, so the type formation
    premise is also
    $\zc{\zc{\Gamma}, \Delta^{0\Gamma}} \yields A \type$, so we can
    form $\zc{\Gamma} \yields \Delta^{0\Gamma}, \zx :: A \tele$.
  \end{itemize}
\end{proof}

\begin{proposition}
  The \rulen{pre-unit} rule is admissible.
  For all rules, assume $\Psi \ctx$ and $\Psi \yields \Gamma \tele$
  and $\Psi, \zc{\Gamma} \yields \Delta \tele$.
  \begin{mathpar}
    \inferrule*[left=pre-unit,fraction={-{\,-\,}-}]
    { \Psi,\zc{\Gamma},\Delta \yields a : A}
    {\Psi,\Gamma,\Delta \yields a : A} \and
    \inferrule*[fraction={-{\,-\,}-}]
    {\Psi,\zc{\Gamma},\Delta \yields a \defeq a' : A}
    {\Psi,\Gamma,\Delta \yields a \defeq a' : A} \and
    \inferrule*[fraction={-{\,-\,}-}]
               {      \Psi,\zc{\Gamma},\Delta \yields A \type}
               {\Psi,\Gamma,\Delta \yields  A \type} \and
    \inferrule*[fraction={-{\,-\,}-}]
               {      \Psi,\zc{\Gamma},\Delta \yields A \defeq A' \type}
               {\Psi,\Gamma,\Delta \yields A \defeq A' \type} \and
 \end{mathpar}
\end{proposition}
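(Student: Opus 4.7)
The plan is to prove the four admissibility claims simultaneously by mutual induction on the derivation of the premise. For each syntactic category (contexts/telescopes are not being transformed here—only the interface between $\zc{\Gamma}$ and $\Gamma$), the induction hypothesis is applied to each subderivation and then the rule is reapplied, with the raw-syntax term annotations left completely unchanged; this is what makes \rulen{pre-unit} ``silent.'' The presuppositions $\Psi \yields \Gamma \tele$ and $\Psi, \zc{\Gamma} \yields \Delta \tele$ provide that the ambient context $\Psi, \Gamma, \Delta$ in the conclusion is well-formed: the latter requires the type-formation version of \rulen{pre-unit}, which is established simultaneously.

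The essential cases are the variable rules. A use of \rulen{var} or \rulen{var-roundtrip} for a variable declared in $\Psi$ or $\Delta$ is transferred unchanged, since those declarations appear identically in $\Psi, \zc{\Gamma}, \Delta$ and in $\Psi, \Gamma, \Delta$. A use of \rulen{var-zero} for a variable declared marked in $\Delta$ is likewise unchanged. The critical case is \rulen{var-zero} applied to a variable drawn from $\zc{\Gamma}$. By the raw-syntax definition of the zeroing operation on contexts, such a declaration $\zx :: A'$ in $\zc{\Gamma}$ arises from either (i) an unmarked declaration $x : A_0$ in $\Gamma$, where $A' \defeq_\alpha A_0^{0\Gamma_{<x}}$, or (ii) a declaration $\zx :: A_0$ already marked in $\Gamma$, where $A' \defeq_\alpha A_0$. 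In case (ii), the same declaration is present in $\Gamma$, so \rulen{var-zero} applies directly. In case (i), I will re-derive the judgement by invoking \rulen{var-roundtrip} on $x : A_0$ in $\Gamma$; this requires checking that the resulting type annotation agrees syntactically with $A'$, which follows from Lemma~\ref{lem:syn-idem} together with the fact that $A_0$ is well-typed in its declaration prefix and so its zeroing with respect to that prefix coincides with $A'$.

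For non-variable rules (the $\natural$ and $\Pi$ formation/introduction/elimination rules, as well as the omitted structural rules such as type conversion, reflexivity, symmetry, transitivity, and congruence), the pattern is: apply the IH to each typing, formation, and equality premise in turn, then reapply the rule. When recursing under a binder, the telescope $\Delta$ is extended to $\Delta, x : A$ or $\Delta, \zx :: A$, the latter using the fact that types in the extended position remain well-formed thanks to the type-formation version of \rulen{pre-unit}. The four judgement forms must be proved together because typing premises depend on type-formation premises, and equality premises depend on both.

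The main obstacle is the variable case (i) above: matching the type annotation exactly. This is where the presuppositions, the careful definitions of $\zc{\Gamma}$ and $A^{0\Gamma}$ on raw syntax, and the idempotence/commuting-with-weakening identities of Lemma~\ref{lem:syn-idem} all have to align. Once that identification is secured, the remaining cases are routine reapplications of the rules, so the bulk of the proof is bookkeeping rather than insight.
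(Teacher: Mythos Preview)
Your plan is correct and matches the paper's proof: mutual induction on derivations, with the decisive case being \rulen{var-zero} on a variable from $\zc{\Gamma}$, handled by \rulen{var-roundtrip} when the variable was unmarked in $\Gamma$ and by \rulen{var-zero} again when it was already marked, with the type premise supplied by inverting the telescope assumption $\Psi \yields \Gamma \tele$.

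Two small corrections to the bookkeeping. First, in case~(i) the declaration in $\zc{\Gamma}$ carries type $A_0^{0(\Psi,\Gamma_{<x})}$, not $A_0^{0\Gamma_{<x}}$: the telescope-zeroing convention marks \emph{all} free variables of $A_0$, including those from $\Psi$. The match with the output type $\zc{A_0}$ of \rulen{var-roundtrip} then uses that zeroing commutes with weakening, rather than Lemma~\ref{lem:syn-idem}. Second, for the $\natural$-rules your blanket ``apply the IH to each premise'' is not quite right: the premises of \rulen{$\natural$-form} and \rulen{$\natural$-intro} live in the fully zeroed context $\zc{\Psi,\zc{\Gamma},\Delta}$, so no IH is needed---one rewrites via idempotence to $\zc{\Psi,\Gamma,\Delta}$ and reapplies the rule directly. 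Only premises in the un-zeroed context (e.g.\ the $b$ premise of \rulen{$\natural$-elim}) invoke the IH.
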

\begin{proof}
  %% The rule on the left is required for the for the context in the conclusion of \rulen{pre-unit} to be well-formed.

  All four are proved by mutual induction on derivations.
  First we check some representative cases of \rulen{pre-unit}.
  \rulen{pre-unit} and the analogous statement for judgemental equality
  are proved by mutual induction on typing/equality derivations.
  \begin{itemize}
  \item \rulen{var}: We distinguish cases on which of $\Psi$ or $\Delta$
    the variable $x$ is in---it cannot be in $\zc{\Gamma}$ because all
    variables in $\zc{\Gamma}$ are marked.  In the case for
    \[
    \inferrule*{\Psi \vdash A \type}
               {\Psi,x:A,\Psi',\zc{\Gamma},\Delta \yields x : A}
    \]
    we can simply reapply the rule.
    In the case for
    \[
    \inferrule*{\Psi,\zc{\Gamma},\Delta \vdash A \type}
               {\Psi,\zc{\Gamma},\Delta,x:A,\Delta' \yields x : A}
    \]
    the inductive hypothesis gives ${\Psi,\Gamma,\Delta \vdash A
      \type}$, so we reapply the rule.

  \item \rulen{var-zero}: We again distinguish cases on which of
    $\Psi,\zc{\Gamma},\Delta$ the variable was in.
    In the case for
    \[
    \inferrule*{\zc{\Psi} \vdash A \type}
               {\Psi,\zx::A,\Psi',\zc{\Gamma},\Delta \yields \zx : A}
    \]
    we can reapply the rule.  In the case for
    \[
    \inferrule*{\zc{\Psi,\zc{\Gamma},\Delta} \vdash A \type}
               {\Psi,\zc{\Gamma},\Delta,\zx::A,\Delta' \yields \zx : A}
    \]
    $\zc{\Psi,\zc{\Gamma},\Delta} = \zc{\Psi,\Gamma,\Delta}$ by
      idempotence, so we can reapply the rule with
      $\zc{\Psi,\Gamma,\Delta} \vdash A \type$ for the premise.
    But now it is possible $\zx \in \zc{\Gamma}$, and there are two
    cases, depending on whether $x$ is marked in $\Gamma$:
    \begin{itemize}

    %% NOTE: we DO need to assume that the context in the conclusion is
    %% well-formed to prove this
    %% \item If $x$ in $\Gamma$, then we have the instance
    %%   \[
    %%   \inferrule*{ \zc{\Psi,\zc{\Gamma}} \yields {A'}^{0(\Psi,\Gamma)}}
    %%              {\Psi,\zc{\Gamma,x:A',\Gamma'},\Delta \vdash \zx : A}
    %%   \]
    %%   where $A'^{0(\Psi,\Gamma)} \defeq_\alpha A$.  Since zeroing
    %%   commutes with weakening, $A =
    %%   A'^{0(\Psi,\Gamma,x,\Gamma',\Delta)}$ as well, and we want to use
    %%   \rulen{var-roundtrip}
    %%   \[
    %%   \inferrule*[left=var-roundtrip]{\Psi,\Gamma \vdash A' \type}
    %%                                  {\Psi,\Gamma, x : A', \Gamma',\Delta \yields \zx_{A'} : A}
    %%   \]
    %%   But for the premise, we
    %%   need $\zc{\Psi,\Gamma} \yields A' \type$, which we don't know --
    %%   STUCK.

     \item If $x$ in was not marked, then we have the instance
       \[
       \inferrule*{ \zc{\Psi,\zc{\Gamma}} \yields {A'}^{0(\Psi,\Gamma)} \type}
                  {\Psi,\zc{\Gamma,x:A',\Gamma'},\Delta \vdash \zx : A}
       \]
       where $A'^{0(\Psi,\Gamma)} \defeq_\alpha A$.  Since zeroing
       commutes with weakening, $A =
       A'^{0(\Psi,\Gamma,x,\Gamma',\Delta)}$ as well, and we want to use
       \rulen{var-roundtrip}
       \[
       \inferrule*[left=var-roundtrip]{\Psi,\Gamma \vdash A' \type}
                                      {\Psi,\Gamma, x : A', \Gamma',\Delta \yields \zx_{A'} : A}
       \]
       For the premise, we need $\Psi,\Gamma \yields A' \type$, we
       which can obtain by inverting the assumption that $\Psi \yields
       \Gamma,x:A',\Gamma' \tele$

     \item
       If $\zx$ was marked, i.e. we have
       \[
       \inferrule*{ \zc{\Psi,\zc{\Gamma}} \yields A \type}
                  {\Psi,\zc{\Gamma,\zx:A,\Gamma'},\Delta \vdash \zx : A}
       \]
       then $\Psi,\Gamma,\zx:A,\Gamma',\Delta \yields \zx : A$ by
       \rulen{var-zero}, because we have
       $\zc{\Psi,\Gamma} \vdash A \type$ by idempotence on the
       premise.
    \end{itemize}

  \item \rulen{var-roundtrip}: We distinguish cases on which of $\Psi$
    or $\Delta$ the variable $x$ is in---again, it cannot be in
    $\zc{\Gamma}$ because all variables in $\zc{\Gamma}$ are marked.  In
    the case for
    \[
    \inferrule*{\Psi \vdash A \type}
               {\Psi,x:A,\Psi',\zc{\Gamma},\Delta \yields \zx : \zA}
    \]
    we can simply reapply the rule to get
    ${\Psi,x:A,\Psi',\zc{\Gamma},\Delta \yields \zx : \zA}$ --- note that
    the old and new contexts have the same sets of variables, so $\zA$
    is zeroed with respect to the same implicit set of variables in both cases.

    In the case for
    \[
    \inferrule*{\Psi,\zc{\Gamma},\Delta \vdash A \type}
               {\Psi,\zc{\Gamma},\Delta,x:A,\Delta' \yields \zx : \zA}
    \]
    the inductive hypothesis (the smaller telescope $\Delta$ is
    well-formed by inversion) gives ${\Psi,\Gamma,\Delta \vdash A \type}$, so we can reapply the rule.

  \item \rulen{$\natural$-form}: Suppose we have
    $\Psi, \zc{\Gamma}, \Delta \yields \natural A \type$ because
    $\zc{\Psi, \zc{\Gamma}, \Delta} \yields A \type$. By idempotence,
    the context can be rewritten
    $\zc{\Psi, \Gamma, \Delta} \yields A \type$. Reapplying the rule
    then gives $\Psi, \Gamma, \Delta \yields \natural A$ as required.

  \item \rulen{$\Pi$-form}: If the inputs are
    \begin{align*}
      \Psi, \zc{\Gamma}, \Delta &\yields A \type \\
      \Psi, \zc{\Gamma}, \Delta, x : A &\yields B \type
    \end{align*}
    then inductively
    \begin{align*}
      \Psi, \Gamma, \Delta &\yields A \type \\
      \Psi, \Gamma, \Delta, x : A &\yields B \type
    \end{align*}
    where for $B$ we have extended $\Delta$ to the well-typed
    telescope $\Psi, \zc{\Gamma} \yields \Delta, x : A \tele$. We can
    then reapply the rule.
  \end{itemize}
\end{proof}

  \begin{proposition} Precomposition with the unit for telescopes is admissible:
    \[
        \inferrule*[fraction={-{\,-\,}-}]
                   {\Psi \ctx \and \Psi \yields \Gamma \tele \and \Psi, \zc{\Gamma} \yields \Delta \tele}
                   {\Psi,\Gamma \yields \Delta \tele}
    \]
  \end{proposition}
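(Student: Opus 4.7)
The plan is to prove this by induction on the derivation of $\Psi, \zc{\Gamma} \yields \Delta \tele$, mirroring exactly the structure of the preceding proofs for contexts and telescopes. The telescope formation judgement has three formation rules (empty, unmarked extension, marked extension), so there are three cases to check, and in each case I will reapply the corresponding formation rule after adjusting the premise via an application of the already-proved \rulen{pre-unit} on types plus the syntactic idempotence lemma (Lemma~\ref{lem:syn-idem}).

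In the empty case $\Delta = \cdot$, we simply have $\Psi, \Gamma \yields \cdot \tele$ by the empty telescope rule. In the unmarked extension case $\Delta = \Delta', x : A$, we are given $\Psi, \zc{\Gamma} \yields \Delta' \tele$ and $\Psi, \zc{\Gamma}, \Delta' \yields A \type$. The inductive hypothesis supplies $\Psi, \Gamma \yields \Delta' \tele$, and the \rulen{pre-unit} rule for types (already established, with the telescope tail taken to be $\Delta'$) converts the type premise to $\Psi, \Gamma, \Delta' \yields A \type$, after which we reapply the telescope extension rule.

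The marked extension case $\Delta = \Delta', \zx :: A$ is the most delicate, but still routine. Here we are given $\Psi, \zc{\Gamma} \yields \Delta' \tele$ together with a type premise whose context zeroes the ambient prefix: $\zc{\Psi, \zc{\Gamma}, \Delta'} \yields A \type$. The inductive hypothesis gives $\Psi, \Gamma \yields \Delta' \tele$. The key observation is that syntactic idempotence (Lemma~\ref{lem:syn-idem}, extended across a telescope) yields $\zc{\Psi, \zc{\Gamma}, \Delta'} \defeq \zc{\Psi, \Gamma, \Delta'}$, so we already have $\zc{\Psi, \Gamma, \Delta'} \yields A \type$ on the nose, and the marked extension rule for telescopes applies directly.

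The only subtlety — and it is the place I expect to spend the most thought — is bookkeeping the well-formedness of the smaller telescope inside $\Delta$ so that the inductive hypothesis is applicable and the implicit side conditions of the formation rules are met; this is handled by inverting the given telescope derivation to extract $\Psi, \zc{\Gamma} \yields \Delta' \tele$ before invoking the IH, exactly as in the preceding propositions. No other case analysis is needed, and in particular no structural reasoning about the term language is required beyond what \rulen{pre-unit} for types already supplies.
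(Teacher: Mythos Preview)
Your proposal is correct and follows essentially the same approach as the paper: induction on the telescope derivation with three cases, using \rulen{pre-unit} for types in the unmarked-extension case and the syntactic idempotence $\zc{\Psi, \zc{\Gamma}, \Delta'} \defeq \zc{\Psi, \Gamma, \Delta'}$ in the marked-extension case. The paper's proof is a slightly terser version of exactly what you wrote.
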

\begin{proof}
  \begin{itemize}
  \item If $\Delta \defeq \cdot$ then $\Psi,\Gamma \yields \cdot \tele$
    is well-formed.
  \item Suppose $\Delta, x : A \tele$ because
    $\Psi, \zc{\Gamma} \yields \Delta \tele$ and
    $\Psi, \zc{\Gamma}, \Delta \yields A \type$. Inductively,
    $\Psi, \Gamma \yields \Delta \tele$ and applying \rulen{pre-unit}
    to $A$, we have $\Psi, \Gamma, \Delta \yields A \type$, so
    $\Psi, \Gamma \yields \Delta, x : A \tele$ is well-formed.
  \item Suppose $\Delta, \zx :: A \tele$ because
    $\Psi, \zc{\Gamma} \yields \Delta \tele$ and
    $\zc{\Psi, \zc{\Gamma}, \Delta} \yields A \type$. By the IH,
    $\Psi,\Gamma \vdash \Delta \tele$.
    By idempotence, $\zc{\Psi, \zc{\Gamma}, \Delta} $
     is equal to
    $\zc{\Psi, \Gamma, \Delta} \yields A \type$, so we can
    form $\Psi, \Gamma \yields \Delta, \zx :: A \tele$.
  \end{itemize}

\end{proof}

\begin{proposition} \label{prop:preunit-preserves-size}
\rulen{pre-unit} preserves the size (number of derivation steps) of a
typing derivation.
\end{proposition}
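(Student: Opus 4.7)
The plan is to prove this by straightforward induction on the input typing/type‑formation/equality derivation, mirroring exactly the case analysis used in the previous admissibility proof for \rulen{pre-unit}. The claim is that each inference step in the input derivation is matched by exactly one inference step in the output derivation, so heights (or node counts) coincide. Since the previous proof already exhibits, for each rule, which rule is applied at the root of the output derivation, one only has to observe that in every case that rule has the same arity as, and receives premises from the inductive hypothesis applied to, the premises of the input rule.

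In detail, for \rulen{var}, \rulen{var-roundtrip}, and all the connective rules (\rulen{$\natural$-form}, \rulen{$\natural$-intro}, \rulen{$\natural$-elim}, \rulen{$\Pi$-form}, \rulen{$\Pi$-intro}, \rulen{$\Pi$-elim}), the output derivation uses the same rule at the root, and each premise is obtained by the inductive hypothesis applied to the corresponding premise of the input—producing a derivation of the same size by IH. For \rulen{var-zero}, most subcases likewise reapply \rulen{var-zero}; the one genuinely different subcase is when $\zx$ is a marked use in $\zc{\Gamma}$ arising from an unmarked binding $x : A'$ in $\Gamma$, where the output derivation instead applies \rulen{var-roundtrip}. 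But both \rulen{var-zero} and \rulen{var-roundtrip} have exactly one type‑formation premise, so the root step contributes the same to the size. The context rewritings used throughout (the various instances of $\zc{\zc{\Gamma},\Delta^{0\Gamma}} \defeq \zc{\Gamma,\Delta}$, and $A \defeq A^{0\Gamma}$ when $A$ is already typed in a zeroed context) are raw‑syntactic equalities on contexts and terms, not derivation steps, so they do not contribute to the size at all.

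The main obstacle is precisely the \rulen{var-zero}-to-\rulen{var-roundtrip} subcase: the output application of \rulen{var-roundtrip} needs a premise $\Psi, \Gamma \yields A' \type$ that does not literally appear as a sub‑derivation of the input, and is instead obtained by inverting the side condition that $\Psi \yields \Gamma, x : A', \Gamma' \tele$ is a well‑formed telescope. To make the size argument go through cleanly, one takes size to count only term/type derivation steps and treats context and telescope well‑formedness as a separate implicit side condition (as is already implicit in the statement of \rulen{pre-unit}); inversion on that side condition is then not counted. If one wished to include context well‑formedness in the size measure, one would additionally need to prove that such inversion produces a type‑formation derivation of bounded size, which is a routine induction on the telescope‑formation rules and does not disturb the overall structure of the argument.
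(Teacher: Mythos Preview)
Your proposal is correct and is precisely the paper's approach expanded in detail: the paper's own proof is just ``Inspection of the above proof.\ Note that all of the equality reasoning involving idempotence etc.\ is about syntactic equality $\defeq_\alpha$, which does not get recorded in typing derivations.'' Your case-by-case analysis unpacks that inspection, and your discussion of the \rulen{var-zero}-to-\rulen{var-roundtrip} subcase (where the output premise comes from inverting the telescope side condition rather than from the input premise) is more careful than the paper, which does not explicitly address it.
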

\begin{proof}
Inspection of the above proof.  Note that all of the equality reasoning
involving idempotence etc.\ is about syntactic equality $\defeq_\alpha$,
which does not get recorded in typing derivations.
\end{proof}

\begin{proposition}
  Substitution is admissible.  For all rules, assume $\Gamma \ctx$ and
  $\Gamma \yields A \type$ and $\Gamma,x:A \yields \Delta \tele$.
  \begin{mathpar}
    \inferrule*[fraction={-{\,-\,}-}]
    {\Gamma \yields a : A \and \Gamma, x : A, \Delta \yields b : B}
    {\Gamma, \Delta[a/x] \yields b[a/x] : B[a/x] }
    \quad
    \inferrule*[fraction={-{\,-\,}-}]
    {\Gamma \yields a : A \and \Gamma, x : A, \Delta \yields B \type}
    {\Gamma, \Delta[a/x] \yields B[a/x] \type }
    \\
    \inferrule*[fraction={-{\,-\,}-}]
               {\Gamma \yields a : A \and \Gamma, x : A, \Delta \yields b \defeq b': B}
               {\Gamma, \Delta[a/x] \yields b[a/x] = b'[a/x] : B[a/x] }
    \quad
    \inferrule*[fraction={-{\,-\,}-}]
               {\Gamma \yields a : A \and \Gamma, x : A, \Delta \yields B \defeq B' \type}
               {\Gamma, \Delta[a/x] \yields B[a/x] \defeq B'[a/x] \type }
  \end{mathpar}
\end{proposition}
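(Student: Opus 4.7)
The plan is to prove all four statements simultaneously by mutual induction on the size of the derivation of the premise living in the context $\Gamma, x:A, \Delta$. Most cases (for $\Pi$, type conversion, and the structural rules for judgemental equality) follow the routine pattern for substitution admissibility in dependent type theory: apply the inductive hypothesis to each sub-derivation after extending the telescope $\Delta$ as needed, and then reapply the same rule, using the commutation of substitution with the relevant term/type former. The genuinely new work happens in the variable rules, the $\natural$-rules, and the bookkeeping for how substitution interacts with the zeroing operations on raw syntax defined earlier.

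For the variable cases, the interesting sub-cases are substitution at the variable being eliminated. For \rulen{var} at $y = x$, we have $b \defeq x$ and $b[a/x] \defeq a$, and the conclusion follows from $\Gamma \yields a : A$ together with weakening by $\Delta[a/x]$. For \rulen{var-roundtrip} at $y = x$, we have $\zx[a/x] \defeq \za$, and the required $\Gamma, \Delta[a/x] \yields \za : \zA$ follows from \rulen{pre-roundtrip} applied to $\Gamma \yields a : A$ together with weakening (noting that $\zA$ contains no occurrences of $x$, since $A$ is in context $\Gamma$, so $\zA[a/x] \defeq \zA$). For any other variable $y \neq x$, used by any of \rulen{var}, \rulen{var-zero}, or \rulen{var-roundtrip}, I reapply the rule after updating the telescope via the inductive hypothesis, relying on the raw-syntax lemmas that zeroing commutes with substitution and with weakening to make the types in the conclusion match on the nose.

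The $\natural$-rules are the main source of subtlety, because each of \rulen{$\natural$-form}, \rulen{$\natural$-intro}, and \rulen{$\natural$-elim} has a premise in the \emph{zeroed} context $\zc{\Gamma,x:A,\Delta}$, in which the declaration for $x$ has become $\zx :: \zA$ rather than $x:A$, so the inductive hypothesis does not apply directly. My plan is, in each such case, to first invoke the admissible \rulen{pre-unit} to unzero the whole context, obtaining a derivation of the same size by Proposition~\ref{prop:preunit-preserves-size} in the context $\Gamma, x:A, \Delta$. The inductive hypothesis for typing or type formation now applies and yields the substituted judgement in $\Gamma, \Delta[a/x]$. To put a $\natural$ back on the result, I apply \rulen{pre-counit-gen} to re-zero and then argue that the additional zeroing on the body is trivial: since the body originally lived in a fully zeroed context, all of its free variables occurred marked, the substitution sends $\zx$ to $\za$ (whose free variables are themselves all marked by definition), and so by the idempotence lemmas for raw-syntax zeroing the re-zeroed term is $\defeq_\alpha$ to the term we had. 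Applying the corresponding $\natural$-rule to this then produces the required conclusion.

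The main obstacle is keeping the bookkeeping for zeroing versus substitution in order. Each time substitution is pushed under a zeroing operation, or a term is pre-zeroed and then substituted into, I must cite one of the raw-syntax commutation or idempotence lemmas to line up the $\alpha$-equivalence of the resulting types and telescopes. A related concern is well-foundedness of the induction once \rulen{pre-unit} is invoked in the $\natural$-cases, which is why Proposition~\ref{prop:preunit-preserves-size} has been established in advance; without it, the recursive call to the inductive hypothesis after unzeroing would not obviously be on a smaller derivation and the induction would not go through.
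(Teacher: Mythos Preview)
Your proposal is correct and follows the paper's strategy: mutual induction on derivation size, the roundtrip-variable case via \rulen{pre-roundtrip} plus weakening, and the $\natural$-rules handled by invoking \rulen{pre-unit} (whose size-preservation guarantees well-foundedness of the recursive call).

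The one tactical difference is in how you handle a premise living in $\zc{\Gamma,x:A,\Delta}$. The paper unzeroes only the single declaration $\zx :: \zA$ to $x : \zA$ (using the prefix $\Psi$ in \rulen{pre-unit}), applies \rulen{pre-counit} to obtain $\zc{\Gamma} \vdash \za : \zA$, and then invokes the inductive hypothesis in the ambient context $\zc{\Gamma}$, landing directly in $\zc{\Gamma},\zc{\Delta}[\za/x] \defeq_\alpha \zc{\Gamma,\Delta[a/x]}$. Your route---unzeroing the whole context, substituting $a$ in $\Gamma$, and then re-zeroing---also works, but needs the extra observation that the final re-zeroing is a no-op on $B[a/x]$; this follows from the commutation lemma ${(b[a/x])}^{0(\gamma,\delta)} \defeq_\alpha b^{0(\gamma,x,\delta)}[a/x]$ together with the fact that zeroing a term already well-typed in a fully zeroed context is the identity. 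The paper's version saves that step, but both arguments use the same ingredients and yield the same result.
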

\begin{proof}
  The first new interesting case is substitution for a roundtripped variable. The inputs are
  \begin{align*}
    \Gamma &\yields a : A \\
    \Gamma, x : A, \Delta &\yields \zx : \zA
  \end{align*}
  and so applying the \rulen{pre-roundtrip} on $a$ gives
  $\Gamma \yields \za : \zA$, which can be weakened to
  $\Gamma, x : A, \Delta \yields \za : \zA$ and taken as the result.

  The other new kind of case is substitution into a rule that has a
  zeroed context as a premise. For example, suppose we are in the case for
  \begin{align*}
    \Gamma &\yields a : A \\
    \Gamma, x : A, \Delta &\yields \natural B \type
  \end{align*}
  because $\zc{\Gamma, x : A, \Delta} \yields B \type$. Then applying
  \rulen{pre-unit} (note that $\zc{\Gamma} \ctx$ and $\zc{\Gamma}
  \yields \zc{A} \type$ and $\zc{\Gamma},x:\zA \yields \zc{\Delta}
  \tele$ by \rulen{ctx-zero} and \rulen{pre-counit} and
  zeroing/\rulen{pre-unit} for telescopes), we have $\zc{\Gamma}, x :
  \zc{A}, \zc{\Delta} \yields B \type$, and by \rulen{pre-counit} we
  have $\zc{\Gamma} \vdash \za : \zA$.  Because \rulen{pre-unit}
  preserves the size of a derivation, we can appeal to the inductive
  hypothesis to form $\zc{\Gamma}, \zc{\Delta}[\za/x] \yields B[\za/x]
  \type$. Because zeroing commutes with substitution,
  $\zc{\Gamma},\zc{\Delta}[\za/x]$ is equal to
  $\zc{\Gamma,\Delta[a/x]}$, and $B[\za/x]$ is equal to $B[a/x]$.  So
  reapplying the rule gives $\Gamma, \Delta[a/x] \yields
  \natural(B[a/x]) \type$.
\end{proof}

\section{Categorical Semantics}\label{sec:semantics}

\subsection{Models}

We now describe a notion of model for our type theory. Categories with
families (CwFs)~\cite{dybjer:cwf} are a standard notion of model for
dependent type theories, and we begin by recalling their definition.
We generally follow the same notation for CwFs as in~\cite{drats}.

\newcommand{\cat}[1]{\mathcal{#1}}
\newcommand{\C}{\cat{C}}
\newcommand{\D}{\cat{D}}
\newcommand{\co}[2]{#1{.}#2}
\newcommand{\coo}[2]{#1{..}#2}
\newcommand{\CWF}{\C}
\newcommand{\Elt}[3][\CWF]{#1(#2\mathbin{\scriptstyle\vdash}#3)}
\newcommand{\Fam}[2][\CWF]{#1(#2)}
\newcommand{\FamU}[3][\C]{#1(#2,{#3})}
\newcommand{\gen}{\mathsf{q}}
\newcommand{\Hom}[3][\CWF]{#1(#2,#3)}
\newcommand{\projsub}{\mathsf{p}}
\newcommand{\pair}[2]{\left(#1,#2\right)}

\newcommand{\den}[1]{\llbracket#1\rrbracket}
\newcommand{\bigden}[1]{\left\llbracket#1\right\rrbracket}
\newcommand{\transp}[1]{\overline{#1}}
\newcommand{\kleq}{\approx}

\newcommand{\Rt}{\mathsf{R}}
\newcommand{\Subst}{\mathsf{S}}
\newcommand{\Wk}{\mathsf{P}}
\newcommand{\ZWk}{\zc{\mathsf{P}}}

\begin{definition}\label{def:cwf}
  A \emph{category with families} (CwF) is specified by:
  \begin{enumerate}
  \item A category $\C$ with a terminal object $1$.

  \item For each object $\Gamma\in\C$, a set $\Fam{\Gamma}$ of
    \emph{families} over $\Gamma$.

  \item For each object $\Gamma\in\C$ and family $A\in\Fam{\Gamma}$, a
    set $\Elt{\Gamma}{A}$ of \emph{elements} of the family $A$ over
    $\Gamma$.

  \item\label{item:2} For each morphism
    $\theta\in\Hom{\Delta}{\Gamma}$, \emph{re-indexing}
    functions $A\in\Fam{\Gamma} \mapsto A[\theta]\in\Fam{\Delta}$ and
    $a\in\Elt{\Gamma}{A}\mapsto a[\theta]\in\Elt{\Delta}{A[\theta]}$,
    satisfying $A[\id]=A$, $A[\theta\circ\delta] = A[\theta][\delta]$,
    $a[\id]=a$ and $a[\theta\circ\delta] = a[\theta][\delta]$.

  \item\label{item:1} For each object $\Gamma\in\C$ and family
    $A\in\Fam{\Gamma}$, a \emph{comprehension object}
    $\co{\Gamma}{A}\in\C$ equipped with a \emph{projection morphism}
    $\projsub_A\in\C(\co{\Gamma}{A},\Gamma)$, a \emph{generic element}
    $\gen_A\in\Elt{\co{\Gamma}{A}}{A[\projsub_A]}$ and a \emph{pairing
      operation}
    $\theta\in\Hom{\Delta}{\Gamma}, a\in\Elt{\Delta}{A[\theta]}
    \mapsto \pair{\theta}{a}\in\C(\Delta,\co{\Gamma}{A})$ satisfying
    $\projsub_A\circ\pair{\theta}{a}= \theta$,
    $\gen_A[\pair{\theta}{a}] = a$,
    $\pair{\theta}{a}\circ\delta =
    \pair{\theta\circ\delta}{a[\delta]}$ and
    $\pair{\projsub_A}{\gen_A} = \id$.
  \end{enumerate}
\end{definition}

\begin{definition}\label{def:cwf-pi}
A \emph{$\Pi$-structure} on a CwF consists of, for each $\Gamma \in C$, $A \in \Fam{\Gamma}$, $B \in \Fam{\co{\Gamma}{A}}$, a family $\Pi(A, B) \in \Fam{\Gamma}$ and functions
\begin{align*}
\lambda_{\Gamma,A,B} &: \Elt{\co{\Gamma}{A}}{B} \to \Elt{\Gamma}{\Pi(A, B)} \\
\mathrm{app}_{\Gamma, A, B} &: \Elt{\Gamma}{\Pi(A, B)} \to \prd{a \in \Elt{\Gamma}{A}} \Elt{\Gamma}{B[\pair{\id_\Gamma}{a}]}
\end{align*}
such that
\begin{align*}
\Pi(A, B)[\theta] &\defeq \Pi(A[\theta], B[\pair{\theta\circ \projsub}{\gen}]) \\
\lambda_{\Gamma,A,B}(b)[\theta] &\defeq \lambda_{\Delta, A[\theta], B[\pair{\theta\circ \projsub}{\gen}]}(b[\pair{\theta\circ \projsub}{\gen}]) \\
\mathrm{app}_{\Gamma, A, B}(f, a)[\theta] &\defeq \mathrm{app}_{\Delta, A[\theta], B[\pair{\theta\circ \projsub}{\gen}]}(f[\theta], a[\theta]) \\
\mathrm{app}_{\Gamma, A, B}(\lambda_{\Gamma,A,B}(b), a) &\defeq b[\pair{\id_\Gamma}{a}] \\
\lambda_{\Gamma,A,B}(\mathrm{app}_{\Gamma.A, A, B}(b[\projsub_A]), \gen_A) &\defeq b
\end{align*}
% \mvrnote{Or should I just say in words, operations are stable under substitution and $\beta$ and $\eta$ hold.}
\end{definition}

$\Pi$-types are not required to interpret our modality, but we will show that the interpretation of marked variables and zeroing does not interfere with that of $\Pi$-types. The other standard type formers are similar.

\begin{definition}\label{def:weak-cwf-mor}
  A \emph{weak CwF morphism} $F$ between CwFs consists of a functor $F : \C\to\D$, an operation on families mapping $A\in\Fam{\Gamma}$ to a family $F A\in\Fam[\D]{F\Gamma}$ and an operation on elements mapping $a\in\Elt{\Gamma}{A}$ to an element $F a\in\Elt[\D]{F\Gamma}{F A}$, such that
 \begin{enumerate}
\item The functor $F : \C\to\D$ preserves terminal objects (up to isomorphism)
\item The operations on families and elements commute with reindexing in the sense that
$F A [F \theta] = F (A[\theta])$ and $F t[F \theta] = F (t[\theta])$.
\item The maps $\pair{F \projsub_A}{F \gen_A} : F(\co{\Gamma}{A}) \to \co{F\Gamma}{F A}$
are isomorphisms for all $\Gamma$ and $A$. We write $\nu_{\Gamma,A}$ for the inverse.
\end{enumerate}
\end{definition}

\begin{lemma}
For any weak CwF morphism, the following equations hold:
\begin{align}
F(\projsub_A) \circ \nu_{\Gamma,A} &\defeq \projsub_{FA}\label{eq:proj-nu} \\
F(\gen_A)[\nu_{\Gamma,A}] &\defeq \gen_{FA}\label{eq:gen-nu} \\
\nu_{\Gamma,A} \circ (F\theta, F a) &\defeq F(\theta, a)\label{eq:nu-sub}
\end{align}
\end{lemma}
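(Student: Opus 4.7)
The plan is to prove all three equations by exploiting the defining property of $\nu_{\Gamma,A}$ as the inverse of $\pair{F \projsub_A}{F \gen_A}$, combined with the CwF axioms $\projsub_A \circ \pair{\theta}{a} = \theta$, $\gen_A[\pair{\theta}{a}] = a$, and $\pair{\theta}{a} \circ \delta = \pair{\theta \circ \delta}{a[\delta]}$.

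For equation~(\ref{eq:proj-nu}), I would compose both sides of $\pair{F\projsub_A}{F\gen_A}\circ \nu_{\Gamma,A} = \id$ on the left with $\projsub_{FA}$. The right side becomes $\projsub_{FA}$, and the left becomes $\projsub_{FA} \circ \pair{F\projsub_A}{F\gen_A} \circ \nu_{\Gamma,A} = F\projsub_A \circ \nu_{\Gamma,A}$ using the first CwF projection law. For equation~(\ref{eq:gen-nu}), I would instead reindex the generic element $\gen_{FA}$ along the same identity. Then $\gen_{FA} = \gen_{FA}[\id] = \gen_{FA}[\pair{F\projsub_A}{F\gen_A}][\nu_{\Gamma,A}] = F\gen_A[\nu_{\Gamma,A}]$, using the second CwF comprehension law in the middle step.

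For equation~(\ref{eq:nu-sub}), since $\nu_{\Gamma,A}$ is an isomorphism with inverse $\pair{F\projsub_A}{F\gen_A}$, it suffices to show that $\pair{F\projsub_A}{F\gen_A} \circ \nu_{\Gamma,A} \circ \pair{F\theta}{Fa}$ and $\pair{F\projsub_A}{F\gen_A} \circ F\pair{\theta}{a}$ agree. The first collapses to $\pair{F\theta}{Fa}$ by cancellation. For the second, I would apply the substitution law $\pair{\sigma}{t} \circ \delta = \pair{\sigma \circ \delta}{t[\delta]}$ in the target CwF to rewrite it as $\pair{F\projsub_A \circ F\pair{\theta}{a}}{F\gen_A[F\pair{\theta}{a}]}$. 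Functoriality of $F$ on morphisms and on elements, together with the two original CwF comprehension laws applied to $\pair{\theta}{a}$ in $\C$, reduce these two components to $F\theta$ and $Fa$ respectively.

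None of these steps presents a real obstacle: the content is entirely formal manipulation of the iso $\nu_{\Gamma,A}$ against the CwF laws and the $F$-preservation clauses. The only thing to be careful about is making sure that each reindexing and composition is type-correct, particularly that $F\pair{\theta}{a}$ is a morphism into $F(\co{\Gamma}{A})$ (not into $\co{F\Gamma}{FA}$), so the equality in~(\ref{eq:nu-sub}) lives in $\Hom[\D]{F\Delta}{F(\co{\Gamma}{A})}$ and the comparison via $\pair{F\projsub_A}{F\gen_A}$ is legitimate.
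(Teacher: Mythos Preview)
Your proposal is correct and is exactly the standard argument one would expect; the paper itself omits the proof entirely, treating these equations as immediate consequences of the CwF axioms and the definition of $\nu_{\Gamma,A}$ as the inverse of $\pair{F\projsub_A}{F\gen_A}$. Your derivations for all three equations are sound, and your remark about type-correctness (that $F\pair{\theta}{a}$ lands in $F(\co{\Gamma}{A})$ rather than $\co{F\Gamma}{FA}$) is the only point worth flagging, which you have done.
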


We now come to our notion of model for our type theory.

\begin{definition}
  A \emph{category with families with a bireflector} (CwB) consists of a CwF
  $\C$ with a weak endomorphism $N : \C \to \C$, such that there are
  natural transformations $\eta : \id_\C \Rightarrow N$ and
  $\varepsilon : N \Rightarrow \id_\C$ for the underlying functor of
  $N$, such that $\eta \circ \varepsilon \defeq \id_{N-}$.
\end{definition}

\begin{remark}
  The CwF structure aside, this is a category equipped with a
  `bireflective subcategory' as studied in~\cite{bireflectivity}. The
  definition we have given here uses an equivalent characterisation of
  bireflectivity~\cite[Theorem 13]{bireflectivity}, as a
  `split-idempotent natural transformation on $\id_\C$ with specified
  splitting'.
\end{remark}

There is some derived structure in any CwB $\C$.

\begin{lemma}
  For any $\Gamma \in \C$,
  \begin{align*}
    N\eta_\Gamma &\defeq \eta_{N\Gamma} \\
    N\varepsilon_\Gamma &\defeq \varepsilon_{N\Gamma}
  \end{align*}
\end{lemma}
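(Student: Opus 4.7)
The plan is to use naturality of $\eta$ and $\varepsilon$ against each other's components, together with three applications of the section-retraction equation $\eta \circ \varepsilon \defeq \id_{N-}$. The key observation is that naturality applied at a unit or counit morphism, combined with the section-retraction law, produces ``half'' of the desired equalities, which can then be upgraded to full equalities using the splitting at $N\Gamma$.

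First, I will establish two auxiliary equations. Applying naturality of $\varepsilon$ to the morphism $\eta_\Gamma : \Gamma \to N\Gamma$ yields a commuting square whose long diagonal reads $\varepsilon_{N\Gamma} \circ N\eta_\Gamma \defeq \eta_\Gamma \circ \varepsilon_\Gamma$; the right-hand side is $\id_{N\Gamma}$ by the section-retraction law at $\Gamma$. Dually, naturality of $\eta$ applied to $\varepsilon_\Gamma : N\Gamma \to \Gamma$ gives $N\varepsilon_\Gamma \circ \eta_{N\Gamma} \defeq \eta_\Gamma \circ \varepsilon_\Gamma \defeq \id_{N\Gamma}$.

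Next, I invoke the section-retraction law at the object $N\Gamma$, which says $\eta_{N\Gamma} \circ \varepsilon_{N\Gamma} \defeq \id_{NN\Gamma}$. Combining this with the first auxiliary equation, I compute
\begin{align*}
N\eta_\Gamma
&\defeq \eta_{N\Gamma} \circ \varepsilon_{N\Gamma} \circ N\eta_\Gamma
\defeq \eta_{N\Gamma} \circ \id_{N\Gamma}
\defeq \eta_{N\Gamma}.
\end{align*}
Analogously, using the second auxiliary equation together with the splitting at $N\Gamma$,
\begin{align*}
N\varepsilon_\Gamma
&\defeq N\varepsilon_\Gamma \circ \eta_{N\Gamma} \circ \varepsilon_{N\Gamma}
\defeq \id_{N\Gamma} \circ \varepsilon_{N\Gamma}
\defeq \varepsilon_{N\Gamma}.
\end{align*}

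There is no genuine obstacle here: the whole argument is routine diagram chasing once one notices that $\eta \circ \varepsilon$ at $N\Gamma$ gives an inverse on one side of $N\eta_\Gamma$ (resp.\ $N\varepsilon_\Gamma$) while the naturality square gives an inverse on the other side, forcing the two components to coincide. The only subtle point is being careful about the direction of the section-retraction equation, since $N$ is only a weak endomorphism and one might worry about needing functoriality equalities; but $N$ is applied here only to a single morphism at a time, so strict functoriality is not used.
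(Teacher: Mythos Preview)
Your proof is correct. It is a minor variation of the paper's argument rather than a genuinely different route: both are short naturality computations using the section--retraction law. The paper's version is marginally more direct, applying naturality of $\eta$ at the morphism $\eta_\Gamma$ to get $N\eta_\Gamma \circ \eta_\Gamma \defeq \eta_{N\Gamma} \circ \eta_\Gamma$, and then simply right-cancelling $\eta_\Gamma \circ \varepsilon_\Gamma \defeq \id_{N\Gamma}$ on both sides; this needs the splitting only at $\Gamma$, not at $N\Gamma$. Your argument instead uses naturality of $\varepsilon$ at $\eta_\Gamma$ to get a left inverse, and then invokes the splitting at $N\Gamma$ to finish, which is one extra ingredient but equally valid. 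Your closing remark about ``strict functoriality'' is slightly off: $N$ is a genuine functor on the underlying category (only the CwF structure is preserved weakly), so functoriality is not in question here.
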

\begin{proof}
  Naturality gives that
  \[ N\eta_\Gamma \defeq
    N\eta_\Gamma \circ \eta_\Gamma \circ \varepsilon_\Gamma \defeq
    \eta_{N\Gamma} \circ \eta_\Gamma \circ \varepsilon_\Gamma \defeq
    \eta_{N\Gamma} \]
  The counit is similar.
\end{proof}

\begin{proposition}
  $N$ is an idempotent monad and comonad, with multiplication and
  comultiplication given by
  \begin{align*}
    \mu_\Gamma &:\defeq N\varepsilon_{\Gamma} \\
    \delta_\Gamma &:\defeq N\eta_{\Gamma} \\
  \end{align*}
\end{proposition}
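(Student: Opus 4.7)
The plan is to reduce every monad/comonad law to a single naturality square combined with the section-retraction assumption $\eta \circ \varepsilon \defeq \id_{N-}$, and to use the preceding lemma identifying $N\eta_\Gamma \defeq \eta_{N\Gamma}$ and $N\varepsilon_\Gamma \defeq \varepsilon_{N\Gamma}$ to rewrite whiskerings into components at $N\Gamma$. In particular, the unit laws for the monad reduce, via the lemma, to the naturality square for $\eta$ at the morphism $\varepsilon_\Gamma : N\Gamma \to \Gamma$, which gives $N\varepsilon_\Gamma \circ \eta_{N\Gamma} = \eta_\Gamma \circ \varepsilon_\Gamma = \id_{N\Gamma}$; similarly the counit laws for the comonad reduce to the naturality square for $\varepsilon$ at $\eta_\Gamma$, yielding $\varepsilon_{N\Gamma} \circ N\eta_\Gamma = \eta_\Gamma \circ \varepsilon_\Gamma = \id_{N\Gamma}$.

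For associativity of $\mu$, I would expand both sides as $N$ applied to a composite and then apply naturality of $\varepsilon$ to $\varepsilon_\Gamma$: the equation $\varepsilon_\Gamma \circ \varepsilon_{N\Gamma} = \varepsilon_\Gamma \circ N\varepsilon_\Gamma$ makes the two sides definitionally equal after applying $N$. Coassociativity of $\delta$ is completely dual, using naturality of $\eta$ at $\eta_\Gamma$. No further cases are required, since each of the four triangle and square identities reduces to one of these two basic moves.

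For idempotency, I would exhibit $\mu_\Gamma$ and $\delta_\Gamma$ as mutually inverse. On one side, $\mu_\Gamma \circ \delta_\Gamma \defeq N\varepsilon_\Gamma \circ N\eta_\Gamma$, and rewriting $N\eta_\Gamma$ as $\eta_{N\Gamma}$ lets us apply the naturality square for $\eta$ above to obtain $\id_{N\Gamma}$. On the other side, $\delta_\Gamma \circ \mu_\Gamma$ rewrites to $\eta_{N\Gamma} \circ \varepsilon_{N\Gamma}$, which is $\id_{NN\Gamma}$ directly by the section-retraction assumption applied at $N\Gamma$. Hence $\mu$ is an isomorphism (with inverse $\delta$), which is the standard characterisation of an idempotent monad, and dually for the comonad.

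The main obstacle is purely bookkeeping: keeping straight which direction of the roundtrip identity one has (only $\eta \circ \varepsilon = \id$ holds strictly, not $\varepsilon \circ \eta = \id$), and ensuring that each invocation of it is at an object of the form $N\Gamma$ rather than a general $\Gamma$. The preceding lemma does essentially all of the real work, so the proof will be a short list of definitional rewrites citing the lemma, naturality, and section-retraction.
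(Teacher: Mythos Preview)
Your proposal is correct and follows essentially the same approach as the paper: both reduce all the monad/comonad laws to naturality of $\eta$ and $\varepsilon$ together with the section--retraction identity $\eta \circ \varepsilon = \id_{N-}$, mediated by the preceding lemma $N\eta_\Gamma = \eta_{N\Gamma}$ and $N\varepsilon_\Gamma = \varepsilon_{N\Gamma}$. The only cosmetic difference is that for associativity the paper directly applies the lemma to obtain $N\mu_\Gamma = \mu_{N\Gamma}$, while you phrase the same step as naturality of $\varepsilon$ at $\varepsilon_\Gamma$; these are equivalent moves.
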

\begin{proof}
  The associativity law follows because
  \[ N\mu_\Gamma \defeq NN\varepsilon_{\Gamma} \defeq
    N\varepsilon_{N\Gamma}\defeq \mu_{N\Gamma}. \]
  Unit laws follow by naturality:
  \[ \mu_\Gamma \circ \eta_{N\Gamma} \defeq N\varepsilon_{\Gamma}
    \circ \eta_{N\Gamma} \defeq \eta_{\Gamma} \circ
    \varepsilon_{\Gamma} \defeq \id_{N\Gamma} \] with the other
  holding because $\eta_{N\Gamma} \defeq N\eta_{\Gamma}$. For
  idempotence, we have just seen that $\eta_{N\Gamma}$ is a right
  inverse to $\mu_\Gamma$, and the other direction follows by assumption:
  \[ \eta_{N\Gamma} \circ \mu_\Gamma \defeq \eta_{N\Gamma} \circ
    N\varepsilon_{\Gamma} \defeq \eta_{N\Gamma} \circ
    \varepsilon_{N\Gamma} \defeq \id_{NN\Gamma} \]

  The comonad structure follows by symmetry.
\end{proof}

\begin{remark}
A CwB is also a `CwF+A'~\cite[Definition 16]{drats}, where the left and
right adjoints are both $N$. By~\cite[Lemma 17]{drats}, any CwB is also
a category with a `dependent right adjoint'~\cite[Definition
  2]{drats}. The action of this `dependent right adjoint' on types is
not quite the action of $N$ coming from the weak CwF morphism, instead
for $\Gamma \in \C$ and $A \in \Fam{N \Gamma}$ the `right adjoint type'
$ N_\Gamma A \in \Fam{\Gamma}$  is given by
\[ N_\Gamma A :\defeq (N A)[N\eta_\Gamma \circ \eta_\Gamma]. \] Note
that $\eta$ as we have defined it is the unit of $N$ as a monad, so
that $N\eta_\Gamma \circ \eta_\Gamma$, is the unit of the $N \dashv N$
adjunction.  There is then a bijection of elements
\[ \Elt{N \Gamma}{A} \cong \Elt{\Gamma}{N_\Gamma A}. \] We could base
our interpretation of the type theory around this bijection, but it
does not match nicely with our judgemental structure: our operation
applying $N$ to an entire context is an admissible one $\zc{\Gamma}$,
which is defined in terms of a special context extension, so we would
need to define and use an interpretation of the admissible rule.
%\begin{lemma}
%$N_\Gamma A$ is stable under substitution: \[(N_\Gamma A)[\theta] \defeq N_\Delta (A[N\theta])\]
%\end{lemma}
%\begin{proof}
%\mvrnote{todo}
%\end{proof}
%\begin{proposition}
%For every $\Gamma \in \C$ and $A \in \Fam{N \Gamma}$, there is a bijection \[ \Elt{N \Gamma}{A} \cong \Elt{\Gamma}{N_\Gamma A} \] defined by
%\begin{align*}
%a \in \Elt{N \Gamma}{A} \mapsto \transp{a} &:= (N a)[N\eta_\Gamma \circ \eta_\Gamma] \\
%b \in \Elt{\Gamma}{N_\Gamma A} \mapsto \transp{b} &:=  \gen_{A}[\varepsilon_{N\Gamma.A} \circ \nu_{N\Gamma, A} \circ (N\eta_\Gamma \circ \eta_\Gamma, b)]
%\end{align*}
%and this bijection is stable under substitution:
%\begin{align*}
%\transp{a}[\theta] \defeq \transp{a[N\theta]}
%\end{align*}
%\end{proposition}
%\begin{proof}
%\mvrnote{todo}
%\end{proof}
\end{remark}

% \begin{definition}
%   \begin{itemize}
%   \item For $A \in \Fam{\Gamma}$, define
%     $\natural' A \in \Fam{\Gamma}$ by
%     $\natural' A :\defeq (NA)[\eta_{\den{\Gamma}}]$
%   \item For $a \in \Elt{\Gamma}{A}$, define
%     $a^{\natural'} \in \Fam{\Gamma}{\natural' A}$ by
%     $a^{\natural'} :\defeq (Na)[\eta_{\den{\Gamma}}]$
%   \item For $n \in \Elt{\Gamma}{\natural' A}$, define
%     $n_{\natural'} \in \Fam{\Gamma}{A[\varepsilon_{\Gamma} \circ
%       \eta_{\Gamma}]}$ by
%     \[ n_{\natural'} :\defeq \gen_A[\varepsilon_{\Gamma.A} \circ
%       \nu_{\Gamma, A} \circ \pair{\eta_{\Gamma} \circ
%         \projsub_{A}}{n}] \]
%   \end{itemize}
% \end{definition}

In our interpretation, we collapse the distinction between modal
variables and variables of type $\natural A$, so that `modal context
extension' by $A$ becomes ordinary context extension by $\natural A$.
The \rulen{var-zero} rule then becomes an application of the semantic
counit map `$\natural A \to A$', or just \rulen{$\natural$-elim}
applied to an ordinary variable, which we capture in the following
definition.

\begin{definition}
If $A \in \Fam{\Gamma}$, let $\zc{\gen}_A \in \Elt{\Gamma.(NA)[\eta_\Gamma]}{A[\varepsilon_\Gamma \circ \eta_\Gamma][\projsub_{(NA)[\eta_\Gamma]}]}$ be defined by
\begin{align*}
\zc{\gen}_A :\defeq \gen_A[\varepsilon_{\Gamma.A} \circ \nu_{\Gamma, A} \circ \pair{\eta_{\Gamma} \circ \projsub_{(NA)[\eta_\Gamma]}}{\gen_{(NA)[\eta_{\Gamma}]}}]
\end{align*}
where we have calculated the type to be
\begin{align*}
A[\projsub_A][\varepsilon_{\Gamma.A} \circ \nu_{\Gamma, A} \circ \pair{\eta_{\Gamma} \circ \projsub_{(NA)[\eta_\Gamma]}}{\gen_{(NA)[\eta_{\Gamma}]}}]
&\defeq A[\varepsilon_{\Gamma} \circ N(\projsub_A) \circ \nu_{\Gamma, A} \circ \pair{\eta_{\Gamma} \circ \projsub_{(NA)[\eta_\Gamma]}}{\gen_{(NA)[\eta_{\Gamma}]}}] \\
&\defeq A[\varepsilon_{\Gamma} \circ \projsub_{NA} \circ \pair{\eta_{\Gamma} \circ \projsub_{(NA)[\eta_\Gamma]}}{\gen_{(NA)[\eta_{\Gamma}]}}] \\
&\defeq A[\varepsilon_{\Gamma} \circ \eta_{\Gamma} \circ \projsub_{(NA)[\eta_\Gamma]}]
\end{align*}
\end{definition}

\begin{lemma}\label{lem:mod-var-sub}
The counit term satisfies the following equations:
\begin{align}
\zc{\gen}_A[\pair{\id_\Gamma}{(Nn)[\eta_\Gamma]}] &\defeq n[\varepsilon_{\Gamma} \circ \eta_{\Gamma}]\label{eq:zq-beta} \\
(N\zc{\gen}_A)[\eta_{\Gamma.(NA)[\eta_\Gamma]}] &\defeq \gen_{(NA)[\eta_{\Gamma}]}\label{eq:zq-eta}\\
\zc{\gen}_A[\varepsilon_{\Gamma.(NA)[\eta_\Gamma]} \circ \eta_{\Gamma.(NA)[\eta_\Gamma]}] &\defeq \zc{\gen}_A\label{eq:zq-idem} \\
\zc{\gen}_A[\pair{\theta \circ \projsub_{(NA[\theta])[\eta]}}{\gen_{(NA[\theta])[\eta]}}] &\defeq \zc{\gen}_{A[\theta]}\label{eq:zq-sub}
\end{align}
\end{lemma}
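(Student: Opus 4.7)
The plan is to prove each of the four equations by direct calculation: unfold $\zc{\gen}_A \defeq \gen_A[\tau]$, where $\tau := \varepsilon_{\Gamma.A} \circ \nu_{\Gamma,A} \circ \pair{\eta_\Gamma \circ \projsub_{(NA)[\eta_\Gamma]}}{\gen_{(NA)[\eta_\Gamma]}}$, manipulate the resulting substitution morphism into the desired form, and then use the CwF axiom $\gen_A[\pair{\theta}{a}] = a$ to conclude. The essential tools throughout are (i) the CwF pairing and reindexing axioms, (ii) naturality of $\eta$ and $\varepsilon$, (iii) the bireflection identity $\eta_\Gamma \circ \varepsilon_\Gamma = \id_{N\Gamma}$, and (iv) the weak CwF morphism equations \eqref{eq:proj-nu}, \eqref{eq:gen-nu}, \eqref{eq:nu-sub} for $N$.

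For \eqref{eq:zq-beta}, push the substitution into the innermost pair to obtain $\pair{\eta_\Gamma}{(Nn)[\eta_\Gamma]}$, which factors as $\pair{\id_{N\Gamma}}{Nn} \circ \eta_\Gamma$; then \eqref{eq:nu-sub} rewrites $\nu_{\Gamma,A} \circ \pair{\id_{N\Gamma}}{Nn}$ as $N\pair{\id_\Gamma}{n}$, and naturality (first of $\eta$, then of $\varepsilon \circ \eta$ viewed as a natural transformation $\id \Rightarrow \id$) moves the roundtrip past $\pair{\id_\Gamma}{n}$, leaving $\gen_A[\pair{\id_\Gamma}{n}][\varepsilon_\Gamma \circ \eta_\Gamma] = n[\varepsilon_\Gamma \circ \eta_\Gamma]$. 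For \eqref{eq:zq-eta}, use $N\zc{\gen}_A = (N\gen_A)[N\tau]$ and compute $N\tau \circ \eta_{\Gamma.(NA)[\eta_\Gamma]} = \eta_{\Gamma.A} \circ \tau$ by naturality of $\eta$; the bireflection identity collapses the prefix $\eta_{\Gamma.A} \circ \varepsilon_{\Gamma.A}$ inside $\tau$, leaving $\nu_{\Gamma,A} \circ \pair{\eta_\Gamma \circ \projsub}{\gen}$, and then $(N\gen_A)[\nu_{\Gamma,A}] = \gen_{NA}$ by \eqref{eq:gen-nu}, followed by the CwF axiom, recovers $\gen_{(NA)[\eta_\Gamma]}$.

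For \eqref{eq:zq-idem}, write $\tau = \varepsilon_{\Gamma.A} \circ \sigma'$ with $\sigma' := \nu_{\Gamma,A} \circ \pair{\eta_\Gamma \circ \projsub}{\gen}$ and show $\sigma' \circ (\varepsilon_\Delta \circ \eta_\Delta) = \sigma'$: naturality of $\varepsilon$ on $\sigma'$ gives $\sigma' \circ \varepsilon_\Delta = N\tau$, then naturality of $\eta$ gives $N\tau \circ \eta_\Delta = \eta_{\Gamma.A} \circ \tau = \eta_{\Gamma.A} \circ \varepsilon_{\Gamma.A} \circ \sigma'$, which is $\sigma'$ by the bireflection identity. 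Applying $\varepsilon_{\Gamma.A}$ and $\gen_A$ then preserves the equality. For \eqref{eq:zq-sub}, compose the inner pair with the reindexing pair and simplify using the CwF axioms and the fact that $\gen_{(NA)[\eta_\Gamma]}$ reindexed along the standard map recovers $\gen_{(NA[\theta])[\eta_\Delta]}$ (using $\eta_\Gamma \circ \theta = N\theta \circ \eta_\Delta$ by naturality); a further application of \eqref{eq:nu-sub} and naturality of $\varepsilon$ on the morphism $\pair{\theta \circ \projsub_{A[\theta]}}{\gen_{A[\theta]}} : \Delta.A[\theta] \to \Gamma.A$ rearranges the left-hand side into the unfolding of $\zc{\gen}_{A[\theta]}$.

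The main obstacle will be \eqref{eq:zq-sub}: several layers of pairing must be recomposed in just the right order to expose a copy of $\nu_{\Delta, A[\theta]}$, so that \eqref{eq:nu-sub} can be applied to transport the reindexing by $N\theta$ into an application of $N$ to the standard reindexing morphism for $A$. The naturality squares involved tend to commute only after rewriting both $\eta$-components and $\varepsilon$-components in the right order, and keeping track of the families to ensure everything type-checks is the most delicate part of the proof.
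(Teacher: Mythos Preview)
Your approach is correct, and for equations \eqref{eq:zq-beta}, \eqref{eq:zq-eta}, and \eqref{eq:zq-idem} it is essentially the paper's proof: both unfold $\zc{\gen}_A$ and chase the substitution using naturality of $\eta$ and $\varepsilon$, the bireflection identity $\eta\circ\varepsilon=\id$, and the weak-CwF-morphism equations \eqref{eq:proj-nu}--\eqref{eq:nu-sub}.

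The genuine divergence is at \eqref{eq:zq-sub}. You propose a direct computation: unfold both sides, use $\eta_\Gamma\circ\theta = N\theta\circ\eta_\Delta$, and rearrange via \eqref{eq:nu-sub} and naturality of $\varepsilon$ applied to the standard reindexing morphism $\pair{\theta\circ\projsub_{A[\theta]}}{\gen_{A[\theta]}}:\Delta.A[\theta]\to\Gamma.A$. This works, and you correctly anticipate that exposing a copy of $\nu_{\Delta,A[\theta]}$ so that \eqref{eq:nu-sub} applies is the fiddly part. The paper instead bootstraps off the three equations already proved: it rewrites the second component $\gen_{(NA[\theta])[\eta_\Delta]}$ as $(N\zc{\gen}_{A[\theta]})[\eta]$ via \eqref{eq:zq-eta}, then applies (the evident generalization of) \eqref{eq:zq-beta} where the first component is $\theta\circ\projsub$ rather than $\id$, obtaining $\zc{\gen}_{A[\theta]}[\varepsilon\circ\eta]$, and finishes with \eqref{eq:zq-idem}. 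Your route is self-contained and makes all the pairing bookkeeping explicit; the paper's route is three lines and sidesteps exactly the obstacle you flag, at the price of silently using \eqref{eq:zq-beta} in a form slightly more general than stated (the same proof goes through with any $\sigma$ in place of $\id_\Gamma$).
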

Thinking of $\zc{\gen}_A$ as the interpretation of `$x_\natural$',
from top to bottom, this is a $\beta$-rule
`$x_\natural[a^\natural/x] \defeq \za$', an $\eta$-rule
`$(x_\natural)^\natural \defeq x$' idempotence
`$\zc{x_\natural} \defeq x_\natural$', and stability under
substitution for other variables
`$x_\natural[\theta] \defeq x_\natural$' if $\theta$ misses $x$.
\begin{proof}
The $\beta$-rule:
\begin{align*}
\zc{\gen}_A[\pair{\id_\Gamma}{(Nn)[\eta_\Gamma]}]
&\defeq \gen_A[\varepsilon_{\Gamma.A} \circ \nu_{\Gamma, A} \circ \pair{\eta_{\Gamma} \circ \projsub_{(NA)[\eta_\Gamma]}}{\gen_{(NA)[\eta_{\Gamma}]}}][\pair{\id_\Gamma}{(Nn)[\eta_\Gamma]}] \\
&\qquad \text{(Substitution into a substitution extension)} \\
&\defeq \gen_A[\varepsilon_{\Gamma.A} \circ \nu_{\Gamma, A} \circ \pair{\eta_{\Gamma} \circ \projsub_{(NA)[\eta_\Gamma]} \circ \pair{\id_\Gamma}{(Nn)[\eta_\Gamma]}}{\gen_{(NA)[\eta_{\Gamma}]}[\pair{\id_\Gamma}{(Nn)[\eta_\Gamma]}]}] \\
&\qquad \text{($\beta$-rules for substitution extension)} \\
&\defeq \gen_A[\varepsilon_{\Gamma.A} \circ \nu_{\Gamma, A} \circ \pair{\eta_{\Gamma}}{(Nn)[\eta_\Gamma]}] \\
&\qquad \text{(Substitution into a substitution extension)} \\
&\defeq \gen_A[\varepsilon_{\Gamma.A} \circ \nu_{\Gamma, A} \circ \pair{N\id_\Gamma}{Nn} \circ \eta_{\Gamma}] \\
&\qquad \text{(Equation~\ref{eq:nu-sub})} \\
&\defeq \gen_A[\varepsilon_{\Gamma.A} \circ N\pair{\id_\Gamma}{n} \circ \eta_{\Gamma}] \\
&\qquad \text{(Naturality of $\varepsilon$)} \\
&\defeq \gen_A[\pair{\id_\Gamma}{n} \circ \varepsilon_{\Gamma} \circ \eta_{\Gamma}] \\
&\qquad \text{($\beta$ for variable rule)} \\
&\defeq n[\varepsilon_{\Gamma} \circ \eta_{\Gamma}]
 \end{align*}
The $\eta$-rule:
\begin{align*}
(N\zc{\gen}_A)[\eta_{\Gamma.(NA)[\eta_\Gamma]}]
&\defeq (N(\gen_A[\varepsilon_{\Gamma.A} \circ \nu_{\Gamma, A} \circ \pair{\eta_{\Gamma} \circ \projsub_{(NA)[\eta_\Gamma]}}{\gen_{(NA)[\eta_{\Gamma}]}}]))[\eta_{\Gamma.(NA)[\eta_\Gamma]}] \\
&\qquad \text{($N$ commutes with substitution into terms)} \\
&\defeq (N\gen_A)[N(\varepsilon_{\Gamma.A} \circ \nu_{\Gamma, A} \circ \pair{\eta_{\Gamma} \circ \projsub_{(NA)[\eta_\Gamma]}}{\gen_{(NA)[\eta_{\Gamma}]}}) \circ \eta_{\Gamma.(NA)[\eta_\Gamma]}] \\
&\qquad \text{(Naturality of $\eta$)} \\
&\defeq (N\gen_A)[\eta_{\Gamma.A} \circ \varepsilon_{\Gamma.A} \circ \nu_{\Gamma, A} \circ \pair{\eta_{\Gamma} \circ \projsub_{(NA)[\eta_\Gamma]}}{\gen_{(NA)[\eta_{\Gamma}]}}] \\
&\qquad \text{($\eta \circ \varepsilon$ is the identity)} \\
&\defeq (N\gen_A)[\nu_{\Gamma, A} \circ \pair{\eta_{\Gamma} \circ \projsub_{(NA)[\eta_\Gamma]}}{\gen_{(NA)[\eta_{\Gamma}]}}] \\
&\qquad \text{(Equation~\ref{eq:gen-nu})} \\
&\defeq \gen_{NA}[\pair{\eta_{\Gamma} \circ \projsub_{(NA)[\eta_\Gamma]}}{\gen_{(NA)[\eta_{\Gamma}]}}] \\
&\qquad \text{($\beta$-rule for variables)} \\
&\defeq \gen_{(NA)[\eta_{\Gamma}]}
\end{align*}
Idempotence:
\begin{align*}
&\zc{\gen}_A[\varepsilon_{\Gamma.(NA)[\eta_\Gamma]} \circ \eta_{\Gamma.(NA)[\eta_\Gamma]}] \\
&\qquad \text{(Definition of $\zc{\gen}$)} \\
&\defeq \gen_A[\varepsilon_{\Gamma.A} \circ \nu_{\Gamma, A} \circ \pair{\eta_{\Gamma} \circ \projsub_{(NA)[\eta_\Gamma]}}{\gen_{(NA)[\eta_{\Gamma}]}}][\varepsilon_{\Gamma.(NA)[\eta_\Gamma]} \circ \eta_{\Gamma.(NA)[\eta_\Gamma]}] \\
&\qquad \text{(Naturality of $\varepsilon$ and $\eta$)} \\
&\defeq \gen_A[\varepsilon_{\Gamma.A} \circ \eta_{\Gamma.A} \circ \varepsilon_{\Gamma.A} \circ \nu_{\Gamma, A} \circ \pair{\eta_{\Gamma} \circ \projsub_{(NA)[\eta_\Gamma]}}{\gen_{(NA)[\eta_{\Gamma}]}}] \\
&\qquad \text{($\eta \circ \varepsilon$ is the identity)} \\
&\defeq \gen_A[\varepsilon_{\Gamma.A} \circ \nu_{\Gamma, A} \circ \pair{\eta_{\Gamma} \circ \projsub_{(NA)[\eta_\Gamma]}}{\gen_{(NA)[\eta_{\Gamma}]}}] \\
&\qquad \text{(Definition of $\zc{\gen}$)} \\
&\defeq \zc{\gen}_A
\end{align*}
Stability under substitution:
\begin{align*}
&\zc{\gen}_A[\pair{\theta \circ \projsub_{(NA[\theta])[\eta]}}{\gen_{(NA[\theta])[\eta]}}] \\
&\qquad \text{(Equation~\ref{eq:zq-eta})} \\
&\defeq \zc{\gen}_A[\pair{\theta \circ \projsub_{(NA[\theta])[\eta]}}{(N\zc{\gen}_{A[\theta]})[\eta_{\Gamma.(NA[\theta])[\eta_\Gamma]}]}] \\
&\qquad \text{(Equation~\ref{eq:zq-beta})} \\
&\defeq \zc{\gen}_{A[\theta]}[\varepsilon_{\Delta.(NA[\theta])[\eta_\Delta]} \circ \eta_{\Delta.(NA[\theta])[\eta_\Delta]}] \\
&\qquad \text{(Equation~\ref{eq:zq-idem})} \\
&\defeq \zc{\gen}_{A[\theta]}
\end{align*}
\end{proof}

We record an easy Lemma stating that the non-trivial roundtrip has no effect on terms of $\natural A$.
\begin{lemma}\label{lem:n-rt-triv}
If $n \in \Elt{\Gamma}{(N A)[\eta_\Gamma]}$ then $n \defeq n[\varepsilon_\Gamma \circ \eta_\Gamma]$
\end{lemma}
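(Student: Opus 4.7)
The plan is to show that any element $n \in \Elt{\Gamma}{(NA)[\eta_\Gamma]}$ factors through the unit in the sense that $n \defeq (Nm)[\eta_\Gamma]$ for some $m$, and then to conclude by the section-retraction property $\eta \circ \varepsilon \defeq \id_{N-}$. This is essentially the $\natural$-$\eta$ rule at the semantic level.

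First, I would use the $\beta$-rule for variables to write $n \defeq \gen_{(NA)[\eta_\Gamma]}[\pair{\id_\Gamma}{n}]$. Then equation~\ref{eq:zq-eta} of Lemma~\ref{lem:mod-var-sub} rewrites the generic element as $(N\zc{\gen}_A)[\eta_{\Gamma.(NA)[\eta_\Gamma]}]$, so
\[ n \defeq (N\zc{\gen}_A)[\eta_{\Gamma.(NA)[\eta_\Gamma]} \circ \pair{\id_\Gamma}{n}]. \]
Naturality of $\eta$ at the morphism $\pair{\id_\Gamma}{n} : \Gamma \to \Gamma.(NA)[\eta_\Gamma]$ then gives
\[ \eta_{\Gamma.(NA)[\eta_\Gamma]} \circ \pair{\id_\Gamma}{n} \defeq N\pair{\id_\Gamma}{n} \circ \eta_\Gamma, \]
and using that $N$ commutes with substitution into terms, this displays $n$ as $(Nm)[\eta_\Gamma]$ for $m :\defeq \zc{\gen}_A[\pair{\id_\Gamma}{n}]$.

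Having this, the conclusion is immediate from the bireflection axiom:
\[ n[\varepsilon_\Gamma \circ \eta_\Gamma] \defeq (Nm)[\eta_\Gamma \circ \varepsilon_\Gamma \circ \eta_\Gamma] \defeq (Nm)[\eta_\Gamma] \defeq n, \]
since $\eta_\Gamma \circ \varepsilon_\Gamma \circ \eta_\Gamma \defeq \eta_\Gamma$ by the assumption $\eta \circ \varepsilon \defeq \id_{N-}$.

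There is no real obstacle here; the only thing to keep track of is that the types line up at each step (which works because $\eta_\Gamma \circ \varepsilon_\Gamma \circ \eta_\Gamma \defeq \eta_\Gamma$ ensures both $n$ and $n[\varepsilon_\Gamma \circ \eta_\Gamma]$ live in $\Elt{\Gamma}{(NA)[\eta_\Gamma]}$). The calculation is essentially a semantic reading of the informal proof of Proposition~\ref{prop:zero-in-natural}, with equation~\ref{eq:zq-eta} playing the role of the $\natural$-$\eta$ rule.
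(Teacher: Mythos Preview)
Your proof is correct and follows essentially the same path as the paper's: both start by rewriting $n$ via the $\beta$-rule for the generic element and then applying equation~\eqref{eq:zq-eta}. The only difference is in the second half: the paper inserts the roundtrip using the idempotence equation~\eqref{eq:zq-idem} for $\zc{\gen}_A$ and then pushes it outward by naturality, whereas you use naturality of $\eta$ to exhibit $n$ as $(Nm)[\eta_\Gamma]$ and then cancel $\eta_\Gamma \circ \varepsilon_\Gamma$ directly via the section--retraction axiom. Since equation~\eqref{eq:zq-idem} is itself derived from that axiom, your route is marginally more direct, but the two arguments are morally the same computation reorganised.
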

\begin{proof}
\begin{align*}
n
&\defeq \gen_{(N A)[\eta_\Gamma]}[\pair{\id_\Gamma}{n}] \\
&\defeq (N\zc{\gen}_A)[\eta_{\Gamma.(NA)[\eta_\Gamma]}] [\pair{\id_\Gamma}{n}] && \text{(Equation~\ref{eq:zq-eta})} \\
&\defeq (N(\zc{\gen}_A[\varepsilon_{\Gamma.(NA)[\eta_\Gamma]} \circ \eta_{\Gamma.(NA)[\eta_\Gamma]}]))[\eta_{\Gamma.(NA)[\eta_\Gamma]}] [\pair{\id_\Gamma}{n}] && \text{(Equation~\ref{eq:zq-idem})} \\
&\defeq (N\zc{\gen}_A)[\eta_{\Gamma.(NA)[\eta_\Gamma]}][\pair{\id_\Gamma}{n}][\varepsilon_{\Gamma} \circ \eta_{\Gamma}] && \text{(Naturality)} \\
&\defeq \gen_{(N A)[\eta_\Gamma]}[\pair{\id_\Gamma}{n}][\varepsilon_{\Gamma} \circ \eta_{\Gamma}] && \text{(Equation~\ref{eq:zq-eta})} \\
&\defeq n[\varepsilon_{\Gamma} \circ \eta_{\Gamma}]
\end{align*}
\end{proof}

\subsection{Interpretation of Syntax}

%% \begin{lemma}
%% The admissible rule \rulen{pre-unit} preserves the size of a derivation.
%% \end{lemma}
%% \begin{proof}
%%   The only changes to a derivation this operation makes is to replace
%%   the variable rule used at each site (replacing \rulen{var-zero}
%%   $\mapsto$ \rulen{var-roundtrip}), which does not change the size of
%%   the derivation.
%% \end{proof}

\begin{figure}
\begin{align*}
\den{\cdot} &:\defeq 1 \\
\den{\Gamma, x : A} &:\defeq \co{\den{\Gamma}}{\den{\Gamma \yields A}} \\
\den{\Gamma, \zx :: A} &:\defeq \co{\den{\Gamma}}{(N\den{\Gamma \yields A})[\eta_{\den{\Gamma}}]} && \text{if $A \defeq_\alpha \zA$} \\
\den{\Gamma,x:A,x_1:_?A_1,\ldots,x_n:_?A_n \yields x} &:\defeq \gen_{\den{\Gamma \yields A}}[\projsub_?{}_{\den{A_1}}\circ\cdots\circ\projsub_?{}_{\den{A_n}}] \\
\den{\Gamma,x:A,x_1:_?A_1,\ldots,x_n:_?A_n \yields \zx} &:\defeq \gen_{\den{\Gamma \yields A}}[\varepsilon_{\den{\Gamma, x : A}} \circ \eta_{\den{\Gamma, x : A}}][\projsub_?{}_{\den{A_1}}\circ\cdots\circ\projsub_?{}_{\den{A_n}}] \\
\den{\Gamma,\zx::A,x_1:_?A_1,\ldots,x_n:_?A_n \yields \zx} &:\defeq \zc{\gen}_{\den{\Gamma \yields A}}[\projsub_?{}_{\den{A_1}}\circ\cdots\circ\projsub_?{}_{\den{A_n}}] \\
\den{\Gamma \yields \natural A} &:\defeq (N\den{\Gamma \yields A})[\eta_{\den{\Gamma}}] \\
\den{\Gamma \yields a^\natural_A} &:\defeq (N_{\den{\Gamma},\den{\Gamma \yields A}}\den{\Gamma \yields a})[\eta_{\den{\Gamma}}] \\
\den{\Gamma \yields n_\natural^A} &:\defeq \zc{\gen}_{\den{\Gamma \yields A}}[\pair{\id_{\den{\Gamma}}}{\den{\Gamma \yields n}}]\\
\den{\Gamma \yields \Pi x:A.B} &:\defeq \Pi(\den{\Gamma \yields A}, \den{\Gamma,x:A \yields B})\\
\den{\Gamma \yields \lambda x:A.b:B} &:\defeq \lambda_{\den{\Gamma},\den {\Gamma \yields A},\den {\Gamma,x:A \yields B}} (\den{\Gamma,x:A \yields b})\\
\den{\Gamma \yields f(a)_{A,x.B}} &:\defeq \mathrm{app}_{\den{\Gamma},\den{\Gamma \yields A},\den{\Gamma,x:A \yields B}} (\den{\Gamma \yields f}, \den{\Gamma \yields a} )
\end{align*}
where $x_1 :_? A_1$ denotes either $x_1 : A_1$ or $\zx_1 :: A_1$, and $\projsub_?{}_{\den{A_1}}$ denotes $\projsub_{\den{A_1}}$ or $\projsub_{(N\den{\Gamma \yields A_1})[\eta_\Gamma]}$ accordingly.
\caption{Partial Interpretation}
\label{fig:partial-interp}
\end{figure}

We now show that the rules of our type theory can be soundly interpreted
in any CwB.  The exact form of the induction we use for the partial
interpretation hews closer to~\cite[Section 3.5]{hofmann:interp}
than~\cite{drats}: we define the interpretation on fully-annotated raw
syntax as in Section~\ref{sec:admissible-rules}, and interpret raw terms
prior to knowing what family they will live in. The partial
interpretation function $\den{-}$
\begin{enumerate}
\item Maps $\Gamma \rawctx$ to objects of $\C$;
\item For $\Gamma \rawctx$ with $\den{\Gamma}$ defined, maps $dom(\Gamma) \yields A \rawterm$ to a family in $\Fam{\den{\Gamma}}$; and,
\item For $\Gamma \rawctx$ with $\den{\Gamma}$ defined, maps $dom(\Gamma) \yields a \rawterm$ to (pairs $T \in \Fam{\den{\Gamma}}$ and elements in $\Elt{\den{\Gamma}}{T}$).
\end{enumerate}
The partial interpretation of the syntax is defined by mutual induction
in Figure~\ref{fig:partial-interp}.  The induction metric is the size of
the context for part 1, the size of the type for part 2, and the size of
the term for part 3.
%% We don't actually need this any more! Since we assume [[G]] is defined.
%% This allows parts 2 and 3 to recursively interpret
%% the types in the context.
The soundness theorem will demonstrate that these assignments are in
fact total on well-typed contexts, types and terms, and that the
interpretation of a well-typed term $\Gamma \yields a : A$ is an element
of the correct family $\den{A} \in \Fam{\den{\Gamma}}$.  We now explain
some of the conventions and subtleties in this definition:
\begin{itemize}
\item
Our interpretation function follows the shape of the
`dull-variable-less' rules of
Figure~\ref{fig:rules-no-marked-context-extension}. This lets us avoid
unnecessary manoeuvring between the syntactic and semantic unit and
counit, instead only requiring the roundtrip to match up
(Lemma~\ref{lem:sem-roundtrip}). This choice requires using the
admissible \rulen{pre-unit} rules in the proof of totality of the
interpretation function, which in turn requires that proof to be by
induction on the \emph{size} of a derivation, the total number of rules
used in forming it.  This works because \rulen{pre-unit} preserves size
by Proposition~\ref{prop:preunit-preserves-size}.  For example, in the
rule for $\den{\Gamma, \zx :: A}$, we write $\den{\Gamma \yields A}$ on
the right hand side, which differs from the typing premise $\zc{\Gamma}
\yields A$.  This will be handled in the totality proof by the
admissible unzeroing rule \rulen{pre-unit}.  The same move also happens
for $\den{\Gamma \yields \natural A}$ and $\den{\Gamma \yields
  a^\natural : \natural A}$.

\item
  Following \cite{deboer20initiality}, we formally think of a partial
  function as a function to partial elements of the codomain,
  representing a partial element of $Z$ by a pair of a proposition
  $\phi$ and function $\phi \to Z$.

\item
  We write the action of $\den{-}$ on pairs $(\Gamma \rawctx,
  dom(\Gamma) \yields A \rawterm)$ as $\den{\Gamma \yields A}$ to make
  clear the separation between the context and the term, or just as
  $\den{A}$ if the context is clear, and similarly for terms.  We have
  to be careful with these abbreviations, though, because the semantics
  is ``intrinsically typed'', so semantic types in different semantic
  contexts are never equal (and similarly for terms).  For example, in
  syntax, the same raw type can satisfy $\Gamma \yields A \type$ and
  $\zc{\Gamma} \yields A \type$, but $\den{\Gamma \yields A} \in
  \C(\Gamma)$ and $\den{\zc{\Gamma} \yields A} \in \C(\zc{\Gamma})$
  (though there is an explicit unit operation taking one to the other).

\item
We make definedness of $\den{\Gamma}$ a precondition for the function of
$\den{\Gamma \yields A}$, so it only makes sense to write the latter
when we have established the former.  This follows the presupposition
modes of the judgements, where the context is assumed to be well-formed
but the type is implied to be well-formed.

\item
We adopt the convention that a right-hand side is defined only if the
interpretation of each subexpression in it is. I.e., we implicitly lift
semantic operations on total elements to partial elements, rather than
thinking of the semantic operations like $N$, $\lambda$ etc. as being
defined on partial elements.  In particular this means that when
$\den{\Gamma}$ is defined, implies $\den{\Gamma^{<x} \yields A}$ is
defined for each $x:A$ in $\Gamma$.

\item
On terms, the interpretation function is supposed to provide both a
family $T \in \Fam{\den{\Gamma}}$ and an element of that family. In the
defining equations above, we only provide the second component and leave
the first component to be inferred.
%% I think it's more consistent with the other cases to type check it
%% So in the \rulen{$\natural$-intro}
%% rule, the inductive case $\den{\Gamma \yields a}$ is an element of some
%% $T \in \Fam{\den{\Gamma}}$, and therefore
%% \[ \den{\Gamma \yields a^\natural_A} \defeq (N\den{\Gamma \yields a})[\eta_{\den{\Gamma}}] \]
%% is an element of $NT[\eta_{\Gamma}] \in \Fam{\den{\Gamma}}$.

\item
We adopt the convention that any semantic expression that does not
`type-check' is undefined. This is essential in the interpretation of
\rulen{$\natural$-elim}, for example, where we semantically substitute
$\den{\Gamma \yields n}$ into $\zc{\gen}_{\den{\Gamma \yields A}}$: a
priori we do not know that the family $T$ such that $\den{\Gamma \yields
  n} \in \Elt{\den{\Gamma}}{T}$ lines up with the family $(N\den{\Gamma
  \yields A})[\eta_{\den{\Gamma}}] \in \Fam{\den{\Gamma}}$.  Similarly,
for \rulen{$\natural$-intro}, because we have written
$N_{\den{\Gamma},\den{\Gamma \yields A}}$, the interpretation is defined
only if the family $T$ for which $\den{a} \in \C(\den{\Gamma} \yields T)$ is in fact
$\den{\Gamma \yields A}$.
\end{itemize}

%% thought about installing a convention but then decided it was clearer
%% to make it explicit
%% For the next several lemmas, we employ the convention that asserting an
%% equality $\defeq$ of semantic entities, one of which is known to be
%% defined, implicitly asserts that the other is also defined --- i.e. we
%% are asserting the equality of partial elements one of which is total,
%% which implies that the domain of definition of the other is also true.
%% The statements we use here is equivalent to those in
%% \cite{deboer20initiality}, because under the assumption that the
%% roundtrip/weakening/substitution is defined, the right-hand side is
%% defined iff the ``unsubstituted'' $\mathcal{J}$ is.

For the next several lemmas, which state that syntactic and semantic
roundtrip/weakening/substitution agree, we use an unpacked statement
that is equivalent to those used in \cite[Lemmas
  3.36,3.37]{hofmann:interp} and \cite[Lemma 4.2.1,Lemma 4.2.6]{deboer20initiality}.
\cite{hofmann:interp} uses Kleene equality (if one side is defined, then
so is the other, and they're equal), but also makes sufficient
assumptions such that one side is defined, so the Kleene equality
reduces to equality.  The statements in \cite{deboer20initiality} assume
that e.g. $\den{a}[\theta]$ is defined rather than assuming separately
that $\den{a}$ and $\theta$ are defined, but these are equivalent.
%% if x is defined then
%%   if x is defined then y is defined and x=y
%%   if y is defined then x is defined and x=y
%%iff
%% if x is defined then y is defined and x=y

%% DRATs is just wrong -- [[ G,x:A |- J]] defined does not imply [[
%% G |- J ]] defined, e.g. if J uses the variable

The following crucial Lemma relates the syntactic roundtrip operation to the semantic roundtrip maps available in any CwB.
% Note that the interpretation of \rulen{var-roundtrip} requires this Lemma in order to be well-typed.
\begin{lemma}\label{lem:sem-roundtrip}
  Suppose $\den{\Gamma, \Delta^{0\Gamma}}$ and $\den{\Gamma, \Delta}$
  are defined. Then there is a morphism
  $\Rt_{\Gamma; \Delta} : \den{\Gamma, \Delta^{0\Gamma}}
  \to\den{\Gamma, \Delta}$ such that whenever
  $\den{\Gamma, \Delta \yields \judge}$ is defined then
  $\den{\Gamma, \Delta^{0\Gamma} \yields \judge^{0\Gamma}}$ is defined and
  \[ \den{\Gamma, \Delta^{0\Gamma} \yields \judge^{0\Gamma}} \defeq \den{\Gamma, \Delta \yields \judge}[\Rt_{\Gamma; \Delta}] \]
  where $\judge$ denotes either a type or term.

  Moreover, $\Rt_{\Gamma;\cdot} :\defeq \varepsilon_{\den{\Gamma}} \circ
  \eta_{\den{\Gamma}}$, so in particular, if $\den{\Gamma}$ and
  $\den{\Gamma \yields \judge}$ are well-defined then $\den{\Gamma
    \yields \zc{\judge}}$ is defined and
  \[ \den{\Gamma \yields \zc{\judge}} \defeq \den{\Gamma \yields \judge}[\varepsilon_{\den{\Gamma}} \circ \eta_{\den{\Gamma}}] \]
\end{lemma}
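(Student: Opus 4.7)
The plan is to prove this by a nested induction: first constructing the morphism $\Rt_{\Gamma;\Delta}$ by induction on the telescope $\Delta$, and then proving the substitution equation by induction on the type or term derivation.

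\medskip

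For the construction of $\Rt_{\Gamma;\Delta}$, I would set $\Rt_{\Gamma;\cdot} :\defeq \varepsilon_{\den{\Gamma}} \circ \eta_{\den{\Gamma}}$ as indicated. For the extension case $\Delta, x : A$, given $\Rt_{\Gamma;\Delta} : \den{\Gamma, \Delta^{0\Gamma}} \to \den{\Gamma, \Delta}$, I would form
\[
\Rt_{\Gamma;\Delta, x:A} :\defeq \pair{\Rt_{\Gamma;\Delta} \circ \projsub_{\den{A^{0\Gamma}}}}{\gen_{\den{A^{0\Gamma}}}}
\]
which has the correct domain $\den{\Gamma, \Delta^{0\Gamma}, x:A^{0\Gamma}}$ once I have verified inductively that $\den{\Gamma,\Delta \yields A}[\Rt_{\Gamma;\Delta}] \defeq \den{\Gamma, \Delta^{0\Gamma} \yields A^{0\Gamma}}$, so that $\gen_{\den{A^{0\Gamma}}}$ lives in the family $\den{A}$ reindexed along the composite. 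For the marked extension case $\Delta, \zx :: A$, I recall that $(\Delta, \zx::A)^{0\Gamma} \defeq \Delta^{0\Gamma}, \zx::A$ (with $A$ unchanged) since the variable is already marked; the extension is then just pairing with the generic element of the $N$-family, reindexed by $\Rt_{\Gamma;\Delta}$.

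\medskip

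For the substitution equation, the proof proceeds by induction on the partial interpretation (by size of the raw type or term). The definedness side is straightforward once one checks that the operation on raw syntax $(-)^{0\Gamma}$ commutes with the structural operations used in the partial interpretation (which it does, by the raw-syntax lemmas already established). The equational side splits into cases. For the ordinary variable rule, the case for $x \in \Delta$ follows from the projection/generic equations $\gen[\pair{\theta\circ\projsub}{\gen}] = \gen$ together with the inductive hypothesis. The case for $x \in \Gamma$ with $x \notin \Delta$ is the crux: here $x^{0\Gamma} = \zx$, so one must check that the interpretation of $\zx$, which involves post-composition with $\varepsilon \circ \eta$, agrees with precomposition of the interpretation of $x$ with $\Rt_{\Gamma;\Delta}$; at the bottom this reduces to the base clause $\Rt_{\Gamma;\cdot} \defeq \varepsilon \circ \eta$. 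The rules for $\natural$-formation, $\natural$-introduction, and $\natural$-elimination all use naturality of $\eta$ and $\varepsilon$ together with the standard CwF morphism equations. The $\Pi$ rules and other non-modal rules are handled by the usual commutation of the CwF structure with reindexing.

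\medskip

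The main obstacle will be the case of a marked variable $\zx$ from $\Gamma$ being re-zeroed under a nontrivial $\Delta$: one must show that the extra copy of $\varepsilon \circ \eta$ introduced by the zeroing does not duplicate and mis-align under the telescope. The key algebraic input is Lemma~\ref{lem:n-rt-triv}, which says that the roundtrip acts trivially on terms already living in $N$-types, combined with the idempotence equation (\ref{eq:zq-idem}) for $\zc{\gen}$. These together ensure that however deeply nested the substitution is, iterating $\Rt$ collapses appropriately. Once this case works, the remaining cases are mechanical applications of naturality and the defining substitution equations for the CwF and the weak morphism $N$.
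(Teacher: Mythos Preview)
Your overall strategy---define $\Rt_{\Gamma;\Delta}$ by cases on $\Delta$ with base case $\varepsilon\circ\eta$, then verify the substitution equation by structural induction on the judgement---is the right shape, and your case analysis for variables and the $\natural$ rules is on target. But the organisation as a \emph{nested} induction (``first construct the morphism\ldots\ and then prove the equation'') does not go through, and the paper's proof is explicit about why. Two dependencies run in opposite directions. First, as you yourself note parenthetically, to type the pairing $\pair{\Rt_{\Gamma;\Delta}\circ\projsub}{\gen}$ in the case $\Delta,x{:}A$ you already need the equation $\den{A}[\Rt_{\Gamma;\Delta}]\defeq\den{A^{0\Gamma}}$; so the morphism at a longer telescope needs the equation at a shorter one. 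Second, and this is the direction you have not accounted for, the equation case for any binder (\rulen{$\Pi$-form}, \rulen{$\Pi$-intro}, etc.) at telescope $\Delta$ requires the morphism $\Rt_{\Gamma;\Delta,x:A}$ at a \emph{longer} telescope, since the bound variable is pushed into $\Delta$. These two dependencies together mean neither component can be completed before the other.

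The paper resolves this with a single well-founded metric covering both: for $\Rt_{\Gamma;\Delta}$ use $\mathrm{size}(\Gamma,\Delta)$, and for the equation at $\Gamma,\Delta\yields\judge$ use $\mathrm{size}(\Gamma,\Delta)+\mathrm{size}(\judge)$, with context size counting $+1$ per variable on top of the type sizes. Then $\Rt_{\Gamma;\Delta,x:A}$ has metric $\mathrm{size}(\Gamma,\Delta)+\mathrm{size}(A)+1$, which may use the equation at $\Gamma,\Delta\yields A$ (metric one less); and the equation at $\Gamma,\Delta\yields\Pi x{:}A.B$ may use $\Rt_{\Gamma;\Delta,x:A}$ because $\mathrm{size}(B)\geq 1$ makes room. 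You should restructure your argument as a single simultaneous induction on this metric. A minor point: Lemma~\ref{lem:n-rt-triv} is not actually needed here; the marked-variable cases go through with Equation~(\ref{eq:zq-idem}), Equation~(\ref{eq:zq-sub}), and naturality of $\varepsilon\circ\eta$ alone.
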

% Here $\kleq$ denotes Kleene equality: if one side is defined then so
% is the other, and the two values are equal. (Kleene equality is
% typically denoted $\equiv$, but we do not want to confuse this with
% equivalence.)
\begin{proof}

The definition of the morphism $\Rt$ and the proof of the equation for
substituting by it are mutually recursive, because the equation is
needed for the definition of $\Rt$ to have the stated type.  The
induction metric is: for $\Rt_{\Gamma;\Delta}$ the total size of the raw
context $\Gamma,\Delta$; and for the equation, the total size of the raw
context $\Gamma,\Delta$ plus the size of $\judge$.  The size of a
context adds $1$ for each variable in addition to counting the size of
the type (e.g. $size(\Gamma,x:A) = size(\Gamma) + size(A) + 1$), which
means that the definition of the morphism for $\Gamma,\Delta,x:A$ can
use the equation at $\Gamma,\Delta \yields A$, but the proof of the
equation for $\Gamma,\Delta \yields J$ only uses the morphism at
$\Gamma,\Delta$.

The morphism $\Rt_{\Gamma; \Delta}$ is defined by
\begin{align*}
\Rt_{\Gamma;\cdot} &:\defeq \varepsilon_{\den{\Gamma}} \circ \eta_{\den{\Gamma}} \\
\Rt_{\Gamma;\Delta, x : A} &:\defeq \pair{\Rt_{\Gamma;\Delta} \circ \projsub_{\den{A^{0\Gamma}}}}{\gen_{\den{A^{0\Gamma}}}} \\
\Rt_{\Gamma;\Delta, \zx :: A} &:\defeq \pair{\Rt_{\Gamma;\Delta} \circ \projsub_{(N\den{\Gamma, \Delta^{0\Gamma} \yields A})[\eta_{\den{\Gamma, \Delta^{0\Gamma}}}]}}{\gen_{(N\den{\Gamma, \Delta^{0\Gamma} \yields A})[\eta_{\den{\Gamma, \Delta^{0\Gamma}}}]}}
\end{align*}

In the second case, $\den{\Gamma,\Delta,x:A} \defeq
\den{\Gamma,\Delta}.\den{\Gamma,\Delta \vdash A}$, and we have
$\den{\Gamma,\Delta^{0\Gamma},x:A^{0\Gamma}} \defeq
\den{\Gamma,\Delta^{0\Gamma}}.\den{\Gamma,\Delta^{0\Gamma} \vdash
  A^{0\Gamma}}$.  By the IH, since $\Gamma,\Delta$ is a smaller context
than $\Gamma,\Delta,x:A$, we have $ \Rt_{\Gamma;\Delta} :
\den{\Gamma,\Delta^{0\Gamma}} \to \den{\Gamma,\Delta}$, and
$\projsub_{\den {A^{0\Gamma}}} :
\den{\Gamma,\Delta^{0\Gamma}}.\den{\Gamma,\Delta^{0\Gamma} \vdash
  A^{0\Gamma}} \to \den{\Gamma,\Delta^{0\Gamma}}$.  Thus, by the pairing
rule for substitutions in a CwF, the second component of the result, we
require a term in $\C(\den{\Gamma,\Delta^{0\Gamma}}.\den{A^{0\Gamma}}
\yields \den{A}[\Rt_{\Gamma,\Delta} \circ \projsub_{\den
    {A^{0\Gamma}}}])$.  Since the size of $\Gamma,\Delta \yields A$ is
the same as the size of the context $\Gamma,\Delta, x : A$ (we count 1
for the variable $x$), we can use the inductive hypothesis for the
second part for $A$, and this type is equal to
$\den{A}[\Rt_{\Gamma,\Delta}][\projsub_{\den {A^{0\Gamma}}}]$,
justifying the use of $\gen_{\den{A^{0\Gamma}}}$.

The third case is similar, but we need to know that
${(N\den{\Gamma, \Delta \yields A})[\eta_{\den{\Gamma, \Delta}}][\Rt_{\Gamma;\Delta}]}
\defeq
{(N\den{\Gamma, \Delta^{0\Gamma} \yields A})[\eta_{\den{\Gamma, \Delta^{0\Gamma}}}]}$.
Again using the IH on the smaller type $A$, and commuting the
semantic substitution by $\Rt_{\Gamma,\Delta}$ outside of the $N$, it suffices to
show that
$\eta_{\den{\Gamma, \Delta}} \circ \Rt_{\Gamma;\Delta}
\defeq
N(\Rt_{\Gamma,\Delta}) \circ \eta_{\den{\Gamma, \Delta^{0\Gamma}}}  :
\den{\Gamma,\Delta^{0\Gamma}} \to N\den{\Gamma,\Delta}$, which are equal by
naturality of $\eta$.

The use of $\Rt$ in the statement of the second part is justified
because $\Gamma;\Delta$ is smaller than the total size of $\Gamma;\Delta
\yields \judge$.  For the proof, we go by cases.  In the cases for
terms, because the term interpretation returns a pair of a family and a
term, we must show that both components are defined and equal.

In each case, we first show that the families
are equal and then the terms.  We orient the calculations from the
left-hand side of the theorem statement to the right-hand side, so for
definedness we need to argue that if the ``unsubstituted'' type/term at
the bottom is defined, then the top is.  For each calculation, we show
the steps and then argue why each step in it is defined.

\begin{itemize}
\item \rulen{var}: Suppose $x$ is the final variable in the context.  There are two cases:
\begin{itemize}
\item $x \in \Delta$:

In this case, we have an inductive hypothesis for $A$, which is smaller
than the input $\Gamma,\Delta,x:A \yields x$, unfolding the gives the
result.

For the families, we have
\begin{align*}
&\den{{\Gamma, \Delta^{0\Gamma}} \yields A^{0\Gamma}}[\projsub_{\den{A^{0\Gamma}}}]\\
&\defeq \den{\Gamma,\Delta \yields A}[\Rt_{\Gamma,\Delta}][\projsub_{\den{A^{0\Gamma}}}] && \text{(IH for $A$)}\\
&\defeq \den{\Gamma,\Delta \yields A}[\projsub_A][\Rt_{\Gamma,\Delta,x:A}] && \text{(Definition of $\Rt_{\Gamma;\Delta,x:A}$)}\\
\end{align*}

For the terms, we have
\begin{align*}
&\den{\Gamma, \Delta^{0\Gamma}, x : A^{0\Gamma} \yields x} \\
&\defeq \gen_{\den{\Gamma,\Delta^{0\Gamma} \yields A^{0\Gamma}}} \\
&\defeq \gen_{\den{\Gamma,\Delta \yields A}[\Rt_{\Gamma;\Delta}]} && \text{(IH for $A$)}\\
&\defeq \gen_{\den{A}} [\pair{\Rt_{\Gamma;\Delta} \circ \projsub_{\den{A^{0\Gamma}}}}{\gen_{\den{A^{0\Gamma}}}}] && \text{(Equation \ref{eq:zq-sub})}\\
&\defeq \den{\Gamma, \Delta, x : A \yields x}[\pair{\Rt_{\Gamma;\Delta} \circ \projsub_{\den{A^{0\Gamma}}}}{\gen_{\den{A^{0\Gamma}}}}] \\
&\defeq \den{\Gamma, \Delta, x : A \yields x}[\Rt_{\Gamma;\Delta, x : A}]
\end{align*}

For definedness, we have assumed that both the top and bottom contexts
are defined, so $\den{\Gamma, \Delta^{0\Gamma}, x : A^{0\Gamma}} \defeq
\den{\Gamma, \Delta^{0\Gamma}}.\den{A^{0\Gamma}}$ and similarly on the
bottom.  Definedness of the contexts implies definedness of $\den{A}$
and $\den{A^{0\Gamma}}$, so all of the subterms.

\item $x \notin \Delta$ (so in this case $\Delta \defeq \cdot$):

For the families, both sides have type
$\den{\Gamma \yields A}[\projsub_{\den A}][\varepsilon_{\den{\Gamma, x : A}} \circ \eta_{\den{\Gamma, x : A}}]$
and this is defined because $\den{\Gamma \yields A}$ is, because the
context is assumed to be well-formed.

For the terms,
\begin{align*}
&\den{\Gamma, x : A \yields \zx} \\
&\defeq \gen_{\den{A}}[\varepsilon_{\den{\Gamma, x : A}} \circ \eta_{\den{\Gamma, x : A}}] \\
&\defeq \den{\Gamma, x : A \yields x}[\varepsilon_{\den{\Gamma, x : A}} \circ \eta_{\den{\Gamma, x : A}}] \\
&\defeq \den{\Gamma, x : A \yields x}[\Rt_{\Gamma, x : A;\cdot}]
\end{align*}

When $\den{x}$ on the bottom is defined, all of the steps are defined,
because $\den{\Gamma,x:A}$ and therefore $\den{A}$ are assumed.

\end{itemize}

The more general situation of a variable in the middle of the context
follows by an easy induction, because $\Rt$ is constructed by iterated
pairings, which cancel with the projections in the interpretation of the
variable term.

\item \rulen{var-zero}: Again suppose $\zx :: A$ is the final variable
  in the context.

\begin{itemize}
\item $x \in \Delta$: First, because $\den{\Gamma, \Delta, \zx :: A}$
  is defined, we know $A$ has only marked variables, i.e.
  $A \defeq \zA \defeq A^{0(\Gamma,\Delta)}$. Syntactic idempotence
  (Lemma~\ref{lem:syn-idem}) also gives that
  $\zA \defeq \zA^{0\Gamma}$, so also
  $A \defeq \zA^{0\Gamma} \defeq A^{0\Gamma}$.  Since $A$ is smaller
  than ${\Gamma, \Delta, \zx :: A} \yields \zx$, we have an inductive
  hypothesis for $A$.

  For the families we have
\begin{align*}
  &\den{\Gamma, \Delta^{0\Gamma} \yields A}[\varepsilon_{\den{\Gamma,\Delta^{0\Gamma}}} \circ \eta_{\den{\Gamma,\Delta^{0\Gamma}}}][\projsub_{N (\den {\Gamma,\Delta^{0\Gamma} \yields A})[\eta_{\den{\Gamma,\Delta^{0\Gamma}}}]}] \\
  &\defeq \den{\Gamma, \Delta^{0\Gamma} \yields A^{0\Gamma}}[\varepsilon_{\den{\Gamma,\Delta^{0\Gamma}}} \circ \eta_{\den{\Gamma,\Delta^{0\Gamma}}}][\projsub_{N (\den {\Gamma,\Delta^{0\Gamma} \yields A})[\eta_{\den{\Gamma,\Delta^{0\Gamma}}}]}] & (A \defeq A^{0\Gamma}) \\
  &\defeq \den{\Gamma, \Delta \yields A}[\Rt_{\Gamma;\Delta}][\varepsilon_{\den{\Gamma,\Delta^{0\Gamma}}} \circ \eta_{\den{\Gamma,\Delta^{0\Gamma}}}][\projsub_{N (\den {\Gamma,\Delta^{0\Gamma} \yields A})[\eta_{\den{\Gamma,\Delta^{0\Gamma}}}]}] & \text{(IH for $A$)}\\
  &\defeq \den{\Gamma, \Delta \yields A}[\varepsilon_{\den{\Gamma,\Delta}} \circ \eta_{\den{\Gamma,\Delta}}][\Rt_{\Gamma;\Delta}][\projsub_{N (\den {\Gamma,\Delta^{0\Gamma} \yields A})[\eta_{\den{\Gamma,\Delta^{0\Gamma}}}]}] & \text{(Naturality of $\varepsilon \circ \eta$)}\\
  &\defeq \den{\Gamma, \Delta \yields A}[\varepsilon_{\den{\Gamma,\Delta}} \circ \eta_{\den{\Gamma,\Delta}}][\projsub_{N (\den {\Gamma,\Delta \yields A})[\eta_{\den{\Delta,\Gamma}}]}][\Rt_{\Gamma;\Delta,x::A}] & \text{(Definition of $\Rt$)}
\end{align*}

\begin{align*}
&\den{\Gamma, \Delta^{0\Gamma}, \zx :: A \yields \zx} \\
  &\defeq \zc{\gen}_{\den{\Gamma, \Delta^{0\Gamma} \yields A}} \\
  &\defeq \zc{\gen}_{\den{\Gamma, \Delta^{0\Gamma} \yields A^{0\Gamma}}} && (A \defeq A^{0\Gamma})\\
  &\defeq \zc{\gen}_{\den{\Gamma, \Delta \yields A}[\Rt_{\Gamma; \Delta}]} && \text{(Induction)} \\
&\defeq \zc{\gen}_{\den{\Gamma, \Delta \yields A}}[\pair{\Rt_{\Gamma; \Delta} \circ \projsub}{\gen}] && \text{(Equation~\ref{eq:zq-sub})} \\
&\defeq \den{\Gamma, \Delta, \zx :: A \yields \zx}[\Rt_{\Gamma; \Delta, \zx :: A}]
\end{align*}

In both calcualations, because of the assumption that
$\den{\Gamma,\Delta^{0\Gamma},x::A}$ is defined, all of the
subexpressions are defined.

\item $x \notin \Delta$:

For the families,
\[
\den{\Gamma \yields A}[\varepsilon_{\den{\Gamma}} \circ \eta_{\den{\Gamma}}][\projsub_{N (\den {\Gamma \yields A})[\eta_{\den{\Gamma}}]}]
\defeq
\den{\Gamma \yields A}[\varepsilon_{\den{\Gamma}} \circ \eta_{\den{\Gamma}}][\projsub_{N (\den {\Gamma \yields A})[\eta_{\den{\Gamma}}]}][\varepsilon_{\den{\Gamma,x::A}} \circ \eta_{\den{\Gamma,x::A}}]
\]
by naturality and idempotence of $\varepsilon \circ \eta$.

\begin{align*}
&\den{\Gamma, \zx :: A \yields \zx} \\
&\defeq \zc{\gen}_{\den{A}} \\
&\defeq \zc{\gen}_{\den{A}}[\varepsilon_{\den{\Gamma, \zx :: A}} \circ \eta_{\den{\Gamma, \zx :: A}}] && \text{(Equation~\ref{eq:zq-idem})} \\
&\defeq \den{\Gamma, \zx :: A \yields \zx}[\Rt_{\Gamma, \zx :: A;\cdot}]
\end{align*}

In both calculations, all subexpressions are defined because $\den{\Gamma,x::A}$ is defined.

\end{itemize}

We again need an induction for non-last variables.

\item \rulen{var-roundtrip}: Again assume $x$ is the last variable in
  the context.

\begin{itemize}
\item $x \in \Delta$:

For the families,
\[
\den{\Gamma, \Delta^{0\Gamma} \yields A^{0\Gamma}}[\projsub_{\den{\Gamma, \Delta^{0\Gamma} \yields A^{0\Gamma}}}]
\defeq
\den{\Gamma, \Delta \yields A}[\projsub_{\den{\Gamma, \Delta \yields A}}][\Rt_{\Gamma; \Delta, x : A}]
\]
by definition of $\Rt_{\Gamma; \Delta, x : A}$ and the IH for the type $A$, which is smaller than $\Gamma,\Delta,x:A \vdash \zx$.

For the terms
\begin{align*}
&\den{\Gamma, \Delta^{0\Gamma}, x : A^{0\Gamma} \yields \zx} \\
&\defeq \gen_{\den{A^{0\Gamma}}}[\varepsilon_{\den{\Gamma, \Delta^{0\Gamma}, x : A^{0\Gamma}}} \circ \eta_{\den{\Gamma, \Delta^{0\Gamma}, x : A^{0\Gamma}}}] \\
&\defeq \gen_{\den{A}}[\pair{\Rt_{\Gamma; \Delta} \circ \projsub_{\den{A^{0\Gamma}}}}{\gen_{\den{A^{0\Gamma}}}}][\varepsilon_{\den{\Gamma, \Delta^{0\Gamma}, x : A^{0\Gamma}}} \circ \eta_{\den{\Gamma, \Delta^{0\Gamma}, x : A^{0\Gamma}}}] \\
&\defeq \gen_{\den{A}}[\varepsilon_{\den{\Gamma, \Delta, x : A}} \circ \eta_{\den{\Gamma, \Delta, x : A}}][\pair{\Rt_{\Gamma; \Delta} \circ \projsub_{\den{A^{0\Gamma}}}}{\gen_{\den{A^{0\Gamma}}}}] &&\text{(Naturality)} \\
&\defeq \den{\Gamma, \Delta, x : A \yields \zx}[\Rt_{\Gamma; \Delta, x : A}]
\end{align*}

In both calculations, $\den{\Gamma; \Delta, x : A}$ and $\den{\Gamma; \Delta^{0\Gamma}, x : A^{0\Gamma}}$ defined implies all of the subterms are defined.

\item $x \notin \Delta$:

  For the families, we have
  \[
  \den{\Gamma \yields A}[\projsub_{\den{A}}][\varepsilon_{\den{\Gamma, x : A}} \circ \eta_{\den{\Gamma, x : A}}]
  \defeq
  \den{\Gamma \yields A}[\projsub_{\den{A}}][\varepsilon_{\den{\Gamma, x : A}} \circ \eta_{\den{\Gamma, x : A}}][\varepsilon_{\den{\Gamma, x : A}} \circ \eta_{\den{\Gamma, x : A}}]
  \]
  by idempotence of $\varepsilon \circ \eta$.

  For the terms
\begin{align*}
&\den{\Gamma, x : A \yields \zx} \\
&\defeq \gen_{\den{A}}[\varepsilon_{\den{\Gamma, x : A}} \circ \eta_{\den{\Gamma, x : A}}] \\
&\defeq \gen_{\den{A}}[\varepsilon_{\den{\Gamma, x : A}} \circ \eta_{\den{\Gamma, x : A}}][\varepsilon_{\den{\Gamma, x : A}} \circ \eta_{\den{\Gamma, x : A}}] &&\text{($\eta \circ \varepsilon$ is identity)}\\
&\defeq \den{\Gamma, x : A \yields \zx}[\varepsilon_{\den{\Gamma, x : A}} \circ \eta_{\den{\Gamma, x : A}}] \\
&\defeq \den{\Gamma, x : A \yields \zx}[\Rt_{\Gamma, x : A;\cdot}]
\end{align*}
Once again, in both cases,
$\den{\Gamma, x : A}$ defined implies all of the subexpressions are defined.

\end{itemize}

We again need an induction for non-last variables.

\item As a final variable case, it may be that
  $\zx :: A \in \Gamma, \Delta$, but we have the raw syntax $x$ as the
  judgement $\judge$. This is in fact not possible, as it contradicts
  the assumption that $\den{\Gamma, \Delta, \zx :: A,\Delta' \yields x}$ is
  defined, as there is no clause for this in the definition of the partial interpretation.

\item \rulen{$\natural$-form}:
\begin{align*}
&\den{\Gamma, \Delta^{0\Gamma} \yields \natural A^{0\Gamma}} \\
&\defeq (N\den{\Gamma, \Delta^{0\Gamma} \yields A^{0\Gamma}})[\eta_{\den{\Gamma,\Delta^{0\Gamma}}}] \\
&\defeq (N\den{\Gamma, \Delta \yields A}[\Rt_{\Gamma;\Delta}])[\eta_{\den{\Gamma,\Delta^{0\Gamma}}}] && \text{(Induction)}  \\
&\defeq (N\den{\Gamma, \Delta \yields A})[N\Rt_{\Gamma;\Delta}][\eta_{\den{\Gamma,\Delta^{0\Gamma}}}] && \text{($N$ commutes with substitution)}  \\
&\defeq (N\den{\Gamma, \Delta \yields A})[\eta_{\den{\Gamma,\Delta}}][\Rt_{\Gamma;\Delta}] && \text{(Naturality)}  \\
&\defeq \den{\Gamma, \Delta \yields \natural A}[\Rt_{\Gamma;\Delta}]
\end{align*}
For definedness, we assume that $\den{\Gamma,\Delta}$ and
$\den{\Gamma,\Delta^{0\Gamma}}$ are defined, and definedness of
$\den{\Gamma, \Delta \yields \natural A}$ on the bottom implies
definedness of $\den{\Gamma, \Delta \yields A}$, so $\den{\Gamma,
  \Delta^{0\Gamma} \yields A^{0\Gamma}}$ is defined by the IH for the
smaller type $A$.  This makes all of the subexpressions defined.

\item \rulen{$\natural$-intro}:

For the families, $\Gamma,\Delta \yields A$ is smaller than
$\Gamma,\Delta\yields a_A^\natural$, and $\den{a_A^\natural}$ defined
implies $\den{A}$ defined, so we have
\[
(N\den{\Gamma, \Delta^{0\Gamma} \yields A^{0\Gamma}})[\eta_{\den{\Gamma,\Delta^{0\Gamma}}}] \defeq (N\den{\Gamma, \Delta \yields A})[\eta_{\den{\Gamma,\Delta}}][\Rt_{\Gamma;\Delta}]
\]
by the same reasoning as in the previous case.

For terms,
\begin{align*}
&\den{\Gamma, \Delta^{0\Gamma} \yields ({a^{0\Gamma}_{A^{0\Gamma}}})^\natural)} \\
&\defeq (N_{\den{\Gamma,\Delta^{0\Gamma} \yields A^{0\Gamma}}}{\den{\Gamma, \Delta^{0\Gamma} \yields a^{0\Gamma}}})[\eta_{\den{\Gamma,\Delta^{0\Gamma}}}] \\
&\defeq (N_{\den{\Gamma,\Delta \yields A}[\Rt_{\Gamma;\Delta}]}{\den{\Gamma, \Delta \yields a}[\Rt_{\Gamma;\Delta}]})[\eta_{\den{\Gamma,\Delta^{0\Gamma}}}] && \text{(IH for $A$ and $a$)}  \\
&\defeq (N_{\den{\Gamma, \Delta \yields A}}\den{\Gamma, \Delta \yields a})[N\Rt_{\Gamma;\Delta}][\eta_{\den{\Gamma,\Delta^{0\Gamma}}}] && \text{($N$ commutes with substitution)}  \\
&\defeq (N_{\den{\Gamma, \Delta \yields A}}\den{\Gamma, \Delta \yields a})[\eta_{\den{\Gamma,\Delta}}][\Rt_{\Gamma;\Delta}] && \text{(Naturality)}  \\
&\defeq \den{\Gamma, \Delta \yields a_A^\natural}[\Rt_{\Gamma;\Delta}]
\end{align*}
The IH on $A$ and $a$ give that $\den{A^{0\Gamma}}$ and
$\den{a^{0\Gamma}}$ are defined, $\den{\Gamma,\Delta}$ and
$\den{\Gamma,\Delta^{0\Gamma}}$ are assumed, so all of the
subexpressions are defined.  For $N_{\den{\Gamma,\Delta^{0\Gamma}
    \yields A^{0\Gamma}}}{\den{\Gamma, \Delta^{0\Gamma} \yields
    a^{0\Gamma}}}$ to be defined, we need that the family for
${\den{\Gamma, \Delta^{0\Gamma} \yields a^{0\Gamma}}}$ is equal to
${\den{\Gamma, \Delta^{0\Gamma} \yields A^{0\Gamma}}}$.  This holds
because the family part of $\den{a}$ is equal to $\den{A}$ because
$\den{a^\natural_A}$ is defined (by the implicit conditions that the
righthand sides must type check), so the family part of
$\den{a^0\Gamma}$ is equal to $\den{A}[\Rt]$ by the family part of the
IH for $a$, and by the IH for $A$, $\den{A}[\Rt] \defeq
\den{A^{0\Gamma}}$.

\item \rulen{$\natural$-elim}:

Assuming $\den{\Gamma,\Delta \yields n_\natural^A}$ is defined, both
$\den{A}$ and $\den{n}$ are defined, so by the inductive hypotheses on
$A$ and $n$, $\den{\Gamma,\Delta^{0\Gamma} \yields A^{0\Gamma}}$ is
defined and equal to $\den{A}[\Rt_{\Gamma;\Delta}]$, and
$\den{n^{0\Gamma}}$ is defined and equal to
$\den{n}[\Rt_{\Gamma;\Delta}]$.  For the families,

\begin{align*}
 & \den{\Gamma,\Delta^{0\Gamma} \yields A^{0\Gamma}}[\varepsilon_{\den{\Gamma,\Delta^{0\Gamma}}} \circ\eta_{\den{\Gamma,\Delta^{0\Gamma}}}]\\
 &\defeq \den{\Gamma,\Delta \yields A}[\Rt_{\Gamma;\Delta}][\varepsilon_{\den{\Gamma,\Delta^{0\Gamma}}} \circ\eta_{\den{\Gamma,\Delta^{0\Gamma}}}] &\text{(IH for $A$)} \\
 &\defeq \den{\Gamma,\Delta \yields A}[\varepsilon_{\den{\Gamma,\Delta}} \circ\eta_{\den{\Gamma,\Delta}}][\Rt_{\Gamma;\Delta}] & \text{(naturality)}
\end{align*}
Since $\den{A}$ and $\den{A^{0\Gamma}}$ and the denotations of the contexts are all defined, all of the steps are defined.

For the terms
\begin{align*}
  &\den{\Gamma,\Delta^{0\Gamma} \yields (n^{0\Gamma})^{A^{0\Gamma}}_\natural} \\
  &\defeq \zc{\gen}_{\den{A^{0\Gamma}}}[\pair{\id_{\den{\Gamma,
    \Delta^{0\Gamma}}}}{\den{\Gamma,\Delta^{0\Gamma} \yields
    n^{0\Gamma}}}]
    && \text{(Definition)} \\
  &\defeq \zc{\gen}_{\den{A}[\Rt_{\Gamma;\Delta}]}[\pair{\id_{\den{\Gamma,
    \Delta^{0\Gamma}}}}{\den{\Gamma,\Delta \yields
    n}[\Rt_{\Gamma;\Delta}]}]
    && \text{(IH for $A$ and $n$)}\\
  &\defeq \zc{\gen}_{\den{A}}[\pair{\Rt_{\Gamma;\Delta} \circ \projsub_{\dots}}{\gen_{\dots}}]
    [\pair{\id_{\den{\Gamma,\Delta^{0\Gamma}}}}{\den{\Gamma,\Delta \yields n}[\Rt_{\Gamma;\Delta}]}]
    && \text{(Equation~\ref{eq:zq-sub})} \\
  &\defeq \zc{\gen}_{\den{A}}[\pair{\Rt_{\Gamma;\Delta}}{\den{\Gamma,\Delta \yields n}[\Rt_{\Gamma;\Delta}]}]
    && \text{(Composing substitutions)} \\
  &\defeq \zc{\gen}_{\den{A}}[\pair{\id_{\den{\Gamma,\Delta}}}{\den{\Gamma,\Delta \yields n}}][\Rt_{\Gamma;\Delta}]
    && \text{(Un-composing substitutions)} \\
  &\defeq \den{\Gamma,\Delta \yields n_\natural^A}[\Rt_{\Gamma;\Delta}]
\end{align*}
Since $\den{n}$ and $\den{n^{0\Gamma}}$ and the denotations of the contexts
and types are all defined, all of the subterms are defined.  For
$\zc{\gen}_{\den{A^{0\Gamma}}}[\pair{\id_{\den{\Gamma,
        \Delta^{0\Gamma}}}}{\den{\Gamma,\Delta^{0\Gamma} \yields
      n^{0\Gamma}}}]$ to be defined, we need that
$\den{\Gamma,\Delta^{0\Gamma} \yields n^{0\Gamma}}$ has family
$N{\den{A^{0\Gamma}}}[\eta_{\den{\Gamma,\Delta^{0\Gamma}}}]$.  Because
$\den{n_\natural^A}$ is defined, we know that the family part of
$\den{n}$ is equal to $N {\den{\Gamma,\Delta \yields A}} [\eta_{\Gamma,\Delta}]$,
so by the IH the family part of $\den{n^{0\Gamma}}$ is equal to
$N {\den{\Gamma,\Delta \yields A}} [\eta_{\Gamma,\Delta}][\Rt_{\Gamma;\Delta}]$,
and this is equal to $N{\den{A^{0\Gamma}}}[\eta_{\den{\Gamma,\Delta^{0\Gamma}}}]$
by the IH for $A$ via the same reasoning as in the \rulen{$\natural$-form} case.

\item \rulen{$\Pi$-form}:

  If $\bigden{\Gamma, \Delta \yields \prd{x : A} B}$ is defined, then so
  are both of $\den{\Gamma, \Delta \yields A}$ and $\den{\Gamma, \Delta,
    x : A \yields B}$. We can apply the IH (on the same contexts) to $A$
  to see that $\den{\Gamma, \Delta^{0\Gamma} \yields A^{0\Gamma}}$ is
  defined.

  By assumption, $\den{\Gamma, \Delta}$ and
  $\den{\Gamma, \Delta^{0\Gamma}}$ are defined so both of
  $\den{\Gamma, \Delta, x : A}$ and
  $\den{\Gamma, \Delta^{0\Gamma}, x : A^{0\Gamma}}$ are also. We can
  then apply the IH  to $B$ to see
  $\den{\Gamma, \Delta^{0\Gamma}, x : A^{0\Gamma} \yields
    B^{0\Gamma}}$ is defined. It follows that
  $\bigden{\Gamma, \Delta^{0\Gamma} \yields \prd{x : A^{0\Gamma}}
    B^{0\Gamma}}$ is also defined.

  The morphism $\Rt_{\Gamma;\Delta,x:A}$ is available by the inductive
  hypothesis--- the size of $\Gamma,\Delta,x:A$ is less than the size of
  $\Gamma,\Delta \yields \prd{x:A} B$ because $B$ (like all types/terms)
  has size at least 1.

  To show the equation holds,
\begin{align*}
&\bigden{\Gamma, \Delta^{0\Gamma} \yields \prd{x : A^{0\Gamma}} B^{0\Gamma}} \\
&\defeq \Pi(\den{\Gamma, \Delta^{0\Gamma} \yields A^{0\Gamma}}, \den{\Gamma, \Delta^{0\Gamma}, x : A^{0\Gamma} \yields B^{0\Gamma}}) \\
&\defeq \Pi(\den{\Gamma, \Delta \yields A}[\Rt_{\Gamma;\Delta}], \den{\Gamma, \Delta, x : A \yields B}[\Rt_{\Gamma;\Delta, x : A}]) && \text{(IH on $A$ and $B$)} \\
&\defeq \Pi(\den{\Gamma, \Delta \yields A}[\Rt_{\Gamma;\Delta}], \den{\Gamma, \Delta, x : A \yields B}[\Rt_{\Gamma;\Delta} \circ \projsub_{\den{A^{0\Gamma}}}, \gen_{\den{A^{0\Gamma}}}]) && \text{(Definition of $\Rt_{\Gamma;\Delta, x : A}$)} \\
&\defeq \Pi(\den{\Gamma, \Delta \yields A}, \den{\Gamma, \Delta, x : A \yields  B})[\Rt_{\Gamma;\Delta}] && \text{(Semantic $\Pi$ commutes with substitution)} \\
&\defeq \bigden{\Gamma, \Delta \yields \prd{x : A} B}[\Rt_{\Gamma;\Delta}]
\end{align*}

\item \rulen{$\Pi$-intro}:

Since $\den{\Gamma,\Delta}$ and $\den{\Gamma,\Delta^{0\Gamma}}$ and
$\den{\Gamma,\Delta \yields \lambda x:A.b:B}$ are defined by assumption,
all of $\den{\Gamma,\Delta \yields A}$ and $\den{\Gamma,\Delta,x:A}$ and
$\den{\Gamma,\Delta,x:A \yields B}$ $\den{\Gamma,\Delta,x:A \yields b}$
are defined, so the inductive hypotheses imply that
$\den{\Gamma,\Delta^{0\Gamma} \yields A^{0\Gamma}}$,
$\den{\Gamma,\Delta^{0\Gamma},x:A^{0\Gamma}}$,
$\den{\Gamma,\Delta^{0\Gamma},x:A^{0\Gamma} \yields B^{0\Gamma}}$,
$\den{\Gamma,\Delta^{0\Gamma},x:A^{0\Gamma} \yields b^{0\Gamma}}$ are
defined.  The morphism $\Rt_{\Gamma;\Delta,x:A}$ is available because
$\Gamma,\Delta,x:A$ is smaller than $\Gamma,\Delta \yields \lambda x:A.b:B$.

The families
\[
\Pi(\den{\Gamma, \Delta^{0\Gamma} \yields A}, \den{\Gamma, \Delta^{0\Gamma}, x : A^{0\Gamma} \yields  B^{0\Gamma}})
\defeq
\Pi(\den{\Gamma, \Delta \yields A}, \den{\Gamma, \Delta, x : A \yields  B})[\Rt_{\Gamma;\Delta}]
\]
are defined and equal as in the previous case.

For the terms
\begin{align*}
& \den{\Gamma,\Delta^{0\Gamma} \yields \lambda x:A^{0\Gamma}.b^{0\Gamma}:B^{0\Gamma}}\\
& \defeq \lambda(\den{\Gamma,\Delta^{0\Gamma}},\den{\Gamma,\Delta^{0\Gamma} \yields  A^{0\Gamma}},\den{\Gamma,\Delta^{0\Gamma},x:A^{0\Gamma} \yields B^{0\Gamma}},\den{\Gamma,\Delta^{0\Gamma},x:A^{0\Gamma} \yields b^{0\Gamma}})\\
& \defeq \lambda(\den{\Gamma,\Delta^{0\Gamma}},\den{\Gamma,\Delta \yields  A}[\Rt_{\Gamma;\Delta}],\den{\Gamma,\Delta,x:A \yields B}[\Rt_{\Gamma;\Delta,x:A}],\den{\Gamma,\Delta,x:A \yields b}[\Rt_{\Gamma;\Delta ,x:A}  ])  && \text{(IH)}\\
& \defeq \lambda(\den{\Gamma,\Delta},\den{\Gamma,\Delta \yields  A},\den{\Gamma,\Delta,x:A \yields B},\den{\Gamma,\Delta,x:A \yields b})[\Rt_{\Gamma;\Delta}] && \text{(Substitution into $\lambda$)} \\
&  \defeq \den{\Gamma,\Delta \yields \lambda x:A.b:B}[\Rt_{\Gamma;\Delta}]
\end{align*}
All of the subterms are defined, so to see that the top is defined, we just need to know that
$\den{\Gamma,\Delta^{0\Gamma},x:A^{0\Gamma} \yields b^{0\Gamma}}$ has type
$\den{\Gamma,\Delta^{0\Gamma},x:A^{0\Gamma} \yields B^{0\Gamma}}$.
Because the bottom is well-typed, the family part of $b$ is equal to
$\den{\Gamma,\Delta,x:A \yields B}$, so the family part of the IH for $b$ says
that its type is equal to $\den{\Gamma,\Delta,x:A \yields B}[\Rt_{\Gamma;\Delta,x:A}]$,
which by the IH for $B$ is equal to what we want.

\item \rulen{$\Pi$-elim}:

Since $\den{\Gamma,\Delta}$ and $\den{\Gamma,\Delta^{0\Gamma}}$ and
$\den{\Gamma,\Delta \yields f(a)_{A,x.B}}$ are defined by assumption,
all of $\den{\Gamma,\Delta \yields A}$ and $\den{\Gamma,\Delta,x:A}$ and
$\den{\Gamma,\Delta,x:A \yields B}$ and $\den{\Gamma,\Delta \yields f}$
and
$\den{\Gamma,\Delta \yields a}$
are defined, so the inductive hypotheses imply that
$\den{\Gamma,\Delta^{0\Gamma} \yields A^{0\Gamma}}$,
$\den{\Gamma,\Delta^{0\Gamma},x:A^{0\Gamma}}$,
$\den{\Gamma,\Delta^{0\Gamma},x:A^{0\Gamma} \yields B^{0\Gamma}}$,
$\den{\Gamma,\Delta^{0\Gamma} \yields f^{0\Gamma}}$,
$\den{\Gamma,\Delta^{0\Gamma} \yields a^{0\Gamma}}$ are
defined.  The morphism $\Rt_{\Gamma;\Delta,x:A}$ is available because
$\Gamma,\Delta,x:A$ is smaller than $\Gamma,\Delta \yields f(a)_{A,x.B}$.

For the families, we have
\begin{align*}
&\den{\Gamma,\Delta^{0\Gamma},x:A^{0\Gamma} \yields B^{0\Gamma}}[(\id_{\den{\Gamma,\Delta^{0\Gamma}}},\den{\Gamma,\Delta^{0\Gamma}\yields a^{0\Gamma}})] \\
&\defeq \den{\Gamma,\Delta,x:A \yields B}[\Rt_{\Gamma;\Delta,x:A}][(\id_{\den{\Gamma,\Delta^{0\Gamma}}},\den{\Gamma,\Delta \yields a}[\Rt_{\Gamma;\Delta}])]\ && \text{(IH)}\\
&\defeq \den{\Gamma,\Delta,x:A \yields B}[(\Rt_{\Gamma;\Delta} \circ \projsub, \gen)](\id_{\den{\Gamma,\Delta^{0\Gamma}}},\den{\Gamma,\Delta \yields a}[\Rt_{\Gamma;\Delta}]) && \text{(Definition of $\Rt_{\Gamma;\Delta,x:A}$)}\\
&\defeq \den{\Gamma,\Delta,x:A \yields B}[(\Rt_{\Gamma;\Delta},\den{\Gamma,\Delta \yields a}[\Rt_{\Gamma;\Delta}])] \\
&\defeq \den{\Gamma,\Delta,x:A \yields B}[(\id_{\den{\Gamma,\Delta}},\den{\Gamma,\Delta \yields a})][\Rt_{\Gamma;\Delta}]
\end{align*}

And for the elements:
\begin{align*}
& \den{\Gamma,\Delta \yields f(a)_{A,x.B}}\\
&\defeq \mathrm{app}_{\den{\Gamma,\Delta^{0\Gamma}},\den{\Gamma,\Delta^{0\Gamma} \yields A^{0\Gamma}},\den{\Gamma,\Delta^{0\Gamma},x: A^{0\Gamma} \yields B^{0\Gamma}  }}(\den{\Gamma,\Delta^{0\Gamma} \yields f^{0\Gamma}},\den{\Gamma,\Delta \yields a^{0\Gamma}})\\
&\defeq \mathrm{app}_{\den{\Gamma,\Delta^{0\Gamma}},\den{\Gamma,\Delta \yields A}[\Rt_{\Gamma;\Delta}],\den{\Gamma,\Delta,x: A \yields B  }[\Rt_{\Gamma;\Delta,x:A}]}(\den{\Gamma,\Delta \yields f}[\Rt_{\Gamma;\Delta}],\den{\Gamma,\Delta \yields a}[\Rt_{\Gamma;\Delta}]) & \text{(IH)}\\
&\defeq \mathrm{app}_{\den{\Gamma,\Delta},\den{\Gamma,\Delta \yields A},\den{\Gamma,\Delta,x: A \yields B  }}(\den{\Gamma,\Delta \yields f},\den{\Gamma,\Delta \yields a}) [\Rt_{\Gamma;\Delta}]\\
&\defeq \den{\Gamma,\Delta \yields f(a)_{A,x.B}}[\Rt_{\Gamma;\Delta}]
\end{align*}
All of the subexpressions are defined, so to check that the top is
defined, we need that the family part of
$\den{\Gamma,\Delta^{0\Gamma} \yields f^{0\Gamma}}$ is
$\Pi(\den{\Gamma,\Delta^{0\Gamma}},\den{\Gamma,\Delta^{0\Gamma} \yields
  A^{0\Gamma}},\den{\Gamma,\Delta^{0\Gamma},x: A^{0\Gamma} \yields
  B^{0\Gamma} })$ and that the family part of $\den{\Gamma,\Delta^{0\Gamma}
  \yields a^{0\Gamma}}$ is $\den{\Gamma,\Delta^{0\Gamma} \yields
  A^{0\Gamma}}$.  Since $\den{\Gamma,\Delta \yields f(a)_{A,x.B}}$ is
defined, we know that the family part of $\den{a}$ is $\den{\Gamma,\Delta
  \yields A}$ and the family part of $\den{f}$ is
$\Pi({\den{\Gamma,\Delta},\den{\Gamma,\Delta \yields A},\den{\Gamma,\Delta,x: A \yields B }})$.  The
family part of the IH for $a$ and $f$ implies that the family parts of $\den{a^{0\Gamma}}$ and $\den{f^{0\Gamma}}$ are
$\den{\Gamma,\Delta \yields A}[\Rt_{\Gamma;\Delta}]$ and
$\Pi({\den{\Gamma,\Delta},\den{\Gamma,\Delta \yields A},\den{\Gamma,\Delta,x: A \yields B }})[\Rt_{\Gamma;\Delta}]$,
which are equal to what we want by the inductive hypotheses for $A$ and $B$ and a calculation as in the case for \rulen{$\Pi$-form}.

\end{itemize}
\end{proof}

As usual we also need to define semantic morphisms corresponding to substitution and weakening. We need weakening for both marked and unmarked variables, but can get away with substitution just for ordinary variables using the `dull substitution' trick (Definition~\ref{def:dull-subst}).

\begin{lemma}[Semantic substitution]
  Suppose all of
  \begin{align*}
    &\den{\Gamma, x : A, \Gamma'} \\
    &\den{\Gamma \yields a} \\
    &\den{\Gamma, \Gamma'[a/x]}
  \end{align*}
  are defined and that $\den{a} \in \C(\den{\Gamma} \yields \den{\Gamma
    \yields A})$ ($\den{\Gamma \yields A}$ is defined because
  $\den{\Gamma,x:A,\Gamma'}$ is).  Then there is a morphism
  $\Subst(\Gamma;A;\Gamma;a) : \den{\Gamma, \Gamma'[a/x]} \to
  \den{\Gamma, x : A, \Gamma'}$ such that if $\den{\Gamma, x : A,
    \Gamma' \yields \judge}$ is defined then $\den{\Gamma, \Gamma'[a/x]
    \yields \judge[a/x]}$ is defined and
  \[\den{\Gamma, \Gamma'[a/x] \yields \judge[a/x]} \defeq \den{\Gamma,x
      : A, \Gamma' \yields \judge}[\Subst(\Gamma;A;\Gamma;a)]\] where
  $\judge$ is either a type or term. The $\Subst(\Gamma;A;\Gamma;a)$
  morphism is given by
\begin{align*}
\Subst(\Gamma;A;\cdot;a) &:\defeq \pair{\id_{\den{\Gamma}}}{\den{a}} \\
\Subst(\Gamma;A;\Gamma', y : B;a) &:\defeq \pair{\Subst(\Gamma;A;\Gamma';a) \circ \projsub_{\den{B[a/x]}}}{\gen_{\den{B[a/x]}}} \\
\Subst(\Gamma;A;\Gamma', y :: B;a) &:\defeq \pair{\Subst(\Gamma;A;\Gamma';a) \circ \projsub_{(N\den{\Gamma,\Gamma'[a/x] \yields B})[\eta_{\den{\Gamma,\Gamma'[a/x]}}]}}{\gen_{(N\den{\Gamma,\Gamma'[a/x] \yields B})[\eta_{\den{\Gamma,\Gamma'[a/x]}}]}}
\end{align*}
\end{lemma}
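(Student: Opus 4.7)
Following the pattern of Lemma~\ref{lem:sem-roundtrip}, the morphism $\Subst(\Gamma;A;\Gamma';a)$ and the substitution equation must be constructed mutually: the definition of $\Subst$ is by recursion on $\Gamma'$ as given in the statement, but its well-typedness (in particular, that the $\gen_{\den{B[a/x]}}$ appearing in the pair has the correct family) relies on the equation already being established for the smaller type $B$. We therefore carry out both inductions in lockstep, with the equation proved by induction on the size of the raw judgement $\judge$, and the construction of $\Subst(\Gamma;A;\Gamma',y:_?B;a)$ appealing to the equation at $\Gamma,x:A,\Gamma' \yields B$.

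The plan is to case-split on the last rule used to form $\judge$. For \rulen{var}, the variable $y$ must lie in exactly one of three regions, giving three subcases: (i) $y$ is declared in $\Gamma$, so $y[a/x] \defeq y$, and the iterated projections in the interpretation of $y$ cancel the pairings in $\Subst$ all the way down to $\Subst(\Gamma;A;\cdot;a) \defeq \pair{\id}{\den a}$, whose first projection is $\id_{\den \Gamma}$; (ii) $y \equiv x$, so $x[a/x] \defeq a$, and the pairing $\beta$-rule $\gen_{\den A}[\pair{\id_{\den\Gamma}}{\den a}] \defeq \den a$ directly gives the equation; (iii) $y$ is declared in $\Gamma'$, so the inductive hypothesis on its type handles the change in family and the pair/projection $\beta$-rules close the case. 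The corresponding subcases of \rulen{var-zero} and \rulen{var-roundtrip} for variables other than $x$ are analogous, using Equation~\ref{eq:zq-sub} to push the pairing past $\zc{\gen}$.

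The main obstacle is \rulen{var-roundtrip} at the substituted variable itself, where $\zx[a/x] \defeq \za$ on raw syntax must be matched semantically. In the subcase with $\Gamma'$ empty, the left-hand side unfolds to
\[
\den{\Gamma, x : A \yields \zx}[\Subst(\Gamma;A;\cdot;a)] \defeq \gen_{\den A}[\varepsilon_{\den{\Gamma, x:A}} \circ \eta_{\den{\Gamma, x:A}}]\,[\pair{\id_{\den\Gamma}}{\den a}].
\]
Since $\varepsilon \circ \eta$ is a natural endotransformation of $\id_\C$, it commutes past $\pair{\id}{\den a} : \den\Gamma \to \den{\Gamma,x:A}$, yielding $\gen_{\den A}[\pair{\id}{\den a}][\varepsilon_{\den\Gamma} \circ \eta_{\den\Gamma}]$, which by the pairing $\beta$-rule collapses to $\den a[\varepsilon_{\den\Gamma} \circ \eta_{\den\Gamma}]$. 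By Lemma~\ref{lem:sem-roundtrip} applied with empty $\Delta$, this is precisely $\den{\Gamma \yields \za}$, as required. The case for nonempty $\Gamma'$ reduces to this by absorbing the trailing projections into the iterated pair structure of $\Subst(\Gamma;A;\Gamma';a)$.

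For the remaining constructors the arguments parallel Lemma~\ref{lem:sem-roundtrip}. In \rulen{$\natural$-form} and \rulen{$\natural$-intro} we push $\Subst$ inside using the IH and commute $N(\Subst) \circ \eta$ into $\eta \circ \Subst$ by naturality of $\eta$, using that the weak CwF morphism $N$ commutes with substitution on types and terms; in \rulen{$\natural$-elim} we additionally use Equation~\ref{eq:zq-sub} to handle $\zc{\gen}_{\den A}$. For \rulen{$\Pi$-form}, \rulen{$\Pi$-intro} and \rulen{$\Pi$-elim}, the substitution equations from Definition~\ref{def:cwf-pi} factor $\Subst$ out of $\Pi$, $\lambda$ and $\mathrm{app}$ respectively, and the IHs applied to the constituent types and terms (together with the usual pair/projection manipulations for extensions by $x:A$) finish each case. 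Definedness at each step is immediate from definedness of the corresponding subexpressions on the unsubstituted side, exactly as in the roundtrip lemma.
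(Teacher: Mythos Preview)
Your proposal is correct and follows essentially the same approach as the paper: the same mutual induction between the definition of $\Subst$ and the substitution equation, the same identification of \rulen{var-roundtrip} at $x$ as the key new case, and the same calculation for it via naturality of $\varepsilon \circ \eta$ together with Lemma~\ref{lem:sem-roundtrip}. The paper additionally spells out the family component of that case (using the semantic weakening lemma to identify $\den{\Gamma,x:A \yields \zA}$ with $\den{\Gamma \yields \zA}[\projsub_{\den A}]$), which you leave implicit, but this is routine.
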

\begin{proof}
  The induction metric is, for $\Subst(\Gamma;A;\Gamma';a)$, the size of
  $\Gamma, x : A, \Gamma'$, and for the equation, the size of the raw
  context $\Gamma, x : A, \Gamma'$ plus the size of $\judge$.  Size is
  defined in such a way that the morphism can use the equation on any
  type in the context.

  In the $\Subst(\Gamma;A;\Gamma', y : B;a)$ case of the definition of
  $\Subst$, we have that
  \[\den{\Gamma, x : A,\Gamma', y : B} \defeq \den{\Gamma, x :
      A,\Gamma'}.\den{\Gamma, x : A,\Gamma' \yields B},\] and
  \[\den{\Gamma,\Gamma'[a/x], y : B[a/x]} \defeq
    \den{\Gamma,\Gamma'[a/x]}.\den{\Gamma,\Gamma'[a/x] \yields
      B[a/x]}\] are defined.  By the IH, since $\Gamma'$ is smaller
  than $\Gamma', y : B$, we have a morphism
  $\Subst(\Gamma;A;\Gamma';a) : \den{\Gamma,\Gamma'[a/x]} \to
  \den{\Gamma, x : A, \Gamma'}$. To apply the paring rule, we need an
  element of
  $\Elt{\den{\Gamma,\Gamma'[a/x]}.\den{B[a/x]}}{\den{B}[\Subst(\Gamma;A;\Gamma';a)
    \circ \projsub_{\den{B[a/x]}}]}$. Since the type $\Gamma,x:A,\Gamma' \yields B$ is defined to be smaller
  than the context $\Gamma, x : A,\Gamma', y : B$, we can use the IH
  to conclude
  $\den{B[a/x]} \defeq \den{B}[\Subst(\Gamma;A;\Gamma';a)]$, which is
  what we need to use $\gen_{\den{B[a/x]}}$ as the second component of
  the pairing.

  The $\Subst(\Gamma;A;\Gamma', y :: B; a)$ case is similar: as the
  second component of the pairing we need an element
  \[\Elt{\den{\Gamma,\Gamma'[a/x]}.(N\den{B[a/x]})[\eta_{\den{\Gamma,\Gamma'[a/x]}}]}{(N\den{B})[\eta_{\Gamma, x : A, \Gamma' }][\Subst(\Gamma;A;\Gamma';a)
      \circ \projsub_{(N\den{B[a/x]})[\eta_{\den{\Gamma,\Gamma'[a/x]}}]}]}.\]
  Again, $\gen_{(N\den{B[a/x]})[\eta_{\den{\Gamma,\Gamma'[a/x]}}]}$
  does the job, as the size of $\Gamma,x:A,\Gamma' \yields B$ is defined to be smaller than
  $\Gamma, x : A, \Gamma', y : B$, so the IH gives us
  $\den{B[a/x]} \defeq \den{B}[\Subst(\Gamma;A;\Gamma';a)]$, and by
  naturality,
  \begin{align*}
    (N\den{B})[\eta_{\Gamma, x : A, \Gamma' } \circ \Subst(\Gamma;A;\Gamma';a)]
    &\defeq (N\den{B})[N\Subst(\Gamma;A;\Gamma';a) \circ \eta_{\Gamma, \Gamma'[a/x]}] \\
    &\defeq (N\den{B}[\Subst(\Gamma;A;\Gamma';a)])[\eta_{\Gamma, \Gamma'[a/x]}] \\
    &\defeq (N\den{B[a/x]})[\eta_{\Gamma, \Gamma'[a/x]}]
  \end{align*}

The cases for terms for the modality are essentially the same as in Lemma~\ref{lem:sem-roundtrip}. The only interesting new case is substituting into \rulen{var-roundtrip}.

Suppose we have done the roundtrip on $x$, the variable being
substituted for, and this variable lies at the end of the context.

Because all of $\den{\Gamma}$, $\den{\Gamma \yields A}$ and $\den{\Gamma \yields a}$ are defined, by
Lemma~\ref{lem:sem-roundtrip} we know that $\den{\Gamma \yields \zA}$ and $\den{\Gamma \yields \za}$
are defined, and
\begin{align*}
 \den{\Gamma \yields \zA} &\defeq \den{\Gamma \yields A}
[\varepsilon_{\den{\Gamma}} \circ \eta_{\den{\Gamma}}] \\
 \den{\Gamma \yields \za} &\defeq \den{\Gamma \yields a}
[\varepsilon_{\den{\Gamma}} \circ \eta_{\den{\Gamma}}]
\end{align*}

For the families, we then have:
\begin{align*}
  \den{\Gamma \yields \zA}
  &\defeq \den{\Gamma \yields \zA}[\projsub_{\den{A}}][\pair{\id_{\den{\Gamma}}}{\den{a}}] && \text{(Semantic projection cancels with substitution)} \\
  &\defeq \den{\Gamma \yields \zA}[\Wk(\Gamma; A; \cdot)][\Subst(\Gamma;A;\cdot;a)] && \text{(Definitions of $\Wk$ and $\Subst$)} \\
  &\defeq \den{\Gamma, x : A \yields \zA}[\Subst(\Gamma;A;\cdot;a)] && \text{(Lemma~\ref{lem:sem-wk})}
\end{align*}
using that $\den{\Gamma}$, $\den{\Gamma, x : A}$ and
$\den{\Gamma \yields \zA}$ are defined to apply
Lemma~\ref{lem:sem-wk}.

And for the elements:
\begin{align*}
&\den{\Gamma \yields \za} \\
&\defeq \den{\Gamma \yields a} [\varepsilon_{\den{\Gamma}} \circ \eta_{\den{\Gamma}}]\\
&\defeq \gen_{\den{A}}[\pair{\id_{\den{\Gamma}}}{\den{a}}][\varepsilon_{\den{\Gamma}} \circ \eta_{\den{\Gamma}}] && \text{(Substitution for variable)} \\
&\defeq \gen_{\den{A}}[\Subst(\Gamma;A;\cdot;a)][\varepsilon_{\den{\Gamma}} \circ \eta_{\den{\Gamma}}] && \text{(Definition of $\Subst$)} \\
&\defeq \gen_{\den{A}}[\varepsilon_{\den{\Gamma, x : A}} \circ \eta_{\den{\Gamma, x : A}}][\Subst(\Gamma;A;\cdot;a)] && \text{(Naturality)} \\
&\defeq \den{\Gamma, x : A \yields \zx}[\Subst(\Gamma;A;\cdot;a)] && \text{(Definition)}
\end{align*}
%% For the families, we have
%% \begin{align*}
%%   & \den{\Gamma \yields \zA} \\
%%   & \defeq \den{A}[\varepsilon_{\den{\Gamma}} \circ \eta_{\den{\Gamma}}]\\
%%   & \defeq \den{A}[\projsub_{\den{A}}][\pair{\id_{\den{\Gamma}}}{\den{a}}][\varepsilon_{\den{\Gamma}} \circ \eta_{\den{\Gamma}}]\\
%%   & \defeq \den{A}[\projsub_{\den{A}}][\varepsilon_{\den{\Gamma, x : A}} \circ \eta_{\den{\Gamma, x : A}}][\pair{\id_{\den{\Gamma}}}{\den{a}}]
%% \end{align*}
%% Since $\den{a}$ and $\den{\za}$ and the denotations of the contexts are all defined, all of the steps are defined.

Again the general case follows by induction on the length of $\Gamma'$, as the definition of $\Subst(\Gamma;A;\Gamma';a)$ will commute past the iterated projections in the interpretation of $\zx$.
\end{proof}

\begin{lemma}[Semantic weakening]\label{lem:sem-wk}
  Suppose $\den{\Gamma, x : A, \Gamma'}$ and $\den{\Gamma, \Gamma'}$ are
  well-defined. Then there is a morphism $\Wk(\Gamma;A;\Gamma') :
  \den{\Gamma, x : A, \Gamma'} \to \den{\Gamma, \Gamma'}$ such that if
  $\den{\Gamma, \Gamma' \yields \judge}$ is defined then $\den{\Gamma, x : A, \Gamma \yields \judge}$
  is defined and \[\den{\Gamma, x : A, \Gamma \yields \judge} \defeq \den{\Gamma, \Gamma' \yields \judge}[\Wk(\Gamma;A;\Gamma)]\] where $\judge$ is a type or a term.

Similarly, if $\den{\Gamma, x :: A, \Gamma'}$ and $\den{\Gamma,
  \Gamma'}$ are well-defined then there is a morphism
$\ZWk(\Gamma;A;\Gamma') : \den{\Gamma, x :: A, \Gamma'} \to \den{\Gamma,
  \Gamma'}$ such that if $\den{\Gamma, \Gamma' \yields \judge}$ is
defined then $\den{\Gamma, x :: A, \Gamma \yields \judge}$ is defined
and
\[\den{\Gamma, x :: A, \Gamma \yields \judge} \defeq \den{\Gamma, \Gamma' \yields \judge}[\ZWk(\Gamma;A;\Gamma)].\]
\end{lemma}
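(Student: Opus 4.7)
The plan is to mirror the structure of the preceding semantic substitution lemma and of Lemma~\ref{lem:sem-roundtrip}, defining the weakening morphisms by recursion on the tail telescope $\Gamma'$ and proving the substitution equation by mutual induction with this definition, using the size of the context for the morphism and the size of the context plus the size of $\judge$ for the equation.

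First, I would define the two morphisms. For the ordinary weakening, set
\begin{align*}
\Wk(\Gamma;A;\cdot) &:\defeq \projsub_{\den{\Gamma \yields A}}\\
\Wk(\Gamma;A;\Gamma', y : B) &:\defeq \pair{\Wk(\Gamma;A;\Gamma') \circ \projsub_{\den{\Gamma,x:A,\Gamma' \yields B}[\Wk(\Gamma;A;\Gamma')]}}{\gen}\\
\Wk(\Gamma;A;\Gamma', \zy :: B) &:\defeq \pair{\Wk(\Gamma;A;\Gamma') \circ \projsub_{(N\den{B})[\eta][\Wk(\Gamma;A;\Gamma')]}}{\gen}
\end{align*}
and analogously for $\ZWk$, replacing the initial projection with $\projsub_{(N\den{\Gamma \yields A})[\eta_{\den{\Gamma}}]}$. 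As in the substitution lemma, each pairing relies on the equation at the smaller type $B$ to ensure the second component typechecks, which is exactly what the size metric makes available. For the marked-variable cases, one additionally uses naturality of $\eta$ to move the semantic substitution through $N$, just as in the $y::B$ case of $\Rt$ in Lemma~\ref{lem:sem-roundtrip}.

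Next, I would prove the equation by induction on the derivation of $\judge$, with the cases being essentially simpler versions of those in Lemma~\ref{lem:sem-roundtrip} and the substitution lemma. The \rulen{var} and \rulen{var-zero} cases reduce to the fact that the iterated projection in the interpretation of a non-last variable cancels with the iterated pairing in $\Wk$, leaving the appropriate $\gen$. The \rulen{var-roundtrip} case additionally uses naturality of $\varepsilon \circ \eta$ to pass the weakening past the roundtrip substitution. The cases for the $\natural$ type formers and $\Pi$ formers proceed by pushing the weakening inside the semantic type/term constructor using stability of these operations under substitution, exactly as in the proofs already carried out, and the inductive hypothesis on their constituents.

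I do not expect a genuinely new obstacle here: all the ingredients (idempotence of $\varepsilon \circ \eta$, naturality of $\eta$ and $\varepsilon$, Equations~\ref{eq:zq-sub}, \ref{eq:zq-idem}, \ref{eq:zq-eta}, and stability of semantic $\Pi$ and $N$ under substitution) have already been exercised in Lemma~\ref{lem:sem-roundtrip} and the substitution lemma. The mild point of care is the distinction between $\Wk$ and $\ZWk$ at the base case (ordinary versus modal context extension by $x:A$ or $\zx::A$), which affects only which projection is used at depth zero and does not propagate into the telescope $\Gamma'$, since in both situations the inductive extensions of $\Gamma'$ are handled identically. Consequently both variants can be proved simultaneously with the same induction.
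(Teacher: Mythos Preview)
Your proposal is correct and follows essentially the same approach as the paper: define $\Wk$ and $\ZWk$ by recursion on $\Gamma'$ with semantic projection in the base case and iterated pairing for the telescope, using the same mutual size metric as in Lemma~\ref{lem:sem-roundtrip} and the substitution lemma. One small notational slip: in your inductive clause the subscript $\den{\Gamma,x:A,\Gamma' \yields B}[\Wk(\Gamma;A;\Gamma')]$ is ill-typed as written (the reindexing goes the wrong way); the paper simply uses $\projsub_{\den{\Gamma,x:A,\Gamma' \yields B}}$, which is the family already living over $\den{\Gamma,x:A,\Gamma'}$, and relies on the inductive hypothesis $\den{\Gamma,x:A,\Gamma' \yields B} \defeq \den{\Gamma,\Gamma' \yields B}[\Wk(\Gamma;A;\Gamma')]$ only to check that $\gen$ has the right type.
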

\begin{proof}
  The metric is the size of $\Gamma,x:_?A,\Gamma'$ for the morphism and the size of $\Gamma,x:_?A,\Gamma' \yields J$ for the equation.
  The maps are defined in the
  expected way, with semantic projection in the base case and
  variables for the remaining telescope, and using the IH for the equation on $\Gamma,x:_?A,\Gamma' \yields B$ in the context extension cases.
  We just show the definition of the maps:
  \begin{align*}
    \Wk(\Gamma;A;\cdot) &:\defeq \projsub_{\den{\Gamma \yields A}} \\
    \Wk(\Gamma;A;\Gamma', y : B) &:\defeq \pair{\Wk(\Gamma;A;\Gamma') \circ \projsub_{\den{\Gamma, x : A, \Gamma' \yields B}}}{\gen_{\den{\Gamma, x : A, \Gamma' \yields B}}} \\
    \Wk(\Gamma;A;\Gamma', y :: B) &:\defeq \pair{\Wk(\Gamma;A;\Gamma') \circ \projsub_{(N\den{\Gamma, x : A, \Gamma' \yields B})[\eta_{\den{\Gamma, x : A, \Gamma'}}]}}{\gen_{(N\den{\Gamma, x : A, \Gamma' \yields B})[\eta_{\den{\Gamma, x : A, \Gamma'}}]}} \\
    \ZWk(\Gamma;A;\cdot) &:\defeq \projsub_{(N\den{\Gamma \yields A})[\eta_{\den{\Gamma}}]} \\
    \ZWk(\Gamma;A;\Gamma', y : B) &:\defeq \pair{\ZWk(\Gamma;A;\Gamma') \circ \projsub_{\den{\Gamma, x :: A, \Gamma' \yields B}}}{\gen_{\den{\Gamma, x :: A, \Gamma' \yields B}}} \\
    \ZWk(\Gamma;A;\Gamma', y :: B) &:\defeq \pair{\ZWk(\Gamma;A;\Gamma') \circ \projsub_{(N\den{\Gamma, x :: A, \Gamma' \yields B})[\eta_{\den{\Gamma, x :: A, \Gamma'}}]}}{\gen_{(N\den{\Gamma, x :: A, \Gamma' \yields B})[\eta_{\den{\Gamma, x :: A, \Gamma'}}]}}
  \end{align*}
\end{proof}

We now have the pieces required for the soundness theorem.
\begin{theorem}[Soundness]
  The partial interpretation function enjoys the following properties:
  \begin{itemize}
  \item If $\Gamma \ctx$ then $\den{\Gamma}$ is a well-defined object of $\C$,
  \item If $\den{\Gamma}$ is defined and $\Gamma \yields A \type$ then $\den{\Gamma \yields A} \in \Fam{\den{\Gamma}}$,
  \item If $\den{\Gamma}$ is defined and $\Gamma \yields a : A$ then $\den{\Gamma \yields A}$ is defined and $\den{\Gamma \yields a} \in \Elt{\den{\Gamma}}{\den{\Gamma \yields A}}$,
  \item If $\Gamma \defeq \Gamma' \ctx$ then $\den{\Gamma} \defeq \den{\Gamma'}$,
  \item If $\den{\Gamma}$ is defined and $\Gamma \yields A \defeq B \type$ then both $\den{A}$ and $\den{B}$ are defined and $\den{A} \defeq \den{B}$,
  \item If $\den{\Gamma}$ is defined and $\Gamma \yields a \defeq a' : A$ then all of $\den{A}, \den{a}$ and $\den{a'}$ are defined, both $\den{a}$ and $\den{a'}$ are in $\Elt{\den{\Gamma}}{\den{\Gamma \yields A}}$, and $\den{a} \defeq \den{a'}$.
  \end{itemize}
\end{theorem}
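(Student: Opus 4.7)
The plan is to prove all six statements simultaneously by mutual induction on the typing, type formation, context formation, and equality derivations. The key technical lemmas are already in place: Lemma~\ref{lem:sem-roundtrip} matches the syntactic zeroing operation with the semantic morphism $\varepsilon \circ \eta$, the semantic substitution lemma matches $[a/x]$ with a CwF pairing, and Lemma~\ref{lem:sem-wk} (together with its marked variant $\ZWk$) handles both flavours of weakening. Moreover, Proposition~\ref{prop:preunit-preserves-size} ensures that the admissible \rulen{pre-unit} rule preserves derivation size, which licenses the repeated trick of invoking the inductive hypothesis through an application of \rulen{pre-unit}.

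I would begin with the context cases. The empty and ordinary-extension cases are immediate from Figure~\ref{fig:partial-interp}. For the marked extension $\Gamma, \zx :: A$, the typing premise is $\zc{\Gamma} \yields A \type$ while the partial interpretation uses $\den{\Gamma \yields A}$; applying \rulen{pre-unit} to the premise (without changing its size) yields $\Gamma \yields A \type$, so the inductive hypothesis gives definedness of $\den{\Gamma \yields A}$. The same manoeuvre handles \rulen{$\natural$-form} and \rulen{$\natural$-intro}. For the variable rules I would use weakening to reduce to the case of the last variable and then unfold: in \rulen{var-roundtrip}, the family is $\den{A}[\projsub][\varepsilon \circ \eta]$, which is exactly $\den{\zA}$ after weakening by Lemma~\ref{lem:sem-wk} and Lemma~\ref{lem:sem-roundtrip}; in \rulen{var-zero}, we use that $A \defeq A^{0\Gamma}$ for well-typed premises together with Equation~\ref{eq:zq-idem}.

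For the type formers, $\Pi$-types are routine from Definition~\ref{def:cwf-pi}, and its $\beta$ and $\eta$-equations follow from the corresponding CwF equations after unfolding semantic substitution. For $\natural$, the $\beta$-rule $(a_A^\natural)_\natural^A \defeq a$ is witnessed by Equation~\ref{eq:zq-beta} after carefully matching reindexings; the $\eta$-rule $b \defeq (\zb_\natural)^\natural$ is witnessed by Equation~\ref{eq:zq-eta} together with Lemma~\ref{lem:n-rt-triv} (the semantic roundtrip is the identity on an element of $N A$) and Lemma~\ref{lem:sem-roundtrip} (which interprets the underlining on $b$ as the semantic roundtrip). The equality judgements then follow: reflexivity, symmetry, transitivity, and type conversion come for free from the ambient equality of the CwF, and each congruence rule follows by applying the inductive hypothesis to the sub-derivations and using that the semantic operations are genuine functions, hence preserve equality.

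The main obstacle I anticipate is the careful bookkeeping of \emph{which} family an interpreted term lives in, since the semantics is intrinsically typed and the third clause of the theorem requires us to simultaneously prove definedness and identify the family. The partial interpretation function implicitly requires families to coincide for its right-hand sides to type-check (as with $N_{\den{\Gamma},\den{A}}$ in the clause for $a^\natural_A$, or $\zc{\gen}_A[\pair{\id}{\den{n}}]$ in the clause for $n_\natural^A$), and in each such case I would have to verify from the family-component of the inductive hypothesis, Lemma~\ref{lem:sem-roundtrip}, and naturality of $\eta$ and $\varepsilon$ that the family actually matches the one demanded. The secondary subtlety is the interplay between the ``silent'' unit on syntax and the non-trivial unit $\eta$ in the semantics; any apparent mismatch between Figure~\ref{fig:partial-interp} and a typing derivation is absorbed by the roundtrip lemma.
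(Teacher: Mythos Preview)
Your proposal is correct and follows essentially the same approach as the paper: mutual induction on derivations, invoking \rulen{pre-unit} (via Proposition~\ref{prop:preunit-preserves-size}) to align zeroed premises with the interpretation function, and then discharging the $\natural$-cases with Equations~\ref{eq:zq-beta} and~\ref{eq:zq-eta}, Lemma~\ref{lem:sem-roundtrip}, and Lemma~\ref{lem:n-rt-triv}. The paper's own proof is terse (``routine, using the previous Lemmas'') and only spells out \rulen{var-roundtrip}, \rulen{$\natural$-intro}, \rulen{$\natural$-elim}, \rulen{$\natural$-beta}, and \rulen{$\natural$-eta}, which you have covered with the same ingredients.
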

\begin{proof}
Routine, using the previous Lemmas as necessary. We show a few cases:
\begin{itemize}

\item \rulen{var-roundtrip}: Since the context is assumed to be
  defined, all of the subexpressions of the definition are defined.  For
  the type, we want $\den{\Gamma \yields \zA}$ defined and have that
  $\den{\Gamma \yields A}$ defined, so we apply
  Lemma~\ref{lem:sem-roundtrip}.  Modulo a number of semantic
  projections corresponding to the weakening of $A$ through the rest of
  the context, we want the definition to have type $\den{\Gamma \yields
    \zA}$, but it has type $\den{\Gamma \yields A}[\varepsilon_{\den
      \Gamma} \circ \eta_{\den \Gamma}]$, and these are equal.

\item \rulen{$\natural$-intro}: Suppose $\zc{\Gamma} \yields A \type$
  and $\zc{\Gamma} \yields a : A$ and that $\den{\Gamma}$ is defined.  We want
  to show $\den{a_A^\natural} \in \den{\natural A}$.  Because
  $\rulen{pre-unit}$ preserves size, we can apply the inductive
  hypothesis to the resulting derivations of $\Gamma \yields A \type$
  and $\Gamma \yields a: A$ to get that $\den{\Gamma \yields A}$ and
  $\den{\Gamma \yields a} \in \Elt{\den{\Gamma}}{\den{\Gamma \yields A}}$
  are defined.  Thus, $\den{\natural A} \defeq (N\den A)[\eta_{\den
      \Gamma}]$ and $\den{a_A^\natural} \defeq (N_{\den{\Gamma \yields
      A}}\den{a})[\eta_{\den \Gamma}]$ are defined, and the term has the correct type.

\item \rulen{$\natural$-elim}: Suppose $\zc{\Gamma} \yields A \type$ and
  $\Gamma \yields a : \natural A$ and that $\den{\Gamma}$ is defined.
  We want to show $\den{a^A_\natural} \in \den{A}$.  Because
  $\rulen{pre-unit}$ preserves size, we can apply the inductive
  hypothesis to the resulting derivation of $\Gamma \yields A \type$ and
  to $\Gamma \yields a: \natural A$ to get that $\den{\Gamma \yields A}$
  and $\den{\Gamma \yields a} \in \Elt{\den{\Gamma}}{N \den{\Gamma \yields
    A} [\eta_{\den \Gamma}]}$ are defined.  Thus, $\den{a^A_\natural}$
  is defined, since all of its pieces are, and $\den{a}$ has the
  necessary type.  Its type is $\den{A}[\projsub_{\den
      A}][(\id_{\den{\Gamma}},\den{a})]$, which is defined because all of
  its pieces are, and reduces to $\den{A}$ as desired.

\item \rulen{$\natural$-beta}: Assume the typing premises $\zc{\Gamma}
  \yields A \type$ and $\zc{\Gamma} \yields a : A$.
  Then $\den{A}$
  and $\den{a} \in \Elt{\den{\Gamma}}{\den{\Gamma \yields A}}$
  are defined because we can apply \rulen{pre-unit} to both before using the
  inductive hypothesis.  Second, $\den{(a^\natural)_\natural}$ is defined
  and has type $\den{A}$ by composing the previous two cases, which
  works because we have the same inductive hypotheses available on the
  typing premises.
  Finally, these are equal because:
\begin{align*}
\den{\Gamma \yields a^\natural{}_\natural}
&\defeq \zc{\gen}_{\den{A}}[\pair{\id_{\den{\Gamma}}}{\den{\Gamma \yields a^\natural}}] \\
&\defeq \zc{\gen}_{\den{A}}[\pair{\id_{\den{\Gamma}}}{(N\den{\Gamma \yields a})[\eta_{\den{\Gamma}}]}] \\
&\defeq \den{\Gamma \yields a}[\varepsilon_{\den{\Gamma}} \circ \eta_{\den{\Gamma}}] && \text{(Equation~\ref{eq:zq-beta})} \\
&\defeq \den{\Gamma \yields \za} && \text{(Lemma~\ref{lem:sem-roundtrip})} \\
&\defeq \den{\Gamma \yields a} && \text{($a$ is already zeroed syntactically)}
\end{align*}

\item \rulen{$\natural$-eta}: Suppose the typing premises $\zc{\Gamma}
  \yields A \type$ and $\Gamma \yields n : \natural A$.  The inductive
  hypothesis on the latter gives $\den{\natural A}$ and $\den{n} \in
  \Elt{\den \Gamma}{\den{\natural A}}$ defined.  By Lemma~\ref{lem:sem-roundtrip},
  $\den{\Gamma \yields \zn} \in \Elt{\den \Gamma}{\den{\natural A}[\varepsilon_{\den{\Gamma}} \circ
    \eta_{\den{\Gamma}}]}$ is defined, and $\den{\natural A}[\varepsilon_{\den{\Gamma}} \circ
    \eta_{\den{\Gamma}}] \defeq \den{\natural A}$.
  This implies $\den{(\zn_\natural^A)^\natural_A}$ is defined and is in
  $\Elt{\den \Gamma}{\den{\natural A}}$.
  %% could factor out semantic natural intro/elim rules as lemmas
\begin{align*}
\den{\Gamma \yields \zn_\natural{}^\natural}
&\defeq (N\den{\Gamma \yields \zn_\natural})[\eta_{\den{\Gamma}}] \\
&\defeq (N(\zc{\gen}_{\den{A}}[\pair{\id_{\den{\Gamma}}}{\den{\Gamma \yields \zn}}]))[\eta_{\den{\Gamma}}] \\
&\defeq (N\zc{\gen}_{\den{A}})[\eta_{\den{\Gamma}.(N\den{A})[\eta_{\den{\Gamma}}]}][\pair{\id_{\den{\Gamma}}}{\den{\Gamma \yields \zn}}] && \text{(Naturality of $\eta$)} \\
&\defeq \gen_{(N\den{A})[\eta_{\den{\Gamma}}]}[\pair{\id_{\den{\Gamma}}}{\den{\Gamma \yields \zn}}] && \text{(Equation~\ref{eq:zq-eta})} \\
&\defeq \den{\Gamma \yields \zn} \\
&\defeq \den{\Gamma \yields n}[\varepsilon_{\den{\Gamma}} \circ \eta_{\den{\Gamma}}] && \text{(Lemma~\ref{lem:sem-roundtrip})} \\
&\defeq \den{\Gamma \yields n} && \text{(Lemma~\ref{lem:n-rt-triv})}
\end{align*}
\end{itemize}
\end{proof}

\begin{corollary}
  MLTT extended as in Figures~\ref{fig:structural} and~\ref{fig:natural-rules} is logically consistent.
\end{corollary}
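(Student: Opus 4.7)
The plan is to derive consistency from the soundness theorem by exhibiting a non-trivial CwB, using that any standard set-theoretic (or presheaf) model of MLTT with $\Pi$, $\Sigma$, identity types, universes, etc.\ forms a CwF $\C$ in which the empty type is interpreted as the empty set (hence has no closed elements). To extend such a $\C$ to a CwB, I would take $N :\defeq \id_\C$ as the underlying weak CwF endomorphism, with $\eta :\defeq \id$ and $\varepsilon :\defeq \id$ as the natural transformations. The section-retraction condition $\eta \circ \varepsilon \defeq \id_{N-}$ then holds trivially, so this data indeed constitutes a CwB.

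Next, I would observe that the soundness theorem applies to this CwB and yields, for every derivation $\cdot \vdash a : A$, an element $\den{a} \in \Elt{1}{\den A}$. Specializing $A$ to the empty type $\emptyset$ (which in this model is interpreted as the empty family over $1$), we conclude that there is no closed term $\cdot \vdash a : \emptyset$, for such a term would yield an element of an empty set.

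The only thing to check is that the interpretations of the MLTT type formers (the $\Pi$-structure in Definition~\ref{def:cwf-pi}, and analogously the $\Sigma$, identity, (higher) inductive, and univalent universe structures not spelled out in the excerpt) in any of the standard models are compatible with the trivial bireflector. This is immediate because when $N = \id$, $\eta = \id$, $\varepsilon = \id$, all of the equations in Lemma~\ref{lem:mod-var-sub} collapse to trivial identities on generic elements, and the action of $\natural$ becomes the identity on types and terms, so no additional coherence conditions on the ambient CwF are required.

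The main obstacle, such as it is, lies only in confirming that a full model of the ambient axiomatic HoTT really exists (closure of the universes under higher inductive types, etc.), which is exactly the interpretation work surveyed in the introduction; once such a model of MLTT is fixed, equipping it with $N = \id$ and applying soundness is automatic. Thus consistency of the extended theory reduces to consistency of the base theory it is built on.
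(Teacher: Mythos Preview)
Your proposal is correct and takes essentially the same approach as the paper: equip any existing CwF model of the base theory with the trivial bireflector $N=\id_\C$, $\eta=\varepsilon=\id$, and apply soundness. The paper's proof is just the one-line observation that any CwF becomes a CwB with $N$ the identity functor; your version simply spells out the remaining step (no closed term of the empty type) and the verification that the trivial bireflector satisfies the CwB axioms.
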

\begin{proof}
  For any CwF there is a CwB with $N$ the identity functor, so the
  extension interprets in any existing model.
\end{proof}

\subsection{Syntactic Category}

Conversely, we can build a syntactic category.

\begin{theorem}[Completeness]
  The term model of the type theory forms a CwB.
\end{theorem}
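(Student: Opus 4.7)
The plan is to build the term model in the standard way and then equip it with the bireflector structure coming from $\natural$. First, take objects to be equivalence classes of well-formed contexts under $\alpha$-equivalence and definitional equality, morphisms $\Delta \to \Gamma$ to be context-typed substitutions modulo the same, families over $[\Gamma]$ to be equivalence classes of types $\Gamma \yields A \type$, and elements to be equivalence classes of terms $\Gamma \yields a : A$. The comprehension $\co{\Gamma}{A}$ is given by the context extension $\Gamma, x : A$, with projection, generic element, and pairing the evident syntactic constructions; the terminal object is the empty context. Functoriality, reindexing, and the CwF equations all follow from the standard substitution lemmas on the syntax.

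Next, I would define the weak endomorphism $N$. On objects, $N[\Gamma] := [\zc{\Gamma}]$, well-defined because the admissible zeroing operation on contexts respects definitional equality. On morphisms $\sigma = (a_1, \ldots, a_n) : \Delta \to \Gamma$, take $N\sigma := (\za_1, \ldots, \za_n) : \zc{\Delta} \to \zc{\Gamma}$ via \rulen{pre-counit}. On families $A \in \Fam{[\Gamma]}$, put $NA := \natural \zA \in \Fam{\zc{\Gamma}}$; this is well-typed because $\zA$ is dull in $\zc{\Gamma}$. On elements $a : A$, set $Na := \za^\natural$. Functoriality and compatibility with reindexing follow from the admissible fact that zeroing commutes with substitution, and preservation of the terminal object is immediate from $\zc{\cdot} \defeq \cdot$. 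The key remaining condition is that the comparison map $\pair{N\projsub_A}{N\gen_A} : N(\co{\Gamma}{A}) \to \co{N\Gamma}{NA}$ is an isomorphism; its source is $\zc{\Gamma}, \zx :: \zA$ and its target is $\zc{\Gamma}, y : \natural \zA$. I would build the inverse by substituting $y_\natural$ for $\zx$ via dull substitution (Remark~\ref{rem:dull-subst}), then check the two triangles using \rulen{$\natural$-beta} and \rulen{$\natural$-eta} respectively.

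Then, I would define $\varepsilon_\Gamma : \zc{\Gamma} \to \Gamma$ and $\eta_\Gamma : \Gamma \to \zc{\Gamma}$ by induction on $\Gamma$. Both are the identity on the empty context. At a context extension $\Gamma, x : A$, where $\zc{\Gamma, x:A} = \zc{\Gamma}, \zx :: \zA$, the counit sends $x$ to the marked variable use $\zx$, typed via \rulen{var-zero} and noting that $\zA$ is the reindexing of $A$ along $\varepsilon_\Gamma$; the unit pairs the weakening of $\eta_\Gamma$ through $x:A$ with $\zx^\natural$, typed via \rulen{$\natural$-intro}, which has type $\natural \zA \defeq (\natural \zA)[\eta_\Gamma]$ for the marked slot. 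Naturality of $\varepsilon$ and $\eta$ reduces to the statements that precomposition with $\varepsilon$ coincides with syntactic zeroing and that precomposition with $\eta$ is silent, both of which were verified inductively when constructing $\Rt_{\Gamma;\Delta}$ in Lemma~\ref{lem:sem-roundtrip} and $\Wk$, $\ZWk$ in Lemma~\ref{lem:sem-wk}. The section-retraction property $\eta_\Gamma \circ \varepsilon_\Gamma \defeq \id_{\zc{\Gamma}}$ then follows by \rulen{$\natural$-eta} together with idempotence $\zc{\zc{\Gamma}} \defeq \zc{\Gamma}$, via a calculation mirroring that of Proposition~\ref{prop:zero-in-natural}.

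The main obstacle will be carefully managing the dependencies between successive components of $\varepsilon$ and $\eta$ in the extension cases: because the type of each successive variable can mention earlier ones, the inductively constructed parts must be compatible with the reindexing occurring in later types, requiring the same sort of idempotence and naturality manipulations already employed in the proofs of Lemmas~\ref{lem:sem-roundtrip} and~\ref{lem:sem-wk}. Once this bookkeeping is in hand, the equations required of a CwB drop out of the corresponding judgmental equalities on syntax, and the term model becomes initial among CwBs in the usual sense, yielding completeness.
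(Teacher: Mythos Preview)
Your overall approach is the same as the paper's: build the CwF from syntax, define $N$ via $\zc{\Gamma}$, $\natural\zA$, $\za^\natural$, and zeroed substitutions, exhibit the comparison isomorphism using $\zx^\natural$ and $y_\natural$, and define $\eta$ and $\varepsilon$ by induction on contexts. Two things need fixing, though.

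First, the component of the unit at an unmarked extension is wrong. The target of $\eta_{\Gamma,x:A}$ is $\zc{\Gamma,x:A} \defeq \zc{\Gamma},\zx::\zA$, and a substitution into a \emph{marked} extension is governed by \rulen{subst-ext-zero}, which asks for a term of type $\zA$ in the \emph{zeroed} source context, not a term of type $\natural\zA$. So the last component should be $\zx$ (typed by \rulen{var-zero} in $\zc{\Gamma,x:A}$), not $\zx^\natural$. You seem to be conflating the syntactic marked slot with its semantic interpretation as an ordinary extension by $\natural A$; in the term model these are distinct, and the substitution rule for the former is not ``supply an element of $\natural A$''. The paper's definitions are simply $\mathsf{unit}_{\Gamma,x:A} :\defeq (\mathsf{unit}_\Gamma,\zx/\zx)$ and $\mathsf{counit}_{\Gamma,x:A} :\defeq (\mathsf{counit}_\Gamma,\zx/x)$, with the analogous clauses for $\zx::A$ (which you omit).

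Second, your appeal to Lemmas~\ref{lem:sem-roundtrip} and~\ref{lem:sem-wk} for naturality is misplaced: those lemmas concern the \emph{semantic} interpretation function $\den{-}$ into an arbitrary CwB (the soundness direction), not equalities between syntactic substitutions in the term model. Naturality of $\eta$ and $\varepsilon$ here is a direct induction on the length of the substitution, using only the definition of composition and the raw-syntax facts that zeroing commutes with substitution and is idempotent. The section--retraction property then holds on the nose: $\mathsf{unit}\circ\mathsf{counit}$ is a tuple of $\zx/\zx$'s, which is literally $\id_{\zc{\Gamma}}$; no appeal to \rulen{$\natural$-eta} or Proposition~\ref{prop:zero-in-natural} is needed.
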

\begin{proof}
  The CwF is constructed in the usual way, with the underlying
  category given by contexts and total substitutions between them, the
  families for a context given by types in that context up to
  equality, and elements of each family by terms up to equality in
  that type.

  Total substitutions are defined as normal for the empty context and
  ordinary context extension. Substitution into $\Gamma,\zx::A$ is
  defined to match \rulen{dull-subst}:
  \begin{mathpar}
    \inferrule*[left=subst-ext-zero]
    {\Delta \yields \theta : \Gamma \and
      \zc{\Delta} \yields a : A[\zc{\theta}]
    }
    {\Delta \yields (\theta, a/\zx) : \Gamma, \zx :: A}
  \end{mathpar}

  The identity substitution is given as usual for ordinary context
  extension, and by
  $\id_{\Gamma, \zx :: A} :\defeq \id_\Gamma, \zx/\zx$ for modal
  context extension. Composition of total substitutions
  $\theta : \Delta \yields \Gamma$ and
  $\theta' : \Gamma \yields \Gamma'$ is defined by induction on
  $\theta'$. The new case, when $\Gamma'$ ends with a modal context
  extension, is defined by
  $(\theta', a'/\zx)[\theta] \defeq (\theta'[\theta],
  a'[\zc{\theta}]/\zx)$. Checking associativity and unit laws of
  composition is then straight forward.

  Turning to the CwB structure, the functor $N$ is defined on contexts
  by $N(\Gamma) :\defeq \zc{\Gamma}$, on types by
  $N(A) :\defeq \natural \zA$ and terms by
  $N(a) :\defeq \za^\natural$. On total substitutions, we define
  $N(\theta) :\defeq \zc{\theta}$, where
  \begin{align*}
    \zc{\cdot} &:\defeq \cdot \\
    \zc{\theta, a/x} &:\defeq \zc{\theta}, \za/\zx \\
    \zc{\theta, a/\zx} &:\defeq \zc{\theta}, a/\zx
  \end{align*}
  This admissible rule was used in \rulen{subst-ext-zero} above.

  To show $N$ is a weak CwF morphism, $N$ preserves the terminal
  object (empty context) by definition, and commutes with reindexing
  on types and terms by the definition of substitution as an
  admissible operation. We just need that
  $N(\Gamma.A) \to N\Gamma.NA$ is an isomorphism, and unfolding the
  definitions, this is the substitution
  \[\zc{\Gamma, x : A} \yields (\id_{\zc{\Gamma}}, \zx^\natural) :
    \zc{\Gamma}, \zy :: \natural \zA \]
  This has an inverse
  \[\zc{\Gamma}, \zy :: \natural \zA \yields (\id_{\zc{\Gamma}}, \zy_\natural) :
    \zc{\Gamma, x : A} \] by the $\beta$- and $\eta$-rules for
  $\natural$. The $\beta$-rule will give us $\zx$ in the last
  component, but this matches the definition of the identity
  substitution for a marked variable.

  The unit and counit substitutions
  \begin{mathpar}
    \inferrule*[fraction={-{\,-\,}-}]{~}{\Gamma \yields \mathsf{unit}_\Gamma : \zc{\Gamma}} \and
    \inferrule*[fraction={-{\,-\,}-}]{~}{\zc{\Gamma} \yields \mathsf{counit}_\Gamma : \Gamma}
  \end{mathpar}
  are given by inductively defining
  \begin{align*}
    \mathsf{unit}_{\cdot} &:\defeq \cdot \\
    \mathsf{unit}_{\Gamma, x : A} &:\defeq \mathsf{unit}_{\Gamma},\zx / \zx \\
    \mathsf{unit}_{\Gamma, \zx :: A} &:\defeq \mathsf{unit}_{\Gamma},\zx / \zx \\
    \mathsf{counit}_{\cdot} &:\defeq \cdot \\
    \mathsf{counit}_{\Gamma, x : A} &:\defeq \mathsf{counit}_{\Gamma},\zx / x \\
    \mathsf{counit}_{\Gamma, \zx :: A} &:\defeq \mathsf{counit}_{\Gamma},\zx / \zx
  \end{align*}
  Naturality is easy to verify by induction. The round-trip on
  $\zc{\Gamma}$ is exactly the identity substitution, as
  $\id_{\zc{\Gamma}}$ by definition consists of tuples of zeroed
  variables.
\end{proof}

\section{Type-Theoretic Description of Parametrised Pointed Types}\label{sec:toy-model}

We present a concrete implementation in parametrised pointed
types, which has a similar structure to parametrised spectra, but is
simpler to describe.  The $\infty$-topos of \emph{parametrised pointed
  spaces} is defined similarly to $P\Spec$, but with the
$\infty$-category of pointed spaces $\mathcal{S}_\star$ taking the place
of $\Spec$, so an object consists of a space $B$ and a family $E :
\mathrm{Fun}(B,\mathcal{S}_\star)$ of pointed spaces.  In intensional
type theory, we can define a weak version of this model, where many
equations that should be strict equalities (such as the $\beta$- and
$\eta$-rules for the type formers) hold only up to paths.
Indeed, in this internal presentation,
function types in the theory are interpreted as functions that
preserve the point only up to a path whereas we conjecture that there is
an external model in parametrised pointed spaces where functions are pointed up to equality.

This model has been considered independently in some related work.
Some aspects, including the universe, were described in a talk by
Buchholtz~\cite{buchholtz:toy-models}, which discussed simple,
internally definable models of cohesion.  The paper of Kraus and
Sattler~\cite{kraus-sattler:spacediagrams} discuss how, for certain
Reedy diagrams, the data of a coherent diagram has a finite
description internally, without needing to specify infinitely many
coherences.  Parameterised pointed spaces can be seen as the
category of homotopy coherent diagrams over the walking
section/retraction.  Intuitively, an $\infty$-presheaf $\Gamma$ on
$\{s : 1 \to 0, r : 0 \to 1 \mid r \circ s = \id_0\}$ consists of a
space $\Gamma(0)$ and a space $\Gamma(1)$ with a projection
$\Gamma(1) \to \Gamma(0)$ and a section of it
$\Gamma(0) \to \Gamma(1)$.  Taking the fibre of the projection, we can
think of $\Gamma(1)$ as a dependent type on $\Gamma(0)$, with each
type in the fibre equipped with a distinguished point determined by the
section $s$. Kraus and Sattler's construction
applies to this index category, and their construction yields a
description equivalent to the one presented here.

Coquand, Ruch, and Sattler~\cite{crs:sheaf-models} also consider
constructive sheaf models of univalent type theory on the walking
section-retraction.

\subsection{Contexts and Substitutions}

In this (weak) model, each `context' `$\Gamma \ctx$' is interpreted as
an element of a record type
\begin{definition}[Contexts in Pointed Spaces]
\begin{align*}
  \mathsf{Ctx} :\defeq \{
  \dnst &: \univ, \\
  \upst &: \dnst \to \univ, \\
  \ptst &: \prd{g : \dnst} \upst (g)\}
\end{align*}
\end{definition}
\noindent consisting of a type $\dnst$, a family of types over it
$\upst$, and a section $\ptst$.  Equivalently, we could package $\upst$
and $\ptst $ together into a family of pointed types.  This corresponds
to the section/retraction pair
\begin{mathpar}
\begin{tikzcd}
\sm{g : \dnst\Gamma} \upst\Gamma(g) \ar[d, "\proj_1",bend left] \\
\dnst\Gamma \ar[u, "{(g, \ptst g)}" bend left] &
\end{tikzcd}
\end{mathpar}
With this in mind, we refer to $\dnst \Gamma$ as the \emph{base} or
\emph{index space}, $\upst \Gamma$ as the \emph{fibre}, and $\ptst$ as
the \emph{section} or the \emph{point} (though this is a bit imprecise,
since really it is a family of points, one for each object of the base).

A `substitution' $\Gamma \yields \theta : \Delta$ between $\Gamma, \Delta
: \mathsf{Ctx}$ is then a map of the base and a map of the fibres that
commute with the section, represented by the record type
\begin{definition}[Substitutions in Pointed Spaces]
\begin{align*}
  \{\dnst &: \dnst \Gamma \to \dnst \Delta, \\
    \upst &: \prd{g : \dnst \Gamma} \upst \Gamma(g) \to \upst \Delta(\dnst (g)), \\
    \ptst &: \prd{g : \dnst \Gamma} \upst(g, \ptst \Gamma (g)) =_{\upst \Delta( \dnst(g))} \ptst \Delta(\dnst (g)) \}
\end{align*}
\end{definition}

\begin{definition}[Empty Context in Pointed Spaces]
The empty context `$\emptyset \ctx$' is given by the unit type over the unit type.
\begin{align*}
\dnst(\emptyset) &:\defeq 1 \\
\upst(\emptyset)(\star) &:\defeq 1 \\
\ptst(\emptyset)(\star) &:\defeq \star
\end{align*}
\end{definition}
\noindent Note that we use ``copattern'' notation~\cite{copatterns} to
describe elements of records/functions, i.e.\ this desugars to the
record $\emptyset :\defeq \{\dnst = 1, \upst = \lambda g. 1, \ptst =
\lambda g. \star\}$

For a context $\Gamma : \mathsf{Ctx}$, the context $\natural\Gamma :
\mathsf{Ctx}$ is defined by keeping the base the same but replacing the
fibre with the point:
\begin{definition}[Natural Context in Pointed Spaces]
\begin{align*}
  \dnst(\natural\Gamma) &:\defeq \dnst\Gamma
  \\ \upst(\natural\Gamma)(g) &:\defeq 1
  \\ \ptst(\natural\Gamma)(g) &:\defeq \star
\end{align*}
\end{definition}

The unit substitution $\Gamma \vdash \eta : \natural \Gamma$ is given by
\begin{align*}
  \dnst \eta (g) &:\defeq g \\
  \upst \eta (g,g') &:\defeq \star \\
  \ptst \eta (g) &:\defeq \refl{}
\end{align*}
i.e.\ it is the identity in the base, and sends everything in $\upst
\Gamma$ to the point, which in particular sends the sections of $\Gamma$
to the point.  Because the fibres are pointed, there is
also a counit $\natural \Gamma \vdash \varepsilon : \Gamma$
\begin{align*}
   \dnst \varepsilon (g) &:\defeq g \\
   \upst \varepsilon (g,\star) &:\defeq \ptst \Gamma(g) \\
   \ptst \varepsilon (g,\star) &:\defeq \refl{}
\end{align*}
i.e.\ it is again the identity on the base, and sends the point in the
fibre to the chosen point in each fibre.

For a context $\Gamma$ (which determines a closed type), $\natural
\Gamma$ is contractible in this model when $\dnst \Gamma$ is a
contractible type. A $\natural$-connected context has the form
$\{\dnst \Gamma \equiv 1, \upst \Gamma : 1 \to \univ, \ptst \Gamma :
\upst \Gamma(\star)\}$ and thus is exactly a single pointed type.
Moreover, the substitutions between them are exactly pointed maps.

Supposing that $\Gamma$ consists of a single type $A$, we are used to thinking of
substitutions $\emptyset \vdash \theta : A$ as corresponding to `elements
of $A$'. In our setting, however, if $A$ is a $\natural$-connected type then the type of such
substitutions is contractible --- there is a unique pointed map from $1$
to $A$ that sends the point in the fibre to the point of $A$. Thus,
substitutions from $\emptyset$ do not give access to the elements of the \emph{fibre} of $A$.
To access the elements of the fibre, we can instead consider
substitutions from $\mathbb{B} \vdash \theta : A$, where $\mathbb{B}$ is
the booleans over the unit type
\begin{definition}
  \begin{align*}
    \dnst \mathbb{B} &:\defeq 1\\
    \upst \mathbb{B}(\star) &:\defeq 2\\
    \ptst \mathbb{B}(\star) &:\defeq \mathsf{true}
  \end{align*}
\end{definition}
Substitutions $\mathbb{B} \vdash \theta : A$ correspond to pointed maps
from the booleans into $\upst A$.  Such a pointed map sends
$\mathsf{true}$ to the point, but has one remaining degree of freedom
--- where they send $\mathsf{false}$.  Thus, the substitutions from
$\mathbb{B}$ into a $\natural$-connected type $A$ are equivalent to
$\upst A(\star)$.  We use an analogue of $\mathbb{B}$ in
Section~\ref{sec:redu} below.

The fact that substitutions $\emptyset \vdash \theta : A$ `miss' data
from $A$ does not conflict with the equivalence of types
$A \equiv (1 \to A)$: we will see that the definition of the internal
hom does capture the fibre of $A$ in the fibre of the function type.

\subsection{Types and Terms}

A dependent type in this model `$\Gamma \yields A \type$' is specified
by
an element of the record type
\begin{definition}[Dependent Type in the Pointed Spaces Model]
\begin{align*}
\{
\dnst &: \dnst \Gamma \to \univ, \\
\upst &: \prd{g : \dnst\Gamma} \dnst (g) \to \upst\Gamma(g) \to \univ, \\
\ptst &: \prd{g : \dnst\Gamma} \prd{a : \dnst (g)} \upst (g, a, \ptst\Gamma(g))
\}
\end{align*}
\end{definition}
\noindent The base of a type $A$ depends only on the base of the
context,
while the fibre depends on both the base and the fibre of the context,
and the base of the type.
Not every fibre of the $\upst A$ family has a specified point, only
the fibres that additionally lie over the basepoint of $\upst \Gamma$.

The data of a type in context can arranged into the following diagram.
\begin{mathpar}
\begin{tikzcd}
\sm{g : \dnst\Gamma} \sm{a : \dnst A(g)} \sm{e : \upst \Gamma(g)} \upst A(g,a,e) \ar[d] \ar[r] & \sm{g : \dnst\Gamma} \upst\Gamma(g) \ar[d] \\
\sm{g : \dnst\Gamma} \dnst A(g) \ar[u, bend left] \ar[r] & \dnst\Gamma \ar[u, bend left]
\end{tikzcd}
\end{mathpar}
where the upwards maps are sections definable from $\ptst\Gamma$ and $\ptst A$.

This section/retraction on the left is exactly the interpretation of context extension `$\Gamma.A \ctx$', and the horizontal maps describe the projection substitution $\Gamma.A \to \Gamma$
\begin{definition}[Context Extension in Pointed Spaces]
The context extension `$\Gamma.A \ctx$' is defined by
\begin{align*}
\dnst(\Gamma.A) &:\defeq \sm{g : \dnst\Gamma} \dnst A(g) \\
\upst(\Gamma.A)(g, a) &:\defeq \sm{e : \upst \Gamma(g)} \upst A(g,a,e) \\
\ptst(\Gamma.A)(g, a) &:\defeq (\ptst \Gamma(g), \ptst A(g, a))
\end{align*}
\end{definition}
A `term in context' must be the data of a section of the projection substitution `$\Gamma.A \to
\Gamma$' shown in the diagram above --- this is a section in the
\emph{horizontal} direction.  Building the section equation in via dependency gives
\begin{definition}[Terms in Pointed Spaces]
A term `$\Gamma \yields t : A$' is an element of the record type
\begin{align*}
\{\homdnst &: \prd{g : \dnst \Gamma} \dnst A(g) \\
\homupst &: \prd{g : \dnst \Gamma} \prd{e : \upst \Gamma(g)} \upst A(g, \homdnst (g), e)\\
\homptst &: \prd{g : \dnst \Gamma} \homupst (g, \ptst \Gamma (g)) =_{\upst A(g,\homdnst (g), \ptst \Gamma(g))} \ptst A(g, \homdnst (g))\}
\end{align*}
\end{definition}

For $\natural \Gamma \vdash A \type$, we can define the natural type
$\Gamma \vdash \natural A \type$ by replacing the fibres with the unit
type:
\begin{definition}[Natural Type in Pointed Spaces]
\begin{align*}
\dnst (\natural A) (g) &:\defeq \dnst A (g) \\
\upst (\natural A) (g,a,e) &:\defeq 1 \\
\upst (\natural A) (g,a) &:\defeq \star
\end{align*}
\end{definition}
\noindent Note that this definition only uses $g : \dnst \Gamma$ and not
the fibre or the section of $\Gamma$, so the definition is independent
of whether we ask for an $A$ that depends on $\Gamma$ or on $\natural
\Gamma$.

\subsection{Additional Type Constructors}

This internal model supports both dependent sums and dependent products
satisfying the expected equations propositionally. Our goal here is to
demonstrate how these type formers depend on the pieces of the context,
so we instead describe non-dependent $\times$-types and $\to$-types, but
the analogous definitions replacing $\times$ with $\Sigma$ and $\to$
with $\Pi$ give the dependent versions.

%$\Sigma$-types are the simplest: we can form the $\Sigma$-type both downstairs and upstairs.
%\begin{definition}
%For two types `$\Gamma \yields A \type$' and `$\Gamma.A \yields B \type$' define `$\Gamma \yields \Sigma_A B \type$' by
%\begin{align*}
%\dnst(\Sigma_A B)(g) &:\defeq \sm{a : \dnst A(g)} \dnst B(g, a) \\
%\upst(\Sigma_A B)(g,(a, b),e) &:\defeq \sm{ea : \upst A(g, a, e)} \upst B(g, (a, b), (e, ea)) \\
%\ptst(\Sigma_A B)(g,(a, b)) &:\defeq (\ptst A(g, a), \ptst B((g, a), b)
%\end{align*}
%\end{definition}
%The pairing and projection terms are easy to define in terms of the pairing and projection of the underlying $\Sigma$-types.

Product types are easy: we form the product both downstairs and upstairs. The pairing and projection terms are defined using the pairing and projection of the underlying $\times$-types.
\begin{definition}[Product Types in Pointed Spaces]
For two types `$\Gamma \yields A \type$' and `$\Gamma \yields B \type$' define `$\Gamma \yields A \times B \type$' by
\begin{align*}
\dnst(A \times B)(g) &:\defeq \dnst A(g)\times \dnst B(g) \\
\upst(A \times B)(g, (a, b), e) &:\defeq \upst A(g, a, e) \times \upst B(g, b, e)  \\
\ptst(A \times B)(g,(a, b)) &:\defeq (\ptst A(g, a), \ptst B(g, b))
\end{align*}
\end{definition}

%Function types are more interesting. The base space of $\Pi_A B$ is not simply the space of functions $\dnst A$ to $\dnst B$, but rather the entire type of functions from $A$ to $B$. For a fixed $g : \dnst \Gamma$, that type consists of triples
%\begin{align*}
%bf &: \prd{a : \dnst A(g)} \dnst B(g, a) \\
%ef &: \prd{a : \dnst A(g)} \prd{ea : \upst A(g, a, \ptst \Gamma(g))} \upst B((g, a), bf(a), (\ptst \Gamma(g), ea)) \\
%pf &: \prd{a : \dnst A(g)} ef(a, \ptst A(g, a)) =_{} \ptst B((g, a), bf(a))
%\end{align*}
%Let $\Homst(A, B) : \dnst \Gamma \to \univ$ be the type of such triples.
%
%\begin{definition}
%For two types `$\Gamma \yields A \type$' and `$\Gamma.A \yields B \type$' define `$\Gamma \yields \Pi_A B \type$' by
%\begin{align*}
%\dnst(\Pi_A B)(g) &:\defeq \Homst(A, B)(g) \\
%\upst(\Pi_A B)(g,(bf, ef, pf),e) &:\defeq \prd{a : \dnst A(g)} \prd{ea : \upst A(g, a, e)} \upst B((g, a), bf(a), e)\\
%\ptst(\Pi_A B)(g,(bf, ef, pf)) &:\defeq ef
%\end{align*}
%\end{definition}

Function types are more interesting. The base of $A \to B$ is not the
type of functions $\dnst A \to \dnst B$, but rather the entire type of
substitutions $\Gamma.A$ to $\Gamma.B$ over $\Gamma$. For a fixed $g :
\dnst \Gamma$, the data of such a substitution unwinds to triples
\begin{align*}
  \Homst(A, B)(g : \dnst \Gamma) :=
 \{ & \homdnst : \dnst A(g) \to \dnst B(g) \\
    & \homupst : \prd{a : \dnst A(g)} \upst A(g, a, \ptst \Gamma(g)) \to \upst B(g, \homdnst (a), \ptst \Gamma(g)) \\
    & \homptst : \prd{a : \dnst A(g)} \homupst (a, \ptst A(g, a)) =_{} \ptst B(g, \homdnst (a)) \}
\end{align*}
%\mvrnote{Why is this $\homupst f$ only defined at $\ptst \Gamma(g)$? Agh, I don't remember where this came from originally.}

\begin{definition}[Function Types in Pointed Spaces]
For two types `$\Gamma \yields A \type$' and `$\Gamma \yields B \type$' define `$\Gamma \yields A \to B \type$' by
\begin{align*}
\dnst(A \to B)(g) &:\defeq \Homst(A, B)(g) \\
\upst(A \to B)(g,(\homdnst f, \homupst f, \homptst f),e) &:\defeq \prd{a : \dnst A(g)} \upst A(g, a, e) \to \upst B(g, \homdnst f(a), e) \\
\ptst(A \to B)(g,(\homdnst f, \homupst f, \homptst f)) &:\defeq \homupst f
\end{align*}
\end{definition}
This definition makes it clear that the $\natural$ modality does
\emph{not} preserve $\Pi$-types: the base of $\natural(A \to B)$ is
$\Homst(A,B)$, while the base of $(\natural A \to \natural B)$ is
essentially $\dnst A \to \dnst B$ (because the fibres of $\natural \zB$
are $1$, the rest of the data of $\Homst(\natural A,\natural B)$ is
determined).

We can also define identity types and the universe. Similar to $\Sigma$-types, $\Idsym$-types are given component-wise. The type `$a=a'$' has as its base paths in the base of `$A$', with the family over a path $p : \dnst(a)(g) = \dnst(a')(g)$ given by dependent paths in $\upst A(g,-,e)$ that lie over it.
\begin{definition}[Identity Types in Pointed Spaces]
For `$\Gamma \yields a : A$' and `$\Gamma \yields a' : A$', define `$\Gamma \yields (a = a') \type$' by
\begin{align*}
\dnst(a = a')(g) &:\defeq (\dnst(a)(g) = \dnst(a')(g)) \\
\upst(a = a')(g, p, e) &:\defeq (\upst(a)(g, e) =^{\ap_{\upst A(g,-,e)}(p)} \upst(a')(g, e)) \\
\ptst(a = a')(g, p) &:\defeq \ptst(a)(g) \pathcat \mathsf{apd}_{\ptst A(g, \dnst B(-)(g))}(p) \pathcat \ptst(a')(g)^{-1}
\end{align*}
The types of the paths used to define the basepoint of each family above are
\begin{align*}
\ptst(a)(g) &:  \upst(a)(g, \ptst \Gamma (g)) = \ptst A(g, \dnst(a)(g)) \\
\mathsf{apd}_{\ptst A(g, \dnst B(-)(g))}(p) &: \ptst A(g, \dnst(a)(g)) =^{\ap_{\upst A(g,-,e)}(p)} \ptst A(g, \dnst(a')(g)) \\
\ptst(a')(g)^{-1} &: \ptst A(g, \dnst(a')(g)) = \upst(a')(g, \ptst \Gamma (g))
\end{align*}
\end{definition}

Finally, the universe has as its base the entire type of pointed families, and
as its upstairs, the type of unpointed families over the same base.
\begin{definition}[The Universe in Pointed Spaces]
The universe `$\emptyset \yields \univ \type$' is given by
\begin{align*}
\dnst(\univ)(\star) &:\defeq \sm{B : \univ}\sm{E : B \to \univ} \prd{b : B} E(b) \\
\upst(\univ)(\star, (B, E, p), \star) &:\defeq B \to \univ \\
\ptst(\univ)(\star, (B, E, p)) &:\defeq E
\end{align*}
\end{definition}

%% Given two objects $\Gamma, \Delta : \mathsf{Ctx}$, the object of
%% functions $(\Gamma \to \Delta) : \mathsf{Ctx}$ is given by
%% \begin{align*}
%%   \{ \dnst(\Gamma \to \Delta) &:\defeq \mathsf{Subst}(\Gamma, \Delta), \\
%%      \upst(\Gamma \to \Delta)(\theta) &:\defeq \prd{g : \dnst \Gamma} \upst \Gamma(g) \to \upst \Delta(\dnst \theta(g)) \\
%%      \ptst(\Gamma \to \Delta)(\theta) &:\defeq \upst \theta
%%   \}
%% \end{align*}
%% so that the base of $\Gamma \to \Delta$ is the entire type of
%% substitutions, as defined earlier.

%\begin{definition}
%\begin{align*}
%\dnst(\Gamma.^0A) &:\defeq \sm{g : \dnst\Gamma} \dnst A(g) \\
%\upst(\Gamma.^0A)(g, a) &:\defeq \upst \Gamma(g) \\
%\ptst(\Gamma.^0A)(g, a) &:\defeq \ptst \Gamma(g)
%\end{align*}
%\end{definition}

\section{Future Work}

This paper lays the groundwork for synthetic stable homotopy theory,
but much remains to be done. The most obvious next step is to investigate
synthetic cohomology using reduced types:

\begin{definition}
  For a dull pointed space $\zX$ and reduced type $\zE$, the
  \emph{$n$th cohomology of $\zX$ with coefficients in $\zE$} is defined
  by the formula
  \begin{align*}
    \zE^n(\zX) :\defeq \pi_n(\natural(\Sigma^\infty \zX \to \zE))
  \end{align*}
\end{definition}

Using Axiom S, we expect these groups to form a `cohomology theory',
in the sense of the Eilenberg-Steenrod axioms (see~\cite[Section
  3]{cavallo:cohomology}) for any reduced
type $\zE$. To recover ordinary cohomology, we need to define a
reduced type `$H\ZZ$' that satisfies $\pi^s_0(H\ZZ) = \ZZ$ and
$\pi^s_n(H\ZZ) = 0$ for $n \neq 0$. Axiom N implies that
$\pi^s_0 \sphere = \ZZ$, so we expect to form such an $H\ZZ$ by
\emph{truncating} $\sphere$ to remove the stable homotopy groups above
dimension 0. This truncation operation will be given by a higher
inductive type, nullification at $\Sigma\sphere$.

To discuss synthetic \emph{homology}, we will need a version of the
`smash product of spectra', so that we can make the definition
`$\zE_n(\zX) :\defeq \pi^s_n(\Sigma^\infty \zX \otimes \zE)$'. This
product is analogous to the tensor product of modules/chain complexes
in algebra, and spectra come equipped with an internal hom for this
product. Adding type formers corresponding to these operations will
require a more radical modification to the way contexts are structured
and remains work in progress.
% This would break double blind reviewing:~\cite{rl:lindep}

We have also begun work on a modification of Agda to include our
$\natural$-modality, which would allow us to formalise the results of
this paper. Agda already has infrastructure to support a class of
modalities, including the $\flat$ modality of spatial type
theory~\cite{agda-flat} and run-time irrelevance~\cite{atkey:qtt}. The
common factor in all these modalities is that they restrict access to
the variables in the context in various ways, but when a variable is
available, it is used via the ordinary variable rule. Including our
\rulen{var-zero} rule involves touching many more aspects of the
type-checker, and in particular it is not yet clear how marked
variables should interact with the module system. This is also left
for future work.

Finally, it remains to work out the details of the models, constructing
instances of a CwB for model categories presenting parametrised pointed
spaces and for parametrised spectra.  For this, we anticipate building
on \cite{mike:all} and the strictification
techniques~\cite{lumsdaine-warren:local-universes}
that have previously been applied to modalities~\cite{drats}.

\printbibliography

\end{document}